\documentclass[12pt]{article}
\usepackage[utf8]{inputenc}

\usepackage{amssymb,   enumitem,   xcolor}
\usepackage{tikz}
\usepackage[normalem]{ulem}

\usepackage{graphics,   graphicx,   amsmath}
\usepackage{lmodern,   microtype}

\usepackage{mathtools,   amsthm,   icomma,   upgreek,   xfrac,   amsmath,   dsfont,   float}
\mathtoolsset{showonlyrefs,   mathic}

\usepackage[colorlinks=true,    urlcolor=blue,    linkcolor=blue,    citecolor=blue]{hyperref}

\allowdisplaybreaks

\theoremstyle{plain}
\newtheorem{theorem}{Theorem}[section]
\newtheorem{lemma}[theorem]{Lemma}

\theoremstyle{definition}
\newtheorem{definition}[theorem]{Definition}

\newtheorem{corollary}{Corollary}[section]

\newtheorem{remark}[theorem]{Remark}
\newtheorem{fact}{Fact}[section]

\numberwithin{equation}{section}

\newcommand{\E}{\mathbf{E}}
\newcommand{\X}{X^{(r)}_n}
\renewcommand{\P}{\mathbf{P}}
\newcommand{\R}{\mathbf{R}}
\newcommand{\N}{\mathbf{N}}
\newcommand{\FF}{{\cal E}}

\DeclareMathOperator{\Var}{Var}

\def\ind{{\mathbb I}}

\newcommand{\cov}{\mathrm{Cov}}
\renewcommand{\P}{\mathbb{P}}
\newcommand{\pr}{\mathbb{P}}
\newcommand{\cP}{{\mathcal P}}

\newcommand{\V}{\mathcal{V}}

\newcommand{\A}{\mathcal{A}}

\newcommand{\Subgraphs}{{\cal K}}

\newcommand{\covers}{{\mathcal C}}
\newcommand{\cover}{{\mathbb C}}

\newcommand{\klik}{K}
\newcommand{\kli}{\mathbf{K}}
\newcommand{\emp}{N}

\newcommand{\indeksy}{{\bf\Gamma}}

\newcommand{\Pra}[1]{\pr\left(#1\right)}
\newcommand{\Gnmp}{{\mathcal G}(n,    m,    p)}

\newcommand{\Gnp}{{\mathcal G}(n,  {\bf p})}

\newcommand{\hatp}{\mathbf{p}}

\newcommand{\ble}{\Psi}

\newcommand{\Pone}{{\bf P1}}
\newcommand{\Ptwo}{{\bf P2}}

\newcommand{\II}{{\bf I}}
\newcommand{\JJ}{{\bf J}}
\newcommand{\LL}{{\bf L}}

\newcommand{\qq}{{\bf P}}
\newcommand{\pp}{{\bf p}}
\newcommand{\calP}{{\cal P}}

\allowdisplaybreaks

\date{\vspace{-5ex}}

\title{
\huge
Normal approximation for subgraph counts\\ in random intersection graphs: the case of cliques
} 

\author{ 
Katarzyna Rybarczyk\thanks{Faculty of Mathematics and Computer Science,      Adam Mickiewicz University,      Pozna\'n,      Poland.
e-mail: {\tt kryba@amu.edu.pl}. ORCID: 0000-0002-3998-6482}
\and
Grzegorz Serafin\thanks{Faculty of Pure and Applied Mathematics,       Wroc{\l}aw University of Science and Technology,      Ul. Wybrze\.ze Wyspia\'nskiego 27,       Wroc{\l}aw,      Poland.
e-mail: {\tt grzegorz.serafin@pwr.edu.pl}. ORCID: 0000-0003-3170-6850}}

\begin{document}
\maketitle

\begin{abstract} 
Let $X^{(r)}_n$ denote the number of copies of a complete graph $K_r$ in the random intersection graph model on $n$ vertices. We derive upper bounds on the distance between the standardized version of $X^{(r)}_n$ and the standard normal distribution,  measured in both the Kolmogorov and Wasserstein metrics. We complement these results with necessary and sufficient conditions for asymptotic normality. As a byproduct,  we provide precise estimates for the variance of $X^{(r)}_n$. In the process,  we develop new general techniques for analysing so--called clique covers,  which play a crucial role in the study of subgraph probabilities in random intersection graphs.
\end{abstract}
\noindent\emph{Keywords}: random intersection graph,     subgraph count,     normal approximation,     convergence rates.\\
\noindent\emph{MSC2020 subject classification}: Primary 05C80,    60F05; Secondary 05C82.

\section{Introduction}

The random intersection graph model,  denoted by $\Gnmp$,  was introduced in \cite{1999KarSchSin}. In some respects it resembles the classical $\Gnp$  model,  in which edges appear independently with probability $\pp$,  chosen to equal the edge probability in $\Gnmp$. The most interesting features,  however,  are precisely those that distinguish the two models. In random intersection graphs,  in contrast to $\Gnp$,  edges are not generated independently,  which in particular affects the probabilities of small subgraphs. Consequently,  the seminal article on random intersection graphs \cite{1999KarSchSin} focused on threshold functions for the property of containing a copy of a given graph $H$ and demonstrated how markedly the two models differ. Subsequent studies have confirmed that the analysis of small subgraphs is central to understanding both the similarities and the differences between $\Gnmp$ and $\Gnp$. In particular,  discrepancies in the distribution of the number of triangles account for significant differences between the two models \cite{FillScheinSinger2000Equivalence, KimLeeNa2018Equivalence, Rybarczyk2011Equivalence, BrennanBresterNagaraj2020Equivalence}.

In this article, we focus on the subgraph count in $\Gnmp$. It is one of the most fundamental statistics in random graph theory and has been studied extensively for several decades. In the case of $\Gnp$, asymptotic normality conditions were established in \cite{Ruc} and complemented by convergence rates in the Wasserstein distance in \cite{BKR}. An analogous result for the more natural Kolmogorov distance was first derived in \cite{R} for triangles, and later extended to arbitrary graphs in \cite{PSbej}. Recently, many results dealing with other random graph models have appeared; see, for example, \cite{BCJ, FLSZ, KR, LP1, LP2, Maugis, MNS, PSstoch}.

In the context of the asymptotic distribution of the number of small subgraphs in $\Gnmp$, research has focused mainly on  convergence to the Poisson distribution \cite{2010RybSta, 2017RybSta}. For certain related random intersection graph models and in various parameter regimes, normal approximations for the number of triangles \cite{Bloznelis2018} and for the number of 2--connected subgraphs \cite{Bloznelis2023} have been established.
However, none of the techniques developed in these studies can be adapted to derive conditions for the asymptotic normality of the number of small cliques in $\Gnmp$. The only known result concerning asymptotic normality for larger subgraphs in $\Gnmp$ applies solely to triangles \cite{Dong2023}. However, the distance bounds obtained there are far from optimal. In particular, they do not imply convergence to the normal distribution for $m \ge n^{2}$.
On the other hand, results in \cite{FillScheinSinger2000Equivalence, KimLeeNa2018Equivalence, Rybarczyk2011Equivalence, BrennanBresterNagaraj2020Equivalence} show that the larger the value of $m$, the more similar the models $\Gnmp$ and $\Gnp$ become. Therefore, convergence to the normal distribution should be expected in this parameter regime. Finally, we note that the authors of \cite{Dong2023} assume that the edge probability tends to zero.

Recently, the authors of this paper have developed a new method that considerably simplifies the analysis of the normal approximation for the number of small subgraphs in random intersection graphs. In \cite{Edges}, an application of this method was demonstrated for the number of edges and a wide range of parameters was considered. 
In the present paper, we develop new combinatorial tools aimed at extending this approach from \cite{Edges} to the more general setting of arbitrary cliques. Our results cover the full range of parameters typically considered in the context of $\Gnmp$,  including the cases where the edge probability is constant or tends to 1. This is achieved,  in part,  through the probabilistic techniques developed in \cite{Edges}. Nevertheless, considering larger graphs significantly increases the complexity of the problem, which required a detailed analysis of subgraphs probabilities. We develop techniques for analysing so-called clique covers. These methods allow for the comparison of occurrence probabilities of different subgraphs, which proves crucial for studying the central moments of certain random variables -- a necessary step in establishing convergence to the normal distribution. We believe that these results may be of independent interest, as the techniques introduced here can be applied to study the probabilities of other subgraphs in random intersection graphs. Furthermore, the general framework developed in this work may also prove valuable for estimating central moments in other models of random graphs.

In the article we   use the following asymptotic notation:
\begin{itemize}
	\item $a_n=o(b_n) \Leftrightarrow $ $a_n/b_n\to 0$  as $n\to \infty$,   
	\item $a_n=O(b_n) \Leftrightarrow $ $|a_n|\le Cb_n$  for some $C>0$ and all  $n\in \N$,  
	\item $a_n\lesssim b_n \text{ or }b_n\gtrsim a_n\Leftrightarrow   a_n=O(b_n)$ and $a_n,    b_n>0$,   
	\item $a_n\asymp b_n \Leftrightarrow   a_n\lesssim b_n$ and $ b_n\lesssim a_n$.
\end{itemize}  
All constants implicit in the asymptotic notation are functions of the size of the considered graph.

We also use standard notation from graph theory.  For a graph $G=(V(G),    E(G))$ we denote by $V(G)$ its vertex set and by $E(G)$ its edge set. 
$G'\subseteq G$ is a subgraph of $G$ if $V(G')\subseteq V(G)$ and $E(G')\subseteq E(G)$. For $V'\subseteq V(G)$ and $E'=\{e\in E(G): e\subseteq V'\}$,    $G[V']=(V', E')$ is the subgraph of $G$ induced on the vertex set $V'$. 
For $E'\in E(G)$ and $V'=\{v:\exists_{e\in E'}\,    v\in e\}$,    $G[E']=(V',    E')$ is the subgraph of $G$ induced on the edge set $E'$,    and    $G-E'=(V(G),       E(G)\setminus E')$.
Furthermore, 
$G\cup G'=(V(G)\cup V(G'),    E(G)\cup E(G'))$ and 
$G\cap G'=(V(G)\cap V(G'),    E(G)\cap E(G'))$,    $\deg_{G}(v)=|\{e\in E(G): v\in e\}|$ stands for the degree of a vertex $v$ in $G$.  We write $\cong$ for the graph isomorphism.

We denote by $\klik_i$ a clique of size $i$. In our further considerations,   any  $\kli$ and any $\kli_{x}$,   with an index $x$,   will be a graph isomorphic to $\klik_r$,  for fixed $r\ge2$. In addition,    we  denote by $\emp_i$  an empty graph on $i$ vertices. In particular,    we  allow a graph to be empty with no vertices and no edges, and denote it by $\emp_0$ or $\klik_0$.

\section{Main results and discussion}
\label{sec:mainresults}

Let us first introduce  the random intersection graph model  $\Gnmp$. We are  given  a set of $n\in\N$ vertices $\V$,  $|\V|=n$,  and a set of attributes $\A=\{a_1,     \ldots,     a_m\}$,  $m=m(n)\in\N$. Any  attribute $a_i$,      $1\le i\le m$,     is assigned to any vertex $v\in\V$,     (we say that vertex $v$ {\it chose} attribute $a_i$) independently with probability $p=p(n)\in(0,  1)$. Then,      two vertices $v$ and $v'$ are connected by an edge in $\Gnmp$ whenever they chose at least one common attribute. The existence probability of any edge will be denoted by $\hatp$ and is given by
\begin{align}\label{eq:pp}
	\hatp=1-(1-p^2)^{m}\asymp mp^2\wedge 1, 
\end{align}
where the estimate is valid for  $p<c$ for any fixed $c\in (0,  1)$.

We  study the random variable 
$\X$,  $r\geq2$,  that counts cliques on $r$ vertices in $\Gnmp$.
Let  $\Subgraphs$ be the family of all cliques of size $r$ in the complete graph with the vertex set $\V$. For $\kli\in\Subgraphs$,    let $\ind_\kli$ be the indicator random variable of the event $\{\kli\subseteq \Gnmp \}$ and define
\begin{equation}
	\qq=\E [\ind_{\kli}]=\Pra{\kli\subseteq \Gnmp},  \quad \kli\in \Subgraphs.
\end{equation}
Then
$$
\X=\sum_{\kli\in \Subgraphs}\ind_{\kli}, \ \ \ \text{ and }\ \ \ 
\E [\X]=\sum_{\kli\in \Subgraphs}\E [\ind_{\kli}] =\binom{n}{r}\qq.
$$
Note that estimates of $\qq$ in terms of  $m$,  $p$,  and $\pp$ were derived in \cite{1999KarSchSin}, and  are recalled below  in Lemma \ref{Lem:PiOptimal}. Our main goal is to examine the asymptotic normality of the standardized random variable 
$$\widetilde \X:=\frac{\X-\E[\X]}{\sqrt{\Var[\X]}}, $$
and to establish bounds on its convergence rates to a normally distributed random variable $\mathcal N\sim \mathcal N(0,     1)$. 
An important role in this context plays the expression
\begin{equation}\label{Eq:Def:Ble}
	\ble := (n^r\qq) \wedge (n^2\pp), 
\end{equation}
which has the following probabilistic interpretation 
$$\ble \asymp \binom{n}{r}\qq\wedge\binom{n}{2}\pp=\E[\X]\wedge \E[X_n^{(2)}] .$$
Furthermore,  $\ble$ is analogous to the expression appearing in the context of small subgraphs of $\Gnp$   (see for example $\Phi$ in \cite{KsiazkaJLR}). However,  contrary to the  case of $\Gnp$,   $\ble$ is not comparable to $\min_{2\le i\le r}\{\E[X_n^{(i)}]\}$.

Below,  we present our first result,  i.e. the estimates of  the variance of $\X$.

\begin{theorem}\label{Tw:Wariancja}
	Let $\X$ be the number of cliques of size $r$ in $\Gnmp$ and $mp^3\le c_r$ for some constant $c_r$ depending on $r$ only. Then
	\begin{align}\label{eq:Var}
		\Var [\X] &\asymp
		\frac{(n^r\qq)^2}{\ble}(1-\pp)+n^{2r-1}\left(\frac{\qq}{\pp}\right)^2mp^3(1-\pp)^2.
	\end{align}
\end{theorem}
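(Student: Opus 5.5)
We will expand the variance as a sum of covariances over ordered pairs of cliques, grouped by the size of their intersection:
\begin{align}
\Var[\X]=\sum_{i=0}^{r}\sum_{\substack{\kli_1,\kli_2\in\Subgraphs\\ |V(\kli_1)\cap V(\kli_2)|=i}}\cov(\ind_{\kli_1},\ind_{\kli_2}) .
\end{align}
For $i=0$ the two indicators are independent: they depend on disjoint vertex sets, and the attribute choices of different vertices are independent. So these terms vanish. For each $i\ge1$ there are $\asymp n^{2r-i}$ pairs. It therefore suffices to estimate $\cov_i:=\P(\kli_1\cup\kli_2\subseteq\Gnmp)-\qq^2$ for each $i$ and show that the dominant contributions are exactly the two terms in \eqref{eq:Var}.

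\emph{The case $i=1$.} Here $\kli_1\cup\kli_2$ is two cliques sharing one vertex $v$. We will condition on the attribute set $S_v$ of $v$. The two cliques are conditionally independent given $S_v$, so
\begin{align}
\cov_1=\Var\big(\P(\kli\subseteq\Gnmp\mid S_v)\big).
\end{align}
We will show this is $\asymp (\qq/\pp)^2 mp^3(1-\pp)^2$, using the estimates for $\qq$ from Lemma~\ref{Lem:PiOptimal}. Heuristically, the conditional probability of $\kli$ given $S_v$ behaves like $\qq$ times a factor driven by $|S_v|-mp$. Its fluctuation around its mean should be of relative order $\sqrt{mp}\,p/\pp$ per incident edge. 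Squaring gives $(\qq/\pp)^2mp^3$, and the factor $(1-\pp)^2$ appears because edge events become deterministic when $\pp\to1$. This yields the second term $n^{2r-1}(\qq/\pp)^2mp^3(1-\pp)^2$.

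\emph{The case $i\ge2$.} For the overlapping terms we will prove the two-sided bound
\begin{align}
\sum_{i=2}^r n^{2r-i}\cov_i\asymp \frac{(n^r\qq)^2}{\ble}(1-\pp),
\end{align}
plus terms dominated by the $i=1$ contribution. The lower bound comes from two extreme configurations:
\begin{itemize}
\item $i=r$, which gives $n^r\qq(1-\qq)$;
\item $i=2$, which gives $n^{2r-2}\big(\P(\kli_1\cup\kli_2)-\qq^2\big)\gtrsim n^{2r-2}\qq^2(1-\pp)/\pp$.
\end{itemize}
The second bound holds because, heuristically, $\P(\kli_1\cup\kli_2)\ge\qq^2/\pp$: sharing one edge removes one edge event. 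Taking the maximum of these two gives $(n^r\qq)^2/\ble$ up to the factor $(1-\pp)$, since $(n^r\qq)^2/(n^r\qq)=n^r\qq$ and $(n^r\qq)^2/(n^2\pp)=n^{2r-2}\qq^2/\pp$. We will also check that $1-\qq\gtrsim1-\pp$.

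The upper bound is the main obstacle. We must show that for every $2\le i\le r$,
\begin{align}
n^{2r-i}\P(\kli_1\cup\kli_2\subseteq\Gnmp)\lesssim \frac{(n^r\qq)^2}{\ble}+n^{2r-1}\Big(\frac{\qq}{\pp}\Big)^2mp^3.
\end{align}
We will also need to extract the factor $(1-\pp)$ from the covariance rather than from the probability alone. To bound $\P(\kli_1\cup\kli_2)$ we will use clique covers: the union is covered by cliques each generated by a single attribute, and we sum $\prod(mp^{|C|})$ over covers. The key combinatorial step is a comparison lemma. It says that every cover of $\kli_1\cup\kli_2$ with $|V\cap|=i$ can be transformed into a cover of a configuration with $i=2$ or $i=r$, or of a one-vertex-overlap configuration, without decreasing the weight by more than the corresponding power of $n$. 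This amounts to an interpolation or log-convexity in $i$ of $n^{-i}\P(\cdot)$.

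We will first try to prove this convexity cover-by-cover. We would split each attribute clique of the cover into its parts inside $\kli_1$ and inside $\kli_2$, using $mp^3\le c_r$ to control attributes covering many vertices of the intersection. For the factor $(1-\pp)$ when $\pp\to1$, we will write $\cov_i=\P(\kli_1\cup\kli_2)-\qq^2$ and bound it by the probability that some edge of $\kli_1\setminus\kli_2$ is absent, combined with conditioning arguments as in \cite{Edges}.
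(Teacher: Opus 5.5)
Your skeleton is sound: the decomposition by overlap size, the lower bound from the $i=r$ and $i=2$ terms, and the $(1-\pp)$ factor from above, which follows immediately from $\cov(\ind_{\kli_1},\ind_{\kli_2})\le\qq(1-\qq)\lesssim 1-\pp$. But the two estimates that actually carry the theorem are only asserted, so there is a genuine gap.

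\textbf{The upper bound for $i\ge 2$.} The main gap is
$n^{2r-i}\pi(\kli_1\oplus_i\kli_2)\lesssim(n^r\qq)^2/\ble$, which the paper gets from Lemma~\ref{Lem:Odrywanie}. The easy cases are $mp\le1$ (Corollary~\ref{Cor:Glueing}), $mp^2\ge1$ (trivial) and $i\in\{2,r\}$. So the difficulty is $2<i<r$ with $1\le mp$, $mp^2\le 1$. There $\pi(\kli_1\oplus_i\kli_2)$ can be dominated by covers that do not restrict to dominant covers of the two cliques: for instance, one attribute over the whole union when $p$ is just above $m^{-\frac{r+1}{2r}}$, even though then $\qq\asymp\pp^{\binom{r}{2}}$.

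Your ``log-convexity in $i$'' is a reformulation of this claim, not a proof. The mechanism you suggest, splitting each attribute set into its traces on $\kli_1$ and $\kli_2$, only gives Lemma~\ref{Lem:Division}(iii): $\pi(\kli_1,\cover[V_1])\,\pi(\kli_2,\cover[V_2])/\pi(\klik_i,\cover[V_\cap])$, with $V_\cap=V(\kli_1\cap\kli_2)$. Bounding the numerators by $\qq$ then requires $n^i\pi(\klik_i,\cover[V_\cap])\gtrsim\ble$, which fails for general covers. For example, if one attribute covers $V_\cap$, the denominator is $n^imp^i=n^2mp^2(np)^{i-2}$. Take $r=4$, $i=3$, $p=m^{-0.55}$, $n=m^{1/2}$. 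Then this denominator is $m^{0.85}$ while $\ble=n^2\pp\asymp m^{0.9}$. The bound so obtained, $m^{1.95}$, exceeds both terms of \eqref{eq:Var} ($m^{1.9}$ and $m^{1.85}$).

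What is missing is the content of Section~\ref{Sec:CCs:Basic}:
\begin{enumerate}
\item Reduce to simple covers.
\item Use the gluing Lemmas~\ref{Lem:Glueing} and~\ref{Lem:GlueingBis}, through Lemma~\ref{Lem:Odrywanie:Pre}, to assume one of two things. Either every set $C$ satisfies $p\ge m^{-\frac{|C|+1}{2|C|}}$, so Lemma~\ref{Lem:Splitting} reduces to $2$-set covers and Lemma~\ref{Lem:Division}(iii) closes with $\ble_j$. Or a single set $C$ contains all of $V_\cap$.
\item In the latter case, Lemma~\ref{Lem:Transition} uses $n^jmp^j\ge n^2mp^2$ when $np\ge1$. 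When $np\le1$, it charges $C$ to one clique and replaces its trace on the other by $2$-sets, gaining $(np)^{|C|-|C_{\bar H}|}\le1$.
\end{enumerate}
The hypothesis on $mp^3$ plays no role here.

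\textbf{The $i=1$ term.} Your identity $\cov_1=\Var(g(|S_v|))$, with $g(s)=\P(\kli\subseteq\Gnmp\mid |S_v|=s)$, is correct. It would be a genuinely different route from the paper's, which writes $\pi(\kli_1\oplus_1\kli_2)-\qq^2=\pi(\kli_1\oplus_1\kli_2\oplus_0\emp_1)-\pi(\kli_1\oplus_0\kli_2)$. The paper then cancels all decomposable clique covers exactly via Lemma~\ref{Lem:DecompositionComparing}, and bounds the indecomposable ones by Lemmas~\ref{Lem:InDecomposable2}, \ref{Lem:InDecomposable2bis} and~\ref{Lem:InDecomposable}.

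You only give a heuristic for the size of $\Var(g(|S_v|))$, and this is where the work lies.
\begin{itemize}
\item For $1\le mp$, $mp^2\le1$, one has $\E[g(|S_v|)^2]=\pi(\kli_1\oplus_1\kli_2)\asymp\qq^2$. So you must exhibit a cancellation of relative order $1/(mp)$. That needs an increment bound $g(s+1)-g(s)\lesssim\qq/(mp)$ for $s$ near $mp$, uniform over the regimes where $\qq$ is governed by $mp^r$, by $\pp^{\binom{r}{2}}$, or by mixed covers. It also needs tail control and a matching lower bound.
\item For $mp^2\ge1$, the target $mp^3e^{-2mp^2}$ is of second order in $1-\pp\asymp e^{-mp^2}$ and carries the extra factor $mp^3$. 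So $1-g(s)$ must be expanded to second order, with errors controlled at the scale $mp^3e^{-2mp^2}$. This is where the bound on $mp^3$ enters.
\end{itemize}
None of this is carried out.

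On the positive side, your heuristic $\pi(\kli_1\oplus_2\kli_2)\ge\qq^2/\pp$ becomes rigorous with the same device. Condition on the attribute sets of the vertices of $V_\cap$ and apply Cauchy--Schwarz. This gives $\qq^2\le\pi(\klik_i)\,\pi(\kli_1\oplus_i\kli_2)$, hence $\pi(\kli_1\oplus_i\kli_2)-\qq^2\ge(1-\pp)\,\pi(\kli_1\oplus_i\kli_2)$. Together with $\cov\le\qq(1-\qq)$, this yields \eqref{eq:KiK3} more simply than the paper does.
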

The first term on the right-hand side of \eqref{eq:Var} is analogous to the estimate of the variance in $\Gnp$.  However,  the second term does not appear in the respective formula for $\Gnp$. Its presence stems from the fact that in $\Gnmp$ edges with a common vertex are not independent,  and therefore two graphs that share a vertex are not generated independently.

Quantitative central limit theorems are usually described by means of the Kolmogorov distance or the Wasserstein one.  For two random variables $X$ and $Y$ we define the Kolmogorov distance $d_K(X,     Y)$ between them as
\begin{align*}
	d_{K}(X,     Y):=\sup_{t\in \R}\big|\P\left(X\le t\right)-\P(Y\le t)\big|,     
\end{align*}
while the Wasserstein distance $d_W(X,     Y)$ is given by
\begin{align*}
	d_{W}(X,     Y):=\sup_{\text{Lip}(h)\le1}\big|\E[h(X)-h(Y)]\big|,     
\end{align*}
where $\text{Lip}(h)$ stands for the   Lipschitz constant of a function $h:\R\rightarrow\R$. Since we  work with both of the distances  simultaneously,      we also denote
$$d_{K/W}(X,     Y):=\max\{d_{K}(X,     Y),      d_{W}(X,     Y)\}.$$
The main result of this article is as follows.
\begin{theorem}\label{Thm:Main} 
	Assume $mp^3\le {c_r}$ with $c_r$ from Theorem \ref{Tw:Wariancja}. Then
	$$\widetilde \X\stackrel{\mathcal D}{\longrightarrow}\mathcal N \ \Leftrightarrow\ (1-\pp)\ble\rightarrow\infty, $$
	where $\mathcal N\sim \mathcal N(0,   1)$. Furthermore,  	there exist  constants $m_0,   n_0\in\N$ and $ C,  C'>0$  depending on $r$ only such that for $n\ge n_0,  m\ge m_0$ we have 
	\begin{align}\label{eq:main}
		d_{K/W}\left(\widetilde{\X},   \mathcal N\right)&\le C \left(\frac{1}{(1-\pp)\ble}+\frac{1}{n}+\frac1{nmp}\right)^{1/2}, 
	\end{align}
	and 
	\begin{align}\label{eq:main2}
		d_{K/W}\left(\widetilde{\X},   \mathcal N\right)
		\le C' \left(\frac1{(1-\pp)\ble}\right)^{1/4}.
	\end{align}
\end{theorem}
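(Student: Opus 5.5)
We use the method of \cite{Edges}: condition on the attribute structure and apply a Stein-method normal approximation in both Kolmogorov and Wasserstein distance. Write $\X$ as a function of the independent bipartite incidences between $\V$ and $\A$, and split it into a part driven by the vertex sets of the attributes and a fluctuation part. Two standard bounds are available. The first is a dependency-graph or size-bias bound built on the indicators $\ind_\kli$, with $\kli,\kli'$ dependent when they share at least two vertices, which yields the terms $1/((1-\pp)\ble)$. The second is a second-order Poincaré (Malliavin--Stein) bound for functionals of independent Bernoulli variables, indexed by the pairs (vertex, attribute), which captures the weak dependence between cliques sharing a single vertex, the source of the second term in \eqref{eq:Var}. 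Applying the latter, $d_{K/W}(\widetilde\X,\mathcal N)$ is bounded by a square root of sums of fourth-type moments of the add-one costs $D_{(v,a)}\X$ and $D^2_{(v,a),(v',a')}\X$, all divided by $\Var[\X]^2$.

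\textbf{Step 1 (the lower bound on the variance).} Theorem \ref{Tw:Wariancja} gives the denominator: $\Var[\X]$ is the sum of the $\Gnp$-like term $(n^r\qq)^2(1-\pp)/\ble$ and the one-vertex-overlap term $n^{2r-1}(\qq/\pp)^2mp^3(1-\pp)^2$.

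\textbf{Step 2 (moments of the difference operators).} The quantities $\E[(D_{(v,a)}\X)^4]$ and $\E[(D_{(v,a)}\X)^2(D_{(v',a')}\X)^2]$, together with the mixed terms involving $D^2$, expand into sums over unions of several copies of $K_r$ glued along shared vertices, together with attributes that cover edges. Their probabilities are governed by clique covers. The key combinatorial input, and the main obstacle, is a comparison lemma: for a union $F=\bigcup_j \kli_j$ of cliques, $\Pra{F\subseteq\Gnmp}$ should be at most a product of $\qq$ and $\pp$ factors according to a suitable decomposition of $F$, uniformly over covers. Here $mp^3\le c_r$ is needed so that attributes shared by three or more vertices cost a factor $mp^3$, which stays bounded. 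I would prove this by induction on the number of cliques: add one clique at a time, classify how its edges are covered (by new attributes, or by attributes already used), and show that each case costs at most $\qq/\pp^{e}$ times a factor in $\{1,\,n^{-1},\,(nmp)^{-1}\}$ per shared vertex. After normalisation by Step 1, this produces exactly the error terms $1/((1-\pp)\ble)$, $1/n$ and $1/(nmp)$ in \eqref{eq:main}. Factors of $(1-\pp)$ come from the central-moment structure: differences of covariances vanish when $\pp\to1$, and I would track them by writing $\ind_\kli-\qq$ in terms of missing edges.

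\textbf{Step 3 (the bound \eqref{eq:main2} and the equivalence).} The bound \eqref{eq:main2} follows from \eqref{eq:main} by a regime split. If $1/n+1/(nmp)\le ((1-\pp)\ble)^{-1/2}$, then \eqref{eq:main} already gives the claim. Otherwise $\ble\lesssim n^2\pp$ and $\ble\le n^r\qq$ force a comparison; in particular $\ble\le n^2$ implies $1/n\le \ble^{-1/2}$, and a similar estimate with $mp$ handles the $1/(nmp)$ term. The remaining cases, where the vertex-overlap variance dominates, are treated directly: there $\X$ is close to a sum of independent per-vertex contributions, and a Berry--Esseen bound applies. Sufficiency in the equivalence then follows from \eqref{eq:main2}. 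For necessity, suppose $(1-\pp)\ble$ stays bounded along a subsequence. If $\ble\le n^r\qq$ is small, then either $\X$ or the number of missing edges is asymptotically lattice-like with bounded mean. In that case $\widetilde\X$ has atoms of non-vanishing probability, for instance $\Pra{\X=0}$ or $\Pra{\X=\binom nr}$ bounded away from $0$ via a Poisson-type argument, which is incompatible with convergence to the continuous distribution $\mathcal N$.
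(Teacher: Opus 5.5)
\textbf{Gap in Step~2.} Your Step~2 carries the whole theorem; its counterpart in the paper is Lemma~\ref{Lem:IntegralUpperGraphs}. The plan you give for it does not work.

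The method of \cite{Edges}, in the form used here (Lemma~\ref{Lemma:Main}), reduces everything to the alternating sums $\sum_{\II\subseteq\indeksy}(-1)^{|\II|}\qq^{|\indeksy|-|\II|}\pi(G_{t,\II})=\E\prod_{ab\in\indeksy}(\ind_{\kli_{ab}}-\qq)$, i.e.\ mixed fourth central moments. The required smallness comes from cancellation in these sums, not from upper bounds on $\Pra{F\subseteq\Gnmp}$.

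For cliques meeting in one vertex there is no gain at the level of probabilities. For $mp\ge1$, $mp^2\le1$, FKG together with Lemma~\ref{Lem:Division}(i) gives $\pi(\kli_1\oplus_1\kli_2)\asymp\qq^2$. The factor $1/(mp)$ appears only in $\pi(\kli_1\oplus_1\kli_2)-\qq^2$ (Lemma~\ref{Lem:Var:OneIntersection}). Concretely, $n^{4r-3}\pi(G_{1,0,1})\asymp(n^r\qq)^4/n^3$. Yet for $r=3$, $p=n^{-1}$, $m=n^{3/2}$ (so $\ble\asymp nmp\asymp n^{3/2}$), the target $(\Var[\X])^2\bigl(\frac{1}{(1-\pp)\ble}+\frac1n+\frac1{nmp}\bigr)$ is only of order $(n^r\qq)^4/n^4$.

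The paper obtains the cancellation as follows. It sorts the clique covers of the padded graphs $H_{t,\II}$ according to which pieces of a partition they split off. Via Lemma~\ref{Lem:DecompositionComparing} and Fact~\ref{Fact:Isomorphism} (and a separate partition for $G_{3,1,j}$), every class except the indecomposable covers contributes identically to all terms of the alternating sum and cancels. The indecomposable covers are then bounded by Lemma~\ref{Lem:InDecomposable} (gain $(mp)^{-\lceil|\Lambda|/2\rceil}p^{-\delta}$) or Lemma~\ref{Lem:InDecomposable2bis}. For $mp^2\ge1$, the factors of $1-\pp$ come from Lemma~\ref{Lem:DuzeMp2} and the explicit algebra of Lemma~\ref{lem:(1-p)^2}. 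Your remark about expressing $\ind_\kli-\qq$ through missing edges supplies none of this.

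Moreover, the $\Gnp$-style accounting ``$\qq/\pp^{e}$ per added clique'' is false in $\Gnmp$. Take $r=4$ and $m^{-5/8}\le p\le m^{-8/13-\varepsilon}$. Then $\qq\asymp\pp^6$ by Lemma~\ref{Lem:PiOptimal}, yet $\pi(\kli_1\oplus_3\kli_2)\ge\pi(\kli_1\oplus_3\kli_2,\{V(\kli_1\oplus_3\kli_2)\})\asymp mp^5$. This exceeds $\qq^2/\pp^3\asymp(mp^2)^9$ by the factor $m^{-8}p^{-13}\to\infty$ (cf.\ Remark~\ref{rem:dominatingccs}). This is why the overlapping cases need Lemma~\ref{Lem:Odrywanie}, built on Lemmas~\ref{Lem:GlueingBis} and~\ref{Lem:Transition}, where detaching a clique gains only $1/\ble$ instead of the $\Gnp$-like $1/\ble_i$.

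\textbf{The reduction and Step~3.} The reduction itself is also not in place. Your dependency-graph bound treats $\ind_\kli$ and $\ind_{\kli'}$ as independent when $\kli,\kli'$ share at most one vertex. This is false in $\Gnmp$: such cliques are dependent through the attributes of the common vertex, which is exactly what produces the second term in \eqref{eq:Var}. The second-order Poincar\'e bound over the $nm$ incidences is never specified, so nothing shows it yields exactly $\frac1{(1-\pp)\ble}+\frac1n+\frac1{nmp}$. If you want to follow \cite{Edges}, invoke Lemma~\ref{Lemma:Main} and then bound the alternating sums.

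The rest is fine up to details. In Step~3 no Berry--Esseen branch is needed. Since $\ble\le n^2\pp\le n^2$ and $(nmp)^2\ge n^2mp^2\gtrsim n^2\pp\ge\ble$, one always has $\frac1n+\frac1{nmp}\lesssim\ble^{-1/2}$. For $(1-\pp)\ble\le1$ one simply uses $d_{K/W}\le2$; this is Lemma~\ref{lem:1/2->1/4}.

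Your necessity argument via atoms is a valid alternative to the paper's one-sided bound $\widetilde\X\ge-c\sqrt{\ble/(1-\pp)}$ (Lemma~\ref{lem:necessary}, applied to $\X$ and to $\binom nr-\X$). It works provided the atoms come from the FKG (Harris) inequality, namely $\Pra{\X=0}\ge(1-\qq)^{\binom nr}\vee(1-\pp)^{\binom n2}$ and $\Pra{\X=\binom nr}\ge\pp^{\binom n2}$, rather than from an unspecified Poisson-type argument.
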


In the $\Gnp$ model,  the bound \eqref{eq:main} takes the form $\big((1-\pp)\ble\big)^{-1/2}$ \cite{PSbej,  R}. The complexity of the $\Gnmp$ model makes it necessary to include two additional terms under the square root. However,  the term $n^{-1/2}$ is not surprising in this context,  as it matches the bound from the classical Berry-Esseen theorem. The term $nmp$,  in turn,  is closely related to the structure of the $\Gnmp$ model and represents the average number of links between vertices and attributes,  which is assumed to tend to infinity. Let us also note that in \cite{Edges},  where we considered only the case $r=2$,  the term $1/m$ appears instead of $1/(nmp)$ in \eqref{eq:main},  which is worse whenever $mp>1$. This inconsistency follows from the fact that in \cite{Edges} the calculations were simpler and yielded the minimum of these two terms,  whereas we bounded it by $1/m$ for the sake of a simpler expression.

The organization of the remaining part of article is as follows. In Section~\ref{Sec:Preliminaries}, we begin by presenting Lemma~\ref{Lemma:Main}, a result taken from \cite{Edges}. It relates $d_{K/W}\left(\widetilde{\X},   \mathcal N\right)$ to a formula involving the probabilities that specific graphs appear as subgraphs of $\Gnmp$. It suffices to compute these  probabilities in order to obtain an upper bound for the distance to the normal distribution, and thus to prove Theorem~\ref{Thm:Main}.
Section~\ref{Sec:CCs} is devoted to the development of techniques that yield sharp estimates for the probabilities of the graphs considered in Lemma~\ref{Lemma:Main}. In the subsequent two sections, we prove Theorem~\ref{Tw:Wariancja} and Theorem~\ref{Thm:Main}, respectively. Both proofs rely on the results established in Section~\ref{Sec:CCs}.

\section{Preliminaries}\label{Sec:Preliminaries}

\subsection{Graphs $G_{t,i,j}$}\label{Sec:SubgraphsIntegrals}

\begin{figure}
	\begin{center}
		\includegraphics[width=15cm]{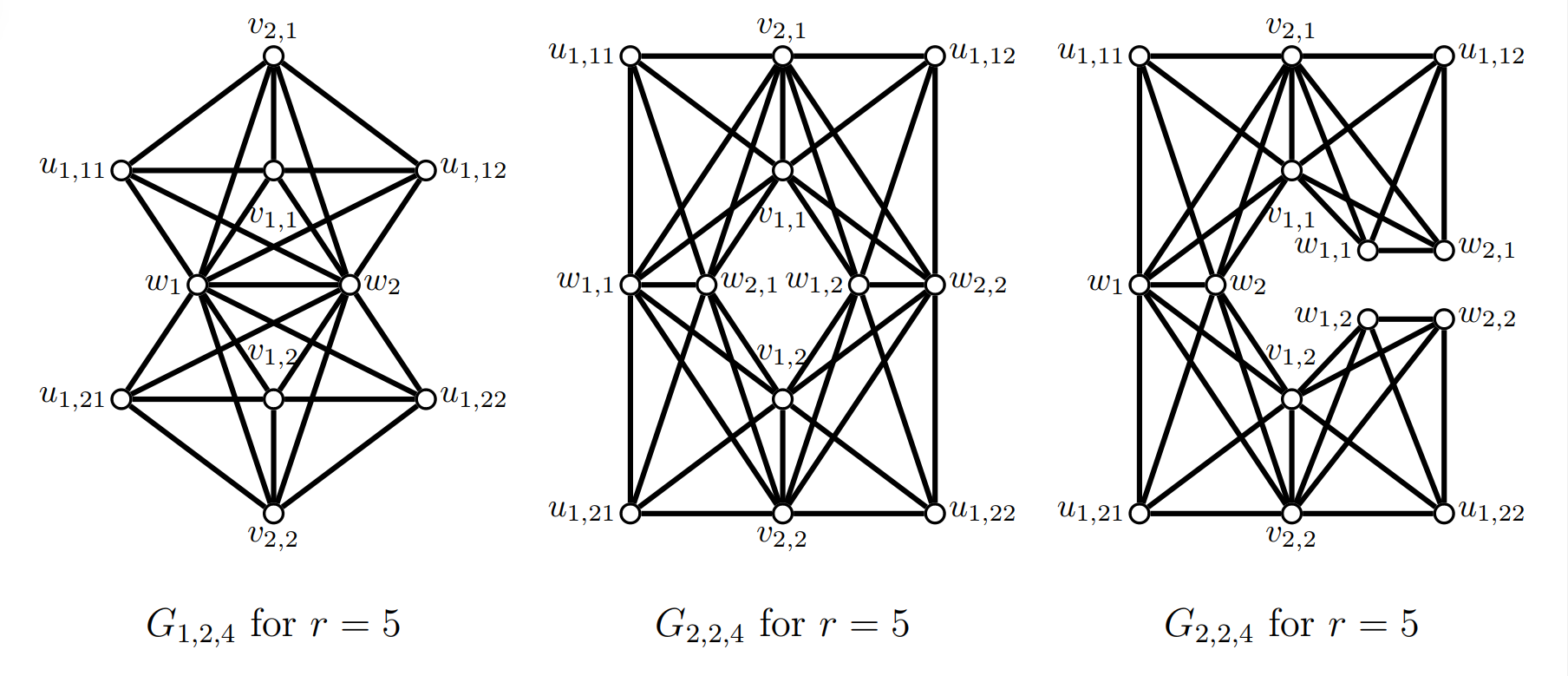}
	\end{center}
	\caption{\label{Fig:GtExamples}  Examples of $G_{t,  i,  j}$ for $r=5$,   $i=2$,   and $j=4$.}
\end{figure}

In this section we define graphs $G_{t,i,j}$ introduced in \cite{Edges}. In general, for each $t,  i,  j$, $G_{t,i,j}$ is a union of four cliques 
$\kli_{ab}$,  $ab\in \indeksy=\{11,  12,  21,  22\}$ of size $r$, that is
\begin{equation}\label{Eq:DefGt}
	G_{t,  i,  j}=\bigcup_{ab\in \indeksy}\kli_{ab}, 
\end{equation}
where the  cliques $\kli_{ab}$,  $ab\in \indeksy$,   intersect in a special manner depending on $t,  i,  j$,  which is  described below.
In  Figure~\ref{Fig:GraphRepresentations} one may find the geometrical idea behind the definition,   and in Figure~\ref{Fig:GtExamples} we present some particular cases.

\begin{definition}

	Let $t=1, 2, 3$, $0\le i < j \le r$, and $a,b\in \{1,2\}$ . We define graphs $G_{t,  i,  j}$ as follows.
	\begin{itemize} 
		\item[(i)] For $0\le i < j \le r$, let
		\begin{align*}
			V(G_{1,  i,  j})=&
			\{v_{l,  a}: l\in \{1,  \ldots,  i\},   a\in \{1,  2\}\}\\
			&\cup
			\{w_{l}: l\in \{1,  \ldots,  j-i\}\}\\
			&\cup 
			\{u_{l,  ab}: l\in \{1,  \ldots,  r-j\},   a,  b\in \{1,  2\}\},
		\end{align*}
		and for each $a,b\in \{1,2\}$   let
		\begin{equation*}
			K_{ab}=G_{1,  i,  j}[\{v_{l,  a}: l\in \{1,  \ldots,  i\}\}\cup  \{w_{l}: l\in \{1,  \ldots,  j-i\}\}\cup \{u_{l,  ab}: l\in \{1,  \ldots,  r-j\}\}]
		\end{equation*}
		be a clique.
		\item[(ii)] For $1\le i < j \le r$, let
		\begin{align*}
			V(G_{2,  i,  j})=&
			\{v_{l,  a}: l\in \{1,  \ldots,  i\},   a\in \{1,  2\}\}\\
			&\cup
			\{w_{l,  b}: l\in \{1,  \ldots,  j-i\},   b\in \{1,  2\}\}\\
			&\cup 
			\{u_{l,  ab}: l\in \{1,  \ldots,  r-j\},   a,  b\in \{1,  2\}\}.
		\end{align*}
		and for each $a,b\in \{1,2\}$   let
		\begin{equation*}
			K_{ab}=G_{2,  i,  j}[\{v_{l,  a}: l\in \{1,  \ldots,  i\}\}\cup  \{w_{l,  b}: l\in \{1,  \ldots,  j-i\}\}\cup \{u_{l,  ab}: l\in \{1,  \ldots,  r-j\}\}]
		\end{equation*}
		be a clique.
		\item[(iii)] For $1\le i < j \le r$, let
		\begin{align*}
			V(G_{3,  i,  j})=&
			\{v_{l,  a}: l\in \{1,  \ldots,  i\},   a\in \{1,  2\}\}\\
			&\cup
			\{w_{l}: l\in \{1,  \ldots,  j-i\}\}\\
			&\cup
			\{w_{l,  a}: l\in \{1,  \ldots,  j-i\},   a\in \{1,  2\}\}\\
			&\cup 
			\{u_{l,  ab}: l\in \{1,  \ldots,  r-j\},   a,  b\in \{1,  2\}\}.
		\end{align*}
		and for each $a\in \{1,2\}$   let
		\begin{align*}
			K_{a1}&=G_{3,  i,  j}[\{v_{l,  a}: l\in \{1,  \ldots,  i\}\}\cup  \{w_{l}: l\in \{1,  \ldots,  j-i\}\}\cup \{u_{l,  a1}: l\in \{1,  \ldots,  r-j\}\}]\\
			K_{a2}&=G_{3,  i,  j}[\{v_{l,  a}: l\in \{1,  \ldots,  i\}\}\cup  \{w_{l,  a}: l\in \{1,  \ldots,  j-i\}\}\cup \{u_{l,  a2}: l\in \{1,  \ldots,  r-j\}\}]
		\end{align*}
		be a clique.
	\end{itemize}
	Moreover  no other edges except those of $K_{a,b}$, $a,b\in \{1,2\}$, are present in $G_{1,  i,  j}$.
	
\end{definition}
In the sequel,   we will omit indices $i,  j$ and write $G_t=G_{t,  i,  j}$,   when it will be clear from the context which values of $i,   j$ are considered.
Note that for $t=1$ and $j=r$ we have $\kli_{11}=\kli_{12}$ and $\kli_{21}=\kli_{22}$, which implies the next fact. While this case is special, the same proof methods will nevertheless apply.
\begin{fact}\label{Fact:G1r}
	$G_{1, i, r}$ is isomorphic to a union of two cliques $\klik_r$ intersecting on $r-i$ vertices. In particular $G_{1, 0, r}\cong \klik_r$. 
\end{fact}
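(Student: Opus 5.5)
The plan is to read off the vertex sets and cliques directly from part (i) of the definition with $j=r$. In that case the index set $\{1,\ldots,r-j\}$ is empty, so there are no vertices $u_{l,ab}$. The vertex set of $G_{1,i,r}$ is therefore
\begin{equation*}
\{v_{l,a}: l\le i,\ a\in\{1,2\}\}\cup\{w_l: l\le r-i\},
\end{equation*}
and each $\kli_{ab}$ is the subgraph induced on $\{v_{l,a}: l\le i\}\cup\{w_l: l\le r-i\}$. This set does not depend on $b$. Hence $\kli_{a1}=\kli_{a2}$ for $a=1,2$, and
\begin{equation*}
G_{1,i,r}=\kli_{11}\cup\kli_{21}.
\end{equation*}
The definition says that no other edges are present, so the edge set of $G_{1,i,r}$ is exactly $E(\kli_{11})\cup E(\kli_{21})$.

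Next I count the sizes involved. Each $\kli_{a1}$ has $i+(r-i)=r$ vertices, all distinct, and is a clique, so it is isomorphic to $\klik_r$. The two vertex sets intersect exactly in $\{w_1,\ldots,w_{r-i}\}$, since the $v_{l,1}$ and $v_{l,2}$ are distinct labelled vertices. So the intersection has $r-i$ vertices. This shows that $G_{1,i,r}$ is a union of two copies of $\klik_r$ sharing exactly $r-i$ vertices, with no further edges. Such a graph is determined up to isomorphism by $r$ and $i$: an isomorphism is obtained by matching the shared vertices and the private vertices of each clique.

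For the special case $i=0$, there are no $v$-vertices. Then $\kli_{11}=\kli_{21}$ is the clique on $\{w_1,\ldots,w_r\}$, so $G_{1,0,r}\cong\klik_r$.

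There is no real obstacle here. The only point needing care is to check that the empty ranges ($r-j=0$, and $i=0$) are read correctly, so that the vertex sets coincide as claimed. One should also note that the clause ``no other edges'' rules out edges between $v_{l,1}$ and $v_{l',2}$.
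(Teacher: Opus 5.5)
Your proof is correct and follows the same route as the paper, which simply notes that for $t=1$ and $j=r$ the $u$-vertices disappear, so $\kli_{11}=\kli_{12}$ and $\kli_{21}=\kli_{22}$. You add the explicit counts: each clique has $r$ vertices, and the two cliques share exactly $w_1,\ldots,w_{r-i}$.
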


\begin{figure}
	\begin{center}
		\includegraphics[width=13cm]{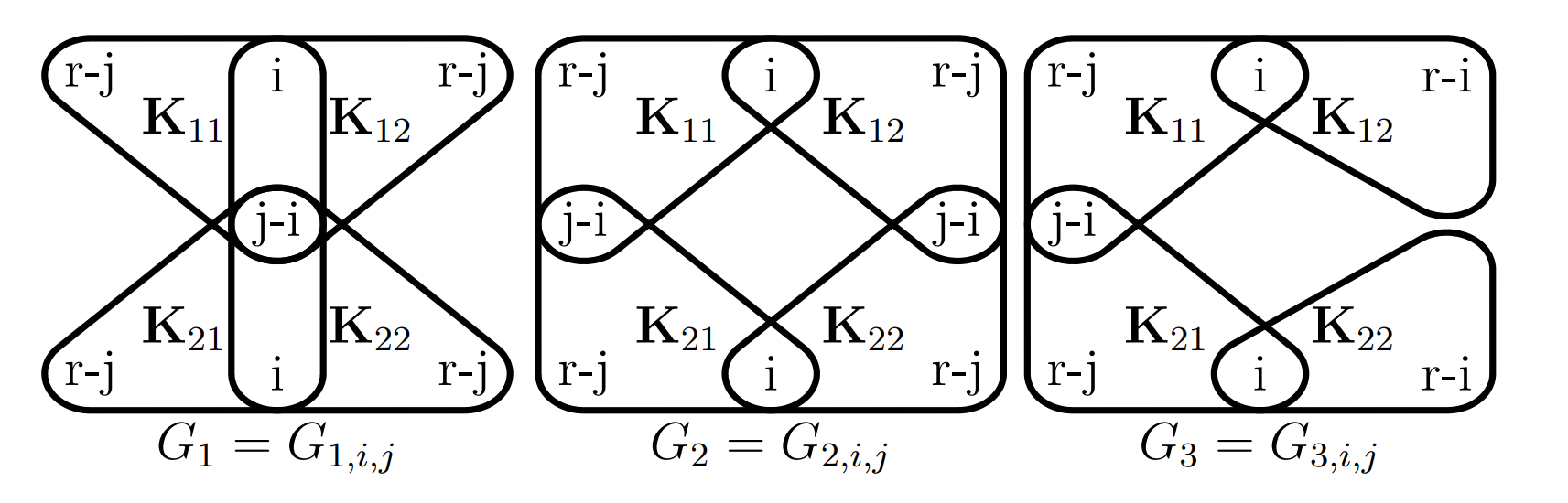}
		\caption{\label{Fig:GraphRepresentations} Symbolic depictions of graphs $G_{t,i,j}$.
		}
	\end{center}
\end{figure}

Given  $i,   j$,   for $t\in \{1,    2,    3\}$,     $\II\subseteq \indeksy$ and  $G_t=G_{t,    i,    j}$,    we define 
\begin{align}\label{eq:GtI}
	G_{t,   \II}:=\bigcup_{ab\in\II}\kli_{ab},  
\end{align}
i.e., $G_{t,   \II}$ is the subgraph of $G_t$ induced by the vertex sets of the cliques with indices in~$\II$. In addition,  we use the convention that $G_{t,   \emptyset}$ has neither vertices nor edges and $\pi(G_{t,   \emptyset})=1$. 

The following lemma was proven in \cite{Edges}. 

\begin{lemma}[Corollary 4.3 \cite{Edges}]\label{Lemma:Main}
	Let $t=1,2,3$ and $G_t=G_{t,i,j}$ be defined as above. Then there exists a constant $C'=C'(r)>0$ such that
	\begin{equation*}
		d_{K/W}\left( \widetilde \X ,      \mathcal{N}\right)
		\le\frac{C'}{\Var [\X]} 
		\sum_{t,i,j}\left(n^{|V (G_t)|} \sum_{\II\subseteq\indeksy}(-1)^{|\II|}\qq^{|\indeksy|-|\II|}\pi(G_{t,   \II})\right).
	\end{equation*}
\end{lemma}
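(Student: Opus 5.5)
The plan is to treat $\X$ as a functional of the $nm$ independent Bernoulli$(p)$ variables $\xi_{v,a}=\ind\{v \text{ chose } a\}$, $v\in\V$, $a\in\A$, and to apply a Stein--Malliavin bound for functionals of independent Bernoulli variables. Concretely, I would use the discrete second-order Poincaré inequalities in the Kolmogorov and Wasserstein distances (in the spirit of Krokowski--Reichenbachs--Thäle, or Döbler--Peccati for general independent variables). They bound $d_{K/W}(\widetilde\X,\mathcal N)$ by $C/\Var[\X]$ times a combination of square roots of sums of the form $\sum_{x,y,z}\E[(D_xD_z F)^2(D_yD_zF)^2]^{1/2}\E[(D_xF)^2(D_yF)^2]^{1/2}$ and similar terms, where $D_x$ is the discrete difference operator in the coordinate $x=(v,a)$. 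The first step is to compute these operators for $F=\X=\sum_\kli \ind_\kli$. A difference $D_{(v,a)}\ind_\kli$ vanishes unless $v\in V(\kli)$, and a second difference $D_{(v,a)}D_{(v',a')}\ind_\kli$ vanishes unless $v,v'\in V(\kli)$ and $a=a'$, since different attributes act independently. Thus every term expands into a sum over ordered quadruples of cliques $(\kli_{11},\kli_{12},\kli_{21},\kli_{22})$.

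The second step is combinatorial. Only quadruples in which the two ``rows'' $\{\kli_{a1},\kli_{a2}\}$ and the two ``columns'' are linked through the vertices carrying the differentiated coordinates contribute. Up to lower-order overlaps, these are exactly the three shapes $G_{1,i,j}$, $G_{2,i,j}$, $G_{3,i,j}$. Here the $v$-vertices are shared within a row, the $w$-vertices within a column (or by all four cliques, or, for $t=3$, only by one column pattern), and the $u$-vertices are private. Counting embeddings then gives the factor $n^{|V(G_t)|}$. Degenerate quadruples with extra coincidences are isomorphic to some $G_{t,i',j'}$ with fewer vertices, or are dominated by them, so they can be absorbed into the sum over $t,i,j$ after adjusting $C'$.

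The third step is to convert the expectations of products of difference operators into the alternating sums $\sum_{\II\subseteq\indeksy}(-1)^{|\II|}\qq^{4-|\II|}\pi(G_{t,\II})=\E\big[\prod_{ab\in\indeksy}(\ind_{\kli_{ab}}-\qq)\big]$. I would not estimate $\E[(D_xF)^2(D_yF)^2]$ crudely. Instead I would rewrite $\sum_x D_xF\,D_x(\cdot)$ through the identity $\E[\sum_x D_xG\, D_xH]$ = covariance-type expressions (the Efron--Stein / Poincaré covariance representation). I would also use the square-root terms only via the inequality $\sqrt{ab}\le a+b$ applied to variances of sums of centred indicators. In this way each second-order term becomes a variance of $\sum (\ind_{\kli}-\qq)(\ind_{\kli'}-\qq)$ over linked pairs. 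Expanding that variance produces precisely $\E\prod(\ind_{\kli_{ab}}-\qq)$ over the quadruples identified above. This is why the statement is linear in $1/\Var[\X]$ and contains no square root.

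The main obstacle is this third step: arranging the Stein--Malliavin bound so that only fully centred fourth-order products appear. Uncentred products such as $\E[\ind_{\kli_{11}}\ind_{\kli_{12}}\ind_{\kli_{21}}\ind_{\kli_{22}}]$ would give a far worse bound. It must also be checked that the sign and cancellation structure survives after summing over attributes. My first attempt would be to follow the proof in \cite{Edges} for edges, namely the bound of its Theorem 4.2 specialised to the variables $\xi_{v,a}$. I would replace single edges by $r$-cliques and verify that the only new ingredient is the enlarged list of overlap patterns, which the shapes $G_{t,i,j}$ enumerate.
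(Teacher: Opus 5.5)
The paper gives no proof of this lemma; it is imported from \cite{Edges} (Corollary~4.3). Your proposal therefore has to stand on its own, and it has genuine gaps.

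First, you misread the target. As displayed, the bound is missing a square root over the double sum. The paper obtains Theorem~\ref{Thm:Main} by combining this lemma with Lemma~\ref{Lem:IntegralUpperGraphs}, which bounds the sums by $\varepsilon\,(\Var[\X])^2$ with $\varepsilon=\frac{1}{(1-\pp)\ble}+\frac1n+\frac1{nmp}$. That yields $d_{K/W}\lesssim\varepsilon^{1/2}$ only if the lemma reads $\frac{C'}{\Var[\X]}\bigl(\sum_{t,i,j}\cdots\bigr)^{1/2}$. Your explanation of why the bound is ``linear in $1/\Var[\X]$ with no square root'' is therefore chasing a typo. Any Stein-type bound has leading term $\Var^{-1}(\cdot)^{1/2}$ by scale invariance, and $\sqrt{ab}\le a+b$ cannot remove the outer root.

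Second, step one rests on a false claim. Mixed differences in distinct attributes do not vanish, because an edge indicator $1-\prod_b(1-\xi_{v,b}\xi_{w,b})$ is a maximum over attributes, not a sum. With $D_xF=F|_{\xi_x=1}-F|_{\xi_x=0}$ and $a\neq a'$,
$D_{(v,a)}D_{(w,a')}\bigl(1-\prod_b(1-\xi_{v,b}\xi_{w,b})\bigr)=-\xi_{w,a}\xi_{v,a'}\prod_{b\neq a,a'}(1-\xi_{v,b}\xi_{w,b})\not\equiv0$.
So the localisation you use in step two to single out the quadruples is unavailable. The triple sums over coordinates $(v,a)$ in a Krokowski--Reichenbachs--Th\"ale type inequality carry attribute indices and powers of $m$, not just vertex-overlap patterns weighted by $n^{|V(G_t)|}$. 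Likewise, the claim that ``degenerate quadruples are dominated'' is unjustified. Comparing values of $\pi$ says nothing about the centred alternating sums, whose cancellations are essential in Sections~\ref{Sec:Proof:IntOnOne} and~\ref{Sec:Proof:G3}.

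The decisive gap is step three. You flag it but do not resolve it, and it is the whole content of the lemma. Second-order Poincar\'e inequalities produce uncentred quantities such as $\E[(D_xF)^2(D_yF)^2]$, $\E[(D_xD_zF)^2(D_yD_zF)^2]$ and $\sum_x\E|D_xF|^3$, combined through square roots of products. No covariance identity turns these into $\sum_{\II}(-1)^{|\II|}\qq^{4-|\II|}\pi(G_{t,\II})=\E\prod_{ab\in\indeksy}(\ind_{\kli_{ab}}-\qq)$.

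The form of the statement points to a different independent structure, indexed by vertices. Let $A_v$ be the attribute set of $v$. These are i.i.d., and $\X$ is a U-statistic of order $r$ in $(A_v)_{v\in\V}$. Put $g_k=\E[\ind_{\kli}-\qq\mid A_{x_1},\dots,A_{x_k}]$ for $x_1,\dots,x_k\in V(\kli)$, and let $\mathbf v$ have $i$ vertices and $\mathbf w,\mathbf w_b$ have $j-i$ vertices. Conditioning on the shared vertices shows that each term is the squared $L^2$-norm of a contraction:
for $t=1$ it equals $\E\bigl[\bigl(\E[g_j(\mathbf v,\mathbf w)^2\mid\mathbf w]\bigr)^2\bigr]$;
for $t=2$ it equals $\E\bigl[\bigl(\E[g_j(\mathbf v,\mathbf w_1)g_j(\mathbf v,\mathbf w_2)\mid\mathbf w_1,\mathbf w_2]\bigr)^2\bigr]$;
for $t=3$ it equals $\E\bigl[\bigl(\E[g_j(\mathbf v,\mathbf w)g_i(\mathbf v)\mid\mathbf w]\bigr)^2\bigr]$.
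This is why the paper can remark that these sums are nonnegative. These contraction norms are the kind of quantities that Hoeffding or Malliavin--Stein bounds for U-statistics in independent vertex variables produce. An argument along your lines would have to start from this vertex-indexed representation, not from the $nm$ variables $\xi_{v,a}$.
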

In the remainder of the paper, we focus on obtaining estimates for the quantity appearing on the right-hand side of the equation in the lemma.

\section{Subgraph probabilities}\label{Sec:CCs}

In this section, we present general results concerning probabilities of subgraph occurrence in $\Gnmp$. These results will be crucial for determining the asymptotics of the variance $\Var[\X]$ and for deriving bounds on the probabilities appearing in the formula of Lemma~\ref{Lemma:Main}. We believe that the results of this section may be of independent interest, as they provide convenient tools for analysing the probability that a given graph appears as a subgraph of $\Gnmp$ for a wide range of graphs, not only those involving cliques.

In the following discussion,  we will determine the probabilities of graphs that can be represented as unions of graphs. For such graphs,  we introduce new notation.
\begin{definition}\label{def:oplus}
	For two graphs $H_1$ and $H_2$,  let  $H_1'$ and $H_2'$ be their respective isomorphic copies,  such that $H_1'\cap H_2'$ is a clique on $i$ vertices ($i\ge 0$). Then 
	$H_1\oplus_{i}H_2 = H_1'\cup H_2'$,  i.e. the graph consisting of copies of $H_1$ and $H_2$ intersecting on a clique on $i$ vertices. Similarly,  by 
	$H_1\oplus_{i,   j}H_2$, $i, j\ge1$, we  denote a graph consisting of copies of $H_1$ and $H_2$ intersecting on two  disjoint cliques,   one of size $i$,    and the other of size $j$ (see Figure~\ref{Fig:CliquesSums}).  
\end{definition}
In Definition~\ref{def:oplus},  
formally we consider a new vertex set and isomorphic copies of $H_1$ and $H_2$. However, for simplicity, we refer to these isomorphic copies simply as $H_1$ and $H_2$. Clearly, not all graphs of the form $H_1 \oplus_i H_2$ (or $H_1 \oplus_{i,j} H_2$) are isomorphic. Nevertheless, it will be clear from the context which graph is meant, or this distinction will be irrelevant.

\begin{figure}
	\begin{center}
		\includegraphics[width=6cm]{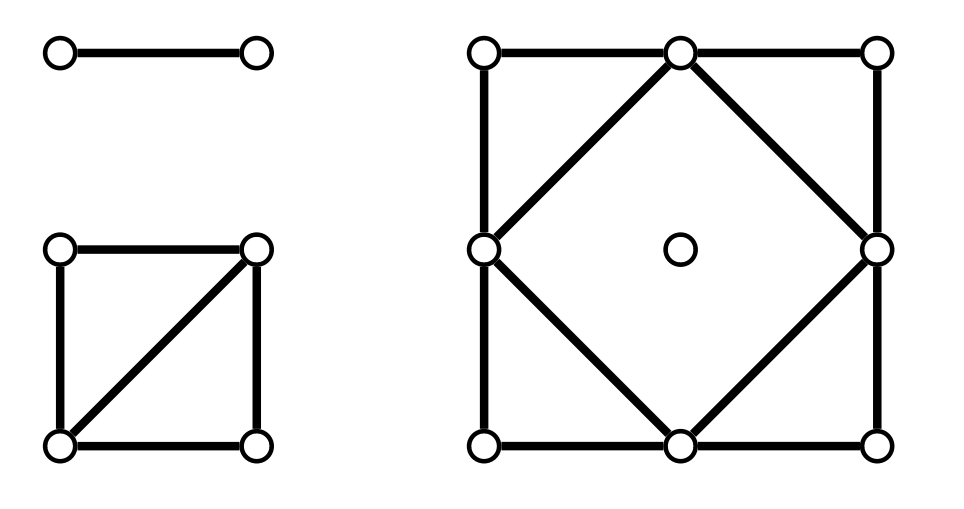}
	\end{center}
	\caption{\label{Fig:CliquesSums} Examples of $\klik_3\oplus_2 \klik_3\oplus_0 \klik_2$ and $(\klik_3\oplus_1 \klik_3)\oplus_{1,   1}(\klik_3\oplus_1 \klik_3)\oplus_0\emp_1$}
\end{figure}

\subsection{Clique covers}
In this section,  we define the clique covers introduced in \cite{1999KarSchSin} with some modifications proposed in \cite{SubgraphProb}. They play a crucial role in determining the probabilities of subgraphs in $\Gnmp$.

\begin{definition}
	For a given graph $H$ we define  $\calP(H)$ as the family of subsets of $V(H)$ that contain both ends of at least one edge of $H$. The sets $C\in\calP(H)$ of size $t$ will be called $t$-sets.
	A subfamily $\cover\subseteq \calP(H)$ is called a {\it clique cover} of $H$,   if    every edge (a $2$-set)  $e\in E(H)$ is contained in some $C\in\cover$.  Furthermore,  the set of all clique covers of $H$ will be denoted by~$\covers(H)$. 
\end{definition}

For a graph $H$  with $V(H)\subseteq \V$ we say that $a\in \A$ {\it builds} $C\subseteq V(H)$ if $C$ is the set of all the vertices of $H$ that chose $a$,  i.e.  $C=\{v\in V(H): v \text{ chose }a\}.$
The probability of such an event is given by
\begin{align}\label{eq:p_C}
	p_C:=p^{|C|}(1-p)^{|V(H)|-|C|}.
\end{align}
Then,   $H$ is {\it given by a clique cover} $\cover$ in $\Gnmp$ if  every $C\in \cover$  is built by an attribute from $\A$ and no attribute from $\A$ builds any $C\in \calP(H)\setminus \cover$. 

By $\pi(H,     \cover)$ we denote the probability that $H$ is given by a clique cover $\cover$ in $\Gnmp$. For a subfamily of clique covers $\covers\subseteq \covers(H)$ let $\pi(H,     \covers)$ be  the probability that $H$ is given by a clique cover from   $\covers$. We clearly have
\begin{equation}\label{Eq:SubgraphProbabilityCCfamily}
	\pi(H,     \covers)=\sum_{\cover\in \covers} \pi(H,     \cover). 
\end{equation}
In particular,     the probability that $H$ occurs as a subgraph in $\Gnmp$ is given by
\begin{align}\label{Eq:SubgraphProb}
	\pi(H):=\Pra{H\subseteq \Gnmp}=\pi(H,     \covers(H))
	=\sum_{\cover\in\covers(H)}\pi(H,     \cover). 
\end{align}
Next,     for a clique cover
$\cover_+\subseteq \calP (H)$ and a set
$\cover_-\subseteq \calP(H)\setminus \cover_+$
we write  $\covers(H,     \cover_+,     \cover_-)$ for the family of  all the clique covers $\cover$ containing all sets from  $\cover_+$ and no set from $\cover_-$,      i.e.  such that $\cover_+\subseteq \cover\subseteq\calP(H)\setminus \cover_-$. The sets from $\cover_-$ will be called {\it forbidden}. Additionally,  we will be using the shorthand notation
\begin{equation}\label{Eq:SubgraphProbabilityCCfamily2}
	\pi(H,     \cover_+,     \cover_-):=\pi(H,     \covers(H,     \cover_+,     \cover_-))=\sum_{\cover\in \covers(H,     \cover_+,     \cover_-)} \pi(H,     \cover). 
\end{equation}
In particular,    we have the following identity $\pi(H,     \cover)=\pi(H,     \cover,     \calP(H)\setminus\cover)$.

The following asymptotics were shown in  \cite{SubgraphProb}. See also
\cite{1999KarSchSin},    \cite{2017RybSta},    \cite{Dong2023} for some previous,  less general results.

\begin{lemma}[\cite{SubgraphProb}]\label{Lem:ProbCCs} 
	Let $H$ be a graph and   $\calP(H)$  be the family of subsets of $V(H)$ containing both ends of at least one edge of $H$. Fix a clique cover  $\cover_+$   and a set $\cover_-\subseteq \calP(H)\setminus \cover_+$.
	If  $m\to \infty$ and $p$ is bounded away from $1$ by a constant,    then 
	\begin{align}\label{eq:CC1}
		\pi(H,   \cover_+,    \cover_-)=\left(1+O(m^{-\delta_{|V(H)|}})\right)  \left(1-\sum_{C\in \cover_-}p_C\right)^{m}\prod_{C\in\cover_+}(1-e^{-mp_C}),  
	\end{align}
	where $p_C$ is defined in \eqref{eq:p_C},  and $\delta_{k}=1/({k2^{k+1}})$.

	Moreover, for $mp^2\le 1$ we have
	\begin{equation}\label{eq:CC2}
		\pi(H,   \cover_+,    \cover_-)\asymp \pi(H,   \cover_+)\asymp \prod_{C\in\cover_+}mp^{|C|}=m^{|\cover_+|}p^{\sum_{C\in \cover_+} |C|},  
	\end{equation}
	and
	\begin{equation}\label{eq:CC3}
		\pi(H,  E(H))=(1+O(m^{-\delta_{|V(H)|}}))\pp^{|E(H)|}, 
	\end{equation}
	\ \\[-10pt]where $\delta_{k}=1/({k2^{k+1}})$.
	
	Furthermore,   for $mp^2\ge 1$ and $mp^3\le 1$,  we have
	\begin{equation}\label{eq:CC4}
		\pi(H,   \cover_+,    \cover_-)
		\asymp e^{-|\cover_-^{(2)}|mp^2}\prod_{C\in\cover_+\setminus \cover_+^{(2)}}mp^{|C|},      
	\end{equation}
	where $\cover_+^{(2)}$ and $\cover_-^{(2)}$ are the families of all 2--sets in $\cover_+$ and $\cover_-$,    respectively.
\end{lemma}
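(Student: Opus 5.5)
The plan is to reduce everything to a multinomial computation. Fix $H$ with $k=|V(H)|$ and restrict attention to the vertices of $H$. Each attribute $a\in\A$, independently of the others, builds exactly one subset $C\subseteq V(H)$, namely the set of vertices of $H$ that chose it. This happens with probability $p_C$ from \eqref{eq:p_C}, and $\sum_{C\subseteq V(H)}p_C=1$. Writing $N_C$ for the number of attributes building $C$, the vector $(N_C)_C$ is multinomial with parameters $m$ and $(p_C)_C$. By definition, $\pi(H,\cover_+,\cover_-)$ is the probability that $N_C=0$ for every $C\in\cover_-$ and $N_C\ge1$ for every $C\in\cover_+$; sets in $\calP(H)\setminus(\cover_+\cup\cover_-)$ are unconstrained.

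First, the forbidden sets factor out exactly. Put $s=\sum_{C\in\cover_-}p_C$. Then $\P(N_C=0\ \forall C\in\cover_-)=(1-s)^m$. Conditionally on this event, the counts of the remaining sets are multinomial with probabilities $q_C=p_C/(1-s)$, so it remains to show
\begin{equation}
\P(N_C\ge1\ \forall C\in\cover_+)=\left(1+O(m^{-\delta_k})\right)\prod_{C\in\cover_+}\left(1-e^{-mq_C}\right),
\end{equation}
and then to replace $q_C$ by $p_C$. For the latter, use $s\le 1-(1-p)^k$, which is bounded away from $1$ because $p$ is.

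For the main product formula, apply inclusion--exclusion over subsets $S\subseteq\cover_+$:
\begin{equation}
\P(N_C\ge1\ \forall C\in\cover_+)=\sum_{S\subseteq\cover_+}(-1)^{|S|}\Bigl(1-\sum_{C\in S}q_C\Bigr)^m .
\end{equation}
Compare this term by term with $\sum_S(-1)^{|S|}e^{-m\sum_S q_C}=\prod_{C\in\cover_+}(1-e^{-mq_C})$. The difficulty is that such an alternating sum can be much smaller than its individual terms, so a naive error bound fails when some $mq_C$ are small. To handle this I would split $\cover_+$ by thresholds $m^{-1+\varepsilon_\ell}$ into ``large'' sets, for which $N_C\ge1$ holds with probability $1-O(e^{-m^{\varepsilon}})$ and can be discarded, and ``small'' sets, for which $mq_C\le m^{\varepsilon}$. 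For the small sets, $(1-x)^m=e^{-mx}(1+O(mx^2))$ with $mx^2\le m^{-1+2\varepsilon}$. The requirement $N_C\ge1$ can then be localized by conditioning on which attributes are used, so that the few small sets behave like independent Poisson events up to relative error $O(m^{-1+2\varepsilon})$. One needs a pigeonhole over at most $2^k$ scales, choosing a gap in the sequence of values $\log(mq_C)/\log m$. This is what produces an exponent of the form $\delta_k=1/(k2^{k+1})$, and I expect this step to be the main obstacle. For the upper bound alone, negative association of multinomial counts gives $\le\prod_C\P(N_C\ge1)$ directly; the lower bound is where the scale separation is needed.

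The remaining asymptotics then follow from \eqref{eq:CC1}. If $mp^2\le1$, every $C$ has $|C|\ge2$, so $mp_C\le mp^2\le1$ and $1-e^{-mp_C}\asymp mp_C\asymp mp^{|C|}$. Also $s\lesssim p^2$, so $(1-s)^m\asymp1$. This gives $\pi(H,\cover_+,\cover_-)\asymp\prod_{C\in\cover_+}mp^{|C|}$. Summing over the $O(1)$ covers containing $\cover_+$ shows $\pi(H,\cover_+)$ is of the same order, since every extra set only multiplies by $mp^{|C|}\lesssim1$; this is \eqref{eq:CC2}. For \eqref{eq:CC3}, take $\cover_+=E(H)$ and $\cover_-=\emptyset$, and compare $1-e^{-mp^2(1-p)^{k-2}}$ with $\hatp=1-(1-p^2)^m$ in the regime where the correction from the factor $(1-p)^{k-2}$ is absorbed into the error term. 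If $mp^2\ge1$ and $mp^3\le1$, each $2$-set in $\cover_+$ contributes $1-e^{-mp_C}\asymp1$, and larger sets contribute $\asymp mp^{|C|}$. In $(1-s)^m\asymp e^{-ms}$, forbidden $2$-sets give $e^{-|\cover_-^{(2)}|mp^2(1+O(p))}$, with $mp^3\le1$ keeping the $O(p)$ correction bounded; larger forbidden sets give $e^{-O(mp^3)}\asymp1$. Together these yield \eqref{eq:CC4}.
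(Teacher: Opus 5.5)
The paper imports this lemma from \cite{SubgraphProb} without proof, so your argument has to stand on its own. Your reduction is right. $\pi(H,\cover_+,\cover_-)$ is exactly the multinomial event $\{N_C=0\ \forall C\in\cover_-\}\cap\{N_C\ge1\ \forall C\in\cover_+\}$, and the factor $(1-s)^m$ splits off by conditioning. Also, \eqref{eq:CC2}--\eqref{eq:CC4} follow from \eqref{eq:CC1} essentially as you describe. One correction there: in \eqref{eq:CC3}, $\pi(H,E(H))$ is the exact-cover probability, i.e.\ $\cover_-=\calP(H)\setminus E(H)$ rather than $\emptyset$. This is harmless, since the extra factor is $(1-s)^m=1+O(mp^3)=1+O(m^{-1/2})$.

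The genuine gap is \eqref{eq:CC1} itself, which is the entire content of the lemma. You correctly note that comparing $(1-\sum_{C\in S}q_C)^m$ with $e^{-m\sum_{C\in S}q_C}$ term by term loses the cancellation in the alternating sum. What you put in its place --- ``localizing by conditioning on which attributes are used'' and a pigeonhole over $2^k$ scales --- describes a hoped-for argument rather than giving one. In particular, the lower bound $\P_q(N_C\ge1\ \forall C\in\cover_+)\ge(1-o(1))\prod_{C\in\cover_+}(1-e^{-mq_C})$ is never established. There is also a second, smaller gap: replacing $q_C=p_C/(1-s)$ by $p_C$ does not follow from $s$ being bounded away from $1$. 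That only makes $(1-e^{-mq_C})/(1-e^{-mp_C})$ bounded, not $1+O(m^{-\delta_k})$.

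Both gaps close without any scale separation. Let $K=|\cover_+|$, let $f(n)$ be the probability that $n$ i.i.d.\ draws from $q$ hit every $C\in\cover_+$, and put $g(\lambda)=\prod_{C\in\cover_+}(1-e^{-\lambda q_C})$. Four facts are available.
\begin{itemize}
\item $f$ is nondecreasing.
\item Poisson thinning gives $\E f(M_\lambda)=g(\lambda)$ for $M_\lambda\sim\mathrm{Po}(\lambda)$.
\item Concavity of $a\mapsto1-e^{-ax}$ gives $g(\lambda)\le(\lambda/m)^Kg(m)$ for $\lambda\ge m$, and the reverse inequality for $\lambda\le m$.
\item Restrict to $S=\{C\in\cover_+:nq_C<1\}$ and use $\P(N_C\ge1\ \forall C\in S)\le\E\prod_{C\in S}N_C\le\prod_{C\in S}nq_C$. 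This gives $f(n)\le\prod_{C\in\cover_+}\min(1,nq_C)\lesssim(n/m)^Kg(m)$ for $n\ge m$.
\end{itemize}
Now compare $f(m)$ with $\E f(M_{\lambda_\pm})$, where $\lambda_\pm=m\pm\sqrt m\log m$. Monotonicity gives the upper bound. For the lower bound, the last estimate bounds $\E[f(M_{\lambda_-});M_{\lambda_-}>m]$ by $g(m)$ times a super-polynomially small factor. Together these yield $f(m)=(1+O(m^{-1/2}\log m))g(m)$.

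For the passage from $q_C$ to $p_C$, the relative error per set is at most $\frac{s}{1-s}\cdot\frac{x}{e^{x}-1}$, with $x=mp_C$ and $s\le 2^kp^2$. There are two cases. Either $p^2\le m^{-\delta_k}$, and the bound is immediate. Or $p>m^{-\delta_k/2}$, and then $x\gtrsim mp^k\ge m^{1-k\delta_k/2}$ (using $p\le 1-c$), so the second factor is super-polynomially small.
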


We close this section with  another result from \cite{SubgraphProb}. 
For two families of sets $\cover$ and $\cover'$ we write 
\begin{align}\label{Bumpeq}
	\cover\Bumpeq\cover',
\end{align}
if  there is a bijection $f:\cover\rightarrow \cover'$ such that $|f(C)|=|C|$ for any $C\in \cover$.

\begin{lemma}[Proposition 2.3 \cite{SubgraphProb}]\label{Lem:CCequality}
	Let $H_1$ and $H_2$ be two graphs with the same number of vertices. Let,      for $i=1,     2$,      $\cover_+^{(i)}$ be a clique cover of $H_i$ and 
	$\cover_-^{(i)}\subseteq \calP(H_i)\setminus \cover_+^{(i)}$. If $\cover_+^{(1)}\Bumpeq\cover_+^{(2)}$  and $\cover_-^{(1)}\Bumpeq\cover_-^{(2)}$,    then
	$$
	\pi(H_1,     \cover_+^{(1)},     \cover_-^{(1)})=\pi(H_2,    \cover_+^{(2)},     \cover_-^{(2)}).
	$$
\end{lemma}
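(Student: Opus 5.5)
The plan is to write $\pi(H,\cover_+,\cover_-)$ as an explicit formula that involves only $m$, $p$, $|V(H)|$ and the sizes of the sets in $\cover_+$ and $\cover_-$. Once we have it, the bijections given by $\cover_+^{(1)}\Bumpeq\cover_+^{(2)}$ and $\cover_-^{(1)}\Bumpeq\cover_-^{(2)}$, together with $|V(H_1)|=|V(H_2)|$, give the equality immediately.

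\emph{Step 1: rewrite $\pi(H,\cover_+,\cover_-)$ as the probability of a single event.} For each attribute $a\in\A$ let $S_a=\{v\in V(H): v \text{ chose } a\}$, so that $a$ builds exactly the set $S_a$. The sets $S_a$, $a\in\A$, are i.i.d. random subsets of $V(H)$, and $\Pra{S_a=C}=p_C=p^{|C|}(1-p)^{|V(H)|-|C|}$. By \eqref{Eq:SubgraphProbabilityCCfamily2}, $\pi(H,\cover_+,\cover_-)$ is the probability that the family $\{S_a:a\in\A\}\cap\calP(H)$ is a clique cover $\cover$ with $\cover_+\subseteq\cover\subseteq\calP(H)\setminus\cover_-$. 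Since $\cover_+$ is already a clique cover, every family containing $\cover_+$ is again a clique cover. Attributes building sets outside $\calP(H)$, or sets in $\calP(H)\setminus(\cover_+\cup\cover_-)$, impose no constraint. Hence
\[
\pi(H,\cover_+,\cover_-)=\Pra{\forall_{C\in\cover_+}\exists_{a}\, S_a=C \ \text{ and }\ \forall_{C\in\cover_-}\forall_{a}\, S_a\neq C}.
\]

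\emph{Step 2: inclusion--exclusion over $\cover_+$.} For $D\subseteq\cover_+$, the probability that no attribute builds any set in $\cover_-\cup D$ equals $\bigl(1-\sum_{C\in\cover_-\cup D}p_C\bigr)^m$. This holds because the events $\{S_a=C\}$ for distinct $C$ are disjoint, the attributes are independent, and $\cover_-\cap D=\emptyset$. Inclusion--exclusion on the requirement that each $C\in\cover_+$ is built then gives
\[
\pi(H,\cover_+,\cover_-)=\sum_{D\subseteq\cover_+}(-1)^{|D|}\Bigl(1-\sum_{C\in\cover_-}p_C-\sum_{C\in D}p_C\Bigr)^{m}.
\]

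\emph{Step 3: transfer along the bijections.} Let $f:\cover_+^{(1)}\to\cover_+^{(2)}$ and $g:\cover_-^{(1)}\to\cover_-^{(2)}$ be the size-preserving bijections. Because $|V(H_1)|=|V(H_2)|$, the quantity $p_C$ depends only on $|C|$. It follows that $\sum_{C\in\cover_-^{(1)}}p_C=\sum_{C\in\cover_-^{(2)}}p_C$, and that $D\mapsto f(D)$ is a bijection of subsets preserving both $|D|$ and $\sum_{C\in D}p_C$. So the two inclusion--exclusion sums agree term by term, which proves the lemma.

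The only delicate point is Step 1. There one must check that the sets in $\calP(H)\setminus(\cover_+\cup\cover_-)$ are genuinely unconstrained, which is exactly where the assumption that $\cover_+$ is a clique cover is used. One must also check that sets of size at most one, or sets not containing an edge, do not interfere. After that, the computation is routine.
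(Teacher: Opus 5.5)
The paper does not prove this lemma. It is imported as Proposition 2.3 of \cite{SubgraphProb}, so there is no in-paper argument to compare your route against. Your proof is correct and self-contained, and each step holds up.

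In Step 1 you identify the event ``$H$ is given by $\cover$'' with $\{S_a:a\in\A\}\cap\calP(H)=\cover$. These events are disjoint for distinct $\cover$, so the sum over $\covers(H,\cover_+,\cover_-)$ is the probability of a single event. You then use that $\cover_+$ is a clique cover to drop the covering requirement, leaving a condition that involves only $\cover_+$ and $\cover_-$.

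Step 2 uses two facts: attributes are independent, and $D\cap\cover_-=\emptyset$. Together these give $\bigl(1-\sum_{C\in\cover_-\cup D}p_C\bigr)^m$ without double counting.

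Step 3 uses only that $p_C$ depends on $|C|$ and $|V(H)|$.

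Your exact identity
\[
\pi(H,\cover_+,\cover_-)=\sum_{D\subseteq\cover_+}(-1)^{|D|}\Bigl(1-\sum_{C\in\cover_-\cup D}p_C\Bigr)^{m}
\]
gives more than the lemma needs. Heuristically replacing $(1-s-t)^m$ by $(1-s)^me^{-mt}$ factorises the sum into $\bigl(1-\sum_{C\in\cover_-}p_C\bigr)^m\prod_{C\in\cover_+}(1-e^{-mp_C})$. This is precisely the main term of \eqref{eq:CC1}, so your formula is the natural starting point for the asymptotics the paper quotes.

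If you only want the equality, a computation-free alternative is a coupling. Because $\cover_+^{(i)}\cap\cover_-^{(i)}=\emptyset$, the map $f\cup g$ is a size-preserving bijection from $\cover_+^{(1)}\cup\cover_-^{(1)}$ onto $\cover_+^{(2)}\cup\cover_-^{(2)}$. Both power sets have $\binom{h}{k}$ sets of each size $k$, so $f\cup g$ extends to a size-preserving bijection $\Phi$ of $2^{V(H_1)}$ onto $2^{V(H_2)}$. Then $\Phi(S_a)$ has the law of the corresponding random set for $H_2$. By your Step 1 reduction, $\Phi$ maps the event for $(H_1,\cover_+^{(1)},\cover_-^{(1)})$ exactly onto the event for $(H_2,\cover_+^{(2)},\cover_-^{(2)})$.

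Your version costs a short computation but yields an explicit formula. The coupling version is shorter but yields only the equality. Both rest on the same Step 1 reduction.
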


\subsection{Basic subgraph probabilities}

This section collects various general results concerning subgraph probabilities.

\begin{fact}\label{Lem:Division2}
	For any graphs $H_1,  H_2$ it holds that
	$$\pi(H_1\cup H_2)\geq \pi(H_1)\pi(H_2).$$
\end{fact}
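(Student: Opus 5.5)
The inequality $\pi(H_1\cup H_2)\ge\pi(H_1)\pi(H_2)$ will follow from the Harris (FKG) inequality on a product space. We work in the natural probability space underlying $\Gnmp$, namely the family of independent Bernoulli($p$) variables $\xi_{v,a}$, $v\in\V$, $a\in\A$, where $\xi_{v,a}=1$ means that vertex $v$ chose attribute $a$. This is a product measure on $\{0,1\}^{\V\times\A}$, ordered coordinatewise.

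First, observe that for any fixed graph $H$ with $V(H)\subseteq\V$, the event $\{H\subseteq\Gnmp\}$ is an increasing event in the variables $\xi_{v,a}$. Indeed, an edge $vw$ is present if and only if $\max_{a\in\A}\xi_{v,a}\xi_{w,a}=1$, which is a nondecreasing function of the configuration. Moreover, $\{H\subseteq\Gnmp\}=\bigcap_{vw\in E(H)}\{vw\in E(\Gnmp)\}$ is an intersection of increasing events, and is therefore increasing.

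Now, as in Definition~\ref{def:oplus}, we regard $H_1$ and $H_2$ as concrete subgraphs of the complete graph on $\V$, so that $H_1\cup H_2$ is the prescribed union. Then
$$\{H_1\cup H_2\subseteq\Gnmp\}=\{H_1\subseteq\Gnmp\}\cap\{H_2\subseteq\Gnmp\},$$
since $E(H_1\cup H_2)=E(H_1)\cup E(H_2)$. The Harris inequality for two increasing events under a product measure then gives
$$\pi(H_1\cup H_2)=\Pra{\{H_1\subseteq\Gnmp\}\cap\{H_2\subseteq\Gnmp\}}\ge\pi(H_1)\pi(H_2).$$
Here we also use that $\pi(H)$ depends only on the isomorphism class of $H$, by vertex exchangeability of the model, so the choice of copies does not matter.

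There is essentially no obstacle. The only point to verify is the monotonicity of edge indicators in the attribute choices, which is immediate. If one prefers to avoid quoting Harris's inequality, it can be proved by induction on the number of coordinates $|\V||\A|$, using the standard one-variable Chebyshev sum inequality for nondecreasing functions.
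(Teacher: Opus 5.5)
Your proof is correct and follows essentially the same route as the paper. The paper also observes that both subgraph indicators are increasing functions of the independent vertex--attribute Bernoulli variables, and then applies the FKG (Harris) inequality to the product of the two indicators.
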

\begin{proof}
	The indicators $\mathbf 1_{\{H_1\subseteq \Gnmp\}}$ and $\mathbf 1_{\{H_2\subseteq \Gnmp\}}$ are increasing in relation to Bernoulli random variables representing links between vertices and their attributes. Therefore,  by FKG inequality (see for example Theorem 2.12 in \cite{KsiazkaJLR}), we have
	$$\E[\mathbf 1_{\{H_1\subseteq \Gnmp\}}\mathbf 1_{\{H_2\subseteq \Gnmp\}}]\ge \E[\mathbf 1_{\{H_1\subseteq \Gnmp\}}]\E[\mathbf 1_{\{H_2\subseteq \Gnmp\}}], $$
	which is equivalent to
	the assertion.
\end{proof}

The following lemma was proved in \cite{1999KarSchSin} under an additional assumption on 
$m$. However,  the argument in \cite{1999KarSchSin} can be adapted to yield the version stated below, see also \cite{SubgraphProb}. The lemma can also be obtained using the methods developed in this paper.

\begin{lemma}\label{Lem:PiOptimal}For  $\kli\cong \klik_r$ we have
	\begin{align*}
		\qq=\pi(\kli)&\asymp
		\begin{cases}
			\pi (\kli,   \{V(\kli)\})\asymp mp^{r}&\text{ for }p\le m^{-\frac{r+1}{2r}};\\
			\pi (\kli,   E(\kli))\asymp \pp^{\binom{r}{2}}&\text{ for }p\ge m^{-\frac{r+1}{2r}}.
		\end{cases}
	\end{align*}
\end{lemma}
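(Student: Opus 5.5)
We prove Lemma \ref{Lem:PiOptimal} by writing $\qq=\pi(\kli)=\sum_{\cover\in\covers(\kli)}\pi(\kli,\cover)$ via \eqref{Eq:SubgraphProb}. Since $|V(\kli)|=r$ is fixed, $|\covers(\kli)|$ is bounded by a constant depending on $r$ only. Hence $\qq\asymp\max_{\cover}\pi(\kli,\cover)$, and it suffices to show that this maximum is attained, up to constants, either by the one-set cover $\{V(\kli)\}$ or by the edge cover $E(\kli)$. For the lower bound we drop forbidden sets. This gives $\qq\ge\pi(\kli,\{V(\kli)\},\emptyset)$ and $\qq\ge\pi(\kli,E(\kli),\emptyset)$. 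By \eqref{eq:CC1} these are $\asymp 1-e^{-mp^r}$ and $\asymp(1-e^{-mp^2})^{\binom r2}\asymp\pp^{\binom r2}$ respectively. At the stated threshold $p\le m^{-(r+1)/(2r)}$ we have $mp^r\le m^{(1-r)/2}\le 1$, so the first is $\asymp mp^r$.

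For the upper bound we split into $mp^2\le1$ and $mp^2\ge1$. In the regime $mp^2\le1$, \eqref{eq:CC2} gives $\pi(\kli,\cover)\asymp\prod_{C\in\cover}mp^{|C|}$. It then suffices to show that for every clique cover $\cover$ of $K_r$,
$$\prod_{C\in\cover}mp^{|C|}\lesssim mp^r\vee (mp^2)^{\binom r2}.$$
To prove this, take logarithms with $p=m^{-\alpha}$, $\alpha\ge1/2$. Each $C\in\cover$ contributes $1-\alpha|C|$ to the exponent of $m$. Since $\alpha\ge 1/2$ and $|C|\ge 2$, every contribution is $\le0$. The total exponent $f(\cover)=|\cover|-\alpha\sum_{C}|C|$ is linear in $\alpha$, so it suffices to compare it with $g(\alpha)=\max\{1-\alpha r,\ \binom r2(1-2\alpha)\}$.

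The key combinatorial step is that covering all $\binom r2$ pairs forces $\sum_{C}\binom{|C|}2\ge\binom r2$. We therefore minimise $\sum_C(\alpha|C|-1)$ subject to this constraint. The cost per covered pair of a set of size $s$ is $(\alpha s-1)/\binom s2$. As a function of $s$ this is quasi-convex-like in the relevant range, so the minimum over admissible covers is attained at an extreme: either one set of size $r$, or all 2-sets. This is the main obstacle, and I would verify it by showing that replacing a set of size $s$, with $2<s<r$, by either the 2-sets it covers or by enlarging it never increases the optimum of the linear program in the relevant direction. The two extremes give exactly $1-\alpha r$ and $\binom r2(1-2\alpha)$. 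These cross at $\alpha=(r+1)/(2r)$, which identifies the threshold. A cleaner route may be a convexity argument: for fixed $\alpha$ the cost ratio $(\alpha s-1)/\binom s2$ is monotone or unimodal in $s$, so the extremes dominate.

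In the regime $mp^2\ge1$ (and $mp^3\le1$; if $p$ is larger, $\pp\asymp1$ and the bound $\qq\le1$ suffices), \eqref{eq:CC4} gives $\pi(\kli,\cover)\lesssim\prod_{C\in\cover,|C|\ge3}mp^{|C|}\le1\asymp\pp^{\binom r2}$, because $mp^{|C|}\le mp^3\le1$. Here $p\ge m^{-1/2}\ge m^{-(r+1)/(2r)}$, so this is the second case of the lemma. Finally, when $mp^2\le1$ the two cases follow because $mp^r\ge(mp^2)^{\binom r2}$ exactly when $p\le m^{-(r+1)/(2r)}$. Combining with the lower bounds gives $\qq\asymp mp^r$, respectively $\qq\asymp\pp^{\binom r2}$. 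Moreover, in each case the dominant cover's probability, $\pi(\kli,\{V(\kli)\})$ or $\pi(\kli,E(\kli))$, is itself of that order by \eqref{eq:CC2} and \eqref{eq:CC3}.
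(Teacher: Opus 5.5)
Your argument is correct in substance, and it takes a different route from the one the paper points to. The paper does not prove this lemma itself. It defers to \cite{1999KarSchSin} and \cite{SubgraphProb}, and remarks that its own clique-cover tools also yield it. Those tools work by local surgery on covers: reduce to simple covers (Lemma~\ref{Lem:Simple}), split a set into its $2$-sets when $p\ge m^{-(|C|+1)/(2|C|)}$ (Lemma~\ref{Lem:Splitting}(ii)), and otherwise merge sets (Lemma~\ref{Lem:Splitting}(i), Lemma~\ref{Lem:Glueing}). You instead use a single global count. All $\binom r2$ pairs must be covered, so $\sum_{C\in\cover}\binom{|C|}{2}\ge\binom r2$, and everything reduces to the per-pair cost $h(s)=(\alpha s-1)/\binom s2$ with $p=m^{-\alpha}$. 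This gives the exact inequality $\prod_{C\in\cover}mp^{|C|}\le mp^r\vee(mp^2)^{\binom r2}$ for every cover, simple or not. It handles both sides of the threshold at once, and it identifies $m^{-(r+1)/(2r)}$ as the point where $h(2)=h(r)$. The price is generality: it relies on every set of a cover of $\klik_r$ spanning all $\binom{|C|}{2}$ of its pairs. That fails for the non-complete unions $G_{t,\II}$ the paper actually needs, which is why the paper builds local comparison lemmas instead.

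There are two points to tighten.

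First, the step you flag as the main obstacle is a one-liner, but the property you need is the opposite of the one you name. The minimum of $h$ over $[2,r]$ sits at an endpoint because $h$ is quasi-concave, not quasi-convex; a quasi-convex ratio could have an interior minimum and the argument would fail. Indeed, $h'(s)$ has the sign of $-\alpha s^2+2s-1$, a concave quadratic in $s$, so $h$ is increasing then decreasing (or monotone) on $[2,r]$. Since $\alpha|C|-1\ge 0$ for $\alpha\ge 1/2$, this gives directly
\[
\sum_{C\in\cover}(\alpha|C|-1)=\sum_{C\in\cover}h(|C|)\binom{|C|}{2}\ge\min\{h(2),h(r)\}\binom{r}{2}=\min\Big\{\binom{r}{2}(2\alpha-1),\,\alpha r-1\Big\},
\]
which is exactly $\prod_{C\in\cover}mp^{|C|}\le mp^r\vee(mp^2)^{\binom r2}$. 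No exchange argument or enlarging of sets is needed.

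Second, for $mp^2\ge 1$ you cannot use \eqref{eq:CC3}, which is stated only for $mp^2\le 1$. The estimate $\pi(\kli,E(\kli))\asymp 1\asymp\pp^{\binom r2}$ comes from \eqref{eq:CC4} and genuinely needs $mp^3=O(1)$. If $mp^3\to\infty$ with $p$ bounded away from $1$ and $r\ge 3$, then $\pi(\kli,E(\kli))\le(1-p^3(1-p)^{r-3})^m\to 0$ while $\qq\to 1$. So the second case must be read under the paper's standing assumption $mp^3\le c_r$, together with $m\to\infty$, which \eqref{eq:CC1}--\eqref{eq:CC2} require. Your remark that $\qq\le 1$ suffices for large $p$ only rescues $\qq\asymp\pp^{\binom r2}$, not $\qq\asymp\pi(\kli,E(\kli))$.
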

Let us note that due to the estimate $\pp\asymp mp^2$ valid for $mp^2<1$, the statement of the lemma is equivalent to  $\qq=\pi(\kli)\asymp \big(mp^r+\pp^{\binom{r}{2}}\big).$

It will be convenient to define
\begin{align}\label{Eq:Def:Blei}
	\ble_i=\ble_i(n,   m,   p,   r)&=
	\begin{cases}
		n^imp^i\asymp n^i\pi(K_i,  \{V(K_i)\})   &\text{ for }p\le m^{-\frac{r+1}{2r}};\\	
		n^i\pp^{\binom{i}{2}}\asymp n^i\pi(K_i,  E(K_i))   &\text{ for } p\ge m^{-\frac{r+1}{2r}}.	
	\end{cases}
\end{align}
The following conclusion concerning $\ble$,  defined in \eqref{Eq:Def:Ble},  holds true
\begin{align}\label{Eq:BleaBlei}
	\min_{2\le i\le r}\ble_i\asymp \ble, 
\end{align}
which is a consequence of  the fact that for $s\in[0, 1]$ we have $\min_{2\le i\le r}\{n^is^{\binom{i}{2}}\}=(n^2s)\wedge (n^rs^{\binom{r}{2}})$,   see Remark 3.17 in \cite{KsiazkaJLR} or Proposition 4.7 in \cite{PSbej}. Let us note that Lemma \ref{Lem:PiOptimal} implies that $\ble_i$ is not comparable to  $\E[X_n^{(i)}]$,  and $\min_{2\le i\le r}\E[X_n^{(i)}]$ is not comparable to $\ble$,  whereas it is the case in $\Gnp$ model.

The next result will be mostly useful in analysing subgraph probabilities in the case when $mp^2$ is bounded away from zero,  which is equivalent to  $\pp\asymp1$.

\begin{fact}\label{Lem:DuzeMp2dwa} For any non-empty  graph $H$ we have
	$$\pp^{|E(H)|}\le\pi(H)\le \pp, $$
	and
	$$1-\pi(H)\asymp 1-\pp.$$
\end{fact}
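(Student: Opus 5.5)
The plan is to prove the two inequalities $\pp^{|E(H)|}\le\pi(H)\le\pp$ first, and then derive $1-\pi(H)\asymp 1-\pp$ from them.

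\emph{Upper bound.} Since $H$ is non-empty, it contains at least one edge $e$. The event $\{H\subseteq\Gnmp\}$ is contained in the event $\{e\in E(\Gnmp)\}$, so
$$\pi(H)\le \pi(\klik_2)=\pp.$$

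\emph{Lower bound.} Write $H$ as the union of the single-edge graphs $H[\{e\}]$, $e\in E(H)$; isolated vertices impose no condition. Applying Fact~\ref{Lem:Division2} repeatedly gives
$$\pi(H)\ge\prod_{e\in E(H)}\pi(H[\{e\}])=\pp^{|E(H)|}.$$
Equivalently, the edge indicators are increasing functions of the vertex--attribute Bernoulli variables, so the FKG inequality applies directly.

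\emph{The estimate $1-\pi(H)\asymp 1-\pp$.} Let $k=|E(H)|\ge1$.
\begin{itemize}
\item From $\pi(H)\le\pp$ we get $1-\pi(H)\ge 1-\pp$.
\item From $\pi(H)\ge\pp^{k}$ we get
$$1-\pi(H)\le 1-\pp^{k}=(1-\pp)(1+\pp+\dots+\pp^{k-1})\le k(1-\pp).$$
\end{itemize}
Hence $1-\pp\le 1-\pi(H)\le |E(H)|\,(1-\pp)$. The constant $|E(H)|$ depends only on the graph, which is allowed by the paper's convention on implicit constants.

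Both quantities are strictly positive because $p\in(0,1)$, so the notation $\asymp$ is legitimate. The argument is essentially routine; the only point needing care is the lower bound, where the FKG step must be iterated over the edges of $H$.
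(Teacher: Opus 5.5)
Your proof is correct and follows the paper's argument essentially verbatim. The upper bound comes from the event inclusion $\{H\subseteq\Gnmp\}\subseteq\{e\in\Gnmp\}$, the lower bound from applying Fact~\ref{Lem:Division2} inductively over the edges, and the estimate from the chain $1-\pp\le 1-\pi(H)\le 1-\pp^{|E(H)|}\le |E(H)|(1-\pp)$.
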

\begin{proof}
	The inequality $\pi(H)\le \pp$ follows from the inclusion $\{H \subseteq \Gnmp\}\subseteq \{e\in \Gnmp\} $ for any $e\in E(H)$.  On the other hand,   Fact~\ref{Lem:Division2} applied inductively gives  $\pp^{|E(H)|}\le\pi(H)$,  which  gives us  the first part of the assertion. Furthermore,  this implies
	\begin{align*}
		1-\pp\le 1-\pi(H)\le1-\pp^{|E(H)|}\le |E(H)|(1-\pp),  
	\end{align*}
	which completes the proof.
\end{proof}

\subsection{Comparing clique covers}\label{Sec:CCs:Basic}

In this section, we develop techniques for identifying which clique covers of a given graph are most probable, i.e., those that contribute most to the sum \eqref{Eq:SubgraphProb}. In particular, these methods enable the determination of the variance and covariance of the indicators of a given subgraph's existence in $\Gnmp$. We begin by introducing some additional notation.

For $\cover\in \covers(H)$,    $V'\subseteq V(H)$ and    $H'=H[V']$  we define
\begin{equation}\label{Eq:InducedCliqueCover}
	\cover[V']=\{C\cap V':\exists_{e\in E(H')} e\subseteq C\}.
\end{equation}

Furthermore,  for  $\cover'\subseteq \cover$,    and $\cover''\subseteq \mathcal{P}(V(H))$ we define a modification $\cover[\cover'\mapsto \cover'']$  of $\cover$ by
\begin{equation}\label{Eq:ChangeCliqueCover}
	\cover[\cover'\mapsto \cover'']=(\cover\setminus\cover')\cup \cover''.
\end{equation}
{In what follows,   $\cover'$ and $\cover''$ will be chosen in such a way that $\cover[\cover'\mapsto \cover'']$ will always be a clique cover of the considered graph $H$.}

\begin{definition}\label{Def:Simple}
	Let $H$ be a graph.
	A clique cover $\cover\in \covers(H)$ is a {\it simple  clique cover} of $H$  if it satisfies the following properties:
	\begin{itemize}
		\item[\Pone] $\forall_{C_1,   C_2\in \cover}$ $|C_1\cap C_2|\le 1$;
		\item[\Ptwo] $\forall_{C\in \cover}\  H[C]$ is connected. 
	\end{itemize}
\end{definition}
Let us note that {\Pone} implies that each edge is covered by at most one set from $\cover$.

\begin{lemma}\label{Lem:Simple}
	Let $mp^2\le 1$,    $H$ be a graph,    and $\cover\in\covers(H)$.
	\begin{itemize}
		\item[(i)] There exists a simple $\cover'\in\covers(H)$ such that $$\pi(H,   \cover)\lesssim \pi(H,   \cover').$$
		
		\item[(ii)] For every clique cover with no {\Pone}  there exists $\cover'\in\covers(H)$ with {\Pone} such that 
		\begin{equation}\label{Eq:noP1toP1}
			\pi(H,   \cover)\lesssim \pi(H,   \cover'),   
			\quad\forall_{C\in \cover}\exists_{C'\in \cover'}C\subseteq C',    \quad
			\text{and}\quad |\cover'|\le |\cover|-1.
		\end{equation}
		Additionally,  if $\cover$ satisfies \Ptwo,  then the assertion remains true with $\cover'$  satisfying $\Ptwo$ as well.
	\end{itemize}
\end{lemma}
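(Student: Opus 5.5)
The plan is to work entirely with the asymptotic formula \eqref{eq:CC2} of Lemma~\ref{Lem:ProbCCs}. Since $mp^2\le 1$, it gives, for every clique cover $\cover\in\covers(H)$,
$$\pi(H,\cover)=\pi(H,\cover,\calP(H)\setminus\cover)\asymp \prod_{C\in\cover} mp^{|C|}=:w(\cover).$$
It therefore suffices to produce the required covers with $w(\cover)\lesssim w(\cover')$. Each modification will be of the form $\cover[\cover'\mapsto\cover'']$ from \eqref{Eq:ChangeCliqueCover}, and the number of modifications will be bounded in terms of $|V(H)|$. Hence all implicit constants depend only on $H$, in line with the paper's convention. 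Two elementary observations will be used repeatedly. First, every $C\in\calP(H)$ satisfies $|C|\ge2$, so $mp^{|C|}\le mp^2\le 1$. Consequently, deleting a set from a family can only increase $w$.

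For part (ii), suppose $\cover$ violates \Pone, so there are $C_1,C_2\in\cover$ with $|C_1\cap C_2|\ge2$. I will replace them by their union, i.e.\ pass to $\cover[\{C_1,C_2\}\mapsto\{C_1\cup C_2\}]$.
\begin{itemize}
\item The union contains an edge, so it lies in $\calP(H)$.
\item Every edge previously covered is still covered, so the new family is a clique cover.
\item Every old set is contained in some new set.
\item The cardinality drops by at least one; it drops by more if $C_1\cup C_2$ already belonged to $\cover$, which by the first observation only helps.
\end{itemize}
The weight comparison is
$$\frac{mp^{|C_1\cup C_2|}}{m^2p^{|C_1|+|C_2|}}=\frac{p^{-|C_1\cap C_2|}}{m}\ge \frac{1}{mp^2}\ge 1 .$$
I will iterate this merging step. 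Since $|\cover|$ strictly decreases, the process stops after at most $|\calP(H)|$ steps, and the final cover satisfies \Pone. The containment property is transitive along the chain of steps, and the final cardinality is at most $|\cover|-1$ because at least one step was made. For the additional claim: if $H[C_1]$ and $H[C_2]$ are connected and share a vertex, then $H[C_1\cup C_2]\supseteq H[C_1]\cup H[C_2]$ is connected. Thus \Ptwo{} is preserved at every step.

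For part (i), I will first apply (ii) if necessary, which yields a cover with \Pone{} and no smaller weight. Then I will enforce \Ptwo{} by splitting. If $H[C]$ is disconnected, let $D_1,\dots,D_k$ be the vertex sets of those components of $H[C]$ that contain an edge; here $k\ge1$ since $C\in\calP(H)$. I will replace $C$ by $\{D_1,\dots,D_k\}$.
\begin{itemize}
\item Every edge inside $C$ lies in some $D_l$, so the result is again a clique cover.
\item Each $D_l$ induces a connected graph and lies in $\calP(H)$.
\end{itemize}
The weight changes by the factor
$$\frac{m^k p^{\sum_l|D_l|}}{mp^{|C|}}\ge m^{k-1}\ge 1,$$
because $\sum_l|D_l|\le|C|$ and $p<1$; coincidences with existing sets again only help. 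Moreover the new sets are subsets of $C$, so pairwise intersections can only shrink and \Pone{} is preserved. Each split strictly decreases the number of sets inducing disconnected subgraphs, since the new sets all induce connected subgraphs. Hence finitely many splits produce a simple cover $\cover'$ with $\pi(H,\cover)\lesssim\pi(H,\cover')$.

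I expect no real obstacle. The only delicate points are bookkeeping: checking that each modified family is a genuine clique cover made of sets from $\calP(H)$, handling coincidences with sets already in the family, and making sure the constants from \eqref{eq:CC2} are applied only finitely many times, with a number of applications bounded in terms of $H$.
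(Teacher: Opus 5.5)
Your proposal is correct and follows essentially the same route as the paper. For (ii) you use the same merging step, with ratio $mp^{|C_1\cap C_2|}\le mp^2\le 1$ from \eqref{eq:CC2}; for (i) you use the same splitting of a disconnected $H[C]$ into its edge-containing components. The only differences are cosmetic: you enforce \Pone{} first and then check that splitting preserves it, whereas the paper splits first and relies on merging preserving \Ptwo{}, and you handle coincident sets explicitly, which the paper leaves implicit.
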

\begin{remark}
	Lemma~\ref{Lem:Simple}(i)  implies that    to estimate a subgraph probability for $mp^2\le 1$,    we may consider only simple clique covers,    which we will do from now on.
\end{remark}
\begin{proof}[Proof of Lemma~\ref{Lem:Simple}] 
	
	We start with the proof of $(ii)$. Assume that $\cover$ does not have {\Pone}. Then,  there are  two sets $C',   C''\in \cover$ that intersect on at least two vertices. Therefore,     by \eqref{eq:CC2},  we get  
	\begin{align*}
		\frac{\pi(H,   \cover)}{\pi(H,   \cover[\{C',   C''\}\mapsto \{C'\cup C''\}])}
		\asymp \frac{mp^{|C'|}mp^{|C''|}}{mp^{|C'\cup C''|}}
		=mp^{|C'\cap C''|}\le mp^2\le 1.
	\end{align*}
	Applying this procedure inductively,    we arrive at a clique cover $\cover$ with {\Pone} and satisfying \eqref{Eq:noP1toP1}. 
	Moreover,   if $\cover$ satisfies \Ptwo,  then  the graphs $H[C']$ and $H[C'']$ are connected,  and hence  $H[C'\cup C'']$ is  connected as well. Therefore,  the procedure described above leads to $\cover'$ with {\Ptwo}. 
	
	In view of $(ii)$,  in order to prove $(i)$ it is enough to show existence of $\cover'$ satisfying $\Ptwo$.
	If $\cover$ does not satisfy \Ptwo,  there exists $C\in\cover$ such that $H[C]$ is not connected.  Let $C_1,  \ldots,  C_k$ be pairwise disjoint sets such that $C_1\cup C_2\cup \ldots \cup C_k\subseteq C$,  $|C_i|\ge 2$,  $i=1, \ldots,  k$,  the set $C_1\cup C_2\cup \ldots \cup C_k$ covers all edges of $H[C]$,  and  $H[C_i]$ is connected,  $i=1, \ldots,  k$. Since $H[C]$ contains an edge by the definition of clique cover,  it holds that  $k\ge1$.  Then,    by \eqref{eq:CC2},  for $\cover'=\cover[\{C\}\mapsto\{C_1,  \ldots,  C_k\}]$ we have
	\begin{align*}
		\frac{\pi(H,   \cover)}{\pi(H,   \cover')}\asymp \frac{mp^{|C|}}{m^kp^{|C_1|+ \ldots+|C_k|}}= \frac{p^{|C|-|C_1|-\ldots-|C_k|}}{m^{k-1}}\le 1.
	\end{align*}
	Since $H[C_1],  \ldots ,  H[C_k]$ are connected,  after repeating the reasoning above finite number of times,    we obtain a new clique cover with {\Ptwo}.  
\end{proof}

The remainder of the section is primarily devoted to the proof of the lemma below. For $mp^2\le 1$, it will play a key role in estimating the probabilities of the graphs $G_t$ in which the intersecting cliques $\kli_{ab}$ share at least two vertices. Several intermediate results will also prove useful in other arguments. Moreover, the reasoning employed in the proof of this lemma may be applied to the analysis of probabilities of other graphs.

\begin{lemma}\label{Lem:Odrywanie}
	Assume $mp^2\le 1$. Let  $j-i\ge 2$ and let $i\ge 0$ for $t=1$ and $i\ge 2$ for  $t=2, 3$. Moreover, let $t$ and $\II$,  $|\II|\ge 2$,  be such that $G_{t, \II}$ is connected. Than it holds that,  for any simple $\cover\in \covers (G_{t, \II})$,  there exists $ab\in \II$ such that 
	$$
	n^{|V(G_{t, \II})|}\pi(G_{t, \II},   \cover)\lesssim \frac{n^r\qq\cdot n^{|V(G_{t, \II\setminus\{ab\}})|}\pi(G_{t, \II\setminus\{ab\}})}{\ble}, 
	$$
	and $G_{t, \II\setminus\{ab\}}$ is connected.
\end{lemma}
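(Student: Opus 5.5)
We prove the statement by peeling off one clique. Pick a suitable $ab\in\II$ and compare $\cover$ with the restriction $\cover[V(G_{t,\II\setminus\{ab\}})]$ from \eqref{Eq:InducedCliqueCover}. Write $H=G_{t,\II}$, $H'=G_{t,\II\setminus\{ab\}}$ and $D=V(\kli_{ab})\setminus V(H')$, so that $|V(H)|=|V(H')|+|D|$.

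\textbf{Choice of $ab$.} In all three families there is a clique whose removal keeps $H'$ connected. The index structure $\indeksy=\{11,12,21,22\}$ forms a cycle (or a path) of intersections, so some $ab\in\II$ is a "leaf" or lies on the cycle. We choose it with one extra requirement: $\kli_{ab}$ meets $H'$ in a clique $S=V(\kli_{ab})\cap V(H')$ of size $s\ge 2$. For $t=1$ this holds because $|S|\ge j-i\ge2$; for $t=2,3$ it holds because $i\ge2$ or $j-i\ge2$. This is exactly where the hypotheses $j-i\ge2$, $i\ge2$ enter.

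\textbf{Splitting the cover.} Since $\cover$ is simple (\Pone), each set $C\in\cover$ either lies inside $V(H')$, or meets $V(H')$ in at most one vertex, or meets $S$ in at least two vertices. By \Pone, at most one set of the third kind can cover any given edge of $S$. Let $\cover_1$ be the sets contained in $V(H')$, together with the traces $C\cap V(H')$ of sets meeting $V(H')$ in at least two vertices; this $\cover_1$ is a clique cover of $H'$. Let $\cover_2$ be the sets, or parts of sets, covering the edges of $\kli_{ab}$ that touch $D$. By \eqref{eq:CC2},
\[
\pi(H,\cover)\asymp \prod_{C\in\cover} mp^{|C|}\le \pi(H',\cover_1)\cdot\prod_{C\in\cover_2\text{-part}} (\text{factors } mp^{|C\cap D|}\text{ or } mp^{|C|}).
\]
Since $\pi(H',\cover_1)\le\pi(H')$, it remains to show that the extra factor, multiplied by $n^{|D|}$, is $\lesssim n^r\qq/\ble$. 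Denote by $\Phi_D$ the extra factor. We aim to bound $n^{|D|}\Phi_D$ by something like $\min$ over the ways of covering $\kli_{ab}$ given the fixed clique $S$, that is, by $n^{r-s}\pi(\kli,\cdot)/\pi(K_s)$-type quantities. We then use $n^r\qq/\ble\gtrsim n^{r-s}\qq/\pp^{\cdots}$. Concretely, we want $n^{|D|}\Phi_D\lesssim n^r\qq/\ble_s$ for some $s\ge2$. Then \eqref{Eq:BleaBlei} gives $\ble\lesssim \ble_s$ and finishes the argument.

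\textbf{Main obstacle: the regimes.} The hardest step is verifying $n^{|D|}\Phi_D\lesssim n^r\qq/\ble_s$ in both regimes of Lemma~\ref{Lem:PiOptimal}. We first reduce $\cover$ restricted to $\kli_{ab}$ to two extreme configurations, using the merge/split comparisons from Lemma~\ref{Lem:Simple}, which are monotone under $mp^2\le1$. The two configurations are: (a) one set containing $D$ together with at least part of $S$; (b) all edges touching $D$ covered by $2$-sets. In case (a) the factor is at most $mp^{r-s'}\cdot p^{\ldots}$. Using the trace set in $\cover_1$, the combined contribution yields $n^{r-s}mp^r/(mp^s)=n^r mp^r/\ble_s$ when $p\le m^{-(r+1)/(2r)}$. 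In case (b) the factor is $(mp^2)^{\binom r2-\binom s2}$, giving $n^r\pp^{\binom r2}/(n^s\pp^{\binom s2})$. Mixed configurations are handled by interpolating: a set of size $c$ meeting $D$ contributes $mp^{c}$, which we compare with $p^{2\binom{c}{2}}m^{\binom c2}$ or with $mp^c$ as the regime dictates. Any slack is absorbed because $n^{\cdot}$ factors only help. This inequality-juggling across partial covers is where care is needed, and we would organize it by induction on the number of sets of $\cover$ meeting $D$.
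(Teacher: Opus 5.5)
You have a genuine gap. The bookkeeping step is fine. Made precise for $mp\ge 1$, it gives $\pi(G_{t,\II},\cover)\lesssim \pi(H')\,\Phi_D$ with $\Phi_D\lesssim \pi(\kli_{ab},\cover[V(\kli_{ab})])/\pi(\klik_s,\cover[S])$, which is essentially Lemma~\ref{Lem:Division}(iii). But then the whole content of the lemma is the bound $n^{|D|}\Phi_D\lesssim n^r\qq/\ble$ for an \emph{arbitrary} simple cover, and your proposal leaves exactly this as ``inequality-juggling''.

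The tools you name cannot deliver that bound:
\begin{itemize}
\item Lemma~\ref{Lem:Simple} only merges sets sharing two vertices and splits disconnected sets. It cannot bring the cover of $\kli_{ab}$ to your configurations (a)/(b).
\item Whether replacing a set by $2$-sets helps is decided by the per-set threshold $m^{-\frac{|C|+1}{2|C|}}$ of Lemma~\ref{Lem:Splitting}, not by the global dichotomy of Lemma~\ref{Lem:PiOptimal}. A set meeting $\kli_{ab}$ and reaching into $H'$ can have $|C|>r$ and be favourable even though $\qq\asymp\pp^{\binom r2}$ (Remark~\ref{rem:dominatingccs}).
\item The $n$-factors do not only help. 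A shared set contributes $p^{|C\cap D|}$, so $n^{|D|}\Phi_D$ contains $(np)^{|C\cap D|}$.
\end{itemize}
For instance, take $r=4$, $t=1$, $i=1$, $j=3$, $\II=\{11,12\}$, so $G_{1,\II}\cong\klik_4\oplus_3\klik_4$ with $|S|=3$ and $|D|=1$. Take $\cover=\{V(G_{1,\II})\}$ and $m^{-5/8}\le p\ll m^{-3/5}$. Then $n^{|D|}\Phi_D=np$, whereas your stated target $n^4\qq/\ble_3\asymp n\pp^3$ is smaller by a factor $\asymp m^3p^5\to 0$. The bound only holds after a case split on $np$. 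For $np\le 1$, use $(np)^{|D|}\le 1\le n^r\qq/\ble$. For $np\ge 1$, compare with $\ble_2$ using $\qq\gtrsim mp^r$. Your plan does not foresee this split.

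The missing idea is control of how the intersection $S$ is covered. After bounding $\pi(\kli_{ab},\cover[V(\kli_{ab})])\le\qq$, the factorized bound closes only if $n^{|S|}\pi(\klik_{|S|},\cover[S])\gtrsim\ble$. This holds when $S$ lies in a single set of $\cover$ and either $p\le m^{-\frac{r+1}{2r}}$ or $np\ge 1$. It fails in general: with $\cover[S]=E(\klik_s)$ and $p\le m^{-\frac{r+1}{2r}}$ one can have $n^s\pp^{\binom s2}\ll\ble$ (e.g.\ $r=4$, $s=3$, $p=m^{-0.7}$, $n=m^{0.75}$).

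The paper handles this with a per-set dichotomy:
\begin{itemize}
\item If every $C\in\cover$ satisfies $p\ge m^{-\frac{|C|+1}{2|C|}}$, Lemma~\ref{Lem:Splitting}(ii) reduces to the all-$2$-set cover, and Lemma~\ref{Lem:Transition}(i) factorizes.
\item Otherwise, the gluing Lemmas~\ref{Lem:Glueing} and~\ref{Lem:GlueingBis}, applied to the decompositions in Lemma~\ref{Lem:Odrywanie:Pre}, produce a no less likely simple cover with one set containing all of $S$. Lemma~\ref{Lem:Transition}(ii) then concludes with the $np$ split above.
\end{itemize}
Without such gluing, your sketched induction on the sets meeting $D$ would have to recover the $1/(mp)$ factors discarded in the factorization, and nothing in it says how.

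Two smaller points. For $t=2$ and $\II=\indeksy$, every $\kli_{ab}$ meets the rest in two disjoint cliques of sizes $i$ and $j-i$, not in a clique. The edges between them belong to $\kli_{ab}$ only; this is the $\oplus_{j_1,j_2}$ case of Lemma~\ref{Lem:Transition}. Also, your trichotomy of sets is incomplete: $\{x,y,w\}$ with $x\in D$, $y\in S$, $w\in V(H')\setminus S$ and $yw\in E(H')$ meets $V(H')$ in two vertices but $S$ in only one.
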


\begin{remark}
	In the case $t=1$ and $r=j$,  for $|\II|\ge 3$ {or} $\II\in \{\{11, 12\}, \{21, 22\}\}$ it is easy to find $ab$ such that  $G_{t, \II\setminus\{ab\}}=G_{t, \II}$. Then, the above lemma follows trivially  by  the inequality $\ble\le n^r\qq$. Furthermore, if we additionally assume   $i=0$,  the same argument holds true  for any $\II$ such that $|\II|=2$. 
\end{remark}

We begin with a lemma that provides  simple estimates of clique cover probabilities when coverings by 2-sets are more likely,  and when coverings by larger sets are more probable.

\begin{lemma}\label{Lem:Splitting}
	Let $mp^2\le 1$,    $H$ be a  graph,    $\cover\in \covers(H)$ {be a simple clique cover},    $C\in \cover$,    $|C|\ge 3$. 
	\begin{itemize}
		\item[(i)] If $p\le m^{-\frac{|C|+1}{2|C|}}$ and $H[C]$ is a clique then
		$$
		\pi(H,   \cover[\{C\}\mapsto E(H[C])])\lesssim \pi(H,   \cover).
		$$
		\item[(ii)] If $p\ge m^{-\frac{|C|+1}{2|C|}}$, then
		$$
		\pi(H,   \cover)\lesssim \pi(H,   \cover[\{C\}\mapsto E(H[C])]).
		$$
		
		\item[(iii)] If $Am^{-\frac{h+1}{2h}}\le p\le m^{-1/2}$,   where $h=|V(H)|$ and $A\ge 1$,    then 
		$$
		\pi(H)=(1+O(A^{-1}))\pi(H,  E(H))=(1+O(A^{-1}+m^{-\delta_h}))\pp^{|E(H)|},  
		$$
		where $\delta_h=1/({h2^{h+1}})$.
	\end{itemize}

\end{lemma}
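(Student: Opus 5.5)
For (i) and (ii) I will compare the two clique covers directly through \eqref{eq:CC2}. Put $k=|C|$ and $\cover'=\cover[\{C\}\mapsto E(H[C])]$. The first task is to check that $\cover'$ is a genuine clique cover and to count its sets.

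Simplicity of $\cover$, property \Pone, means that no other set of $\cover$ shares two vertices with $C$. Consequently no edge inside $C$ lies in another set of $\cover$, and no edge of $E(H[C])$ already belongs to $\cover$. So in $\cover'$ the single $k$-set $C$ is replaced by exactly $|E(H[C])|$ new $2$-sets, and $\cover'$ still covers every edge of $H$. By \eqref{eq:CC2} the common factors cancel, which gives
\begin{equation*}
\frac{\pi(H,\cover')}{\pi(H,\cover)}\asymp\frac{(mp^2)^{|E(H[C])|}}{mp^{k}} .
\end{equation*}

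For (i), $H[C]$ is a clique, so $|E(H[C])|=\binom{k}{2}$ and the ratio equals $m^{\binom k2-1}p^{k(k-1)-k}=\big(m^{(k+1)/2}p^{k}\big)^{k-2}$ up to constants. The condition $p\le m^{-\frac{k+1}{2k}}$ is exactly $m^{(k+1)/2}p^k\le1$, so the ratio is $\lesssim 1$ because $k\ge3$.

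For (ii), $H[C]$ need not be a clique, so I only use $|E(H[C])|\le\binom k2$. Since $mp^2\le1$, this gives $(mp^2)^{|E(H[C])|}\ge(mp^2)^{\binom k2}$. The ratio is therefore at least $\big(m^{(k+1)/2}p^k\big)^{k-2}$, and this is $\ge1$ under the reversed condition.

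For (iii), I will work with the exact formula \eqref{eq:CC1} and with the decomposition \eqref{Eq:SubgraphProb}. By Lemma~\ref{Lem:Simple} and the remark after it, it suffices to show that the total contribution of all covers $\cover\neq E(H)$ is $O(A^{-1})\pi(H,E(H))$, where the non-simple covers are controlled as well. Pick any $\cover\ne E(H)$. It contains some set of size $k$ with $3\le k\le h$, where $h=|V(H)|$; if $\cover$ is non-simple, first pass to its simple counterpart via Lemma~\ref{Lem:Simple}.

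Now replace each such large set by its edges, as in (ii). The gain factor for one set is at least $\big(m^{(k+1)/2}p^k\big)^{k-2}\ge m^{(k+1)/2}p^k$. Since $\frac{k+1}{2k}\ge\frac{h+1}{2h}$, the hypothesis $p\ge Am^{-\frac{h+1}{2h}}$ gives $p^k\ge A^km^{-(h+1)k/(2h)}\ge A^k m^{-(k+1)/2}$. Each large set therefore costs a factor $\lesssim A^{-k}\le A^{-1}$ relative to $\pi(H,E(H))$. Only finitely many covers exist, their number depending on $h$, so summing gives $\pi(H)=(1+O(A^{-1}))\pi(H,E(H))$.

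The second equality in (iii) is then \eqref{eq:CC3}. The assumption $p\le m^{-1/2}$ is what puts us in the regime $mp^2\le1$, where \eqref{eq:CC2} and \eqref{eq:CC3} apply.

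The main obstacle is (iii). There, constant-factor comparisons are not enough: I need an explicit $A^{-1}$ gain that is uniform over all covers. This works because the constants in \eqref{eq:CC2} depend only on $h$, while the number of covers is bounded by a function of $h$, so a fixed constant times $A^{-1}$ absorbs them. A second point needing care is the non-simple covers. Lemma~\ref{Lem:Simple} only gives $\lesssim$, so I must track that merging and splitting sets never produces the cover $E(H)$ itself without first passing through some set of size at least $3$.
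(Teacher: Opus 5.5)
\textbf{Parts (i) and (ii).} These follow the paper: you compare the two covers through \eqref{eq:CC2}. For (ii) your argument is more economical. The paper writes the ratio as $(m^{f(E_C)}/p)^{2E_C-k}$ and needs \Ptwo{} (to get $E_C\ge k-1>k/2$) together with monotonicity of $f$. You only use $|E(H[C])|\le\binom{k}{2}$ and $mp^2\le 1$.

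\textbf{Part (iii).} The overall route is the paper's: every set of size at least $3$ costs a factor $A^{-1}$, and there are finitely many covers. There is, however, a genuine gap in how you handle non-simple covers.

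Routing them through Lemma~\ref{Lem:Simple} does not work as you set it up. That lemma only gives $\lesssim$, and its \Ptwo-splitting step can take a non-simple cover directly to $E(H)$. For example, let $H$ consist of two disjoint edges $ab$ and $cd$, and let $\cover=\{\{a,b,c,d\}\}$. The simple counterpart of $\cover$ is exactly $E(H)$. Your next step then has no large set to split, so the factor $A^{-1}$ is lost. The invariant you propose to track does not rule this out, since the original cover already contains a $4$-set.

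The fix is already in your computation for (ii), which never needed \Pone{} in that direction. Take an arbitrary $\cover\neq E(H)$ and $C\in\cover$ with $|C|=k\ge 3$. The replacement $\cover[\{C\}\mapsto E(H[C])]$ adds some number $E'\le|E(H[C])|\le\binom{k}{2}$ of new $2$-sets; $E'$ may be smaller than $|E(H[C])|$ if some of these $2$-sets already lie in $\cover$. By \eqref{eq:CC2},
\[
\frac{\pi(H,\cover[\{C\}\mapsto E(H[C])])}{\pi(H,\cover)}\asymp\frac{(mp^2)^{E'}}{mp^{k}} .
\]
Using $mp^2\le 1$ and your bound $p^k\ge A^k m^{-(k+1)/2}$, we get
\[
\frac{(mp^2)^{E'}}{mp^{k}}\ge\frac{(mp^2)^{\binom{k}{2}}}{mp^{k}}=\big(m^{(k+1)/2}p^{k}\big)^{k-2}\ge A^{k(k-2)}\ge A .
\]
Iterate this over the sets of size at least $3$. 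The process ends at $E(H)$ and gives $\pi(H,\cover)\lesssim A^{-1}\pi(H,E(H))$ for every $\cover\neq E(H)$, simple or not, with no appeal to Lemma~\ref{Lem:Simple}.

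The paper's own proof of (iii) also runs its ratio formula over all $\cover\neq E(H)$ without treating non-simple covers separately. So you were right that this point needs an argument; the display above supplies it.
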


\begin{proof}
	First set $C\in \cover\in \covers(H)$ such that $|C|=k\ge 3$. Denote $E_C=|E(H[C])|$. By \eqref{eq:CC2},  for $mp^2\le 1$, 
	\begin{align}\label{Eq:Splitting:main}
		\frac{\pi(H,   \cover)}
		{\pi(H,   \cover[\{C\}\mapsto E(H[C])])}
		\asymp
		\frac{mp^{k}}{(mp^2)^{E_C}}=\left(\frac{m^{-\frac{E_C-1}{2E_C-k}}}{p}\right)^{2E_C-k}=\left(\frac{m^{f(E_C)}}{p}\right)^{2E_C-k}, 
	\end{align}
	where
	$
	f(x):=-\frac{x-1}{2x-k}=-\frac{1}{2}-\frac{k-2}{4x-2k}.
	$
	We will use the following properties of the function $f$
	\begin{equation}
		\text{$f(x)$ is increasing for $x > k/2$}\quad\text{ and }\quad f\left(\binom{k}{2}\right)=-\frac{k+1}{2k}.
	\end{equation} 
	Furthermore,  note that by {\Ptwo} the graph $H[C]$ is connected, and recall the assumption $k\ge 3$.   This gives us
	\begin{equation}\label{Eq:Splitting:positive}
		2E_C-k\ge 2(k-1)-k = k-2\ge 1.
	\end{equation}
	
	The assumptions of $(i)$ imply $E_C=\binom{k}{2}$ and $p\le  m^{-\frac{k+1}{2k}}= m^{f(\binom{k}{2})}$,  which yields
	\begin{equation}
		\frac{m^{f(E_C)}}{p}\ge 1. 
	\end{equation}
	Consequently,  $(i)$ follows by \eqref{Eq:Splitting:main} and \eqref{Eq:Splitting:positive}.
	
	Now let us consider the case $(ii)$. $E_C\le \binom{k}{2}$ and,  since by {\Ptwo} $H[C]$ is connected,  we have $E_C\ge k-1>k/2$ for $k\ge 3$. Therefore, by monotonicity of $f$ and the assumption $p\ge m^{-\frac{k+1}{2k}}=m^{f(\binom{k}{2})}$,  we obtain
	$$
	\frac{m^{f(E_C)}}{p}\le \frac{m^{f(\binom{k}{2})}}{p}\le 1.
	$$
	Applying this and \eqref{Eq:Splitting:positive} to \eqref{Eq:Splitting:main}, we get $(ii)$.
	
	Under the assumptions from $(iii)$ we have $p\ge Am^{f(\binom{h}{2})}$,  where $h=|V(H)|$. For any $\cover\in \covers(H)\setminus \{E(H)\}$ and any $C\in \cover$ of size $k=|C|$,  $3\le k\le h$,  by monotonicity of $f$ we have
	\begin{equation}
		\frac{m^{f(E_C)}}{p}\le \frac{m^{f(\binom{k}{2})}}{Am^{f(\binom{h}{2})}}\le A^{-1}.
	\end{equation}
	Therefore,  applying \eqref{Eq:Splitting:main} and \eqref{Eq:Splitting:positive} inductively for each $C$,  $|C|\ge 3$,  we get that  for any $\cover\in \covers(H)\setminus\{E(H)\}$ and  $A\ge 1$
	\begin{equation}
		\pi(H, \cover)\lesssim A^{-1} \pi(H, E(H)).
	\end{equation}
	This combined with \eqref{Eq:SubgraphProbabilityCCfamily} proves the first equality in $(iii)$. The second equality follows then by \eqref{eq:CC3}.
\end{proof}

\begin{figure}
	\begin{center}
		\includegraphics[width=10cm]{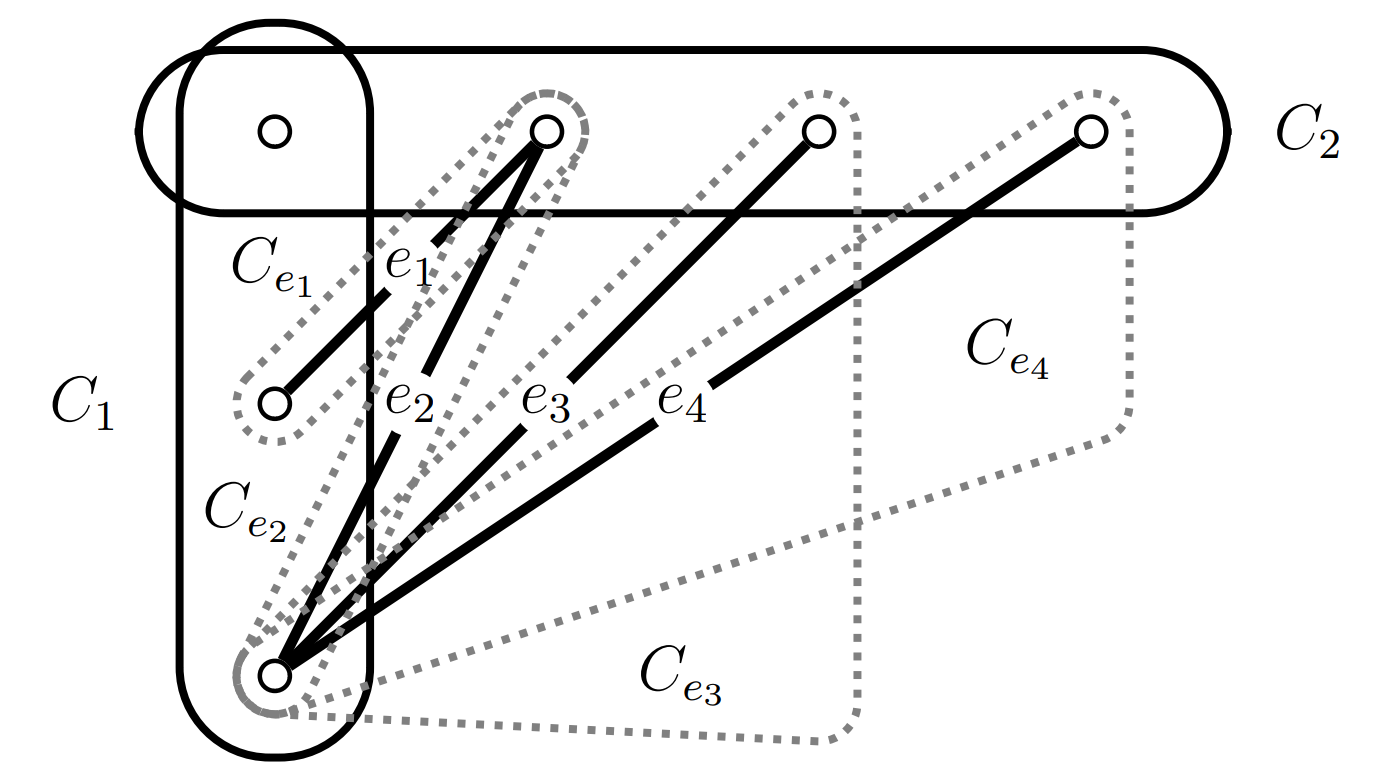}
	\end{center}
	\caption{\label{Fig:Glueing1} An example of $\FF(C_1,  C_2)=\{e_1,  e_2,  e_3,  e_4\}$ with $C_{e_i}$,   $i=1,  2,  3,  4$.}
\end{figure}

The next lemma allows for a more refined comparison between the probability of a clique cover $\cover$ and the probability of a clique cover in which we replace some sets of $\cover$ (not necessarily 2-sets) by their union.

\begin{lemma}\label{Lem:Glueing} 
	Let $\cover\in \covers(H)$ be a simple clique cover of $H$ and $C_1,   C_2\in \cover$ be such that 
	$|C_1\cap C_2|=1$. Let
	$$
	\FF(C_1,   C_2):=\{e\in E(H): |e\cap C_1|=|e\cap C_2|=1\text{ and }C_1\cap C_2\cap e = \emptyset \}.
	$$
	If $p\le m^{-\frac{k+1}{2k}}$ 
	and $|\FF(C_1,   C_2)|\ge (k-1)/2$ for some $k\ge1$,    then
	there exists a simple clique cover $\cover'\in\covers(H)$ such that
	\begin{itemize}
		\item[(i)] $\pi(H,   \cover)\lesssim \pi(H,   \cover')$;
		\item[(ii)] $\exists_{C'\in\cover'}$ $C_1\cup C_2\subseteq C'$;
		\item[(iii)] $|\cover'|\le |\cover|-1$.
	\end{itemize}
\end{lemma}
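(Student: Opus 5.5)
The plan is to build $\cover'$ from $\cover$ by merging. Every set of $\cover$ that lies inside $C_1\cup C_2$ is absorbed into the new set $C_1\cup C_2$. The set $C_1\cup C_2$ also takes over the edges of $\FF(C_1,C_2)$, which were previously covered by other sets of $\cover$. Write $s=|\FF(C_1,C_2)|$ and $k_0=|C_1\cup C_2|=|C_1|+|C_2|-1$. Since $\cover$ is simple, property \Pone{} implies that each edge $e\in\FF(C_1,C_2)$ is covered by exactly one set $C_e\in\cover$. Moreover $C_e\neq C_1,C_2$, because $e$ has exactly one endpoint in each of $C_1$ and $C_2$ and avoids $C_1\cap C_2$. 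Also by \Pone, $|C_e\cap(C_1\cup C_2)|=2$, so $C_e\cap(C_1\cup C_2)=e$. Distinct edges of $\FF(C_1,C_2)$ might a priori share the same $C_e$; that would force $|C_e\cap C_1|\ge 2$ or $|C_e\cap C_2|\ge 2$, contradicting \Pone. So the $s$ sets $C_e$ are distinct. I would set
\begin{equation*}
\cover''=\cover\bigl[\{C_1,C_2\}\cup\{C_e:|C_e|=2\}\mapsto\{C_1\cup C_2\}\cup\{C_e\setminus\{x_e\}: |C_e|\ge 3\}\bigr],
\end{equation*}
where $x_e$ is a suitably chosen endpoint of $e$ (the one outside $C_1\cap C_2$, either endpoint works). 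The cleanest alternative is simpler still: keep $C_e$ with $|C_e|\ge3$ unchanged, replace $C_1,C_2$ by $C_1\cup C_2$, and delete only the $2$-sets $C_e=e$.

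For the probability comparison I would use \eqref{eq:CC2}. Let $s_2$ be the number of $e\in\FF(C_1,C_2)$ with $C_e=e$. Removing $C_1,C_2$ and those $s_2$ edge sets and adding $C_1\cup C_2$ gives the ratio
\begin{equation*}
\frac{\pi(H,\cover)}{\pi(H,\cover'')}\asymp\frac{mp^{|C_1|}\,mp^{|C_2|}(mp^2)^{s_2}}{mp^{k_0}}=mp\,(mp^2)^{s_2}.
\end{equation*}
This is only $\lesssim1$ when $s_2$ is large, so the larger sets $C_e$ must be handled too. To treat them, I would also trim each large $C_e$ to remove the redundancy. Intersecting on two vertices with $C_1\cup C_2$ violates \Pone{} for $\cover''$. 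So I would instead merge each such $C_e$ into $C_1\cup C_2$ as well, following Lemma~\ref{Lem:Simple}(ii). The gain from merging a set $C_e$ that meets $C_1\cup C_2$ in two vertices is a factor $mp^2\le1$, so each merge only helps. A cleaner route is to merge everything: let $C'=C_1\cup C_2\cup\bigcup_{e\in\FF}C_e$. Each $C_e$ contributes $|C_e|-2$ new vertices and these are distinct across $e$ by \Pone. The ratio then becomes
\begin{equation*}
\frac{\pi(H,\cover)}{\pi(H,\cover[\{C_1,C_2\}\cup\{C_e\}\mapsto\{C'\}])}\asymp\frac{m^{2+s}p^{k_0+1+\sum_e|C_e|}}{mp^{k_0+\sum_e(|C_e|-2)}}=m^{1+s}p^{1+2s}.
\end{equation*}
This is $\le1$ iff $p\le m^{-\frac{s+1}{2s+1}}$. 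The hypothesis $s\ge(k-1)/2$ gives $2s+1\ge k$, and $x\mapsto\frac{x+1}{2x}$ is decreasing, so $\frac{s+1}{2s+1}=\frac{(2s+1)+1}{2(2s+1)}\le\frac{k+1}{2k}$. Hence $p\le m^{-\frac{k+1}{2k}}\le m^{-\frac{s+1}{2s+1}}$ and the ratio is $\lesssim1$. Properties (ii) and (iii) are immediate, since at least two sets are replaced by one.

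The main obstacle is ensuring that $\cover'$ is a clique cover that is again simple. The merged set $C'$ covers all edges previously covered by the removed sets, so it is a cover, and $H[C']$ is connected because $C_1,C_2$ share a vertex and each $C_e$ meets $C_1\cup C_2$. Property \Pone{} may fail, however, if some other set $D\in\cover$ meets $C'$ in at least two vertices. Here I would invoke Lemma~\ref{Lem:Simple}(ii) in its \Ptwo-preserving form. It produces a cover with \Pone{} and \Ptwo{}, no smaller in probability up to constants, in which every set is contained in a set of the new cover (so (ii) persists) and with fewer sets (so (iii) persists). If $mp^2>1$ were relevant this would break, but $p\le m^{-\frac{k+1}{2k}}\le m^{-1/2}$ guarantees $mp^2\le1$.
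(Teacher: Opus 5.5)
Your final ``merge everything'' route is the paper's proof: the same merged set $C_1\cup C_2\cup\bigcup_{e\in\FF}C_e$, the same injectivity of $e\mapsto C_e$ from \Pone, the same bound $m^{|\FF|+1}p^{2|\FF|+1}\le 1$ via the same monotonicity in $|\FF|$, and the same repair of \Pone{} through Lemma~\ref{Lem:Simple}(ii) once \Ptwo{} is checked by connectivity. One small slip: the new vertices $C_e\setminus(C_1\cup C_2)$ need not be distinct across $e$, since under \Pone{} two such sets with disjoint edges may share one outside vertex. So the displayed ratio holds only as $\lesssim m^{1+s}p^{1+2s}$; this is the direction you need, and it is how the paper states it.
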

For an example of a configuration described in the lemma see Figure~\ref{Fig:Glueing1}.
\begin{proof} For each $e\in \FF := \FF(C_1,   C_2)$,   
	denote by $C_e$ the set that covers $e$ in $\cover$. Due to  {\Pone} the set $C_e$ is unique for any $e\in E(H)$ (since each edge is covered by exactly one set). Moreover,  the function $e\mapsto C_e$,   $e\in \FF$,   defined in this manner is an injection. Indeed,   if there were distinct  $e_1,   e_2\in \FF$ such that $C_{e_1}=C_{e_2}$ then $C_{e_1}$ would intersect $C_1$ or $C_2$ on at least two vertices (see Figure~\ref{Fig:Glueing1}). This contradicts {\Pone}. Moreover,   again by {\Pone},   each $C_e$ intersects $C_i$,   $i=1,  2$,   on exactly one vertex.  
	
	Let $$C_\FF=C_1\cup C_2\cup \bigcup_{e\in \FF} C_e\quad\text{ and }\quad\cover_\FF=\{C_1,  C_2\}\cup \{C_e,    e\in \FF\}.$$ 
	Then,   using all the observations given above,   we obtain
	\begin{align*}
		|C_\FF|=\left|
		C_1\cup C_2\cup \bigcup_{e\in \FF} C_e
		\right|
		&\le
		|C_1|+|C_2|-1+\sum_{e\in \FF}(|C_e|-2)\\
		&=
		|C_1|+|C_2|-1-2|\FF| +\sum_{e\in \FF}|C_e|.
	\end{align*}
	Let $\cover'=\cover[\cover_\FF \mapsto \{C_\FF\}]\in \covers(H)$.
	Since for $p\le m^{-\frac{k+1}{2k}}$ it holds that $mp^2\leq1$,     the formula \eqref{eq:CC2} gives us 
	\begin{align*}
		\frac
		{\pi(H,   \cover)}
		{\pi(H,   \cover')}
		\asymp 
		\frac
		{mp^{|C_1|}mp^{|C_2|}\prod_{e\in \FF}mp^{|C_e|}}
		{mp^{\left|C_\FF\right|}}
		\le m^{|\FF|+1}p^{2|\FF|+1}
		\le \left(m^{\frac{|\FF|+1}{2|\FF|+1}}m^{-\frac{k+1}{2k}}\right)^{2|\FF|+1}\le 1,   
	\end{align*}
	where the last inequality is a consequence of the assumption $|\FF|\ge (k-1)/2$ and the fact that  $\frac{k+1}{2k}=\frac{\frac{k-1}{2}+1}{2\frac{k-1}{2}+1}$ and $f(x)=\frac{x+1}{2x+1}=\frac{1}{2}+\frac{1}{2(2x+1)}$ is a decreasing function of $x>0$. 
	Since $\cover$ has property {\Ptwo},    all graphs $H[C]$,   $C\in\cover_\FF$, are connected. Moreover each set in $\cover_\FF$ intersect $C_1$ and $C_2$,   therefore $H[C_\FF]$ is also connected and $\cover'$ has {\Ptwo}. Obviously, $\cover'$ fulfills (ii) and (iii). If $\cover'$ does not have {\Pone},    then the lemma follows by Lemma~\ref{Lem:Simple}~(ii) applied to $\cover'$.
\end{proof}
The following straightforward corollary of the previous lemma will play a key role when considering  $mp\le 1$.  For an alternative proof, see \cite{SubgraphProb}.
\begin{corollary}\label{Cor:Glueing}
	Let $mp\le 1$ and $H$ be a connected graph. Then
	$$\pi(H)\asymp \pi(H,   \{V(H)\})\asymp mp^{|V(H)|}.$$
\end{corollary}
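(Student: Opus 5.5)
The plan is to show that, when $mp\le 1$, the single-set cover $\{V(H)\}$ dominates every other clique cover of a connected graph $H$. The proof then has three steps.

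\textbf{Step 1 (the single set is a valid cover and has the right order).} Since $mp\le 1$ implies $mp^2\le 1$, estimate \eqref{eq:CC2} applies to every clique cover. Because $H$ is connected and non-empty, $V(H)\in\calP(H)$ and $\{V(H)\}$ is a clique cover. Hence \eqref{eq:CC2} gives $\pi(H,\{V(H)\})\asymp mp^{|V(H)|}$.

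\textbf{Step 2 (reduction to simple covers).} It remains to show $\pi(H,\cover)\lesssim \pi(H,\{V(H)\})$ for every $\cover\in\covers(H)$. Since $|\covers(H)|$ is bounded by a constant depending only on $|V(H)|$, summing via \eqref{Eq:SubgraphProb} then gives $\pi(H)\asymp\pi(H,\{V(H)\})$. By Lemma~\ref{Lem:Simple}(i) we may assume $\cover$ is simple.

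\textbf{Step 3 (gluing argument).} I argue by induction on $|\cover|$, applying Lemma~\ref{Lem:Glueing} with $k=1$. Its hypothesis $p\le m^{-\frac{k+1}{2k}}=m^{-1}$ is exactly $mp\le 1$, and the requirement $|\FF(C_1,C_2)|\ge 0$ is automatic. So whenever $\cover$ contains two sets $C_1,C_2$ with $|C_1\cap C_2|=1$, Lemma~\ref{Lem:Glueing} produces a simple cover $\cover'$ with $\pi(H,\cover)\lesssim\pi(H,\cover')$ and $|\cover'|<|\cover|$. Each step strictly decreases $|\cover|$, so after at most $|E(H)|$ steps we reach a simple cover in which all sets are pairwise disjoint (by \Pone, intersections have size $0$ or $1$).

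\textbf{Main obstacle: the disjoint-but-several-sets case.} Here the expected difficulty is a simple cover with at least two sets that are pairwise disjoint. Such a cover must be handled without Lemma~\ref{Lem:Glueing}, whose hypothesis $|C_1\cap C_2|=1$ fails. Connectivity of $H$ then forces an edge $e$ joining two different sets $C_1$ and $C_2$. That edge is covered by some $C_e\in\cover$, so $C_e$ meets $C_1$, contradicting disjointness unless $C_e\in\{C_1,C_2\}$. But if $C_e=C_1$, then $e\subseteq C_1$ and $C_1$ meets $C_2$, again a contradiction. So the terminal cover is a single set $C$. It must contain every vertex, because every vertex of a connected $H$ with at least one edge lies on an edge, and $C$ covers all edges. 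Thus $C=V(H)$.

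If instead one wants to avoid relying on the gluing lemma, the disjoint case can also be settled by merging two disjoint sets directly: by \eqref{eq:CC2} the ratio is $mp^{|C_1|}mp^{|C_2|}/mp^{|C_1\cup C_2|}=m\le$? This ratio is not bounded, which is why connectivity (the argument above) is essential rather than a direct merge.

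Combining the three steps, every cover satisfies $\pi(H,\cover)\lesssim\pi(H,\{V(H)\})$. Therefore $\pi(H)\asymp\pi(H,\{V(H)\})\asymp mp^{|V(H)|}$.
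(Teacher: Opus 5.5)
Your proof is correct and follows the paper's argument. You reduce to simple covers, apply Lemma~\ref{Lem:Glueing} with $k=1$ (whose hypothesis is exactly $mp\le 1$) until a single set remains, and use connectivity to identify that set as $V(H)$; your terminal-case argument simply spells out the paper's one-line claim that a simple cover of a connected graph with at least two sets contains two sets meeting in exactly one vertex. The aside about merging disjoint sets directly is a dead end you correctly abandon, and it can be deleted.
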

\begin{proof}
	Let $\cover\in \covers(H)$ be a simple clique cover such that $|\cover|\ge 2$. Therefore,    since $H$ is connected,    there are $C_1$ and $C_2$ in $\cover$ that intersect on exactly one vertex. By Lemma~\ref{Lem:Glueing} with $k=1$,    there exists a simple $\cover'\in\covers(H)$ such that $|\cover'|\le |\cover|-1$ and $\pi(H,   \cover)\lesssim\pi(H,   \cover')$. Hence,   by induction,   we may prove that there exists simple $\cover''\in\covers(H)$ such that $|\cover''|=1$ and $\pi(H,   \cover)\lesssim\pi(H,   \cover'')$. Since $H$ is connected,    it does not have isolated vertices,    and therefore  $\cover''=\{V(H)\}$. Summing up,   for all $\cover\in \covers(H)$ we have $\pi(H,  \cover)\lesssim \pi(H,   \{V(H)\})$. The lemma follows now by \eqref{Eq:SubgraphProb} and \eqref{eq:CC2}.
\end{proof}
Let us note that the assertion of Corollary~\ref{Cor:Glueing} can be easily extended to disconnected graphs. Namely,  the existence probability of such a graph is simply the product of the existence probabilities of its connected components.

In the next lemma,  we will apply Lemma~\ref{Lem:Glueing} to a special type of graphs,  which constitutes a generalization of some graphs $G_{t, \II}$, see \eqref{eq:GtI}.

\begin{lemma}\label{Lem:GlueingBis}
	Let $H$ be a graph such that $\kli_1\oplus_j\kli_2\subseteq H$,   $j\ge 2$. Let $\cover\in \covers(H)$ be its simple clique cover and $C\in\cover$ be such that $p\le m^{-\frac{|C|+1}{2|C|}}$.
	If $$|C\cap V(\kli_1\oplus_j\kli_2)|\ge \frac{|C|+1}{2},$$
	then there exists a simple clique cover $\cover'\in \covers(H)$ such that
	\begin{itemize}
		\item[(i)] $\exists_{C'\in\cover'}\,  C\cup\big(V(\kli_1\cap\kli_2)\big)\subseteq C'$,  
		\item[(ii)] $\pi(H,   \cover)\lesssim \pi(H,   \cover')$.
	\end{itemize}
\end{lemma}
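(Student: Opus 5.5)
The plan is to apply Lemma~\ref{Lem:Glueing} repeatedly, each time gluing onto $C$ a set of $\cover$ that contains a still-uncovered vertex of the intersection $W:=V(\kli_1\cap\kli_2)$. Set $k=|C|$ and $U:=V(\kli_1\oplus_j\kli_2)$, so $|W|=j\ge 2$. If $W\subseteq C$, take $\cover'=\cover$. Otherwise fix $w\in W\setminus C$. The point is that $w$ is adjacent in $H$ to every vertex of $U$: it lies in both cliques, and every vertex of $U$ lies in $\kli_1$ or in $\kli_2$. Hence $w$ is adjacent to every vertex of $C\cap U$, and the number of such neighbours is at least $(k+1)/2\ge 2$.

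First we find a set of $\cover$ through $w$ that meets $C$. Each edge $wx$ with $x\in C\cap U$ is covered by a unique set $C_x\in\cover$, by \Pone. Since $w\notin C$, we have $C_x\ne C$. By \Pone, $|C_x\cap C|\le 1$, so $C_x\cap C=\{x\}$. In particular, the sets $C_x$ for distinct $x$ are distinct. Choose one vertex $x_0\in C\cap U$ and put $C_2:=C_{x_0}$, so that $C\cap C_2=\{x_0\}$.

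Next we count the edges required by Lemma~\ref{Lem:Glueing}. For every other $x\in (C\cap U)\setminus\{x_0\}$, the edge $wx$ has $w\in C_2\setminus C$ and $x\in C\setminus C_2$. Therefore $wx\in\FF(C,C_2)$, which gives
\[
|\FF(C,C_2)|\ge |C\cap U|-1\ge \frac{k+1}{2}-1=\frac{k-1}{2}.
\]
Together with $p\le m^{-\frac{k+1}{2k}}$, this is exactly the hypothesis of Lemma~\ref{Lem:Glueing} with this $k$. The lemma yields a simple cover $\cover_1$ with $\pi(H,\cover)\lesssim\pi(H,\cover_1)$, a set $C^{(1)}\in\cover_1$ with $C\cup C_2\subseteq C^{(1)}$, so that $w\in C^{(1)}$, and $|\cover_1|<|\cover|$.

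Finally we iterate with $C$ replaced by $C^{(1)}$. This is the main obstacle: the hypotheses must survive the replacement. Write $k_1=|C^{(1)}|\ge k$.
\begin{itemize}
\item The function $s\mapsto \frac{s+1}{2s}$ is decreasing, so $p\le m^{-\frac{k+1}{2k}}\le m^{-\frac{k_1+1}{2k_1}}$ still holds.
\item The density condition $|C^{(1)}\cap U|\ge (k_1+1)/2$ is not automatic, because $C_2$ may contain many vertices outside $U$.
\end{itemize}
To avoid needing the density condition at all, I would not iterate the density count. Instead I would use, at each later step, only the neighbours of the new vertex $w'\in W\setminus C^{(1)}$ inside $C\cap U$. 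These are still at least $(k+1)/2$ vertices of $C^{(1)}$. The same argument then gives $|\FF(C^{(1)},C_2')|\ge (k-1)/2$ with the original $k$. I would then invoke Lemma~\ref{Lem:Glueing} with this original $k$, which is allowed since that lemma only requires $p\le m^{-\frac{k+1}{2k}}$ and $|\FF|\ge (k-1)/2$ for some $k$.

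Since $|\cover|$ strictly decreases, the procedure terminates after at most $j$ steps. It ends with a simple cover $\cover'$ satisfying $\pi(H,\cover)\lesssim\pi(H,\cover')$, with implicit constants depending only on $|V(H)|$, and containing a set $C'\supseteq C\cup W$. This is the assertion of Lemma~\ref{Lem:GlueingBis}.
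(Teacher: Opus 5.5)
Your proposal is correct and follows the paper's proof. As there, you pick an intersection vertex outside $C$, take the unique set of $\cover$ covering its edge to a vertex of $C\cap V(\kli_1\oplus_j\kli_2)$, count at least $|C\cap V(\kli_1\oplus_j\kli_2)|-1\ge(|C|-1)/2$ edges in $\FF$, apply Lemma~\ref{Lem:Glueing}, and iterate. The density condition need not survive enlargement, so your handling of the iteration, which keeps $k=|C|$ and counts only neighbours in the original set $C\cap V(\kli_1\oplus_j\kli_2)$, makes rigorous a step the paper only sketches with ``apply the reasoning to each $v$''.
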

\begin{figure}
	\begin{center}
		\includegraphics[width=12cm]{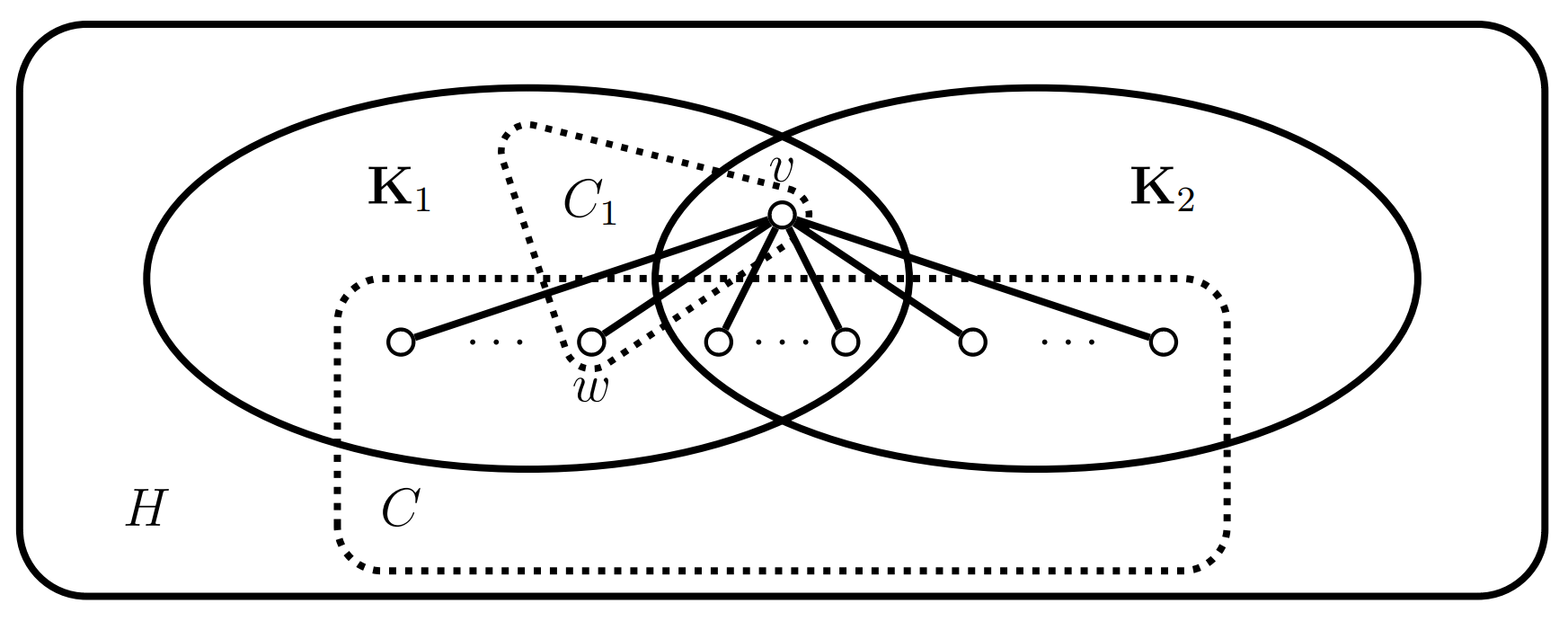}
	\end{center}
	\caption{\label{Fig:Glueing2} The graph $H$ with its subgraph $\kli_1\oplus_j\kli_2$ and the sets $C$ and $C_1$,   as defined in the proof of Lemma~\ref{Lem:GlueingBis}.}
\end{figure}

\begin{proof}
	If $V(\kli_1\cap \kli_2)\subseteq C$,    then the lemma follows. Assume that   there exists $v\in( V(\kli_1\cap\kli_2))\setminus C$,   and
	let us fix $w\in C\cap V(\kli_1\oplus_j\kli_2)$. Since $v$ is connected by an edge with all the vertices from $ V(\kli_1\oplus_j\kli_2)$,   we have $vw\in E(H)$. The clique cover $\cover$ is simple,   thus there exists exactly one $C_1\in \cover$ that contains the edge $vw$ and it satisfies $|C\cap C_1|=1$. 
	Define $\FF(C,   C_1)$ as in Lemma~\ref{Lem:Glueing}. Since $v$ is connected by an edge with all the vertices from $V(\kli_1\oplus_j\kli_2)$,   we have (see Figure~\ref{Fig:Glueing2})
	$$\FF(C,   C_1)\supseteq \{vz: z\neq w,   z\in C\cap V(\kli_1\oplus_j\kli_2)\}.$$ 
	By the assumption $|C\cap V(\kli_1\oplus_j\kli_2)|\ge \frac{|C|+1}{2}$,   we have
	\begin{align*}
		|\FF(C,   C_1)|&\ge |C\cap V(\kli_1\oplus_j\kli_2)|-1\ge \frac{|C|-1}{2}.
	\end{align*}
	Consequently,    by Lemma~\ref{Lem:Glueing} there is a simple $\cover'\in\covers(H)$ with $C'\in\cover'$,    such that $ C\cup \{v\}\subseteq C\cup C_1\subseteq C'$ and $\pi(H,   \cover)\lesssim \pi(H,   \cover')$. Next,  we apply the reasoning to each of $v\in( V(\kli_1\cap\kli_2))\setminus C$ until we get $\cover''\in \covers(H)$ with $C''\in \cover''$ such that $C\cup\big(V(\kli_1)\cap V(\kli_2)\big)\subseteq C''$ and $\pi(H,   \cover)\lesssim \pi(H,   \cover'')$.
\end{proof}

\begin{remark}\label{rem:dominatingccs}
	To understand the next two lemmas (Lemma~\ref{Lem:Division} and Lemma~\ref{Lem:Transition}),  it is important to note that even if the value of $\pi(H_1)$ is dominated by the probability of one particular clique cover,  for example $\pi(H_1) = (1+o(1)) \pi(H_1,  E(H_1))$,  it may happen that $\pi(H_1 \cup H_2)$ is dominated by a clique cover that induces on $H_1$ a clique cover of a completely different structure. For example,  if $\pi(H_1) = (1+o(1)) \pi(H_1,  E(H_1))$, it may happen that  $\pi(H_1\cup H_2)=(1+o(1))\pi(H_1\cup H_2, \{V(H_1\cup H_2)\})$. Nevertheless,  even in such cases,  we can sometimes bound $\pi(H_1\cup H_2)$ in terms of $\pi(H_1)$ and $\pi(H_2)$,  in a manner reminiscent of properties of $\Gnp$.
\end{remark}

The next two lemmas allow us to estimate the probability of a union of graphs in terms of the probabilities of the graphs that constitute this sum. Together with Lemma~\ref{Lem:GlueingBis},  they will form the key part of the proof of Lemma~\ref{Lem:Odrywanie}.  Lemma~\ref{Lem:Division} will also be useful for analysing the probabilities of the graphs 
$G_{t}$ in the cases when some intersecting cliques  $\kli_{ab}$ in $G_{t}$ share exactly one vertex.

\begin{lemma}\label{Lem:Division}
	Let $mp\ge 1$,  $mp^2\le 1$,   $H_1,  H_2$  be graphs,   and $\kli$ be a clique on $r$ vertices. 
	Then,
	\begin{itemize}
		\item[(i)] for a simple $\cover\in\covers(H_1\oplus_1 H_2)$
		$$
		\pi(H_1\oplus_1 H_2,   \cover)\lesssim  \pi(H_1,   \cover[V(H_1)])\pi(H_2,   \cover[V(H_2)]).
		$$
		\item[(ii)]  for a simple $\cover\in\covers(H_1\oplus_{1,  1} H_2)$  
		$$
		\pi(H_1\oplus_{1,  1} H_2,   \cover)\lesssim  \frac{\pi(H_1,   \cover[V(H_1)])\pi(H_2,   \cover[V(H_2)])}{mp^2}.
		$$
		\item[(iii)] if $H=H_1\oplus_{i} H_2$ or $H=H_1\oplus_{i,   j} H_2$,    for $i,   j\ge 1$,   $i+j\le r$,  $H_1=\kli$ and  $\cover\in\covers(H)$ is simple,   
		$$
		\pi(H,   \cover)\lesssim \frac{\pi(H_1,   \cover[V(H_1)])\pi(H_2,   \cover[V(H_2)])}{\pi(H_1[V(H_1\cap H_2)],   \cover[V(H_1\cap H_2)])}.
		$$ 	
	\end{itemize}
\end{lemma}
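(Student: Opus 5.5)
The plan is to use formula \eqref{eq:CC2}: for $mp^2\le1$, a cover contributes $\pi(H,\cover)\asymp\prod_{C\in\cover}mp^{|C|}$. For a simple $\cover$ of $H=H_1\oplus_s H_2$ or $H_1\oplus_{s_1,s_2}H_2$, write $S=V(H_1\cap H_2)$. Split the sets of $\cover$ into three types: those inside $V(H_1)$, those inside $V(H_2)$, and the ``crossing'' sets $C$ that meet both $V(H_1)\setminus S$ and $V(H_2)\setminus S$. Every $C\in\cover$ of the first two types that contains an edge of the respective $H_i$ appears unchanged in $\cover[V(H_i)]$. A crossing set $C$ contributes $mp^{|C|}$ to $\pi(H,\cover)$. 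It is replaced by $C\cap V(H_1)$ and $C\cap V(H_2)$ (whenever these contain edges), contributing $mp^{|C\cap V(H_1)|}\cdot mp^{|C\cap V(H_2)|}$. Sets lying inside $S$ would be counted twice. The proof therefore reduces to comparing exponents of $m$ and $p$.

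For (i), $|S|=1$. A crossing $C$ satisfies $|C|=|C\cap V(H_1)|+|C\cap V(H_2)|-|C\cap S|$. Hence the replacement ratio is $mp^{|C|}/(m^{a}p^{|C|+|C\cap S|})$, where $a\in\{0,1,2\}$ counts the nonempty, edge-containing pieces. By \Pone\ and since $|S|=1$, at most one set contains the cut vertex. Each piece of a crossing set without the cut vertex has $|C\cap V(H_i)|\ge1$; if a piece is a single vertex, it is dropped from the induced cover. Also $H[C]$ is connected by \Ptwo, and $H$ has no edges between $V(H_1)\setminus S$ and $V(H_2)\setminus S$. Consequently every crossing $C$ contains the cut vertex, so there is at most one crossing set. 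For it, the ratio is $mp^{k}/(m^ap^{k+1})$, which is $\le1/(mp)\le1$ when $a=2$. When $a=1$ (the other piece is edgeless, i.e.\ only the cut vertex) the ratio is $\le1$ trivially, and when $a=0$ it is $m$... but $a=0$ is impossible since $C$ covers an edge. Sets in $S$ are impossible. This gives (i), and it uses $mp\ge1$.

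For (ii), $S$ consists of two nonadjacent vertices $x,y$. The same connectivity argument forces every crossing set to contain $x$ or $y$. By \Pone\ there are at most two crossing sets, or one containing both. Each crossing set containing exactly one of $x,y$ costs at most $1/(mp)$ as above. A single set containing both costs $mp^{k}/(m^2p^{k+2})=1/(mp)^2$ relative to the two pieces. However, those pieces may each contain both $x,y$ while the induced covers of $H_1,H_2$ are separate. So in all cases the loss is at most $(mp)^{-2}$, and it is cheaper to bound generously. The stated bound $1/(mp^2)\ge1$ is much weaker, so (ii) follows with room to spare. The point to be careful about is that no crossing set can avoid $S$, which again follows from \Ptwo.

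For (iii), $H_1=\kli$, and $S$ is a clique of size $s$ or two cliques of sizes $s_1,s_2$. Now sets of $\cover$ may lie inside $S$; in $\pi(H_1,\cover[V(H_1)])\pi(H_2,\cover[V(H_2)])$ these are counted twice, and the denominator $\pi(H_1[S],\cover[S])$ cancels exactly the contributions of $\cover[S]$. The main obstacle is a crossing set $C$ with $|C\cap S|\ge2$. For such a set, the pieces $C\cap V(H_1)$, $C\cap V(H_2)$ and $C\cap S$ are all in the respective induced covers. The ratio is then
\[
\frac{mp^{|C|}\cdot mp^{|C\cap S|}}{mp^{|C\cap V(H_1)|}\,mp^{|C\cap V(H_2)|}}=1
\]
by inclusion--exclusion of sizes, so it is harmless. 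The cases where some piece is edgeless, or $|C\cap S|\le1$, behave as in (i) and need $mp\ge1$. I would also use that $H_1$ is a clique: every piece $C\cap V(H_1)$ with at least two vertices covers an edge, and two distinct crossing sets cannot share two vertices of $S$ by \Pone. Finally, the induced covers might not be in bijection with $\cover$. This occurs when several sets restrict to the same set; \Pone\ together with cliqueness of $H_1$ rules that out on the $H_1$ side. On the $H_2$ side any coincidence only merges factors, which can lose at most a factor $mp^{\cdot}\le1$ in the right direction. This bookkeeping is the step I expect to need the most care.
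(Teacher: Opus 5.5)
Your route is the paper's: compare $\prod_{C\in\cover}mp^{|C|}$ from \eqref{eq:CC2} set by set with the products over the induced covers. Use \Ptwo\ to force every set meeting both $V(H_1)\setminus S$ and $V(H_2)\setminus S$ to hit $S=V(H_1\cap H_2)$. Absorb each such set with $|C\cap S|=1$ using $mp\ge1$.

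However, (ii) contains a miscalculation that hides the actual mechanism. $mp^{k}/(m^2p^{k+2})$ equals $(mp^2)^{-1}$, not $(mp)^{-2}$, and $(mp^2)^{-1}\ge1$. This loss is attained: take $H_1$ the path $x\,a\,y$ and $H_2$ the path $x\,b\,y$, so $H$ is a $4$-cycle, and $\cover=\{V(H)\}$. Then $\pi(H,\cover)\asymp mp^4$, whereas the product of the induced probabilities is $\asymp m^2p^6$. So (ii) does not hold ``with room to spare''; the factor $(mp^2)^{-1}$ in the statement is sharp. What proves (ii) is that, by \Pone, at most one set of $\cover$ contains both $x$ and $y$ (the paper's $|\cover_{\cap\ge2}|\le1$). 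Hence this factor occurs at most once, while every other crossing set costs $(mp)^{-1}\le1$. You mention ``one containing both'', so the ingredient is present, but your conclusion rests on the wrong computation.

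Separately, two of your claims are false: that at most one set contains the cut vertex in (i), and that there are at most two crossing sets in (ii). \Pone\ only bounds pairwise intersections, so many sets may pass through the same vertex. For example, take $K_{1,4}=H_1\oplus_1H_2$ with $H_1,H_2$ the paths $a_1\,v\,a_2$ and $b_1\,v\,b_2$. Then $\{\{v,a_1,b_1\},\{v,a_2,b_2\}\}$ is a simple cover with two crossing sets. This is harmless, because the comparison is multiplicative and each such set contributes $(mp)^{-1}\le1$ on its own. The map $C\mapsto C\cap V(H_l)$ is injective on traces of size $\ge2$ by \Pone. But the argument must be made set by set rather than through uniqueness.

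In (iii) your ratios for crossing sets are correct: $1$ when $|C\cap S|\ge2$ and $(mp)^{-1}$ when $|C\cap S|=1$. The claim that the denominator cancels exactly the contributions of $\cover[S]$ is not right, though. It overlooks non-crossing sets $C\subseteq V(H_1)$ with $|C\cap S|\ge2$ whose trace $C\cap V(H_2)=C\cap S$ contains no edge of $H_2$. This can happen for $H_1\oplus_{i,j}H_2$ when $C$ takes one vertex from each of the two cliques. For such a set, $C\cap S\in\cover[S]$ has no partner in $\cover[V(H_2)]$, so the right-hand side carries an extra factor $(mp^{|C\cap S|})^{-1}\ge1$. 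This goes in the favourable direction and is exactly the paper's family $\cover_{1,\cap\ge2}$, but it is neither an exact cancellation nor ``as in (i)''. Sets $C\subseteq V(H_2)$ with $|C\cap S|\ge2$ contribute $C\cap S$ to both $\cover[V(H_1)]$ and $\cover[S]$, and these cancel. With these corrections your argument coincides with the paper's proof.
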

\begin{proof}
	Let $H=H_1\oplus_{i} H_2$ or $H=H_1\oplus_{i,   j} H_2$,    and $\cover\in \covers(H)$ be a simple clique cover. Furthermore,  let $V_s=V(H_s)$,   $s\in \{1,  2\}$. Note that for $C\in\cover$,  if $H_1[C]$ and $H_2[C]$ each contains an edge,  then  {\Ptwo} implies   $|C\cap V(H_1)\cap V(H_2)|\ge1$. Otherwise $H[C]$ would not be connected. Therefore $\cover$ may be divided into four disjoint subfamilies $\cover=\cover_1\dot{\cup}\,    \cover_2\dot{\cup}\,    \cover_{\cap 1}\dot{\cup}\,    \cover_{\cap \ge 2}$,    where 
	\begin{align*}
		\cover_1&=\{C\in \cover: C\text{ does not contain any edge of }H_2\},   \\
		\cover_2&=\{C\in \cover: C\text{ does not contain any edge of }H_1\},   \\
		\cover_{\cap 1}&=\{C\in \cover: C\text{ contains edges of both $H_1$ and $H_2$,     and }|C\cap V_1\cap V_2|=1\},   \\
		\cover_{\cap \ge 2}&=\{C\in \cover: C\text{ contains edges of both $H_1$ and $H_2$,    and }|C\cap V_1\cap V_2|\ge 2\}.
	\end{align*}
	$(i)$ First,  consider $H=H_1\oplus_1 H_2$. In this case $\cover_{\cap \ge 2}=\emptyset$,   because $|V_1\cap V_2|=1$. By \eqref{eq:CC2},   for  $mp\ge 1$ we get
	\begin{align*}
		\pi(H,   \cover)
		&\asymp
		\prod_{C\in \cover_1} mp^{|C|}
		\prod_{C\in \cover_2} mp^{|C|}
		\prod_{C\in \cover_{\cap 1}} mp^{|C\cap V_1|+|C\cap V_2|-1}\\
		&\le 
		\frac{\prod_{i=1,   2}
			\left(
			\prod_{C\in \cover_i} mp^{|C\cap V_i|}
			\prod_{C\in \cover_{\cap 1}} mp^{|C\cap V_i|}
			\right)
		}
		{\prod_{C\in \cover_{\cap 1}} mp}\\
		&\lesssim 
		\pi(H_1,   \cover[V_1])\pi(H_2,   \cover[V_2]). 
	\end{align*}
	$(ii)$ Now let $H=H_1\oplus_{1,  1} H_2$. In this case $|V_1\cap V_2|=2$ and there is at most one set $C\in\cover$ such that $|V_1\cap V_2\cap C|\ge 2$. Otherwise,   there would be two sets in $\cover$ that intersect on $V_1\cap V_2$,   i.e., on more than one vertex. That contradicts {\Pone}. Thus $|\cover_{\cap \ge 2}|\le 1$. Therefore,   by  \eqref{eq:CC2},   for  $mp\ge 1$ and $mp^2\le 1$ we get
	\begin{align*}
		\pi(H,   \cover)
		&\asymp
		\prod_{C\in \cover_1} mp^{|C|}
		\prod_{C\in \cover_2} mp^{|C|}\prod_{C\in \cover_{\cap 1}} mp^{|C\cap V_1|+|C\cap V_2|-1}
		\prod_{C\in \cover_{\cap \ge 2}} mp^{|C\cap V_1|+|C\cap V_2|-2}\\
		&\lesssim
		\frac{
			\prod_{i=1,   2}\left(\prod_{C\in \cover_i} mp^{|C\cap V_i|}\prod_{C\in \cover_{\cap 1}} mp^{|C\cap V_i|}\prod_{C\in \cover_{\cap \ge 2}} mp^{|C\cap V_i|}\right)
		}{\prod_{C\in \cover_{\cap 1}} mp \prod_{C\in \cover_{\cap \ge 2}} mp^{2}}\\
		&\lesssim 
		\frac{\pi(H_1,   \cover[V_1])\pi(H_2,   \cover[V_2])}{mp^2}. 
	\end{align*}
	$(iii)$ Now let us consider $H=\kli\oplus_{i}H_2$ or $H=\kli\oplus_{i,  j}H_2$,   i.e. $H_1=\kli$. Note that in this case $\kli[V_1\cap V_2]$ is a clique on $i$ or $i+j$ vertices,   respectively. By {\Pone} each edge of $\kli[V_1\cap V_2]$ is covered by exactly one set of $\cover$. Therefore,   any two sets in $\{C\cap V_1\cap V_2: C\in \cover_{\cap \ge 2}\}$ are distinct. 
	
	We define a subset of $\cover_1$ of those sets $C\in \cover$ that intersect $V_1\cap V_2$ on at least $2$ vertices but cover no edges of $H_2$:
	\begin{align*}
		\cover_{1, \cap\ge 2}&=\{C\in \cover_1: |C\cap V_1\cap V_2|\ge 2\}.
	\end{align*}
	Moreover,  since $H_1=\kli$ is a clique,  for any $C\in \cover_2$ we have $|C\cap V_1|\le 1$,  otherwise $C$ would contain an edge of $H_1$. Hence,  we have
	$$\cover[V_1\cap V_2]=\{C\cap V_1\cap V_2: C\in \cover_{\cap\ge2}\cup \cover_{1,  \cap\ge2}\}.$$
	Combining this with  \eqref{eq:CC2},  we arrive at
	\begin{align*}
		\pi(H,   \cover)
		&\lesssim
		\prod_{C\in \cover_{1}} mp^{|C\cap V_1|}
		\prod_{C\in \cover_2} mp^{|C\cap V_2|}\\
		&\hspace{.5cm}
		\prod_{C\in \cover_{\cap \ge 2}} mp^{|C\cap V_1|+|C\cap V_2|-|C\cap V_1\cap V_2|}
		\prod_{C\in \cover_{\cap 1}} mp^{|C\cap V_1|+|C\cap V_2|-1}\\[5pt]
		&=
		\frac
		{
			\prod_{i=1,   2}\left(\prod_{C\in \cover_i} mp^{|C\cap V_i|}\prod_{C\in \cover_{\cap 1}} mp^{|C\cap V_i|}\prod_{C\in \cover_{\cap \ge 2}} mp^{|C\cap V_i|}\right)
		}
		{\prod_{C\in \cover_{\cap \ge 2}} mp^{|C\cap V_1\cap V_2|}\prod_{C\in \cover_{\cap 1}} mp\ \prod_{C\in \cover_{ 1, \cap\ge 2}} mp^{|C\cap V_1\cap V_2|}	}
		\prod_{C\in \cover_{ 1, \cap\ge 2}} mp^{|C\cap V_1\cap V_2|}
		\\[5pt]
		&\asymp
		\frac{\pi(\kli,   \cover[\kli])\pi(H_2,   \cover[V_2])}{\pi(\kli[V_1\cap V_2],   \cover[V_1\cap V_2])}\cdot
		\frac{\prod_{C\in \cover_{ 1, \cap\ge 2}} mp^{|C\cap V_1\cap V_2|}}
		{\prod_{C\in \cover_{\cap 1}} mp}.
	\end{align*}
	We note that for  $C\in \cover_{1, \cap\ge 2}$ we have ${|C\cap V_1\cap V_2|}\ge 2$. Thus,  due to the assumptions $mp\ge1$ and $mp^2\le1$,  the last fraction is bounded from above by 1,  which completes the proof.
\end{proof}

\begin{lemma}\label{Lem:Transition}
	Assume $mp^2\le 1$. Let $H$ and $\bar H$ be graphs such that
	$$
	H=\kli \oplus_j \bar{H}\quad\text{or}\quad H=\kli \oplus_{j_1,   j_2} \bar{H},   \quad\text{ for some }j,   j_1,   j_2\ge 2.
	$$
	Moreover,  fix a simple clique cover  $\cover\in \covers(H)$. If at least one of the following conditions is fulfilled 
	\begin{itemize}
		\item[(i)] 
		$\forall_{C\in \cover}\ p\ge m^{-\frac{|C|+1}{2|C|}}$, 
		\item[(ii)] 
		$
		\exists_{C\in\cover}\  V(\kli\cap \bar H)\subseteq C;
		$
	\end{itemize}
	then    it holds that 
	$$
	n^{|V(H)|}\pi(H,   \cover)\lesssim \frac{n^r\qq\cdot n^{|V(\bar H)|}\pi(\bar H)}{\ble}.
	$$
\end{lemma}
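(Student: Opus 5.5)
The approach is to split $\cover$ along $\bar H$ and then prove a bound for the clique part alone. Let $V_1=V(\kli)$, $V_2=V(\bar H)$ and $S=V_1\cap V_2$, so that $s:=|S|=j$ or $j_1+j_2$, with $s\ge 2$ and $s\le r$. By Lemma~\ref{Lem:Division}(iii) (whose proof uses only \eqref{eq:CC2} and {\Pone}; I will redo it directly, since we assume $mp^2\le1$ but not necessarily $mp\ge1$), we get
$$
\pi(H,\cover)\lesssim \frac{\pi(\kli,\cover[V_1])\,\pi(\bar H,\cover[V_2])}{\pi(\kli[S],\cover[S])}.
$$
Since $|V(H)|=|V(\bar H)|+r-s$ and $\pi(\bar H,\cover[V_2])\le\pi(\bar H)$, the claim reduces to showing
$$
n^{r-s}\,\frac{\pi(\kli,\cover[V_1])}{\pi(\klik_s,\cover[S])}\lesssim \frac{n^r\qq}{\ble}.
$$
By \eqref{Eq:BleaBlei} it suffices to prove this with $\ble$ replaced by $\ble_s$ (because $\ble\lesssim\ble_s$). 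That is, the target is $\pi(\kli,\cover[V_1])\,\ble_s\lesssim n^s\qq\,\pi(\klik_s,\cover[S])$, or equivalently $\pi(\kli,\cover[V_1])\lesssim \qq\,\pi(\klik_s,\cover[S])/\pi^*_s$. Here $\pi^*_s$ denotes $mp^s$ or $\pp^{\binom s2}$, according to the regime in \eqref{Eq:Def:Blei}.

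\emph{Case (ii).} Suppose some $C\in\cover$ contains $S$. Then $\cover[S]=\{S\}$, so $\pi(\klik_s,\cover[S])\asymp mp^s$. I would bound $\pi(\kli,\cover[V_1])$ by directly comparing $\cover[V_1]$ with a cover in which one set contains $S$. The factor for the set $C\cap V_1\supseteq S$ is $mp^{|C\cap V_1|}$, and dividing it by $mp^s$ leaves $p^{|C\cap V_1|-s}$. What remains is a clique cover of the clique $\kli$ with the vertices of $S$ contracted to a single vertex, which is essentially a cover of $\klik_{r-s+1}$ (all edges inside $S$ are handled by $C$). Then I use Lemma~\ref{Lem:PiOptimal} in the form $\pi(\klik_{r-s+1}\text{-type})\cdot(\text{stuff})\lesssim \qq\cdot p^{-(s-1)}$.

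More concretely, I would show $\pi(\kli,\cover[V_1])/(mp^s)\lesssim \qq/\pi^*_s$ in two regimes. When $p\le m^{-(r+1)/(2r)}$ the target reads $\lesssim p^{r-s}\cdot(\ldots)$. Gluing via Lemma~\ref{Lem:Glueing} and Lemma~\ref{Lem:GlueingBis}, the dominant cover of $\kli$ that contains a set $\supseteq S$ is $\{V_1\}$, giving $mp^r/(mp^s)=p^{r-s}$, which matches. When $p\ge m^{-(r+1)/(2r)}$, Lemma~\ref{Lem:Splitting}(ii) lets me split all other sets into edges, and the worst case gives $mp^s\pp^{\binom r2-\binom s2}/(mp^s)$. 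This has to be compared with $\pp^{\binom r2}/\pp^{\binom s2}$, and it matches.

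\emph{Case (i).} Here every $C\in\cover$ satisfies $p\ge m^{-(|C|+1)/(2|C|)}$. By Lemma~\ref{Lem:Splitting}(ii) applied to each set of $\cover[V_1]$ (the inequality is local, so it extends to the quotient), I can replace the sets meeting $V_1$ in at least $3$ vertices by edges without decreasing the ratio. The exception is sets inside $S$, where the numerator and denominator change together. This reduces to $\pi(\kli,\cover[V_1])/\pi(\klik_s,\cover[S])\lesssim \pp^{\binom r2-\binom s2}$. Since $p\ge m^{-(|C|+1)/(2|C|)}\ge m^{-(r+1)/(2r)}$ in the relevant range, we have $\qq\asymp\pp^{\binom r2}$ and $\pi^*_s=\pp^{\binom s2}$, which gives the claim.

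The main obstacle is making the reduction from $\ble$ to $\ble_s$ rigorous uniformly across regimes. The difficulty is that $\ble_s$ is defined by the threshold for $r$, not for $s$, so the mixed regime $m^{-(r+1)/(2r)}\le p$ while sets of size $<r$ prefer being whole needs care. I would handle this by comparing $mp^s$ and $\pp^{\binom s2}$ directly, using $\pi^*_s\ge$ both forms up to constants in the appropriate range.
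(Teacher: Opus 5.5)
\paragraph{The gap: replacing $\ble$ by $\ble_s$.} This step is where the proof breaks, and in case (ii) with $p\ge m^{-\frac{r+1}{2r}}$ it cannot be repaired. Case (ii) for $p\le m^{-\frac{r+1}{2r}}$ is fine, since there the target is just $\pi(\kli,\cover[V_1])\le\qq$.

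In the upper regime your denominator is $\pi(\klik_s,\{S\})\asymp mp^s$, whereas $\pi^*_s=\pp^{\binom{s}{2}}$ is larger. At $p=m^{-\frac{r+1}{2r}}$ one has $\pp^{\binom{s}{2}}/(mp^s)\asymp m^{(s-2)(r-s)/(2r)}$. So your observation that $\pi^*_s$ dominates both forms works against you.

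Your ``worst case'' $mp^s\pp^{\binom{r}{2}-\binom{s}{2}}$ tacitly assumes that the set of $\cover[V_1]$ containing $S$ is $S$ itself. That set may be all of $V_1$, and Lemma~\ref{Lem:Splitting}(ii) cannot shrink it without losing $S\subseteq C$.

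In fact, take $H=\kli\oplus_s\kli'$ with $3\le s<r$, $\cover=\{V(H)\}$ (simple, satisfies (ii)) and $p=m^{-\frac{r+1}{2r}}$, where $\qq\asymp mp^r$. The left-hand side is $n^{2r-s}mp^{2r-s}$. The conclusion of the lemma with $\ble_s$ in place of $\ble$ then fails by exactly the factor $m^{(s-2)(r-s)/(2r)}$. The lemma holds there only because $\ble\lesssim n^smp^s$, and proving this needs $n$:
\begin{itemize}
\item if $np\ge1$, then $\ble\le n^2\pp\lesssim n^smp^s$;
\item if $np\le1$, then $\ble\le n^r\qq\asymp n^rmp^r\le n^smp^s$.
\end{itemize}

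\paragraph{The missing idea.} You need the dichotomy $np\gtrless 1$, comparing with $\ble$ rather than $\ble_s$. This is what the paper does.
\begin{itemize}
\item For $np\ge1$ it bounds $\pi(\kli,\cover[V(\kli)])\le\qq$ and uses $n^smp^s\ge n^2mp^2\asymp\ble_2\gtrsim\ble$.
\item For $np\le1$ it sets $C_{\kli}=C\cap V(\kli)$ and replaces $C_{\kli}$ by its edges on the clique side. This gives $\pi(\kli,\cover[V(\kli)])\lesssim mp^{|C_{\kli}|}\qq/\pp^{\binom{|C_{\kli}|}{2}}$. The $|C_{\kli}|-s$ extra vertices are then paid for by $(np)^{|C_{\kli}|-s}\le1$, and the denominator becomes $\ble_{|C_{\kli}|}\gtrsim\ble$.
\end{itemize}

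\paragraph{Case (i): dividing before splitting.} You divide first and split afterwards, but the ratio $\pi(\kli,\cover[V_1])/\pi(\klik_s,\cover[S])$ is not monotone under splitting a set that meets $S$ in at least three vertices without lying in $S$.

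Take $H=\kli\oplus_s\bar H$ with $3\le s<r$, $p=m^{-\frac{r+1}{2r}}$, and the simple cover $\cover=\{V(\kli)\}\cup\big(E(\bar H)\setminus E(\kli\cap\bar H)\big)$, which satisfies (i). The ratio is $p^{r-s}$, so your target $\lesssim\pp^{\binom{r}{2}-\binom{s}{2}}$ is again false by $m^{(s-2)(r-s)/(2r)}$. The lemma's bound, even with $\ble_s$, does hold for this cover, so the loss comes from the order of operations.

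There is a second error here. For $|C|\le r$ one has $m^{-\frac{|C|+1}{2|C|}}\le m^{-\frac{r+1}{2r}}$, so (i) does not place you in the regime $\qq\asymp\pp^{\binom{r}{2}}$. A cover by $2$-sets only needs $p\ge m^{-3/4}$.

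The paper avoids both problems as follows.
\begin{itemize}
\item It first splits on all of $H$, obtaining $\pi(H,\cover)\lesssim\pi(H,E(H))$ by Lemma~\ref{Lem:Splitting}(ii).
\item Only then does it divide via Lemma~\ref{Lem:Division}(iii), now with $\cover[S]=E(\klik_s)$.
\item For $p\le m^{-\frac{r+1}{2r}}$ it re-glues $E(\kli)$ into $\{V(\kli)\}$ via Lemma~\ref{Lem:Splitting}(i), so the denominator is $\ble_s$ in both subcases.
\end{itemize}

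\paragraph{The regime $mp<1$.} Lemma~\ref{Lem:Division}(iii) does use $mp\ge1$: each set meeting $S$ in one vertex and covering edges on both sides costs a factor $(mp)^{-1}$. So it does not extend to $mp<1$ as you claim. Under (i) one automatically has $mp\ge m^{1/4}$. In case (ii) with $mp\le1$, the claim follows directly from Corollary~\ref{Cor:Glueing}.
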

\begin{proof}
	Without loss of generality,  we may set $j_1+j_2=j$. Then,   in the both definitions  of $H$,   $\kli[V(\bar{H}\cap \kli)]$ is a clique on $j$ vertices.

	Let us first assume that (i) is fulfilled. Then by Lemma~\ref{Lem:Splitting}(ii) we have 
	$$
	\pi(H,   \cover)\lesssim \pi(H,   E(H)). 
	$$	
	We consider two cases. For $p\le m^{-\frac{r+1}{2r}}$,   using first Lemma~\ref{Lem:Splitting}(i) and  Lemma~\ref{Lem:Division}(iii) and  then, in the last line, the fact that $\kli[V(\kli\cap\bar H)]$ is a clique on $j$ vertices,   \eqref{Eq:Def:Blei},  and \eqref{Eq:BleaBlei},  we obtain
	\begin{align*}
		n^{|V(H)|}\pi(H,   \cover)&\lesssim n^{|V(H)|}\pi(H,   E(H)) \\
		&\lesssim n^{|V(H)|}\pi(H,   \{V(\kli)\}\cup (E(\bar H)\setminus E(\kli\cap\bar H)))\\
		&\lesssim
		\frac{n^r\pi(\kli,   \{V(\kli)\})n^{|V(\bar H)|}\pi(\bar H,   \{V(\kli\cap\bar H)\}\cup (E(\bar H)\setminus E(\kli\cap\bar H))}
		{n^{|V(\kli\cap\bar H)|}\pi(\kli[V(\kli\cap\bar H)],   \{V(\kli\cap\bar H)\})}
		\\
		&\lesssim
		\frac{n^{r}\qq\cdot n^{|V(\bar H)|}\pi(\bar H)}{\ble_j}\lesssim \frac{n^{r}\qq\cdot n^{|V(\bar H)|}\pi(\bar H)}{\ble}.
	\end{align*}

	Similarly,   for $m^{-\frac{r+1}{2r}}\le p\le m^{-1/2}$,   using Lemma~\ref{Lem:Division}(iii),  the fact that $\kli[V(\kli\cap\bar H)]$ is a clique on $j$ vertices,   \eqref{Eq:Def:Blei},  and \eqref{Eq:BleaBlei}  we obtain
	\begin{align*}
		n^{|V(H)|}\pi(H,   \cover)&\lesssim n^{|V(H)|}\pi(H,   E(H)) 
		\lesssim
		\frac{n^r\pi(\kli,   E(\kli))n^{|V(\bar H)|}\pi(\bar H,   E(\bar H))}
		{n^{|V(\kli\cap\bar H)|}\pi(\kli[V(\kli\cap\bar H)],   E(\kli[V(\kli\cap\bar H)]))}
		\\
		&\lesssim
		\frac{n^{r}\qq\cdot n^{|V(\bar H)|}\pi(\bar H)}{\ble_j}\le \frac{n^{r}\qq\cdot n^{|V(\bar H)|}\pi(\bar H)}{\ble}.
	\end{align*}

	We shall now proceed to the case when assumption (ii) is fulfilled.  
	In this case $\cover[V(\kli\cap \bar{H})]=\{V(\kli\cap \bar{H})\}$.
	First, we assume that $p\le m^{-\frac{r+1}{2r}}$ or $np\ge 1$. 
	
	For $p\le m^{-\frac{r+1}{2r}}$,  by \eqref{Eq:Def:Blei} and \eqref{Eq:BleaBlei}
	$$
	n^{|V(\kli\cap\bar H)|}\pi(\kli[V(\kli\cap\bar H)],   \{V(\kli\cap\bar H)\})\asymp \ble_j\gtrsim \ble.$$ 
	On the other hand,   for $np\ge 1$,    by \eqref{eq:CC2},  \eqref{Eq:Def:Blei},  and \eqref{Eq:BleaBlei} 
	$$
	n^{|V(\kli\cap\bar H)|}\pi(\kli[V(\kli\cap\bar H)],   \{V(\kli\cap\bar H)\})\asymp n^jmp^j\ge n^2mp^2 \asymp \ble_2 \gtrsim \ble.
	$$
	Therefore,   by Lemma~\ref{Lem:Division}(iii)  
	\begin{align*}
		n^{|V(H)|}\pi(H,   \cover)
		&\lesssim
		\frac{n^r\pi(\kli,   \cover[V(\kli)])n^{|V(\bar H)|}\pi(\bar H,   \cover[V(\bar H)]}
		{n^{|V(\kli\cap\bar H)|}\pi(\kli[V(\kli\cap\bar H)],   \{V(\kli\cap\bar H)\})}
		\\
		&\lesssim
		\frac{n^r\qq\cdot n^{|V(\bar H)|}\pi(\bar H)}
		{\ble}.
	\end{align*}
	
	Now we assume (ii),   $np\le 1$,   and $m^{-\frac{r+1}{2r}}\le p\le m^{-1/2}$. The last assumption implies that $mp\ge 1$.
	Set 
	$$C_{\kli}=C\cap V(\kli)\text{ and }C_{\bar H}=C\cap V(\bar H).$$  Recall that $\cover$ is simple. Therefore $C$ is the only set in $\cover$ that contains edges of $\kli[V(\kli\cap\bar H)]$,    otherwise $\cover$ would not have {\Pone}. Let $C'\in \cover\setminus \{C\}$. If $C'$ covers only edges of one of $\kli$ or $\bar H$,  then either $C'\cap V(\kli)$ is included in $\cover[\kli]$ or $C'\cap V(\bar H)$ is included in $\cover[\bar H]$. If $C'$ covers edges of both $\kli$ and $\bar H$. Then by {\Ptwo} $|C'\cap V(\kli\cap \bar H)|=1$ and
	$$
	|C'|=|C'\cap V(\kli)|+|C'\cap V(\bar H)|-|C'\cap V(\kli\cap \bar H)|=|C'\cap V(\kli)|+|C'\cap V(\bar H)|-1.
	$$
	Let $c$ be the number of sets in $\cover\setminus\{C\}$ that cover edges of both $\kli$ and $\bar H$. 
	Then,  employing the above observations,  by \eqref{eq:CC2},   since $mp\ge 1$,  
	\begin{align}\nonumber
		\pi(H,   \cover)
		&\asymp
		mp^{|C|}\prod_{C'\in \cover\setminus\{C\}}mp^{|C'|}\\ \nonumber
		&\le 
		mp^{|C|}\frac{\prod_{C'\in \cover[V(\kli)]\setminus\{C_\kli\}}mp^{|C'|}\prod_{C''\in \cover[V({\bar H})]\setminus\{C_{\bar H}\}}mp^{|C''|}}{(mp)^{c}}\\ \nonumber
		\\
		&\le 
		mp^{|C|}\prod_{C'\in \cover[V(\kli)]\setminus\{C_\kli\}}mp^{|C'|}\prod_{C''\in \cover[V({\bar H})]\setminus\{C_{\bar H}\}}mp^{|C''|}\label{Eq:Lem:Division1}
		.
	\end{align}
	Let us consider the first product in \eqref{Eq:Lem:Division1}. By \eqref{eq:CC2} we get
	\begin{align*}
		\prod_{C'\in \cover[V(\kli)]\setminus\{C_\kli\}}mp^{|C'|}&=\frac{\prod_{C'\in \cover[V(\kli)]\setminus\{C_\kli\}}mp^{|C'|}\prod_{\{i,   j\}\in C_\kli,   i\neq j}mp^2}{(mp^2)^{\binom{|C_{\kli}|}{2}}}\\
		&\asymp \frac{\pi\big(\kli,   (\cover[V(\kli)]\setminus \{C_{\kli}\})\cup E(\kli[C_{\kli}])\big)}
		{\pp^{\binom{|C_{\kli}|}{2}}}\leq \frac{\pi(\kli)}{\pp^{\binom{|C_{\kli}|}{2}}}.
	\end{align*}
	Regarding the latter product in \eqref{Eq:Lem:Division1},   in an analogous manner we get
	\begin{align*}
		\prod_{C''\in \cover[V({\bar H})]\setminus\{C_{\bar H}\}}mp^{|C''|}=
		\frac{\prod_{C''\in \cover[V({\bar H})]}mp^{|C''|}}{mp^{|C_{\bar H}|}}
		\asymp \frac{\pi(\bar H,   \cover[V({\bar H})])}{mp^{|C_{\bar H}|}}\leq \frac{\pi(\bar H)}{mp^{|C_{\bar H}|}}. 
	\end{align*}
	Applying the last two estimates to \eqref{Eq:Lem:Division1} and using the equality 
	$$|V(\kli)\cap V(\bar H)|=|(C\cap V(\kli))\cap (C\cap V(\bar H))|=|C_{\bar H}|+|C_{\kli}|-|C|,  $$ we arrive at
	\begin{align*}
		n^{|V(H)|}\pi(H,   \cover)&\lesssim n^{|V(\bar H)|+r-|V(\kli)\cap V(\bar H)|}mp^{|C|}\frac{\pi(\kli)}{\pp^{\binom{|C_{\kli}|}{2}}}\frac{\pi(\bar H)}{mp^{|C_{\bar H}|}}\\
		&\asymp
		\frac{n^r\pi(\kli)\cdot n^{|V(\bar H)|}\pi(\bar H)}
		{n^{|C_{\kli}|}\pp^{\binom{|C_{\kli}|}{2}}}
		(np)^{|C|-|C_{\bar H}|}
		\\
		&\lesssim
		\frac{n^r\qq\cdot n^{|V(\bar H)|}\pi(\bar H)}{\ble},   
	\end{align*}
	where in the last bound we used the assumption $np\le 1$,  \eqref{Eq:Def:Blei} for $i=|C_{\kli}|$,  and \eqref{Eq:BleaBlei}.
\end{proof}

The next lemma completes the preparation for the proof of Lemma~\ref{Lem:Odrywanie}.

\begin{lemma}\label{Lem:Odrywanie:Pre} Let  $j-i\ge 2$ and let $i\ge 0$ for $t=1$ and $i\ge 2$ for  $t=2, 3$. 
	Moreover let $\II\subseteq \indeksy$,  $|\II|\ge 2$ be such that $G_{t, \II}$ is connected. 
	If for a simple $\cover\in \covers(G_{t, \II})$
	\begin{equation}\label{Eq:Odrywanie:Cdef}
		\exists_{C\in \cover}\ p \le m^{-\frac{|C|+1}{2|C|}},
	\end{equation}
	then there exists another simple clique cover $\cover'$,  a set $C'\in \cover$ and  $ab\in \II$ satisfying
	\begin{itemize}
		\item[(i)] $\pi(G_{t, \II},   \cover)\lesssim\pi(G_{t, \II},   \cover')$, 
		\item[(ii)]
		$
		V(G_{t,   \II\setminus \{ab\}}\cap \kli_{ab})\subseteq C', 
		$
		\item[(iii)] $G_{t,   \II\setminus \{ab\}}$ is connected.
	\end{itemize}
\end{lemma}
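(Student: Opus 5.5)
The plan is to start from a set $C\in\cover$ satisfying \eqref{Eq:Odrywanie:Cdef} and enlarge it step by step with Lemma~\ref{Lem:GlueingBis} and Lemma~\ref{Lem:Glueing}, until it contains the whole intersection of some clique $\kli_{ab}$ with the rest of $G_{t,\II}$. I read the set $C'$ in (ii) as an element of the new cover $\cover'$.

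Two monotonicity facts drive the iteration. First, the threshold $m^{-\frac{k+1}{2k}}$ is increasing in $k$. Hence if $p\le m^{-\frac{|C|+1}{2|C|}}$ and $C\subseteq C''$, then also $p\le m^{-\frac{|C''|+1}{2|C''|}}$, so the hypothesis survives every enlargement. Second, each application of Lemma~\ref{Lem:Glueing} or Lemma~\ref{Lem:GlueingBis} yields a simple cover of no smaller probability (up to constants) in which the image of $C$ is contained in a single set. Since $|\cover|$ strictly decreases in the gluing steps, the iteration terminates.

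\textbf{Step 1: the configuration.} Under the assumptions $j-i\ge2$ (and $i\ge2$ for $t=2,3$), any two cliques of $G_{t,\II}$ that intersect share either the block $\{w_l\}$ of size $j-i\ge2$ or the block $\{v_{l,a}\}$ of size $i\ge2$. They therefore form a subgraph $\kli_{ab}\oplus_{s}\kli_{a'b'}$ with $s\ge2$, which is exactly the situation of Lemma~\ref{Lem:GlueingBis}.

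\textbf{Step 2: the growth step.} Given the current $C$, distinguish two cases.
\begin{itemize}
\item If $|C\cap V(\kli_{ab}\cup\kli_{a'b'})|\ge(|C|+1)/2$ for some intersecting pair, apply Lemma~\ref{Lem:GlueingBis}. This produces a set containing $C\cup V(\kli_{ab}\cap\kli_{a'b'})$.
\item Otherwise $C$ is spread over at least two disjoint regions. Since $H[C]$ is connected by \Ptwo, there is a second set $C_1$ of the cover meeting $C$ in one vertex, and the shared vertices, being adjacent to everything in both cliques, supply many edges of $\FF(C,C_1)$. I then want $|\FF(C,C_1)|\ge(|C|-1)/2$ and apply Lemma~\ref{Lem:Glueing} with $k=|C|$.
\end{itemize}

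\textbf{Step 3: choosing $ab$.} Iterate Step 2 until the set $C'$ containing the original $C$ contains $V(G_{t,\II\setminus\{ab\}}\cap\kli_{ab})$ for some $ab\in\II$ such that $G_{t,\II\setminus\{ab\}}$ is connected. The choice of $ab$ is made by inspecting the three shapes $t=1,2,3$ with $|\II|\in\{2,3,4\}$. In $G_1$ every pair of cliques shares the $w$-block, so any $ab$ works and the intersection is $\{w_l\}$ (or $\{w_l\}\cup\{v_{l,a}\}$). In $G_2$ and $G_3$ the cliques form a path or cycle of adjacencies, so I take $ab$ to be an endpoint (or any vertex of the cycle). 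Its intersection with the rest is one block, or the union of two blocks that meet $C'$ after two gluing steps.

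\textbf{Main obstacle.} The hard part is the second case of Step 2: guaranteeing the counting bound $|\FF(C,C_1)|\ge(|C|-1)/2$ when $C$ is balanced among cliques. If that fails, I would first try to handle it by choosing $\kli_{ab}$ to be the clique that contains the most vertices of $C$. One then shows that $C$ has at least $(|C|+1)/2$ vertices in the union of that clique and one neighbour sharing a $\ge2$ block. This holds because every vertex of $C$ lies in some clique, and, by \Pone, $C$ meets each $\kli_{ab}$ outside the blocks in a controlled way. That reduces the second case to the first.
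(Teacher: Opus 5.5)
You follow the paper's overall plan: grow $C$ with Lemma~\ref{Lem:GlueingBis}, using that $m^{-\frac{k+1}{2k}}$ increases in $k$. However, the step you flag as the main obstacle is a genuine gap, and the repair you sketch is false. Take $t=3$, $r=6$, $i=2$, $j=4$, $\II=\indeksy$; the cliques then form a path in the order $\kli_{12},\kli_{11},\kli_{21},\kli_{22}$. Let $C$ consist of:
the $4$ vertices of $\kli_{12}$ outside $\kli_{11}$;
one vertex from each of $V(\kli_{11}\cap\kli_{12})$, $V(\kli_{11}\cap\kli_{21})$ and $V(\kli_{21}\cap\kli_{22})$;
the two vertices $u_{1,21},u_{2,21}$;
and $3$ vertices of $\kli_{22}$ outside $\kli_{21}$.
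Then $G_{3,\indeksy}[C]$ is connected, and $C$ together with all edges not contained in $C$ is a simple clique cover. Moreover $|C|=12$ and $|C\cap V(\kli_{ab})|=5,2,4,4$ for $ab=12,11,21,22$. The unique maximiser $\kli_{12}$ has the single neighbour $\kli_{11}$, yet $|C\cap V(\kli_{12}\cup\kli_{11})|=6<13/2$. Your justification also cites the wrong property: \Pone{} restricts how $C$ meets \emph{other} sets of the cover, not how $C$ is spread over the cliques. The fallback via Lemma~\ref{Lem:Glueing} with $|\FF(C,C_1)|\ge(|C|-1)/2$ is likewise unproved.

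The missing idea is a pigeonhole argument over a two-piece decomposition, which shows that your second case never occurs. Write $G_{t,\II}=H_1\cup H_2$ with each $H_l$ a union of two cliques sharing at least two vertices. For instance, for $\II=\indeksy$ take $H_1=\kli_{11}\cup\kli_{12}$ and $H_2=\kli_{21}\cup\kli_{22}$; for $\II=\{11,12,21\}$ take $H_1=\kli_{11}\cup\kli_{12}$ and $H_2=\kli_{11}\cup\kli_{21}$; for $|\II|=2$ take $H_1=H_2=G_{t,\II}$. Every edge lies in $H_1$ or $H_2$, so no edge joins $V(H_1)\setminus V(H_2)$ to $V(H_2)\setminus V(H_1)$. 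Hence, if $C$ lies in neither $V(H_l)$, property \Ptwo{} forces $C\cap V(H_1\cap H_2)\neq\emptyset$. Then $|C|\le|C\cap V(H_1)|+|C\cap V(H_2)|-1$, so some $H_l$ carries at least $(|C|+1)/2$ vertices of $C$ (in the example, $\kli_{21}\cup\kli_{22}$ carries $7$). Thus Lemma~\ref{Lem:GlueingBis} always applies.

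Choosing the decompositions deliberately also repairs your Step~3, which does not control which blocks get absorbed. Absorbing only the middle block $V(\kli_{11}\cap\kli_{21})$ of $G_{3,\indeksy}$ is useless. Also, re-applying Lemma~\ref{Lem:GlueingBis} to a block already inside $C$ need not decrease $|\cover|$, so your termination argument fails. With the split into $\kli_{11}\cup\kli_{12}$ and $\kli_{21}\cup\kli_{22}$, for $t=1,3$ one absorbs some $V(\kli_{a1}\cap\kli_{a2})=V(\kli_{a2}\cap G_{t,\indeksy\setminus\{a2\}})$. For $t=2$ this split absorbs some $V(\kli_{a1}\cap\kli_{a2})$. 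The split into $\kli_{11}\cup\kli_{21}$ and $\kli_{12}\cup\kli_{22}$ then absorbs some $V(\kli_{1b}\cap\kli_{2b})$, and the union of the two is exactly $V(\kli_{ab}\cap G_{2,\indeksy\setminus\{ab\}})$.
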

\begin{proof} 
	We consider the following decompositions
	$
	G_{t,\II} = H_1 \cup H_2,
	$
	chosen so as to allow us to apply Lemma~\ref{Lem:GlueingBis}. Namely, for each such decomposition, we will verify that the assumptions of Lemma~\ref{Lem:GlueingBis} are satisfied for $H' = H_1$ or $H' = H_2$.
	\begin{align}
		G_{1, \indeksy}&=(\kli_{11}\oplus_j\kli_{12})\cup(\kli_{21}\oplus_j\kli_{22}), &&\nonumber \\
		G_{t, \indeksy}&=(\kli_{11}\oplus_{i}\kli_{12})\cup(\kli_{21}\oplus_i\kli_{22}), &&\text{ for }t=2, 3, \nonumber \\
		G_{2, \indeksy}&=(\kli_{12}\oplus_{j-i}\kli_{22})\cup(\kli_{11}\oplus_{j-i}\kli_{21}), &&\nonumber \\
		G_{1, \{11, 12, 21\}}&=(\kli_{11}\oplus_j\kli_{12})\cup(\kli_{11}\oplus_{j-i}\kli_{21}), &&
		\nonumber\\
		G_{t, \{11, 12, 21\}}&=(\kli_{11}\oplus_{i}\kli_{12})\cup(\kli_{11}\oplus_{j-i}\kli_{21}), &&\text{ for }t=2, 3 \nonumber\\
		G_{t, \II}&=(\kli_y \oplus_x \kli_z)\cup(\kli_y \oplus_x \kli_z),  &&\text{ for } \II=\{y, z\},  x\in \{i, j, j-i\}.\label{Eq:Odrywanie:Decomp}
	\end{align} 
	We note that,  for all connected $G_{t, \II}$  with $|\II|=3$,  we have $G_{t, \II}\cong G_{t, \{11, 12, 21\}}$. Therefore, for the case $|I|=3$ only $G_{t, \{11, 12, 21\}}$ are considered. Moreover,  in all the cases presented above,  $H_1\cap H_2$ has at least two vertices.

	For $C$ that exists by \eqref{Eq:Odrywanie:Cdef} and for $G_{t, \II}=H_1\cup H_2$,  we will show that
	\begin{equation}\label{Eq:Odrywanie:Clarge}
		\exists_{l\in \{1, 2\}}\,  |C\cap V(H_l)|\ge \frac{|C|+1}{2}.
	\end{equation} 
	If $C$ is  included in $H_1$ or $H_2$,  the assertion is clear. Let us therefore assume  $C\cap V(H_1),  C\cap V(H_2)\neq \emptyset$. Since $\cover$ is simple,  then $C\cap V(H_1\cap H_2)\neq \emptyset$.  Thus,  by inclusion-exclusion principle we get
	$$|C|=|C\cap V(H_1)|+|C\cap V(H_2)|-|C\cap V(H_1\cap H_2)|\le |C\cap V(H_1)|+|C\cap V(H_2)|-1, $$
	which implies \eqref{Eq:Odrywanie:Clarge}.
	
	We start by considering the most challenging case $\II=\indeksy$ and $t=2$. By the symmetry of $G_{2, \indeksy}$,  without loss of generality we may assume that in the decomposition listed in the second line of \eqref{Eq:Odrywanie:Decomp},  $|C\cap H_1|=|C\cap V(\kli_{11}\oplus_i\kli_{12})|\ge (|C|+1)/2$. Hence,   by Lemma~\ref{Lem:GlueingBis} applied with $H=G_{2, \indeksy}$ and $\kli_{11}\oplus_i\kli_{12}\subseteq G_{2, \indeksy}$,  there exists a simple clique cover $\cover'$ and  $C'\in\cover'$ such that $\pi(G_{2, \indeksy},   \cover)\lesssim\pi(G_{2, \indeksy},   \cover')$,    $C\subseteq C'$ (thus $|C|\le |C'|$, and  hence $p\le m^{-\frac{|C|+1}{2|C|}}\le m^{-\frac{|C'|+1}{2|C'|}}$),    and $V(\kli_{11}\cap \kli_{12})\subseteq C'$. The same reasoning applied to the decomposition listed in the third line of \eqref{Eq:Odrywanie:Decomp} and clique cover $\cover'$ implies (assuming without loss of generality that $|C'\cap V(\kli_{12}\oplus_j\kli_{22})|\ge (|C'|+1)/2$) that there exists a simple clique cover $\cover''$ and  $C''\in\cover''$ such that $\pi(G_{2, \indeksy},   \cover')\lesssim\pi(G_{2, \indeksy},   \cover'')$,    $V(\kli_{11}\cap \kli_{12})\subseteq C'\subseteq C''$,    and $V(\kli_{12}\cap \kli_{22})\subseteq C''$. This means that for such  $\cover''$ and $C''$ it holds that
	\begin{equation}
		V(\kli_{12}\cap G_{2, \indeksy\setminus\{12\}})\subseteq C'', \quad \pi(G_{2, \indeksy},   \cover)\lesssim\pi(G_{2, \indeksy},   \cover'').
	\end{equation}
	Since $G_{2, \indeksy\setminus\{12\}}$ is clearly connected,  the lemma follows in the case $t=2$ and $\II=\indeksy$.
	
	The same reasoning may be applied to the remaining decompositions listed in \eqref{Eq:Odrywanie:Decomp}. Note that in these cases,   $\cover'$ has already required properties and there is no need to consider $\cover''$. This implies the lemma for $|\II|=2$,  $|\II|=3$ (considering $\II=\{11, 12, 21\}$ and $ab=12$ or $ab=21$),  as well as $\II=\indeksy$ with $t=1, 3$ (taking $ab=12$ or $ab=21$). 
\end{proof}

Now we are ready to prove Lemma~\ref{Lem:Odrywanie}.

\begin{proof}[Proof of Lemma~\ref{Lem:Odrywanie}]
	We will apply Lemma~\ref{Lem:Transition}. For that we note that,  for any $|\II|\ge 2$ and any $ab\in \II$ we have $G_{t, \II}=\kli\oplus_{x} \bar{H}$ for $\kli=\kli_{ab}$,  $\bar{H}=G_{t, \II\setminus\{ab\}}$,  and some $x$ from the set $\{i, j, j-i, (i, j-i)\}$. The index $ab$ may always be chosen so that $G_{t, \II\setminus\{ab\}}$ is connected.
	
	By Lemma~\ref{Lem:Odrywanie:Pre} we have that for a clique cover $\cover\in \covers(G_{t, \II})$ 
	either (i) of  Lemma~\ref{Lem:Transition} is true or there exists $ab\in \II$,  $\cover'$,  and $C'\in \cover'$ such that $G_{t, \II\setminus\{ab\}}$ is connected,  $\pi(G_{t, \II},   \cover)\lesssim\pi(G_{t, \II},   \cover')$,  and for $\cover'$ the assumption (ii) of  Lemma~\ref{Lem:Transition} is fulfilled. Thus,  Lemma~\ref{Lem:Odrywanie} follows by Lemma~\ref{Lem:Transition}.
	
\end{proof}

\subsection{Decompositions of clique covers}\label{Sec:CCs:Decopmposition}

\begin{figure}
	\begin{center}
		\includegraphics[width=14cm]{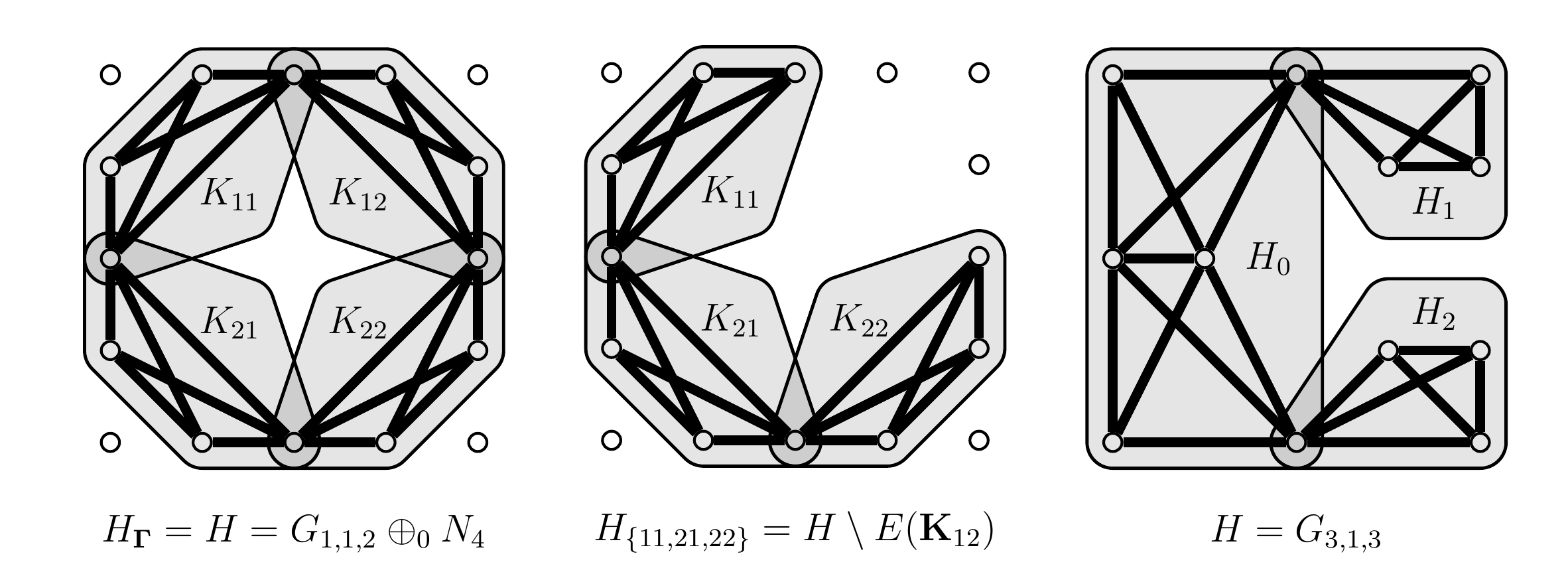}
	\end{center}
	\caption{\label{Fig:Partition}Let $r=4$. 
		$H=G_{2,  1,  2}\oplus_0 \emp_4$ has a natural {\it partition} with $\Lambda=\indeksy$ and $H_{ab}=\kli_{ab}$,   for $ab\in \indeksy$. Also $H_{\{11,  21,  22\}}$ in the picture in the middle has a partition $(\kli_{ab})_{ab\in \{11,  21,  22\}}$.  
		In the third picture we present $G_{3,  1, 3}$ and its partition into $H_0=\kli_{11}\oplus_2\kli_{12}$,   $H_1=\kli_{21}$,   and $H_2=\kli_{22}$. 
	}
\end{figure}

In this section we consider clique covers of the graphs that are unions of graphs that pairwise intersect on at most one vertex. 

\begin{definition}\label{Def:Partition}
	A family $\{H_i\}_{i\in \Lambda}$ of subgraphs of $H$ is called a {\it partition of H} if 
	\begin{itemize}
		\item[(i)] $\forall_{i,   j\in \Lambda,    i\neq j} |V(H_i)\cap V(H_j)|\le 1$;
		\item[(ii)] $E(H)=\bigcup_{i\in \Lambda}E(H_i)$.  
	\end{itemize} 
\end{definition}
For any $J\subseteq \Lambda$,    we define 
\begin{equation}\label{Eq:DecompDefHJ}
	H_{J}=\left(V(H),   \bigcup_{i\in J}E(H_i)\right)=H\setminus \bigcup_{i\in \Lambda\setminus J}E(H_i), \end{equation}
which is a graph created from $H$ by removing edges that do not belong to any $H_i$ with $i\in J$. In particular,   we have $H_{\emptyset}=(V(H),   \emptyset)$.\\

Later on,    we will consider some special partitions of graphs, as e.g.,
\begin{itemize}
	\item for any graph $G$ with the edge set $E(G)=\{e_1, \ldots, e_s\}$ we will take $\{H_i\}_{i\in \{1, \ldots, s\}}$ such that $H_i=G[e_i]$.
	\item for $G_{1, 0, 1}$ and $G_{t, 1, 2}$,  for $t=1, 2$,  we will consider the partition $\{\kli_{ab}\}_{ab\in\indeksy}$.
	\item for $G_{3, 1, j}$ we will analyse the partition $\{H_i\}_{i\in \Lambda}$,    where $\Lambda=\{0,   1,   2\}$,    $ H_0=G_{\{11,   21\}}=\kli_{11}\oplus_{j-1}\kli_{21}$,    $H_1=\kli_{12}$,    and  $H_2=\kli_{22}$.
\end{itemize}
For examples  of partitions we refer the reader  to Figure~\ref{Fig:Partition}.

\begin{definition}\label{def:decomp}
	Let $\{H_i\}_{i\in \Lambda}$  be a partition of $H$ and $\cover\in\covers(H)$. We say that $\cover$ is {\it decomposable} (in relation to the partition $\{H_i\}_{i\in \Lambda}$) if,    for some $i\in \Lambda$,    there exist  $\cover_i\subseteq \cover$ and $\bar \cover_i\subseteq \cover$ such that $\cover_i\in \covers(H_i)$,    $\bar \cover_i\in \covers(H_{\Lambda\setminus \{i\}})$,    and $\cover_i\cap\bar{\cover_i}=\emptyset$.   Otherwise $\cover$ is {\it indecomposable}. Figures~\ref{Fig:Decomposition} and \ref{Fig:Indecomposable2graphs}  illustrate several examples of graphs and their decomposable and indecomposable clique covers.
	
	For  $J\subseteq \Lambda$    we say that $\cover\in \covers(H)$ is {\it decomposable to $H_J$} (see Figure~\ref{Fig:Decomposition}),    if  there are pairwise disjoint subfamilies $\cover_{J}$ and $\cover_i$,    $i\in \Lambda\setminus J$,    of $\cover$ such that $\cover_{J}\cup\bigcup_{i\in \Lambda\setminus J} \cover_{i}\subseteq \cover$,    
	$\cover_i\in \covers(H_i)$,    for $i\in \Lambda\setminus J$,    and $\cover_{J}\in\covers(H_{J})$ is indecomposable with relation to the partition $\{H_i\}_{i\in J}$ of $H_{J}$.
	Furthermore,  
	by $\covers_{J}(H)$ we denote   the family of clique covers of $H$  that are   decomposable to $H_{J}$. \\
	\ \\
	In other words,   $\cover$ is decomposable,   if there are disjoint subfamilies $\cover_i$ and $\bar\cover_i$ of $\cover$,   such that sets of $\cover_i$ are subsets of $V(H_i)$,  and cover all edges of $H_i$ and sets of $\bar\cover_i$ cover all edges of $H\setminus E(H_i)$,   i.e. cover all the remaining edges.  Then,  $\cover$ is decomposable to $H_J$,  if one can separate in such a way every graph $H_i$ for $i\in \Lambda\setminus J$,  and only these graphs. 
	In particular $\cover$ is decomposable to $H_{\emptyset}$ if there exist disjoint $\cover_i\in \covers(H_i)$ for all   $i\in \Lambda$,    such that $\bigcup_{i\in \Lambda}\cover_i\subseteq \cover$. Note that,    in all considered cases of decompositions,    we allow some extra sets in $\cover$,    that  do not need to be included in any $\cover_i$  or $\cover_J$. 
\end{definition}

\begin{figure}
	\begin{center}
		\includegraphics[width=14cm]{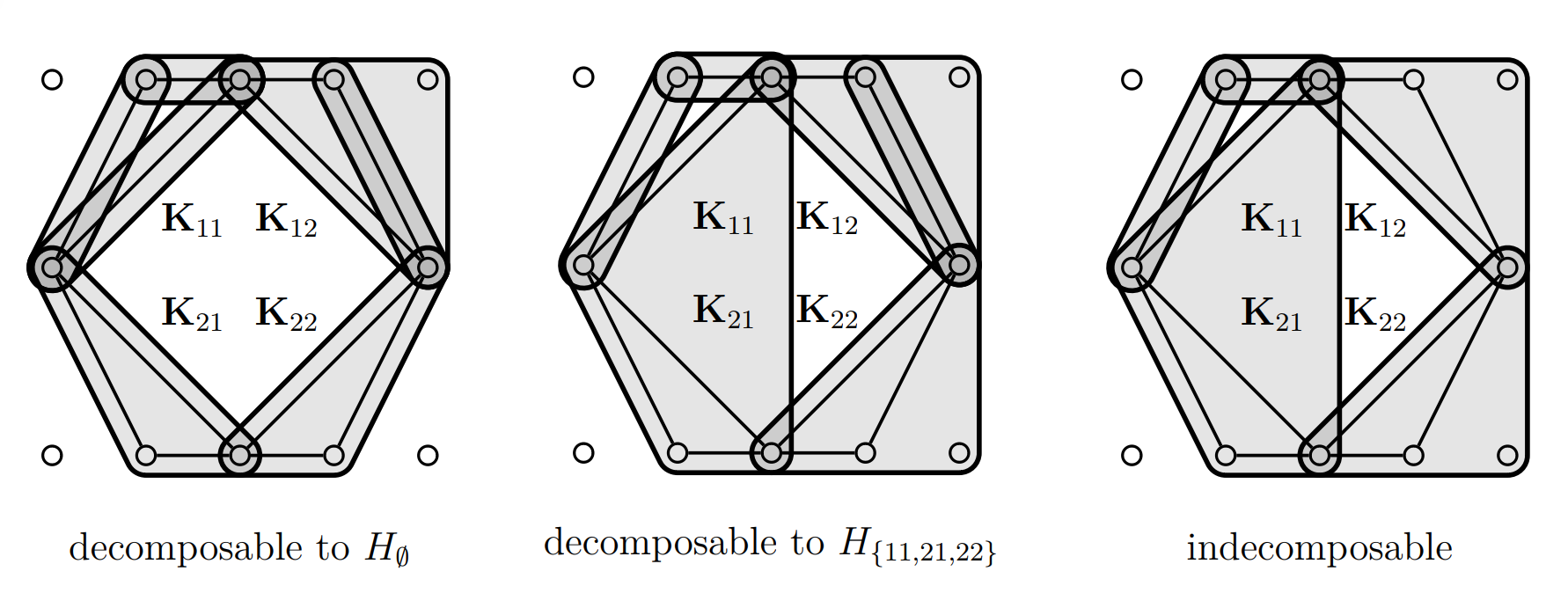}
	\end{center}
	\caption{\label{Fig:Decomposition}Let $r=3$. 
		The picture represents various clique covers of $H=G_{2,  1,  2}\oplus_0 \emp_4$. In all cases we consider partition $\{\kli_{ab}\}_{ab\in \indeksy}$.
		In the first picture the clique cover is decomposable to $H_{\emptyset}$. 
		The extra 3--set that contains an isolated vertex is not contained in any $\cover_{ab}$,   $ab\in \indeksy$. In the second picture the set that covers edges of $\kli_{22}$ is not contained in $V(\kli_{22})$,  therefore the clique cover is not decomposable to $H_{\{11, 21\}}$. For similar reasons the clique cover in the last picture is indecomposable.
	}
\end{figure}

\begin{lemma}\label{Lem:DecompositionComparing}
	Let $H^{(l)}$,    $l=1,   2$,    be two graphs with partitions $\{H_i^{(l)}\}_{i\in \Lambda}$,    $l=1,   2$,  and $J\subseteq \Lambda$. 
	If
	\begin{equation}\label{Eq:IsoDecompComp}
		H^{(1)}_{J}\cong H^{(2)}_{J}\quad\text{and}\quad \forall_{i\in \Lambda\setminus J} H^{(1)}_i\cong H^{(2)}_i,
	\end{equation}
	then 
	$$
	\pi(H^{(1)},    \covers_{J}(H^{(1)}))=\pi(H^{(2)},    \covers_{J}(H^{(2)})).
	$$
\end{lemma}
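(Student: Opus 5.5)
The plan is to construct a bijection between $\covers_J(H^{(1)})$ and $\covers_J(H^{(2)})$ that preserves the multiset of set sizes, and then invoke Lemma~\ref{Lem:CCequality} cover by cover. Concretely, $\pi(H,\cover)=\pi(H,\cover,\calP(H)\setminus\cover)$, so if $\Phi:\covers_J(H^{(1)})\to\covers_J(H^{(2)})$ is a bijection with $\Phi(\cover)\Bumpeq\cover$ and $(\calP(H^{(2)})\setminus\Phi(\cover))\Bumpeq(\calP(H^{(1)})\setminus\cover)$, the claim follows by summing \eqref{Eq:SubgraphProbabilityCCfamily}. Both graphs have the same vertex count: $V(H^{(l)})=V(H^{(l)}_J)$, and \eqref{Eq:IsoDecompComp} gives $H^{(1)}_J\cong H^{(2)}_J$.

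The first thing to settle is what $\pi(H,\covers_J(H))$ actually measures. Since clique covers are allowed extra sets, it is cleaner to rewrite it in terms of attribute events. Write $B_C$ for the event that some attribute builds $C$. Then $H$ is given by a cover in $\covers_J(H)$ exactly when the family $\mathcal{B}=\{C\in\calP(H): B_C \text{ occurs}\}$ satisfies two conditions: it is a clique cover of $H$, and it contains disjoint subfamilies $\cover_i\in\covers(H_i)$ for $i\in\Lambda\setminus J$ together with $\cover_J\in\covers(H_J)$ indecomposable with respect to $\{H_i\}_{i\in J}$.

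I will then check that this condition is determined by the sets of sizes restricted to the relevant pieces. A set $C\in\cover_i$ satisfies $C\subseteq V(H_i)$ and covers an edge of $H_i$. The remaining sets cover edges of $H_J$, or nothing at all.

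The key step is therefore a relabelling. Fix isomorphisms $\phi_J:H^{(1)}_J\to H^{(2)}_J$, which is a bijection of the whole vertex sets, and $\phi_i:H^{(1)}_i\to H^{(2)}_i$ for $i\notin J$. I want a single vertex bijection $\psi:V(H^{(1)})\to V(H^{(2)})$ that sends each $\cover_i$ onto a cover of $H^{(2)}_i$ and $\cover_J$ onto an indecomposable cover of $H^{(2)}_J$. The problem is that $\phi_J$ and the $\phi_i$ need not agree on shared vertices. Each $H_i$ meets the others in at most one vertex, but a vertex of $H_i$ may also lie in $V(H_J)$, which is everything.

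I therefore expect the main obstacle to be defining $\Phi$ piecewise rather than by one global vertex bijection. Sets in $\cover_i$ are mapped via $\phi_i$, sets of $\cover_J$ and the extra sets via $\phi_J$, and the images of $\cover_i$ are only required to lie inside $V(H^{(2)}_i)$. I then need three facts:
\begin{itemize}
\item[(a)] the image is again in $\covers_J(H^{(2)})$ and is a genuine set family, with no collisions that would shrink it;
\item[(b)] the map is well defined, independent of the chosen decomposition, perhaps by fixing a canonical one;
\item[(c)] it preserves the size profiles of both the cover and its complement in $\calP$.
\end{itemize}
Because uniqueness of the decomposition is delicate, I would fall back on a counting argument. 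Using \eqref{eq:CC1}-type exact product structure, that is independence of the $B_C$ up to the constraint that attributes build one set each, I would show that $\pi(H,\covers_J(H))$ is an explicit function of the multiset of sizes of sets in $\calP(H)$ lying in each $V(H_i)$ and covering $H_i$-edges, of the structure of $\calP(H_J)$, and of $|V(H)|$. All of these quantities agree under \eqref{Eq:IsoDecompComp}. Sets in $\calP(H)\setminus\calP(H_J)$ cover only edges of $H_i$ with $i\notin J$, and each such set lies within a single $V(H_i)$ or straddles pieces; straddling sets only act as extra sets. This reduces the claim to matching size-preserving bijections on each class separately, which is where Lemma~\ref{Lem:CCequality} is applied.
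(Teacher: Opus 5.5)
\textbf{There is a genuine gap: the cover-by-cover bijection you aim for cannot exist.} You want a bijection $\Phi:\covers_J(H^{(1)})\to\covers_J(H^{(2)})$ with $\Phi(\cover)\Bumpeq\cover$ and $\calP(H^{(2)})\setminus\Phi(\cover)\Bumpeq\calP(H^{(1)})\setminus\cover$. Such a $\Phi$ would force $\calP(H^{(1)})\Bumpeq\calP(H^{(2)})$. But \eqref{Eq:IsoDecompComp} only controls $H_J$ and the pieces $H_i$, $i\notin J$. It says nothing about the ``straddling'' sets in $\calP(H)\setminus\big(\calP(H_J)\cup\bigcup_{i\notin J}\calP(H_i)\big)$.

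In the paper's own applications these sets differ. Take $H^{(1)}=\klik_r\oplus_1\klik_r\oplus_0\emp_1$ and $H^{(2)}=\klik_r\oplus_0\klik_r$ with $J=\emptyset$, as in the proof of Lemma~\ref{Lem:Var:OneIntersection}. Every $3$-subset of $V(H^{(2)})$ contains an edge, whereas $(r-1)^2$ of the $3$-subsets of $V(H^{(1)})$ do not. Straddling sets may be added freely to any cover in $\covers_\emptyset(H)$, so $|\covers_\emptyset(H^{(1)})|\neq|\covers_\emptyset(H^{(2)})|$ and no bijection exists. Likewise, transporting straddling sets by $\phi_J$ need not land in $\calP(H^{(2)})$, so your (a) and (c) fail.

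The fallback does not repair this as written. \eqref{eq:CC1} holds only up to a factor $1+O(m^{-\delta})$, and the events $B_C$ are not independent, so it cannot give an exact identity. Also, a size-preserving bijection on the straddling class is impossible for the same counting reason.

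\textbf{The missing idea is to match \emph{classes} of covers rather than individual covers.} Pieces share at most one vertex, so $\calP(H_J)$ and the $\calP(H_i)$, $i\notin J$, are pairwise disjoint. Moreover, $\cover\in\covers_J(H)$ exactly when two conditions hold:
\begin{itemize}
\item the trace $\cover\cap\calP(H_J)$ is an indecomposable cover of $H_J$;
\item each trace $\cover\cap\calP(H_i)$, $i\notin J$, covers $H_i$.
\end{itemize}
This canonical choice of traces also settles your worry (b).

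Hence $\covers_J(H)$ is the disjoint union, over such tuples $(\cover_J,(\cover_i)_{i\notin J})$, of the families $\covers(H,\cover_+,\cover_-)$, where $\cover_+=\cover_J\cup\bigcup_i\cover_i$ and $\cover_-=(\calP(H_J)\setminus\cover_J)\cup\bigcup_i(\calP(H_i)\setminus\cover_i)$. The straddling sets are neither required nor forbidden.

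The maps $\phi_J$ and $\phi_i$ give a bijection between index tuples for $l=1,2$ with $\cover_+^{(1)}\Bumpeq\cover_+^{(2)}$ and $\cover_-^{(1)}\Bumpeq\cover_-^{(2)}$. Lemma~\ref{Lem:CCequality} needs only equal vertex counts and these two profiles, not all of $\calP(H)$, so it equates the classes term by term. Summing gives the claim.

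This is exactly the paper's proof. Your remark that straddling sets ``only act as extra sets'' is the right seed, but it must be used by leaving those sets unconstrained, not by mapping them.
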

\begin{proof}
	Let 
	$$\varphi_J:V(H_J^{(1)})\to V(H_J^{(2)})\quad\text{ and } \quad\varphi_i:V(H_i^{(1)})\to V(H_i^{(2)}),   \text{ for } i\in \Lambda\setminus J,   $$ be bijections of vertex sets implied by \eqref{Eq:IsoDecompComp},    i.e., edge-preserving bijections of the respective vertex sets of $H_i^{(l)}$,    $i\in \Lambda\setminus J$,    and $H_J^{(l)}$,    $l=1,   2$. They define bijections
	$$
	\phi_x:\covers(H_x^{(1)})\to \covers(H_x^{(2)}),   \quad x\in \{J\}\cup (\Lambda\setminus J),   
	$$
	given by the formula
	$$
	\phi_x(\{C_1,   \ldots,   C_t\})=\{\varphi_x(C_1),   \ldots,   \varphi_x(C_t)\},   \quad x\in \{J\}\cup (\Lambda\setminus J),    
	$$
	for any $\cover=\{C_1,   \ldots,   C_t\}\in \covers(H_x^{(1)})$. Moreover,   \eqref{Eq:IsoDecompComp} gives us $\cP(H_{x}^{(1)})\Bumpeq \cP(H_{x}^{(2)})$,   where the relation $\Bumpeq$ is defined in \eqref{Bumpeq}. Therefore 
	\begin{equation}\label{Eq:BijectionsSameStructure}
		\forall_{x\in \{J\}\cup (\Lambda\setminus J)}
		\forall_{\cover \in \covers(H_x^{(1)})}\quad
		\cover \Bumpeq \phi_x(\cover)\quad\text{and}\quad
		\cP(H_{x}^{(1)})\setminus \cover\Bumpeq \cP(H_{x}^{(2)})\setminus\phi_x(\cover).
	\end{equation}
	
	Let $\covers_{J}(H_{J}^{(l)})$ be the family of indecomposable clique covers of $H_{J}^{(l)}$ with respect to $(H_i^{(l)})_{i\in J}$. 
	From the definition,    we can conclude that each clique cover of $H^{(l)}$ decomposable to $H_{J}^{(l)}$,    contains an indecomposable clique cover of $H_{J}^{(l)}$ and clique covers of $H_{i}^{(l)}$,    $i\in \Lambda\setminus J$. Thus
	\begin{align}\label{Eq:Decomp:SumOfCC}
		&\covers_{J}(H^{(l)})\\
		&=	
		\bigcup_{\cover_J\in \covers_{J}(H_J^{(l)})}
		\bigcup_{\substack{\cover_i\in \covers(H_i^{(l)})\\   i\in \Lambda\setminus J}}  
		\covers\left(
		H^{(l)},   
		\cover_J\cup\bigcup_{i\in \Lambda\setminus J}\cover_i,   
		\left({\cal P}(H^{(l)}_J)\setminus \cover_J\right) 
		\cup
		\bigcup_{i\in \Lambda\setminus J}\left({\cal P}(H^{(l)}_i)\setminus\cover_i\right)
		\right).
	\end{align}
	We will show that for a fixed $l\in\{1,   2\}$ the families in the union are pairwise disjoint. Without loss of generality,  let $l=1$. Assume that $\covers_{1}$ and $\covers_{2}$ are the families from the union generated by clique covers 
	$$
	\cover_{s,   J}\in \covers_{J}(H_J^{(1)}),    
	\quad
	\cover_{s,   i}\in \covers_{i}(H_i^{(1)}),    
	\quad
	i\in \Lambda\setminus J,   s=1, 2, 
	$$
	and satisfying
	$$\exists_{x\in \{J\}\cup (\Lambda\setminus J)}\ \cover_{1,   x}\neq \cover_{2,   x}.
	$$ 
	Thus,    
	without loss of generality,    we may assume that  there exists 
	$C\in \cover_{1,   x}\setminus \cover_{2,   x}$ for some $x\in \{J\}\cup (\Lambda\setminus J)$. We may write equivalently $C\in \cover_{1,   x}$ and $C\in {\cal P}(H^{(1)}_x)\setminus \cover_{2,   x}$.  Hence
	$$
	C\in \cover_{1,   J}\cup\bigcup_{i\in \Lambda\setminus J}\cover_{1,   i}\quad\text{and}\quad
	C\in \left({\cal P}(H^{(1)}_{J})\setminus \cover_{2,   J}\right) 
	\cup
	\bigcup_{i\in \Lambda\setminus J}\left({\cal P}(H_i^{(1)})\setminus\cover_{2,   i}\right).$$
	This implies that $C$ is an element of every clique cover in 
	$
	\covers_1
	$
	and is contained in no clique cover in 
	$
	\covers_2
	$.
	Hence,   $\covers_1$ and $\covers_2$ are disjoint.
	
	Consequently,    combining  \eqref{Eq:BijectionsSameStructure},     \eqref{Eq:SubgraphProbabilityCCfamily}  and Lemma~\ref{Lem:CCequality} we get
	\begin{align*}
		&\pi(H^{(1)},    \covers_{J}(H^{(1)}))=\\
		&=  \sum_{\cover_J\in \covers_{J}(H_J^{(1)})}
		\sum_{\cover_i\in \covers(H_i^{(1)}),   i\in \Lambda\setminus J}  
		\pi\Bigg(
		H^{(1)},    
		\cover_J\cup\bigcup_{i\in \Lambda\setminus J}\cover_i,\\
		&\hspace{8cm}    
		\left({\cal P}(H^{(1)}_J)\setminus \cover_J\right) 
		\cup
		\bigcup_{i\in \Lambda\setminus J}\left({\cal P}(H_i^{(1)})\setminus\cover_i\right)
		\Bigg)\\
		&=  \sum_{\cover_J\in \covers_{J}(H_J^{(1)})}
		\sum_{\cover_i\in \covers(H_i^{(1)}),   i\in \Lambda\setminus J}  
		\pi\Bigg(
		H^{(2)},   
		\phi_{J}(\cover_J)\cup\bigcup_{i\in \Lambda\setminus J}\phi(\cover_i),  \\
		&\hspace{7cm}   
		\left({\cal P}(H^{(2)}_J)\setminus \phi(\cover_J)\right) 
		\cup
		\bigcup_{i\in \Lambda\setminus J}\left({\cal P}(H_i^{(2)})\setminus\phi(\cover_i)\right)
		\Bigg)\\
		&=  \sum_{\cover_J\in \covers_{J}(H_J^{(2)})}
		\sum_{\cover_i\in \covers(H_i^{(2)}),   i\in \Lambda\setminus J}  
		\pi\Bigg(
		H^{(2)},    
		\cover_J\cup\bigcup_{i\in \Lambda\setminus J}\cover_i, \\
		&\hspace{8cm}      
		\left({\cal P}(H^{(2)}_J)\setminus \cover_J\right) 
		\cup
		\bigcup_{i\in \Lambda\setminus J}\left({\cal P}(H_i^{(2)})\setminus\cover_i\right)
		\Bigg)\\
		&=\pi(H^{(2)},    \covers_{J}(H^{(2)})),  
	\end{align*}
	as required.
\end{proof}

The next result will be essential for obtaining sharp estimates of subgraph probabilities in the case $mp^2\ge 1$. It also clearly illustrates how the lemma above can be beneficial.

\begin{lemma}\label{Lem:DuzeMp2} 
	Let $mp^2\ge 1$,    $mp^3\le 1$,    and $H_1$,  $H_2$ be non-empty graphs. Then we have
	$$
	\pi(H_1\cup H_2)=\frac{\pi(H_1)\pi(H_2)}{\pi(H_1\cap H_2)}+O(mp^3e^{-2mp^2}).
	$$
\end{lemma}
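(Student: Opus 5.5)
We split the clique covers of $H=H_1\cup H_2$ according to whether they use a set of size at least $3$. In the regime $mp^2\ge 1$, $mp^3\le 1$, estimate \eqref{eq:CC4} shows that every set $C$ with $|C|\ge 3$ costs a factor $mp^{|C|}\le mp^3$. Every 2-set that is not in the cover is forbidden and costs $e^{-mp^2}$. The dominant contribution should therefore come from covers consisting only of 2-sets.

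Concretely, define $\covers^{(2)}(H)$ as the family of clique covers of $H$ all of whose sets of size at least $3$ fail to cover any edge. Equivalently, we work with the partition of $H$ into single edges and consider covers that are decomposable to $H_\emptyset$. In other words, these are covers containing every edge of $H$ as a 2-set. We write
\[
\pi(H)=\pi(H,\covers^{(2)}(H))+R(H),
\]
where $R(H)$ sums over covers in which some edge $e$ is not present as a 2-set. Such an edge must then be covered by a set of size at least $3$, and the 2-set $e$ itself is forbidden. By \eqref{eq:CC4}, every such cover has probability $\lesssim mp^3e^{-mp^2}$. We aim for the stronger factor $e^{-2mp^2}$. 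The idea is that, since the graph is non-empty and the edge $e$ lies in a set $C$ of size at least $3$, we may also forbid a second 2-set inside $C$ that is not an edge, or is an edge not realised as a 2-set. If this gives only $e^{-mp^2}$, we fall back on a finer comparison.

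For the main term, $\pi(H,\covers^{(2)}(H))$ is the probability that every edge of $H$ is built as a 2-set, i.e.\ $\pi(H,E(H),\emptyset)$. By \eqref{eq:CC1} this is essentially $\prod_{e\in E(H)}(1-e^{-mp^2})$, up to the factor $(1+O(m^{-\delta}))$. The more robust route, however, is Lemma~\ref{Lem:DecompositionComparing} together with the exact structure: the events ``2-set $e$ is built'' are almost independent. A cleaner exact formulation is the following. The event that all edges of $H$ are built as 2-sets factorises over the edges through independence of the attributes, via the $\Bumpeq$-invariance of Lemma~\ref{Lem:CCequality}. Hence
\[
\pi(H_1\cup H_2,E)\,\pi(H_1\cap H_2,E)=\pi(H_1,E)\,\pi(H_2,E)
\]
holds up to errors of the same order as $R$.

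Substituting $\pi(H_l)=\pi(H_l,E(H_l))+R(H_l)$ and using $\pi(\cdot)\asymp 1$ (which follows from Fact~\ref{Lem:DuzeMp2dwa} since $\pp\asymp1$), the errors propagate linearly and give the $O(mp^3e^{-2mp^2})$ term. If $H_1\cap H_2$ is empty, we interpret $\pi(H_1\cap H_2)=1$.

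The main obstacle is obtaining the error $e^{-2mp^2}$ rather than $e^{-mp^2}$. The reason is that the remainders do not cancel exactly: the covers of $H$ containing a $3$-set $C$ that straddles $H_1$ and $H_2$ must be matched against products of covers of $H_1$ and $H_2$. We would handle this with Lemma~\ref{Lem:DecompositionComparing}, applied to the partition $\{H_1, H_2\}$, or more precisely to the partition into edges. Covers whose $3$-sets lie inside one $H_l$ then match the corresponding term in $\pi(H_l)\pi(H_{3-l})/\pi(H_1\cap H_2)$ exactly. The unmatched covers are those whose large set contains edges not entirely inside one part. Such a set must miss at least two 2-sets, i.e.\ carry two forbidden 2-sets, and so is bounded by $mp^3e^{-2mp^2}$ via \eqref{eq:CC4}.
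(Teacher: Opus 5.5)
Your plan has a genuine gap. The exact cancellations it relies on do not exist for the four graphs $H_1\cup H_2$, $H_1\cap H_2$, $H_1$, $H_2$ taken separately, and the covers you intend to bound are not small.

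First, $\pi(H,E(H),\emptyset)$ does not factorise over edges, because the numbers of attributes building the various sets are multinomially dependent. What Lemma~\ref{Lem:CCequality} does give is that this quantity depends only on $|E(H)|$ \emph{and} $|V(H)|$, since $p_C=p^{|C|}(1-p)^{|V(H)|-|C|}$ sees the ambient vertex set. Your four graphs have different vertex counts, and this costs a full factor $e^{-mp^2}$. Take $H_1=\{uv\}$ and $H_2=\{vw\}$, so $H_1\cap H_2$ is the vertex $v$ and $P=H_1\cup H_2$ is a path. For single edges $\pi(H_l,E(H_l),\emptyset)=\pi(H_l)$, and
\[
\pi(P,E(P),\emptyset)-\pi(H_1)\pi(H_2)=-2\bigl[(1-p^2+p^3)^m-(1-p^2)^m\bigr]+O(mp^3e^{-2mp^2}).
\]
The bracket is $\asymp mp^3e^{-mp^2}$, which is much larger than the target once $mp^2\to\infty$. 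This term is cancelled, up to $O(mp^3e^{-2mp^2})$, by the covers $\{uv,\{u,v,w\}\}$ and $\{vw,\{u,v,w\}\}$ in $R(P)$. Each has a $3$-set straddling $H_1$ and $H_2$ but only \emph{one} forbidden $2$-set, so each has probability $\asymp mp^3e^{-mp^2}$ by \eqref{eq:CC4}. This contradicts your claim that straddling sets carry two forbidden $2$-sets. More generally, there is no cover-by-cover matching between $H_1\cup H_2$ and pairs of covers of $H_1,H_2$, since the ambient vertex sets and forbidden families differ. A quotient $\pi(H_1)\pi(H_2)/\pi(H_1\cap H_2)$ has no expansion over covers at all.

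The missing idea is to clear the denominator and equalise vertex sets \emph{before} comparing covers. Set $G^{(1)}=(H_1\cup H_2)\oplus_0(H_1\cap H_2)$ and $G^{(2)}=H_1\oplus_0H_2$. By independence on disjoint vertex sets, $\pi(G^{(1)})=\pi(H_1\cup H_2)\pi(H_1\cap H_2)$ and $\pi(G^{(2)})=\pi(H_1)\pi(H_2)$. The two graphs have the same numbers of vertices and edges. Partition each into single edges and apply Lemma~\ref{Lem:DecompositionComparing}. It shows that $G^{(1)}$ and $G^{(2)}$ have the same total probability of covers containing every edge as a $2$-set. It also shows that, for each fixed $i$, they have the same total probability of covers in which exactly the $i$-th edge's $2$-set is forbidden, whatever larger (possibly straddling) sets occur. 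This family-level matching absorbs both the $mp^3e^{-mp^2}$ discrepancy and the single-forbidden covers above. The remaining covers have at least two forbidden $2$-sets and a set of size at least $3$, so each is $\lesssim mp^3e^{-2mp^2}$ by \eqref{eq:CC4}. Dividing by $\pi(H_1\cap H_2)\asymp1$ (Fact~\ref{Lem:DuzeMp2dwa}) finishes the proof.
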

\begin{proof}
	Let $G^{(1)}=(H_1\cup H_2)\oplus_0 (H_1\cap H_2)$ and $G^{(2)}=H_1\oplus_0 H_2$. 
	By definition,   $G^{(1)}$ and $G^{(2)}$ have the same number of edges and the same number of vertices.
	Set $E(G^{(1)})=\{e^{(1)}_1,   \ldots,   e^{(1)}_l\}$ and $E(G^{(2)})=\{e^{(2)}_1,   \ldots,   e^{(2)}_l\}$. Now,    for $j=1,   2$,    consider partitions $\{\bar H^{(j)}_{i}\}_{i\in \{1,   \ldots,   l\}}$,    where $\bar H^{(j)}_{i}$ is the  subgraph of $G^{(j)}$  induced by the edge $e^{(j)}_i$. We clearly have  $\bar H^{(1)}_{i}\cong \bar H^{(2)}_{i}$,  for all $i\in \{1,   \ldots,   l\}$,  and therefore Lemma~\ref{Lem:DecompositionComparing} gives us
	\begin{align}\label{Eq:all2sets}
		\begin{array}{rl}
			(i)&\pi(G^{(1)},    \covers_{\emptyset}(G^{(1)}))=\pi(G^{(2)},    \covers_{\emptyset}(G^{(2)})), \\[5pt] 
			(ii)&\forall_{i\in \{1,   \ldots,   l\}}\pi(G^{(1)},    \covers_{\{i\}}(G^{(1)}))=\pi(G^{(2)},    \covers_{\{i\}}(G^{(2)})).
		\end{array}
	\end{align}
	Recall that $\covers_{\emptyset}(G^{(j)})$ contains all clique covers decomposable to $\bar H^{(j)}_\emptyset$ in relation to the partition $\{\bar H^{(j)}_i\}_{i\in \{1,   \ldots,   l\}}$. 
	This means that $\covers_{\emptyset}(G^{(j)})$ contains all clique covers $\cover$ such that $E(G^{(j)})\subseteq\cover$,    i.e. all edges are covered by 2--sets.	Moreover, $\covers_{\{i\}}(G^{(j)})$ contains all clique covers of $G^{(j)}$  decomposable to $\bar H^{(j)}_{\{i\}}$ in relation to the partition $\{\bar H^{(j)}_i\}_{i\in \{1,   \ldots,   l\}}$, i.e. such that all edges but $e_i^{(j)}$ are covered by 2--sets and  the 2--set covering $e_i^{(j)}$ is forbidden. As a consequence,    given $j=1,   2$,    all the families of clique covers $\covers_{\emptyset}(G^{(j)})$,    $\covers_{\{i\}}(G^{(j)})$,    $i=1,   \ldots,   l$,    are pairwise disjoint. Define
	$$
	\covers^*(G^{(j)}):=\covers_{\emptyset}(G^{(j)})\cup\bigcup_{i=1}^{l}\covers_{\{i\}}(G^{(j)}),   \hspace{20pt} j=1,   2, 
	$$ 
	which is the family of all clique covers of $G^{(j)}$ with at most one forbidden $2$--set. Since,  as mentioned before,  the families in the union above are pairwise disjoint,  by \eqref{Eq:all2sets} and \eqref{Eq:SubgraphProbabilityCCfamily} we have
	\begin{equation}\label{Eq:Lem:DuzeMp2:1}
		\pi(G^{(1)},  \covers^*(G^{(1)}))
		=\pi(G^{(2)},  \covers^*(G^{(2)})).
	\end{equation}	
	
	All clique covers that are not contained in $\covers^*(G^{(j)})$  have at least two forbidden $2$--sets and at least one $k$--set,    $k\ge 3$,    that covers the edges that are related to the forbidden $2$--sets. Therefore, by \eqref{eq:CC4} we deduce
	\begin{equation}\label{Eq:Lem:DuzeMp2:2}
		\forall_{\cover\in \covers(G^{(j)})\setminus \covers^*(G^{(j)})} \pi(G^{(j)},   \cover)\lesssim mp^3e^{-2mp^2}, \quad j=1,  2.
	\end{equation}
	Combining \eqref{Eq:Lem:DuzeMp2:1},  \eqref{Eq:Lem:DuzeMp2:2},  and \eqref{Eq:SubgraphProb} we obtain
	\begin{equation*}
		\begin{split}
			\pi(H_1\cup H_2)\pi(H_1\cap H_2)&=\pi(G^{(1)})\\
			&=\pi(G^{(1)},    \covers^*(G^{(1)}))+\pi(G^{(1)},    \covers(G^{(1)})\setminus \covers^*(G^{(1)}))\\
			&=\pi(G^{(1)},    \covers^*(G^{(1)}))+O(mp^3e^{-2mp^2})\\
			&=\pi(G^{(2)},    \covers^*(G^{(2)}))+O(mp^3e^{-2mp^2})\\
			&=\pi(G^{(2)},    \covers^*(G^{(2)}))+\pi(G^{(2)},    \covers(G^{(2)})\setminus \covers^*(G^{(2)}))+O(mp^3e^{-2mp^2})\\
			&=\pi(G^{(2)})+O(mp^3e^{-2mp^2})\\
			&=\pi(H_1)\pi(H_2)+O(mp^3e^{-2mp^2}).
		\end{split}
	\end{equation*}
	This,  together with the fact that for $mp^2\ge 1$ we have $\pi(H_1\cap H_2)\asymp 1$,  implies the assertion of the lemma.
\end{proof}

We conclude this section with a series of results that will allow us to estimate the probabilities of indecomposable clique covers.

\begin{figure}
	\begin{center}
		\includegraphics[width=15cm]{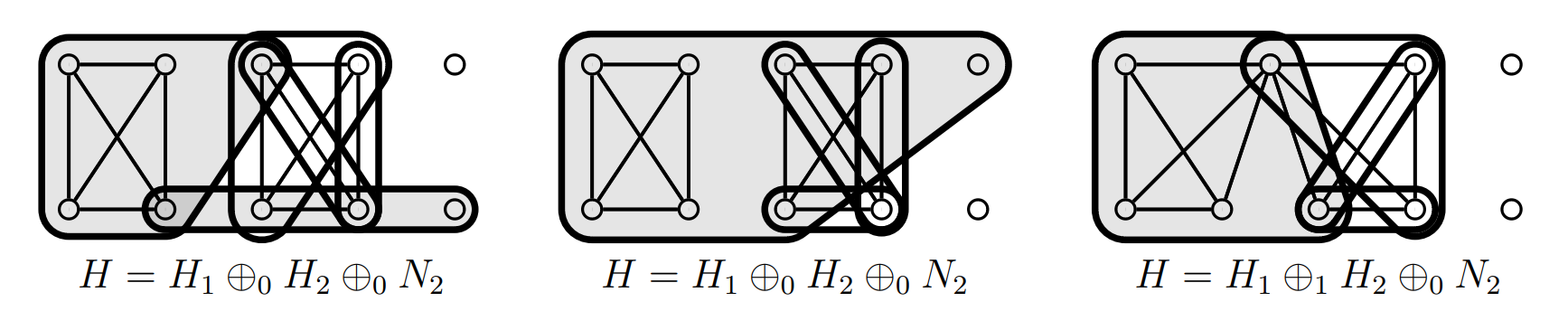}
	\end{center}
	\caption{\label{Fig:Indecomposable2graphs} Examples of indecomposable clique covers of $H$,  with relation to the partition $\{H_i\}_{i\in\{1, 2\}}$,   where $H_i$,   $i=1,  2$,   are isomorphic to $\klik_4$. In the pictures,   the sets of the clique cover that are not included neither in $V(H_1)$ nor in $V(H_2)$ are  grey. $C_{\cap}$ is empty in the first of the depicted graphs, and  consists of one set in the remaining graphs. }
\end{figure}

\begin{lemma}\label{Lem:InDecomposable2}
	Let $mp^2\le 1$,  $H_1$ and $H_2$ be two graphs and $\emp_j$,   $j\ge 0$,  be an empty graph on $j$ vertices.
	Furthermore,  let $H=H_1\oplus_0 H_2\oplus_0 \emp_j$ and $\cover$ be its indecomposable clique cover with relation to the partition $\{H_i\}_{i\in\{1,   2\}}$. Then
	$$
	\pi(H,   \cover)\lesssim m^{-1}\pi(H_1)\pi(H_2).$$
\end{lemma}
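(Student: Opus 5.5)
Throughout, $V_1=V(H_1)$ and $V_2=V(H_2)$; these are disjoint in $H=H_1\oplus_0 H_2\oplus_0\emp_j$. The plan is to compare $\pi(H,\cover)$, via \eqref{eq:CC2}, with the product of the probabilities of the clique covers that $\cover$ induces on $H_1$ and $H_2$. The extra factor $m^{-1}$ must come from the indecomposability.

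First I reduce to simple covers only where it is harmless, and otherwise work directly with $\cover$. Split $\cover$ into three families:
\begin{itemize}
\item $\cover^{(1)}$, the sets covering edges of $H_1$ only;
\item $\cover^{(2)}$, the sets covering edges of $H_2$ only;
\item $\cover_\cap$, the sets covering edges of both $H_1$ and $H_2$.
\end{itemize}
Every $C\in\cover$ contains an edge, so these three families exhaust $\cover$. By \eqref{eq:CC2}, $\pi(H,\cover)\asymp\prod_{C\in\cover}mp^{|C|}$.

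For $C\in\cover_\cap$ we have $mp^{|C|}\le mp^{|C\cap V_1|}\,mp^{|C\cap V_2|}\cdot m^{-1}$, since $|C|\ge |C\cap V_1|+|C\cap V_2|$. For $C\in\cover^{(s)}$ we use $mp^{|C|}\le mp^{|C\cap V_s|}$. Consequently
$$\pi(H,\cover)\lesssim m^{-|\cover_\cap|}\prod_{C\in \cover^{(1)}\cup\cover_\cap}mp^{|C\cap V_1|}\prod_{C\in \cover^{(2)}\cup\cover_\cap}mp^{|C\cap V_2|}.$$

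The main point is to show that the right-hand side gains a factor $m^{-1}$ over $\pi(H_1)\pi(H_2)$.

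\emph{Case $\cover_\cap\neq\emptyset$.} The family $\{C\cap V_s: C\in\cover^{(s)}\cup\cover_\cap\}$, with duplicates merged, is exactly $\cover[V_s]$, a clique cover of $H_s$. If two sets have the same trace, dropping one of the factors $mp^{|\cdot|}$ only increases the bound, because each factor is at most $mp^2\le1$. The product over $V_s$ is therefore $\lesssim \pi(H_s,\cover[V_s])\le\pi(H_s)$ by \eqref{eq:CC2}, and we get $\pi(H,\cover)\lesssim m^{-1}\pi(H_1)\pi(H_2)$.

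\emph{Case $\cover_\cap=\emptyset$.} Here $\cover^{(1)}$ and $\cover^{(2)}$ are disjoint. Set $\cover_1'=\{C\cap V_1: C\in\cover^{(1)}\}$ and $\cover_2'=\{C\cap V_2: C\in\cover^{(2)}\}$; these cover $H_1$ and $H_2$ respectively. If every $C\in\cover^{(1)}$ lies in $V_1$, then $\cover^{(1)}\in\covers(H_1)$ and $\cover^{(2)}\subseteq\cover$ covers $H_{\{2\}}$ disjointly from it, so $\cover$ is decomposable, a contradiction. The symmetric argument applies to $H_2$. Hence some $C_1\in\cover^{(1)}$ has $C_1\not\subseteq V_1$, and some $C_2\in\cover^{(2)}$ has $C_2\not\subseteq V_2$.

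For such a set I need a saving of $m^{-1}$, but $p^{|C_1|-|C_1\cap V_1|}$ only gives $p\ge m^{-1}$ in general. This is the hard step. I will try to combine the two defects, using the following comparison:
$$mp^{|C_1|}\,mp^{|C_2|}\le (mp^{|C_1\cap V_1|})(mp^{|C_2\cap V_2|})\,p^2,$$
which yields the factor $p^2\le m^{-1}$ precisely because $mp^2\le1$. So the bound again becomes $\lesssim m^{-1}\prod_{\cover_1'}\prod_{\cover_2'}$. The remaining concern is distinct sets in $\cover^{(1)}$ with equal traces on $V_1$, which is handled by merging duplicates as in the previous case. Finally, $\prod_{C\in\cover_s'}mp^{|C|}\asymp\pi(H_s,\cover_s')\le\pi(H_s)$ by \eqref{eq:CC2}, which gives the claim.

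Distinctness of $C_1$ and $C_2$ is automatic, since $\cover^{(1)}\cap\cover^{(2)}=\emptyset$ when $\cover_\cap=\emptyset$. The only delicate point is verifying, in the case analysis, that indecomposability really forces at least one of these two sources of $m^{-1}$. I would check this first, because a set covering $H_1$-edges together with isolated vertices of $\emp_j$ is exactly the configuration in which $\cover$ fails to be decomposable.
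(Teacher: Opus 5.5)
Your proposal is correct and is essentially the paper's argument: a set covering edges of both $H_1$ and $H_2$ contributes a factor $m^{-1}$. Otherwise, indecomposability forces one set covering only $H_1$-edges but not contained in $V(H_1)$, and likewise one for $H_2$; each contributes a factor $p$, and $p^2\le m^{-1}$. Your explicit merging of coinciding traces (using $mp^{|C'|}\le mp^2\le 1$) and your derivation of this dichotomy from the definition of decomposability are slightly more careful than the paper's presentation, and the opening remark about reducing to simple covers is never used and can be dropped.
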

\begin{proof}
	Recall that $\cover\in \covers(H)$ is indecomposable with relation to the partition $\{H_i\}_{i\in \{1,  2\}}$,   if for each $i\in\{1,  2\}$ it has no subfamily that is a clique cover of $H_i$. Examples of indecomposable clique covers of $H$ are presented in Figure~\ref{Fig:Indecomposable2graphs}. 
	
	Let $\cover$ be an indecomposable  clique cover of $H$. For $i=1,   2$ we define
	\begin{align*}
		\cover_i&=\{C\in \cover: C\not\subseteq V(H_i)\text{ and $C$ covers only edges of }H_i\}, \\
		\cover_\cap&=\{C\in \cover: \text{$C$ covers edges of $H_1$ and $H_2$}\}.
	\end{align*}
	First, consider $C\in \cover_\cap$. Since $H_1$ and $H_2$ are vertex disjoint graphs,  $|C|\ge |C\cap V(H_1)|+|C\cap V(H_2)|$,  and hence
	\begin{equation}\label{Eq:Inde:1}
		\forall_{ C\in \cover_\cap}\ \frac{mp^{|C|}}{mp^{|C\cap V(H_1)|}mp^{|C\cap V(H_2)|}}\le m^{-1}.
	\end{equation}
	Now consider $C\in \cover_i$. Since $C$ has at least one vertex  outside $V(H_i)$,  we have
	$|C|\ge |C\cap V(H_i)|+1$. Therefore,  for $mp^2\le 1$, 
	\begin{equation}\label{Eq:Inde:2}
		\forall_{C\in \cover_i}\ \frac{mp^{|C|}}{mp^{|C\cap V(H_i)|}}\le p\le m^{-\frac12} .
	\end{equation}	Moreover,  for any $C\in \cover \setminus (\cover_{\cap}\cup\cover_1\cup\cover_2)$ we have that $V(H_i)\supset C\in \cover[V(H_i)]$ for some $i\in\{1,  2\}$. Therefore
	\begin{equation}\label{Eq:Inde:3}
		\forall_{C\in \cover \setminus (\cover_{\cap}\cup\cover_1\cup\cover_2)}\exists_{i\in \{1, 2\}}\ \   mp^{|C|}=mp^{|C\cap V(H_i)|}.
	\end{equation}
	Furthermore,  
	\begin{itemize}
		\item[(i)] for any $C\in \cover_\cap$ we have $C\cap V(H_i)\in \cover[V(H_i)]$,  $i=1,  2$;
		\item[(ii)] for $C\notin \cover_\cap$  there exists $j\in\{1,  2\}$  such that  $C\cap V(H_j)\in \cover[V(H_j)]$ and $C\cap V(H_{3-j})\notin \cover[V(H_{3-j})]$; in particular,  for $C\in\cover_i$,  $j=i$. 
	\end{itemize}	
	
	By \eqref{eq:CC2},  \eqref{Eq:Inde:3},  (i)  and (ii),  we get
	\begin{align*}
		&\frac{\pi(H,   \cover)}{\pi(H_1,   \cover[V(H_1)])\pi(H_2,   \cover[V(H_2)])}\\
		&\asymp 
		\frac{\prod_{C\in\cover}mp^{|C|}}{\prod_{C\in\cover[V(H_1)]}mp^{|C|}\prod_{C\in\cover[V(H_2)]}mp^{|C|}}\\
		&=\frac{\prod_{C\in\cover_\cap}mp^{|C|}}{\prod_{C\in\cover_\cap}mp^{|C\cap V(H_1)|}mp^{|C\cap V(H_2)|}}
		\cdot \frac{\prod_{C\in\cover_1}mp^{|C|}}{\prod_{C\in\cover_1}mp^{|C\cap V(H_1)|}}
		\cdot \frac{\prod_{C\in\cover_2}mp^{|C|}}{\prod_{C\in\cover_2}mp^{|C\cap V(H_2)|}}
		.
	\end{align*}
	Let us note that in the last line there are no terms related to $C\in \cover \setminus (\cover_{\cap}\cup\cover_1\cup\cover_2)$, as they  cancel out   by \eqref{Eq:Inde:3} and (ii).
	
	Since $\cover$ is indecomposable,  there exists $C\in \cover_\cap$ or there exist $C_1\in \cover_1$ and $C_2\in \cover_2$. Combining this observation with \eqref{Eq:Inde:1},  \eqref{Eq:Inde:2}  and the above equality we obtain 
	\begin{align*}
		\frac{\pi(H,   \cover)}{\pi(H_1,   \cover[V(H_1)])\pi(H_2,   \cover[V(H_2)])}
		&\lesssim m^{-1}.
	\end{align*}
	This together with \eqref{Eq:SubgraphProbabilityCCfamily} implies the assertion. 
\end{proof}
\begin{lemma}\label{Lem:InDecomposable2bis}
	Let  $mp^2\ge 1$ and $mp^3\le 1$,    $H$ be a graph with a partition $\{H_i\}_{i\in \Lambda}$,   and $\cover\in \covers(H)$ be indecomposable with relation to the partition   $\{H_i\}_{i\in \Lambda}$. Then 
	$$
	\pi(H,   \cover)\lesssim mp^3e^{-|\Lambda|mp^2}.
	$$
	Additionally,  for $H=H_1\oplus_0 H_2$ we have
	$$\pi(H,  \cover)\lesssim (mp^3)^2e^{-2mp^2}.$$
\end{lemma}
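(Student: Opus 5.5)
We estimate $\pi(H,\cover)$ through \eqref{eq:CC4}: in the regime $mp^2\ge1$, $mp^3\le1$,
$$\pi(H,\cover)\asymp e^{-|\cover_-^{(2)}|mp^2}\prod_{C\in\cover\setminus\cover^{(2)}}mp^{|C|},$$
where $\cover_-=\calP(H)\setminus\cover$. Every factor $mp^{|C|}$ with $|C|\ge3$ is at most $mp^3\le1$. So it suffices to show two things: $\cover$ contains at least one set of size at least $3$, and at least $|\Lambda|$ two-sets corresponding to edges of $H$ are forbidden, i.e. do not belong to $\cover$.

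\emph{First claim.} If every edge of $H$ were covered by a $2$-set of $\cover$, we could take $\cover_i=E(H_i)\cap\cover$ and $\bar\cover_i=\cover\cap\bigl(E(H)\setminus E(H_i)\bigr)$. By property (i) of a partition, two distinct $H_i$ share at most one vertex, so they share no edge. Hence $\cover_i$ and $\bar\cover_i$ are disjoint, and they cover $H_i$ and $H_{\Lambda\setminus\{i\}}$ respectively. This makes $\cover$ decomposable, a contradiction. So some edge $e$ is not covered by a $2$-set in $\cover$. Its $2$-set is then forbidden, and $e$ is covered by some $C\in\cover$ with $|C|\ge3$. This already yields the factor $mp^3e^{-mp^2}$.

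\emph{Second claim.} We need a forbidden edge inside every $H_i$. Fix $i$ and suppose all edges of $H_i$ are covered in $\cover$ by their own $2$-sets. Put $\cover_i=E(H_i)$ and $\bar\cover_i=\cover\setminus E(H_i)$. Any edge outside $E(H_i)$ is covered by some set of $\cover$, and that set is not a $2$-set of an edge of $H_i$, since it must contain an edge not in $H_i$. Hence $\bar\cover_i$ is a clique cover of $H_{\Lambda\setminus\{i\}}$. It is disjoint from $\cover_i$, so $\cover$ would be decomposable at $i$, a contradiction. Therefore each $H_i$ contains an edge whose $2$-set is forbidden. Since the $H_i$ are pairwise edge-disjoint, this gives $|\cover_-^{(2)}|\ge|\Lambda|$. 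The main subtlety is making precise that the extra sets of $\cover$ are allowed in the decomposition, which is exactly as stated in Definition~\ref{def:decomp}.

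\emph{The case $H=H_1\oplus_0H_2$.} Here we need two sets of size at least $3$, and we argue by contradiction. Suppose only one set $C$ with $|C|\ge3$ is in $\cover$. By the second claim, both $H_1$ and $H_2$ have forbidden edges, and these edges can only be covered by $C$. So $C$ meets both $V(H_1)$ and $V(H_2)$, and it covers every non-$2$-set-covered edge of each $H_l$. Because $H_1$ and $H_2$ are vertex-disjoint, every edge of $H_l$ covered by $C$ lies in $C\cap V(H_l)$. Then $\{C\cap V(H_1)\}\cup(E(H_1)\cap\cover)$ covers $H_1$, but this family is not a subfamily of $\cover$, so this route fails as stated. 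Instead, use $\cover_1=E(H_1)\cap\cover$ together with $C$ only if $C\subseteq V(H_1)$. Since $C$ is not contained in $V(H_1)$, we argue directly: $H_1$ needs some set of $\cover$ inside $V(H_1)$ covering each forbidden edge, and the only candidate is $C$. So $H_1$ cannot be covered from within, and $\cover_1$ fails.

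The contradiction does not come from the definition itself, since indecomposability forbids only a subfamily covering $H_i$, and the example of one big set spanning both parts is genuinely indecomposable. So a second large set must be found elsewhere, and this is the step I expect to be the obstacle. I would handle it in the exponent instead. One big set $C$ meeting both parts, with $|C|=k\ge4$ and $|C\cap V(H_l)|\ge2$, gives a factor $mp^k\le mp^4=(mp^3)p\le(mp^3)^2$ after using $p\le mp^3$, which holds when $mp^2\ge1$. If instead $C\cap V(H_l)$ is a single vertex for some $l$, then $C$ covers no edge of $H_l$, so the forbidden edge of $H_l$ requires a second large set. In either case we obtain the factor $(mp^3)^2$, while the two forbidden edges give $e^{-2mp^2}$.
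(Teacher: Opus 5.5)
Your proposal is correct and follows essentially the same route as the paper. In both, indecomposability gives one forbidden $2$-set in each $H_i$ (otherwise $\cover_i=E(H_i)$ decomposes $\cover$), each forbidden edge must be covered by a set of size at least $3$, and \eqref{eq:CC4} does the rest. For $H_1\oplus_0H_2$ the paper uses exactly your final dichotomy: the two vertex-disjoint forbidden edges $e_1,e_2$ lie either in one set of size at least $4$ or in two distinct sets of size at least $3$, and $mp^4\le(mp^3)^2$ when $mp^2\ge1$.

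Your earlier attempt to force two large sets can be dropped; as you noticed, that claim is false. Your final case split is cleaner phrased as ``a single set containing both $e_1$ and $e_2$, or two distinct sets''. That phrasing also covers the case $C\cap V(H_l)=\emptyset$, which your ``single vertex'' wording omits.
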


\begin{proof}
	Each indecomposable clique cover of $H$ has at least one forbidden 2--set $e_i$,   $i\in \Lambda$,   for each $H_i$,    $i\in\Lambda$. Otherwise the clique cover could be decomposed with $\cover_i=E(H_i)$,  for some $i\in \Lambda$. Moreover,   each of the edges $e_i$,   $i\in \Lambda$,   related  to those forbidden 2--sets,    must be covered by some $k$--set,   $k\ge 3$. Therefore,    by \eqref{eq:CC4},   
	\begin{align*}
		\pi(H,   \cover)\lesssim mp^3e^{-|\Lambda|mp^2}, 
	\end{align*}
	where the constant in the inequality may be chosen to depend only on $|V(H)|$. 
	
	In the case $H=H_1\oplus_0 H_2$,  in an indecomposable clique cover $\cover$,   there are at least two forbidden 2-sets,  related to some edges $e_i\in H_i$,  $i=1,  2$. As $H_1$ and $H_2$ are disjoint graphs,  $e_1$ and $e_2$ are covered by at least one set of size at least $4$ or at least two sets of size at least $3$ each. Therefore by \eqref{eq:CC4},  
	\begin{align*}
		\pi(H,   \cover)\lesssim ((mp^3)^2+mp^4)e^{-2mp^2}\lesssim (mp^3)^2 e^{-2mp^2}.
	\end{align*}

	The proof is complete.
\end{proof}

\begin{figure}\begin{center}
		\includegraphics[width=11cm]{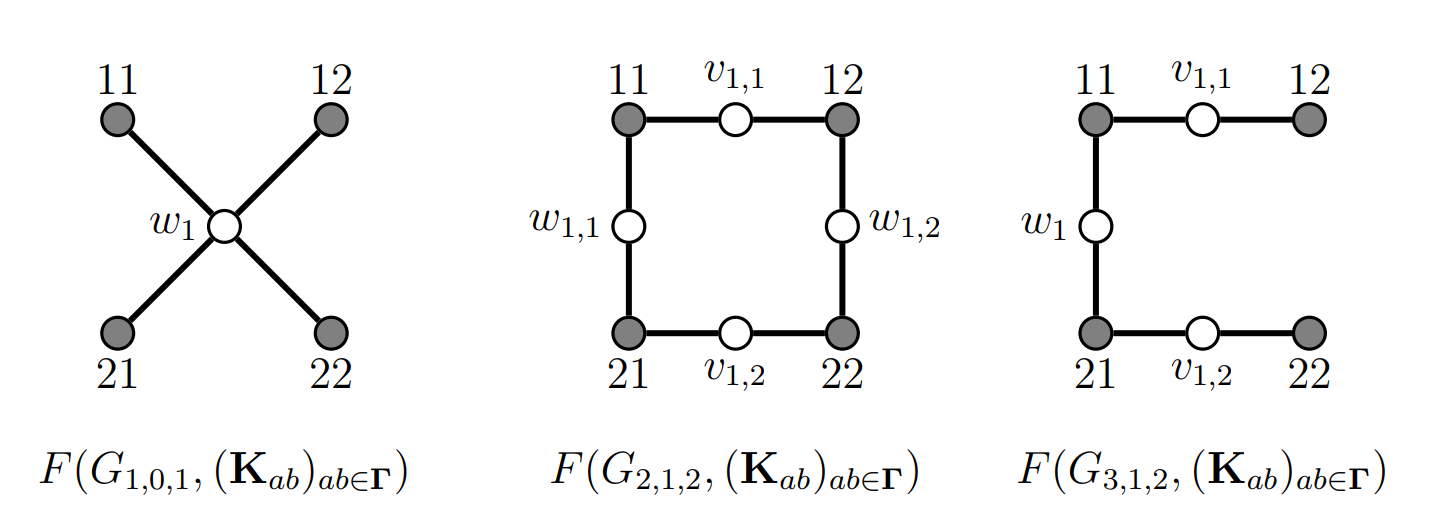}
	\end{center}
	\caption{\label{Fig:GraphF} Graphs $F(G_t, (\kli_{ab})_{ab\in\indeksy})$,  $t=1,  2,  3$. In this case $\Lambda=\indeksy=\{11, 12, 21, 22\}$ and  $H_{ab}=\kli_{ab}$,  $ab\in \Lambda$. In the picture white vertices are those from $V_{\cap 1}$, and gray ones are those from $\Lambda=\indeksy$.}
\end{figure}

\begin{definition}\label{def:F}
	For a partition  $\{H_i\}_{i\in \Lambda}$ of $H$,    let us define 
	$$V_{\cap 1}:=\{v:\exists_{i\neq j}V(H_i)\cap V(H_j)=\{v\}\}, $$
	i.e. the set of all the vertices that belong to at least two graphs from the partition. 
	Furthermore,   define the bipartite graph $F=F(H,   (H_i)_{i\in \Lambda})$,   with   the bipartition $(\Lambda,   V_{\cap 1})$,  as follows
	\begin{align*}
		V(F)&:=\Lambda\cup V_{\cap1},\\
		E(F)&:=\{\{i,   v\}: i\in \Lambda,    v\in V_{\cap1},    v\in V(H_i)\}.
	\end{align*} 
\end{definition}
In other words,   $\{i, v\}\in E(F)$  if and only if $v\in V(H_i\cap H_j)$ for some $j\in \Lambda$. Examples of graphs $F=F(H,   (H_i)_{i\in \Lambda})$ are presented in Figure~\ref{Fig:GraphF}.

\begin{lemma}\label{Lem:InDecomposable}
	Let $mp\ge 1$ and $mp^2\le 1$,    $H$ be a graph and $(H_i)_{i\in \Lambda}$ its partition such that $F=F(H, \{H_i\}_{i\in \Lambda})$ contains at most one cycle. If  $\cover\in \covers(H)$ is indecomposable with relation to the partition $\{H_i\}_{i\in \Lambda}$  of $H$, then
	$$
	\pi(H,   \cover)\lesssim \frac{\prod_{i\in \Lambda}\pi(H_i)}{(mp)^{\lceil |\Lambda|/2\rceil}p^{\delta}},   
	$$
	where $\delta=1$ if $F$ has a cycle, and $\delta=0$ otherwise.
\end{lemma}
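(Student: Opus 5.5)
We compare $\pi(H,\cover)$ with $\prod_{i\in\Lambda}\pi(H_i,\cover[V(H_i)])$ by a multiplicative accounting over the sets of $\cover$, in the spirit of Lemma~\ref{Lem:Division} and Lemma~\ref{Lem:InDecomposable2}. We may assume $\cover$ is simple, using Lemma~\ref{Lem:Simple} and checking that the merging and splitting operations there preserve indecomposability. If they do not, we simply run the argument over all clique covers, since \eqref{eq:CC2} holds without simplicity.

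\textbf{Step 1: the ratio as a product.} For each $C\in\cover$ let $\Lambda_C=\{i\in\Lambda: C \text{ covers an edge of } H_i\}$. By \eqref{eq:CC2} the ratio $\pi(H,\cover)/\prod_i\pi(H_i,\cover[V(H_i)])$ is $\asymp\prod_{C\in\cover}\rho_C$, with
\[
\rho_C=\frac{mp^{|C|}}{\prod_{i\in\Lambda_C}mp^{|C\cap V(H_i)|}}.
\]
For each $C$, count $|C|$ by inclusion–exclusion over the pieces. Since distinct $H_i$ share at most one vertex, the overlaps $C\cap V(H_i)\cap V(H_j)$ are vertices of $V_{\cap1}$. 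This gives
\[
\rho_C\le m^{1-|\Lambda_C|}\,p^{|C|-\sum_{i\in\Lambda_C}|C\cap V(H_i)|}.
\]
The exponent of $p$ equals $|C\setminus\bigcup_{i\in\Lambda_C}V(H_i)|$ minus the overcount at shared vertices.

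\textbf{Step 2: encode overlaps in $F$.} Associate to $C$ the subgraph $F_C$ of $F$ induced by $\Lambda_C$ and the shared vertices lying in $C$. Connectivity of $H[C]$ (property \Ptwo) makes $F_C$ connected, and by \Pone\ distinct sets $C$ use edge-disjoint parts of $F$. The overcount at shared vertices is then at most $|E(F_C)|-|V(F_C)\cap V_{\cap1}|$. Since $F$ has at most one cycle, this is at most $|\Lambda_C|-1+\delta_C$, where $\delta_C\in\{0,1\}$ and $\delta_C=1$ for at most one $C$. Using $mp^2\le1$, so $p\le 1/(mp)$, each $C$ with $|\Lambda_C|\ge2$ should contribute $\rho_C\lesssim (mp)^{-(|\Lambda_C|-1)}p^{-\delta_C}$. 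A set $C$ with $|\Lambda_C|=1$ that leaves $V(H_i)$ contributes a factor $\le p\le (mp)^{-1}$.

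\textbf{Step 3: indecomposability forces losses.} Indecomposability means that for every $i$ the edges of $H_i$ cannot all be covered by sets inside $V(H_i)$ that are disjoint from a cover of the rest. So each $i\in\Lambda$ is touched by a ``bad'' set: either one with $|\Lambda_C|\ge2$, or one leaving $V(H_i)$. A bad set with $|\Lambda_C|=k$ serves $k$ indices and pays $(mp)^{-(k-1)}$ with $k-1\ge k/2$. A set leaving $V(H_i)$ serves one index and pays $(mp)^{-1}$. Summing over bad sets, the total loss is at least $(mp)^{\lceil|\Lambda|/2\rceil}$, with the single cycle costing at most one extra $p^{-1}$. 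Summing over the indecomposable covers is not needed, since the statement is per cover. We finish by bounding $\pi(H_i,\cover[V(H_i)])\le\pi(H_i)$.

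\textbf{Main obstacle.} The hard step is the precise inclusion–exclusion in Step 2. We must show that the overcount of $C$ at shared vertices is bounded by the excess of edges over vertices in $F_C$, which needs \Pone\ and \Ptwo\ carefully, and we must check that the unique cycle is charged only once. There is a further subtlety: a set lying inside $V(H_i)$ that covers edges of other pieces only through shared vertices. Such a set has $|\Lambda_C|=1$ but still breaks decomposability, so we must verify that it too incurs a factor of order $(mp)^{-1}$.
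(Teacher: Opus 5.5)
Your outline matches the paper's proof. It writes the ratio to $\prod_{i}\pi(H_i,\cover[V(H_i)])$ as a product over the sets not contained in any $V(H_i)$, and uses the bipartite graphs $F_C$ with $|E(F_C)|\le |V(F_C)|-1+\delta_C$. It charges the cycle to at most one set, and uses the count $\sum_{|\Lambda_C|=1}1+\sum_{|\Lambda_C|\ge 2}(|\Lambda_C|-1)\ge\lceil|\Lambda|/2\rceil$.

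The gap is the opening reduction. The splitting step of Lemma~\ref{Lem:Simple}(i), which enforces \Ptwo, does not preserve indecomposability. Take $H$ to be two disjoint edges $ab$, $cd$, partitioned into these two edges. Then $\cover=\{\{a,b,c,d\}\}$ is indecomposable, but splitting produces the decomposable cover $\{\{a,b\},\{c,d\}\}$. Your fallback of running the argument over all covers does not work as written, because Step 2 invokes \Ptwo. It needs \Ptwo\ for connectivity of $F_C$ and for your overcount identity. That identity fails when $C$ contains shared vertices isolated in $F_C$.

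The repair, which is what the paper does, is to reduce only to \Pone\ and never use \Ptwo.
\begin{itemize}
\item Merging via Lemma~\ref{Lem:Simple}(ii) keeps indecomposability. In the resulting \Pone\ cover each bad edge $e_i$ is covered by a unique set. That set contains an original set covering $e_i$, so it still leaves $V(H_i)$.
\item Split $|C|-\sum_{i\in\Lambda_C}|C\cap V(H_i)|$ into a non-shared part and a shared part. The non-shared part is $\ge 0$ because the sets $V(H_i)\setminus V_{\cap 1}$ are pairwise disjoint. The shared part equals $|C\cap V_{\cap1}|-|E(F_C)|$.
\item The bound $|E(F_C)|\le|V(F_C)|-1+\delta_C$ holds for every nonempty subgraph of a graph with at most one cycle, whether or not it is connected.
\end{itemize}

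Two smaller points.

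First, the claim that distinct sets use edge-disjoint parts of $F$ is false. Two sets meeting in a shared vertex $v$ can both cover edges of the same $H_i\ni v$, so both $F_C$ contain the edge $\{i,v\}$. What \Pone\ actually gives is that $F_C$ and $F_{C'}$ share at most one vertex of $V_{\cap1}$. The cycle of the bipartite graph $F$ has at least two such vertices, so $\delta_C=1$ for at most one $C$, which is all you need.

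Second, the ``further subtlety'' you flag is vacuous. A set $C\subseteq V(H_i)$ cannot contain an edge of $H_j$ with $j\ne i$, since that edge would lie in $V(H_i)\cap V(H_j)$, which has at most one vertex. So such a set has ratio exactly $1$ and never obstructs a decomposition. Indecomposability at $i$ means exactly that some edge $e_i\in E(H_i)$ has all its covering sets leaving $V(H_i)$. Indeed, if every edge of $H_i$ is covered by a set inside $V(H_i)$, those sets cover nothing else, and the remaining sets cover $H_{\Lambda\setminus\{i\}}$. This is the justification your Step 3 needs. If such sets really had to pay $(mp)^{-1}$, your argument could not deliver it.
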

\begin{proof}
	Let $\cover\in \covers(H)$ be indecomposable with relation to the partition $\{H_i\}_{i\in \Lambda}$,    i.e., for each $i\in \Lambda$,    there is an edge $e_i\in E(H_i)$ that is covered only by sets that are not subsets of $V(H_i)$. 
	If $\cover$ does not have {\Pone} then by Lemma~\ref{Lem:Simple}(ii),  it may be replaced by $\cover'$ with {\Pone} such that  each $e_i$,   $i\in \Lambda$,   is covered by exactly one $C_i$  (due to {\Pone}). Moreover, Lemma~\ref{Lem:Simple}(ii) implies that $C_i\nsubseteq V(H_i)$,  because all sets that covered $e_i$ in $\cover$ were not contained in $V(H_i)$.   Therefore $\cover'$ is also indecomposable. Thus, we may assume that $\cover$ has {\Pone}.
	
	Let $\cover^*=\{C_1,   \ldots,   C_s\}\subseteq \cover$ be the family of all sets $C\in \cover$ such that 
	$$
	\forall_{i\in \Lambda}C\nsubseteq V(H_i),  
	$$
	i.e., sets that are not contained in the vertex set of any of the graphs of the partition.
	
	For all $j\in \{1, \ldots, s\}$,   define
	$$\Lambda_j:=
	\{i\in \Lambda: \text{$C_j$ covers an edge of $H_i$}\}, $$
	that is  the index set of graphs from the partition  whose edges are covered by $C_j$.
	
	Since $\cover$ is indecomposable,    for each $i\in \Lambda$    there is an edge in $H_i$ covered by some $C\in \cover^*$. Therefore,  $\bigcup_{j=1}^{s}\Lambda_j=\Lambda$ and consequently 
	\begin{equation}\label{Eq:Indecomp0}
		\sum_{j=1}^{s}|\Lambda_j|\ge |\Lambda|.
	\end{equation}
	Now we define a clique cover
	\begin{equation}\label{Eq:LemInDecompCH}
		\cover_H
		=\bigcup_{i\in \Lambda}\cover[V(H_i)].  
	\end{equation}
	Note that 
	$$
	\forall_{C\notin \cover^*}\exists_{i\in \Lambda} C\subseteq V(H_i)\text{ and }mp^{|C|}=mp^{|C\cap V(H_i)|}.
	$$
	Hence,  by \eqref{eq:CC2},  since the terms related to $C\notin \cover^*$ cancel out (see the equation above),  we obtain 
	\begin{align}\nonumber
		\frac{\pi(H,  \cover)}{\pi(H,  \cover_H)}
		&\asymp
		\frac
		{\prod_{C\in \cover}mp^{|C|}}
		{\prod_{i\in \Lambda}\prod_{C'\in \cover[H_i]}mp^{|C'|}}\asymp \prod_{j=1}^{s}\frac{mp^{|C_j|}}{\prod_{i\in \Lambda_j}mp^{|C_j\cap V(H_i)|}}\\
		&=\prod_{j=1}^{s}m^{-(|\Lambda_j|-1)}p^{|C_j|-\sum_{i\in \Lambda_j}|C_j\cap H_i|}.
		\label{Eq:LamIndecomposableMain}
	\end{align}
	Now we analyse each term of the product. 
	
	If $|\Lambda_j|=1$ then for some $l\in\Lambda$ we have $\Lambda_j=\{l\}$. {Therefore} $|C_j|-\sum_{i\in \Lambda_j}|C_j\cap V(H_i)|=|C_j|-|C_j\cap V(H_l)|\ge 1$ (since $C_j\nsubseteq V(H_l)$). Thus,   for $mp^2\le 1$,  
	\begin{equation}\label{Eq:LemInDecomposibleLambda1}
		m^{-(|\Lambda_j|-1)}p^{|C_j|-\sum_{i\in \Lambda_j}|C_j\cap H_i|}\le p\le \frac{1}{mp}.
	\end{equation} 	
	
	Now consider $|\Lambda_j|\ge 2$. Recall that $V_{\cap 1}$ is the set of all the vertices of $H$ that are on intersections of graphs of the partition $\{H_i\}_{i\in \Lambda}$. 
	First, observe that $V(H_i)\setminus V_{\cap1}$,  $i\in \Lambda$,  are pairwise disjoint sets and,  for any $j\in\{1, \ldots, s\}$,  $\bigcup_{i\in \Lambda_j}(V(H_i)\setminus V_{\cap1})\subseteq( V(H)\setminus V_{\cap1})$. Therefore
	\begin{equation*}
		\sum_{i\in \Lambda_j} 
		\Big|(C_j\cap (V(H_i))\setminus V_{\cap1})\Big|
		=\Big|\Big(C_j\cap\bigcup_{i\in \Lambda_j} (V(H_i))\setminus V_{\cap1}\Big)\Big|\le |C_j\setminus V_{\cap1}|.
	\end{equation*}
	Thus
	\begin{align}\nonumber
		&|C_j|-\sum_{i\in \Lambda_j}|C_j\cap V(H_i)|\\ \nonumber
		&=  
		|C_j\setminus V_{\cap1}| + |C_j\cap V_{\cap1}|
		-\sum_{i\in \Lambda_j} 
		(|(C_j\cap V(H_i))\setminus V_{\cap1}| + |C_j\cap V(H_i)\cap V_{\cap1}|) \\ \nonumber
		&\ge |C_j\cap V_{\cap1}|-\sum_{i\in \Lambda_j} |C_j\cap V(H_i)\cap V_{\cap1}|\\ \nonumber
		&=|C_j\cap V_{\cap1}|- \sum_{i\in \Lambda_j}\sum_{v\in V_{\cap1}\cap C_j} |V(H_i) \cap\{v\}|\\
		&=|C_j\cap V_{\cap1}|- \sum_{v\in V_{\cap1}\cap C_j}\sum_{i\in \Lambda_j} |V(H_i) \cap\{v\}|
		.
		\label{Eq:LemInDecomposableSum1}	
	\end{align}
	Recall that the degree of $v\in V_{\cap1}$ in $F$ is the number of graphs $H_i$,   $i\in \Lambda$,   that contain~$v$. Moreover,   observe that $\sum_{i\in \Lambda_j} |V(H_i) \cap\{v\}|$ is the number of graphs $H_i$,   $i\in \Lambda_j$,   that contain $v$, which is the degree of $v$ in the subgraph of $F$ induced on $\Lambda_j\cup (V_{\cap1}\cap C_j)$. Thus,   for $v\in C_j\cap V_{\cap1}$,    
	$$
	\sum_{i\in \Lambda_j} |V(H_i) \cap\{v\}|=\deg_{F_j}(v),  \quad\text{where }F_j=F[\Lambda_j\cup (V_{\cap1}\cap C_j)].
	$$
	Moreover,   since $F_j$ is bipartite with the bipartition $(V_{\cap1}\cap C_j,  \Lambda_j)$,   we have
	$$
	\sum_{v\in V_{\cap1}\cap C_j}\sum_{i\in \Lambda_j} |V(H_i) \cap\{v\}|
	=
	\sum_{v\in V_{\cap1}\cap C_j}\deg_{F_j}(v)
	=|E(F_j)|.
	$$
	Therefore,   when we substitute this equation to \eqref{Eq:LemInDecomposableSum1} we obtain
	$$
	|C_j|-\sum_{i\in \Lambda_j}|C_j\cap V(H_i)|
	= |V_{\cap1}\cap C_j|-|E(F_j)|.
	$$
	Let us define $\delta_j=1$ if $F_j$ has a cycle and $\delta_j=0$ if $F_j$ does not have any cycle. Recall that $F$ has at most one cycle,   thus $F_j$ has at most one,   too. Therefore,   for all $j=1,  \ldots,  s$,   we have
	$$
	|E(F_j)|\le |V(F_j)|-1+\delta_j =|\Lambda_j|+|C_j\cap V_{\cap1}|-1+\delta_j.
	$$
	Combining the last two equalities
	$$
	|C_j|-\sum_{i\in \Lambda_j}|C_j\cap V(H_i)|\ge -|\Lambda_j|+1-\delta_j.
	$$
	Therefore 
	\begin{equation}\label{Eq:LemInDecomposibleLambda2}
		m^{-(|\Lambda_j|-1)}p^{|C_j|-\sum_{i\in \Lambda_j}|C_j\cap H_i|}\le m^{-(|\Lambda_j|-1)}p^{-|\Lambda_j|+1-\delta_j}=(mp)^{-(|\Lambda_j|-1)}p^{-
			\delta_j}.
	\end{equation} 
	Substituting this and \eqref{Eq:LemInDecomposibleLambda1} to \eqref{Eq:LamIndecomposableMain} we obtain
	\begin{align}\nonumber
		\frac{\pi(H,  \cover)}{\pi(H,  \cover_H)}
		&\lesssim 
		\left(\prod_{j,   |\Lambda_j|= 1}(mp)^{-1}\right) \cdot
		\left(\prod_{j,   |\Lambda_j|\ge 2}m^{-(|\Lambda_j|-1)}p^{-|\Lambda_j|+1-\delta_j}\right)\\
		&=
		\frac{1}{(mp)^{\sum_{j,   |\Lambda_j|=1}|\Lambda_j|+\sum_{j,   |\Lambda_j|\ge 2}(|\Lambda_j|-1)}p^{\sum_{j=1}^{s}\delta_j}}
		.
		\label{Eq:LemIndecomposableMain3}
	\end{align}
	Since $\cover$ has {\Pone},  the  inequality $|C_{j_1}\cap C_{j_2}|\le 1$ holds  for distinct $j_1,  j_2\in \{1, \ldots, s\}$. Therefore,    for any $j_1\neq j_2$,    $F_{j_1}$ intersect with $F_{j_2}$ on at most one vertex from $V_{\cap1}$. It implies that,    if $F$ contains one cycle,    it is included in at most one of its subgraphs $F_j$,    $j=1,   \ldots,   s$ (as a cycle in the bipartite graph $F$ contains at least two vertices from $V_{\cap1}$). Thus 
	$\delta_j=1$ for at most one $j$,    $j=1,   \ldots,   s$,  and only if $F$ has a cycle. It implies that $$\sum_{j=1}^{s}\delta_j\le \delta.$$ 
	
	Moreover,    using \eqref{Eq:Indecomp0},   
	\begin{equation}\label{Eq:Indecomp3}
		\sum_{j,    |\Lambda_j| =1}|\Lambda_j|+\sum_{j,    |\Lambda_j|\ge 2}(|\Lambda_j|-1)\ge 
		\bigg\lceil
		\frac{\sum_{j}|\Lambda_j|}{2}
		\bigg\rceil
		\ge \bigg\lceil
		\frac{|\Lambda|}{2}
		\bigg\rceil.
	\end{equation}
	The two equalities above together with \eqref{Eq:LemIndecomposableMain3} imply 
	$$
	\frac{\pi(H,  \cover)}{\pi(H,  \cover_H)}\lesssim \frac{1}{(mp)^{\lceil |\Lambda|/2\rceil}p^{\delta}}.
	$$
	Thus,   recalling the definition of $\cover_H$ \eqref{Eq:LemInDecompCH},   by \eqref{eq:CC2} and the fact that graphs $H_i$,   $i\in \Lambda$,   intersect on at most one vertex (i.e., $ \cover[V(H_i)]$,   $i\in \Lambda$ are pairwise disjoint)
	$$
	\pi(H,  \cover)
	\lesssim
	\frac{\pi\left(H,  \bigcup_{i\in\Lambda} \cover[V(H_i)]\right)}{(mp)^{\lceil |\Lambda|/2\rceil}p^\delta}
	\lesssim \frac{\prod_{i\in \Lambda} \pi(H_i,  \cover[V(H_i)])}{(mp)^{\lceil |\Lambda|/2\rceil}p^\delta}
	\lesssim \frac{\prod_{i\in \Lambda} \pi(H_i)}{(mp)^{\lceil |\Lambda|/2\rceil}p^\delta}.
	$$
	The proof is complete.
\end{proof}

\section{Proof of Theorem~\ref{Tw:Wariancja}}\label{Sec:Variance}

Recall that  $\Subgraphs$ is the family of all cliques of size $r$ in the complete graph with the vertex set $\V$, and  
$$
\X=\sum_{\kli\in \Subgraphs}\ind_{\kli}, 
$$
where $\ind_{\kli}$ is the indicator random variable of the event $\{\kli\subseteq \Gnmp \}$.

In this section we study the variance of $\X$,  which can be expressed in the following manner. Since $\ind_{\kli}$ and $\ind_{\kli'}$ are independent whenever $\kli, \kli'$ are disjoint, we have
\begin{align}
	\nonumber
	\Var [\X]&= \sum_{\kli\in \Subgraphs}\Var[ \ind_{\kli}] + \sum_{{\substack{\kli_1,   \kli_2\in \Subgraphs,   \\ \kli_1\neq \kli_2}}}\cov (\ind_{\kli_1},   \ind_{\kli_2})\\ \nonumber
	&= \sum_{\kli\in \Subgraphs}\Var [\ind_{\kli}] + \sum_{i=1}^{r-1}\sum_{{\substack{\kli_1,   \kli_2\in \Subgraphs\\ |V(\kli_1\cap \kli_2)|=i}}}\cov(\ind_{\kli_1},   \ind_{\kli_2})\\ \nonumber
	&=\binom{n}{r}\qq(1-\qq)
	+\sum_{i=1}^{r-1}\big|\{\kli_1,   \kli_2\in \Subgraphs: |V(\kli_1\cap \kli_2)|=i\}\big|\left(\pi(\kli_1\oplus_i\kli_2)-\qq^2\right),  \\
	&\label{Eq:Var:FormulaGeneral2}
	=\sum_{i=1}^{r}\frac{(n)_{2r-i}}{[(r-i)!]^2i!}\left(\pi(\kli_1\oplus_i\kli_2)-\qq^2\right),  
\end{align}
where $(n)_k=n\cdot (n-1)\cdot (n-k+1)$,  and $\kli_1,  \kli_2$ are some fixed cliques of size $r$. Note that Lemma~\ref{Lem:Division2} implies that all the terms in \eqref{Eq:Var:FormulaGeneral2} are non-negative.

In the proofs of this section,  we will use the following relations concerning the terms in \eqref{eq:Var},  that are consequences of \eqref{eq:pp} and Fact \ref{Lem:DuzeMp2dwa}
\begin{align}
	\label{Eq:Var:Szacowanie:Duzemp2}
	\frac{(n^r\qq)^2}{\ble}(1-\pp)&\asymp n^{2r-2}e^{-mp^2},  \quad \text{ for }mp^2\ge 1, \\[7pt]
	\label{Eq:Var:Szacowanie:Przeciecie1}
	\left(\frac{\qq}{\pp}\right)^2mp^3(1-\pp)^2
	&\asymp
	\begin{cases}
		\frac{\qq^2}{mp}, &\text{ for }mp^2\le 1;\\[5pt]
		mp^3e^{-2mp^2}, &\text{ for }mp^2\ge 1\text{ and }mp^3\le 1. 
	\end{cases}
\end{align}
In particular,  this allows us to rewrite \eqref{eq:Var} into the form
\begin{align}\label{eq:Var2}
	\Var[\X]\asymp
	\begin{cases}
		\frac{(n^r\qq)^2}{\ble\wedge (nmp)}, &\text{ for }mp^2\le 1;\\[5pt]
		n^{2r-2}e^{-mp^2}+n^{2r-1}mp^3e^{-2mp^2}, &\text{ for }mp^2\ge 1\text{ and }mp^3\le c_r. 
	\end{cases}
\end{align}

Let us recall the definition \eqref{Eq:Def:Blei} of $\ble_i$. Theorem~\ref{Tw:Wariancja} follows directly from 
\eqref{Eq:Var:FormulaGeneral2},  the next two  lemmas,  and the equivalence \eqref{Eq:BleaBlei}.
\begin{lemma}\label{lem:Var2--r}
	Let $m$ be large enough and  $2\le  i \le r$. {If } $mp\le1$,    or if  $mp^2\ge 1$  and  $mp^3\le 1$, we have
	\begin{align}\label{eq:KiK1}
		n^{2r-i}\pi(\kli_1\oplus_i\kli_2)\asymp \frac{(n^r\qq)^2}{\ble_i},
	\end{align}
	while if  $mp\ge 1$  and  $mp^2\le 1$, it holds that
	\begin{align}\label{eq:KiK2}
		\frac{(n^r\qq)^2}{\ble_i}\lesssim n^{2r-i}\pi(\kli_1\oplus_i\kli_2)\lesssim  \frac{(n^r\qq)^2}{\ble}.
	\end{align}
	Furthermore,  for $mp^3\le 1$ we have
	\begin{align}\label{eq:KiK3}
		\pi(\kli_1\oplus_i\kli_2)-\qq^2\asymp (1-\pp)\pi(\kli_1\oplus_i\kli_2).  
	\end{align}
\end{lemma}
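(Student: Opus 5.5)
The plan is to treat the three parameter regimes separately, writing $H=\kli_1\oplus_i\kli_2$ throughout. In every regime the lower bound $\pi(H)\gtrsim (n^r\qq)^2/(n^{2r-i}\ble_i)$ comes from exhibiting one good clique cover. Let $\kli_\cap=\kli_1[V(\kli_1\cap\kli_2)]\cong\klik_i$.

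For $p\le m^{-\frac{r+1}{2r}}$, take the cover $\{V(\kli_1),V(\kli_2)\}$. By \eqref{eq:CC2} it has probability $m^2p^{2r}$, which equals $\qq^2/(mp^i)$ by Lemma~\ref{Lem:PiOptimal}, and hence equals $n^{-(2r-i)}(n^r\qq)^2/\ble_i$. (If $mp\le1$, Corollary~\ref{Cor:Glueing} gives the better cover $\{V(H)\}$, and one checks that the bound still holds.) For larger $p$, take $E(H)$: its probability is $\pp^{2\binom r2-\binom i2}=\qq^2/\pp^{\binom i2}$, which is again the desired quantity.

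\textbf{Upper bounds.}

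For $mp\le1$, Corollary~\ref{Cor:Glueing} gives $\pi(H)\asymp mp^{2r-i}$ and $\qq\asymp mp^r$. Then $(n^r\qq)^2/\ble_i= n^{2r-i}mp^{2r-i}$ exactly, since $\ble_i=n^imp^i$ in this range. So \eqref{eq:KiK1} holds with equality of orders.

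For $mp^2\ge1$, $mp^3\le1$, Lemma~\ref{Lem:DuzeMp2} with $H_1=\kli_1$, $H_2=\kli_2$ gives $\pi(H)=\qq^2/\pi(\klik_i)+O(mp^3e^{-2mp^2})$. Here every quantity is $\asymp1$, since $\pp\asymp1$ and Fact~\ref{Lem:DuzeMp2dwa} applies. This gives \eqref{eq:KiK1}, because $\ble_i\asymp n^i$ there.

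For $1\le mp$, $mp^2\le1$, I would use Lemma~\ref{Lem:Simple} to reduce to simple covers and then apply Lemma~\ref{Lem:Transition} with $\kli=\kli_1$ and $\bar H=\kli_2$ (using $i\ge2$). This requires condition (i) or (ii) of that lemma. If neither holds, some $C$ has $p\le m^{-\frac{|C|+1}{2|C|}}$. Since $|C\cap V(\kli_l)|\ge(|C|+1)/2$ for some $l$, as in \eqref{Eq:Odrywanie:Clarge}, Lemma~\ref{Lem:GlueingBis} lets me enlarge $C$ to contain $V(\kli_1\cap\kli_2)$, so condition (ii) holds. Summing over the boundedly many covers then gives the right inequality in \eqref{eq:KiK2}. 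The left inequality is the lower bound above. This reduction to Lemma~\ref{Lem:Transition} is the step I expect to be most delicate, chiefly in checking the hypotheses of Lemma~\ref{Lem:GlueingBis}. It is essentially the argument of Lemma~\ref{Lem:Odrywanie:Pre} for two cliques.

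\textbf{Covariance comparison \eqref{eq:KiK3}.}

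The upper bound is immediate: $\pi(H)-\qq^2\le\pi(H)(1-\qq)$ because $\qq^2\ge \qq\,\pi(H)$. Then $1-\qq\asymp1-\pp$ by Fact~\ref{Lem:DuzeMp2dwa}.

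For the lower bound with $mp^2\le1$, note that $1-\pp\asymp1$, so I need $\qq^2\le(1-c)\pi(H)$. Taking the bounds computed above, the ratio $\qq^2/\pi(H)$ is at most $\pi(\kli_\cap)\lesssim\pp$. This suffices when $\pp$ is small. When $\pp$ is merely bounded, I would instead use \eqref{eq:CC1} to compare the exact cover expressions, which gives the constant gap for $m$ large.

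For $mp^2\ge1$, Lemma~\ref{Lem:DuzeMp2} gives $\pi(H)-\qq^2=\qq^2(1/\pi(\kli_\cap)-1)+O(mp^3e^{-2mp^2})$. The main term is $\asymp 1-\pi(\klik_i)\asymp e^{-mp^2}$. The error is $mp^3e^{-2mp^2}\le e^{-mp^2}\cdot mp^3e^{-mp^2}=o(e^{-mp^2})$, because $mp^3\le1$. This regime is the main obstacle for constants: the error term has to be genuinely smaller than $e^{-mp^2}$, and that is exactly what $mp^3\le1$ with $mp^2\ge 1$ provides.
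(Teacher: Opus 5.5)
\textbf{Gap: the lower bound for $m^{-1}\le p\le m^{-\frac{r+1}{2r}}$.} This range lies inside the regime of \eqref{eq:KiK2}, and there the cover $\{V(\kli_1),V(\kli_2)\}$ does not give what you claim. By \eqref{eq:CC2} and Lemma~\ref{Lem:PiOptimal} its probability is $\asymp m^2p^{2r}\asymp \qq^2$, not $\qq^2/(mp^i)$. The target is $n^{-(2r-i)}(n^r\qq)^2/\ble_i=\qq^2/(mp^i)\asymp mp^{2r-i}$. This exceeds what your cover gives by the factor $1/(mp^i)\ge 1/(mp^2)\ge m^{1/r}\to\infty$.

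Two sets meeting in $i\ge 2$ vertices only reproduce the independent product $\qq^2$. Such a cover is not even simple. The computation in the proof of Lemma~\ref{Lem:Simple}(ii) shows that merging the two sets multiplies the probability by $1/(mp^{i})$. The positive correlation comes exactly from one attribute building all of $V(H)$.

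The fix is to use $\{V(H)\}$ on the whole range $p\le m^{-\frac{r+1}{2r}}$, as the paper does. Then $\pi(H,\{V(H)\})\asymp mp^{2r-i}$ by \eqref{eq:CC2}, and no assumption $mp\le 1$ is needed. Your lower bound in \eqref{eq:KiK3} for $mp^2\le 1$ inherits the same error, because the estimate $\qq^2/\pi(H)\lesssim \pp$ you use there rests on this computation. With the corrected cover it goes through.

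\textbf{The rest is sound and follows the paper's route.}
\begin{itemize}
\item Your upper bound via Lemma~\ref{Lem:Transition} and Lemma~\ref{Lem:GlueingBis} is what the paper packages as Lemma~\ref{Lem:Odrywanie}. You can apply Lemma~\ref{Lem:GlueingBis} with $\kli_1\oplus_i\kli_2=H$ itself, so its hypothesis $|C\cap V(H)|=|C|\ge(|C|+1)/2$ is automatic and the step is not delicate.
\item Your upper half of \eqref{eq:KiK3}, via $\pi(H)\le\qq$, is a clean shortcut.
\item For the constant gap when $\pp$ is bounded away from $0$ with $mp^2\le1$, the precise tool is Lemma~\ref{Lem:Splitting}(iii). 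It gives $\qq^2=(1+o(1))\pp^{\binom{i}{2}}\pi(H)$, which is what your appeal to \eqref{eq:CC1} amounts to.
\item One small correction for $mp^2\ge 1$: the error term is $o(e^{-mp^2})$ because $mp^3e^{-mp^2}=(mp^2)^{3/2}e^{-mp^2}m^{-1/2}\to 0$. This comes from $m$ being large, not merely from $mp^3\le 1$.
\end{itemize}
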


Note that in \eqref{eq:KiK2} the lower bound may not be of the same order as the upper one. 
This follows from the fact that, for certain parameter ranges, we have 
$
\pi(\kli)\asymp \pi(\kli, E(\kli)),
$
whereas
$
\pi(\kli\oplus_i \kli)\asymp \pi(\kli\oplus_i \kli, \cover)
$
holds for a clique cover $\cover$ that does not consist solely of $2$-sets, possibly
$
\cover = \{ V(\kli\oplus_i \kli) \}.
$
This phenomenon has already been addressed in Remark~\ref{rem:dominatingccs}, and analysing it was one of the most challenging tasks in this article.

\begin{proof}
	Assume first $mp\le 1$.  Then,  by Corollary~\ref{Cor:Glueing},   the estimate \eqref{eq:CC2} and Lemma~\ref{Lem:PiOptimal},   for any $2\le i\le r$  we have    
	\begin{equation*}
		n^{2r-i}\pi(\kli_1\oplus_i\kli_2)
		\asymp n^{2r-i} \pi(\kli_1\oplus_i\kli_2,   \{V(\kli_1\oplus_i\kli_2)\})
		\asymp
		n^{2r-i}mp^{2r-i} 
		\asymp\frac{(n^r\qq)^2}{\ble_i}, 
	\end{equation*}
	which implies \eqref{eq:KiK1}. Using Corollary~\ref{Cor:Glueing} once again we obtain
	\begin{align*}
		\frac{\qq^2}{\pi(\kli_1\oplus_i\kli_2)}&\asymp \frac{(mp^r)^2}{mp^{2r-i}}=\frac{m^ip^i}{m^{i-1}}\le \frac1m, 
	\end{align*}
	which gives us \eqref{eq:KiK3} in this range of parameters.
	
	If $mp^2\ge1$ and $mp^3\le1$,   then \eqref{eq:KiK1} follows directly from Fact~\ref{Lem:DuzeMp2dwa},  as any subgraph probability is then comparable to a constant.  Regarding \eqref{eq:KiK3},  by Lemma~\ref{Lem:DuzeMp2} and Fact~\ref{Lem:DuzeMp2dwa} we get
	\begin{align*}
		\pi(\kli_1\oplus_i\kli_2)-\qq^2
		&=
		\qq^2\left(\frac{1}{\pi(\klik_i)}-1\right)+O(mp^3e^{-2mp^2})\\
		&=\qq^2\, \frac{1-\pi(\klik_i)}{\pi(\klik_i)}+O((1-\pp)^2)\\
		&\asymp 1-\pp\\
		&\asymp \pi(\kli_1\oplus_i\kli_2)(1-\pp).
	\end{align*}
	
	We now turn our attention to the case when $mp\ge1$ and $mp^2\leq1$ and will focus on it in the remaining part of the proof. 
	
	The upper bound follows directly from Lemma~\ref{Lem:Odrywanie} and the fact that $\kli_1\oplus_i\kli_2\cong G_{2, \II}$ for $\II=\{11, 12\}$.
	
	Now we address the lower bound. 
	For $m^{-1}\le p\le m^{-\frac{r+1}{2r}}$,      by \eqref{Eq:SubgraphProb},    \eqref{eq:CC2},    and Lemma~\ref{Lem:PiOptimal} we have 
	\begin{align*}
		n^{2r-i}\pi(\kli_1\oplus_i\kli_2)
		&\ge n^{2r-i}\pi(\kli_1\oplus_i\kli_2,   \{V(\kli_1\oplus_i\kli)\})\\
		&\asymp  n^{2r-i}mp^{2r-i}= \frac{(n^{r}mp^{r})^2}{n^imp^i}\asymp \frac{(n^r\qq)^2}{\ble_i}.
	\end{align*} 
	Similarly,    for $m^{-\frac{r+1}{2r}}\le p\le m^{-\frac12}$,    by \eqref{Eq:SubgraphProb},    \eqref{eq:CC2},    and Lemma~\ref{Lem:PiOptimal}  we obtain
	\begin{align*}
		n^{2r-i}\pi(\kli_1\oplus_i\kli_2)
		\ge n^{2r-i}\pi(\kli_1\oplus_i\kli_2,   E(\kli_1\oplus_i\kli_2))
		\asymp n^{2r-i}\pp^{2\binom{r}{2}-\binom{i}{2}}
		= \frac{(n^r\qq)^2}{\ble_i}, 
	\end{align*}
	which concludes the proof of \eqref{eq:KiK2}. 
	
	Eventually,   we examine the difference $\pi(\kli_1\oplus_i\kli_2)-\qq^2$ for $mp\ge1$ and $mp^2\le1$. Recall that $|V(\kli_1\oplus_i\kli_2)|=2r-i$. First,  consider $p\le m^{-\frac{(2r-i)+1}{2(2r-i)}+\delta}$,  for some $0<\delta<1/8r$. 
	We remark that  (since $2(2r-i) < 4r$)
	$$
	-\frac{(2r-i)+1}{2(2r-i)}+\delta<-\frac{1}{2}-\frac{1}{2(2r-i)}+\frac{1}{8r}<-\frac{1}{2}-\frac{1}{8r}.
	$$
	In particular, this implies
	$$
	mp^2\le m^{-1/(4r)}.
	$$
	Then,  for all $2\le i\le r$ 
	$$
	\ble_i\lesssim n^i\left(mp^i+(mp^2)^{\binom{i}{2}}\right)\le n^imp^2\le n^i m^{-1/{(4r)}}. 
	$$
	Therefore,  using the lower bound in \eqref{eq:KiK2} we get 
	\begin{align}\label{eq:aux5}
		\qq^2\lesssim   \frac{n^im^{-1/(4r)}}{\ble_i}\qq^2
		= m^{-1/(4r)} n^{i-2r}\frac{(n^r\qq)^2}{\ble_i}
		\lesssim
		m^{-1/(4r)}\pi(\kli_1\oplus_i\kli_2).
	\end{align}
	Thus,  since  $\pp=o(1)$ in this case, 
	$$
	\pi(\kli_1\oplus_i\kli_2)-\qq^2=(1+O(m^{-1/(4r)}))\pi(\kli_1\oplus_i\kli_2)\asymp (1-\pp) \pi(\kli_1\oplus_i\kli_2). 
	$$
	This implies \eqref{eq:KiK3} for $p\le m^{-\frac{(2r-i)+1}{2(2r-i)}+\delta}$ with any $0<\delta<1/8r$.
	
	Now we assume  $m^{-\frac{(2r-i)+1}{2(2r-i)}+\delta}\le p\le m^{-\frac12}$. Observe that  $p$ satisfies the assumptions of Lemma~\ref{Lem:Splitting}$(iii)$ with $A=m^{\delta}$.
	Therefore, for fixed $0<\delta<\delta_{2r}<\delta_{r}$,  Lemma~\ref{Lem:Splitting}$(iii)$ gives us
	\begin{align*}
		\qq&=\pi(\kli)=(1+O(m^{-\delta}+m^{-\delta_{r}}))\pp^{\binom{r}{2}}=(1+O(m^{-\delta}))\pp^{\binom{r}{2}}, \\
		\pi(\kli_1\oplus_i\kli_2)&=(1+O(m^{-\delta}+m^{-\delta_{2r}}))\pp^{\binom{r}{2}-\binom{i}{2}}=(1+O(m^{-\delta}))\pp^{\binom{r}{2}-\binom{i}{2}}.
	\end{align*}
	Consequently,  using the above equalities,  we obtain for $i\ge 2$
	\begin{align}
		\qq^2
		&=
		(1+O(m^{-\delta}))\pp^{2\binom{r}{2}}
		=
		\pp^{\binom{i}{2}}(1+O(m^{-\delta}))\pp^{2\binom{r}{2}-\binom{i}{2}}\\
		&=\pp^{\binom{i}{2}}(1+O(m^{-\delta}))\pi(\kli_1\oplus_i\kli_2)\\\label{eq:aux6}
		&\le 
		\pp(1+O(m^{-\delta}))\pi(\kli_1\oplus_i\kli_2).
	\end{align}
	Hence,
	$$
	(1-\pp-O(m^{-\delta}))\pi(\kli_1\oplus_i\kli_2)\le \pi(\kli_1\oplus_i\kli_2)-\qq^2 \le \pi(\kli_1\oplus_i\kli_2).
	$$
	This,  together with $1-\pp$ being bounded away from $0$ by a constant for $mp^2\le 1$,  implies \eqref{eq:KiK3} in this case.
	This completes the proof.
\end{proof}

\begin{lemma}\label{Lem:Var:OneIntersection}
	There exists a constant $c_r>0$ dependent on $r$ only such that for $mp^3\le c_r$ and $m$ sufficiently large  it holds that
	$$
	(\pi(\kli_1\oplus_1\kli_2)-\qq^2)
	\asymp 
	\left(\frac{\qq}{\pp}\right)^2mp^3(1-\pp)^2.
	$$
\end{lemma}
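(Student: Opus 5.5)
We split according to the three parameter regimes used throughout the paper: $mp\le 1$; $mp\ge 1,\ mp^2\le 1$; and $mp^2\ge 1,\ mp^3\le c_r$. In the first two regimes the target is, by \eqref{Eq:Var:Szacowanie:Przeciecie1}, $\qq^2/(mp)$. In the last it is $mp^3e^{-2mp^2}$.

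The basic tool for the two sparse regimes is the partition $\{\kli_1,\kli_2\}$ of $H=\kli_1\oplus_1\kli_2$. The clique covers of $H$ that are decomposable to $H_\emptyset$ contain a cover of each $\kli_s$ inside $V(\kli_s)$. We compare their total probability with $\pi(\kli_1\oplus_0\kli_2)=\qq^2$ via Lemma~\ref{Lem:DecompositionComparing}, applied with $H^{(2)}=\kli_1\oplus_0\kli_2\oplus_0\emp_1$. Since $\kli_1\oplus_1\kli_2$ has one vertex fewer, we use instead the exact product formula \eqref{eq:CC1}. Identifying the shared vertex costs only a factor $1+O(p)$ per set and a modified forbidden-set term, so the decomposable part equals $\qq^2(1+O(p)+O(m^{-\delta}))$. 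We expect it to be at most $\qq^2$ up to such errors and not to contribute positively at leading order. The covariance is then governed by the indecomposable covers.

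For the indecomposable covers we argue as follows.

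\emph{Upper bound.} In $F(H,\{\kli_1,\kli_2\})$ the graph is a path, so $\delta=0$ and $|\Lambda|=2$. For $mp\ge 1$, Lemma~\ref{Lem:InDecomposable} then gives $\pi(H,\cover)\lesssim \qq^2/(mp)$. For $mp\le1$, Corollary~\ref{Cor:Glueing} gives directly $\pi(H)\asymp mp^{2r-1}$ and $\qq\asymp mp^r$, so $\pi(H)\asymp \qq^2/(mp)\gg \qq^2$ and both bounds are immediate.

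\emph{Lower bound for $mp\ge1$.} We exhibit one concrete indecomposable cover. The candidate is obtained from the dominant covers of the two cliques (either $\{V(\kli_s)\}$ or $E(\kli_s)$, according to Lemma~\ref{Lem:PiOptimal}) by merging one set of each through the common vertex into a single set. In the case of the $E(\kli_s)$ covers, this means replacing two edges at the shared vertex $v$ by a $3$-set. By \eqref{eq:CC2} this cover has probability $\asymp \qq^2\cdot mp^{3}/(mp^2)^2=\qq^2/(mp)$. In the case of the $\{V(\kli_s)\}$ covers it becomes $\{V(H)\}$, with probability $mp^{2r-1}=\qq^2/(mp)$. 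Since $\qq^2/(mp)\cdot mp\ge \qq^2$ is the wrong direction, we must ensure the difference is not cancelled. We then need $\pi(H)-\qq^2\ge \pi(H,\cover)-|\text{decomposable error}|$, with the error $O(p+m^{-\delta})\qq^2$ being $o(\qq^2/(mp))$. This holds because $p\cdot mp=mp^2\le1$; we would choose $c_r$ and the lower threshold so that this comparison holds with constants. When $mp^2\asymp1$ this becomes delicate, and we would merge that boundary into the dense case.

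\emph{Dense regime $mp^2\ge1$.} Here Lemma~\ref{Lem:DuzeMp2} only gives $\pi(H)=\qq^2/\pi(\emp_1)+O(mp^3e^{-2mp^2})=\qq^2+O(\cdot)$, which yields the upper bound but loses the sign. We instead redo its proof with the partition into edges, and isolate the clique covers with exactly two forbidden $2$-sets $vu_1,vu_2$, $u_s\in\kli_s$, covered by one $3$-set $\{v,u_1,u_2\}$, with all other edges covered by $2$-sets. For $H$ these covers exist. In $\kli_1\oplus_0\kli_2\oplus_0\emp_1$ the corresponding $3$-sets cover only one edge each, or a non-edge set, so their weights differ. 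By \eqref{eq:CC1} this gives a contribution $\asymp (r-1)^2\,mp^3e^{-2mp^2}$ to the difference. The remaining covers, with a $\ge 4$-set or at least three forbidden pairs, contribute $O((mp^3)^2+mp^4)e^{-2mp^2}$ or $O(mp^3e^{-3mp^2})$. These are smaller by a factor $c_r$ once $mp^3\le c_r$, which is where $c_r$ is fixed.

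The main obstacle is making this last comparison exact rather than up to constants. The bijection of Lemma~\ref{Lem:DecompositionComparing} must match all covers with at most one forbidden edge exactly, leaving a positive $\Theta(mp^3e^{-2mp^2})$ surplus. The analogous sign control near $mp^2\asymp1$ is the same difficulty.
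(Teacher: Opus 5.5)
\textbf{There is a genuine gap, in the reduction to indecomposable covers.} Your handling of the indecomposable covers themselves matches the paper: Lemma~\ref{Lem:InDecomposable} with $F$ a path gives the upper bound, and $\{V(H)\}$ or the cover with the single $3$-set $\{v,v_1,v_2\}$ gives the lower bound. The problem is that you pad the wrong graph. The graph $\kli_1\oplus_0\kli_2\oplus_0\emp_1$ has $2r+1$ vertices, two more than $\kli_1\oplus_1\kli_2$. You then fall back on \eqref{eq:CC1} to compare the decomposable parts only approximately, and this cannot work.

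\eqref{eq:CC1} carries a relative error $O(m^{-\delta})$ with $\delta=\delta_{2r-1}=1/((2r-1)2^{2r})$, which is tiny. So the error $m^{-\delta}\qq^2$ is not small compared with the target $\qq^2/(mp)$ once $mp\gtrsim m^{\delta}$, which is most of the range $1\le mp\le m^{1/2}$. Hence neither the upper nor the lower bound on the covariance follows. Your justification also addresses only the $O(p)$ term, and even there $p\,\qq^2\le \qq^2/(mp)$ is merely $O$, not $o$. The claim that the decomposable part is at most $\qq^2$ is only ``expected''.

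The same defect appears in two other places. In the dense regime you explicitly leave the exact comparison open. For $mp\le1$, the claim $\qq^2/(mp)\gg\qq^2$ fails when $mp\asymp 1$, so $\pi(H)\asymp \qq^2/(mp)$ alone does not give $\pi(H)-\qq^2\gtrsim \qq^2/(mp)$.

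\textbf{The missing idea is to put the isolated vertex on the other side.} Take $H^{(1)}=\kli_1\oplus_1\kli_2\oplus_0\emp_1$ and $H^{(2)}=\kli_1\oplus_0\kli_2$. Both have $2r$ vertices, with $\pi(H^{(1)})=\pi(\kli_1\oplus_1\kli_2)$ and $\pi(H^{(2)})=\qq^2$. For the partition $\{\kli_1,\kli_2\}$ one has $H^{(1)}_{\{s\}}\cong\klik_r\oplus_0\emp_r\cong H^{(2)}_{\{s\}}$ and $H^{(1)}_{\emptyset}\cong\emp_{2r}\cong H^{(2)}_{\emptyset}$. Lemma~\ref{Lem:DecompositionComparing} then cancels all decomposable parts exactly:
\[
\pi(\kli_1\oplus_1\kli_2)-\qq^2=\pi\big(H^{(1)},\covers_\Lambda(H^{(1)})\big)-\pi\big(H^{(2)},\covers_\Lambda(H^{(2)})\big).
\]
There is no error term left to control.

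The subtracted term is of lower order in every regime:
\begin{itemize}
\item For $mp^2\le1$, Lemma~\ref{Lem:InDecomposable2} gives $\lesssim\qq^2/m$. This is $O(m^{-1})\,\qq^2/(mp)$ for $mp\le1$ and $O(m^{-1/2})\,\qq^2/(mp)$ for $mp\ge1$.
\item For $mp^2\ge1$, Lemma~\ref{Lem:InDecomposable2bis} gives $\lesssim (mp^3)^2e^{-2mp^2}$.
\end{itemize}
Meanwhile the first term is $\asymp \qq^2/(mp)$ or $\asymp mp^3e^{-2mp^2}$ respectively; in the dense case the lower bound comes from the $3$-set cover via \eqref{eq:CC4}. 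Choosing $c_r$ small finishes the proof. In particular, no rerun of Lemma~\ref{Lem:DuzeMp2} with an edge partition is needed.
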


\begin{proof}
	Let us define two auxiliary graphs $H^{(1)}=\kli_1\oplus_1\kli_2\oplus_0 \emp_1$ and $H^{(2)}=\kli_1\oplus_0\kli_2$, and their partitions $\{\kli_i\}_{i\in\Lambda}$,    $\Lambda=\{1,   2\}$ (see Definition~\ref{Def:Partition}). Let  $\covers_J(H^{(l)})$,    $J\subseteq \Lambda$,    $l=1,   2$,   be as in Definition \ref{def:decomp}. 
	Given $l=1,   2$,    by definition the families $\covers_J(H^{(l)})$,    $J\subseteq \Lambda$,    are pairwise disjoint and $\covers(H^{(l)})=\bigcup_{J\subseteq \Lambda} \covers_J(H^{(l)})$. 
	Moreover, $H_{\{i\}}^{(1)}\cong K_r\oplus_0N_r\cong H_{\{i\}}^{(2)}$,   for $i=1,   2$,    and $H_{\emptyset}^{(1)}\cong \emp_{2r}\cong H_{\emptyset}^{(2)}$. Therefore,    by \eqref{Eq:SubgraphProb} and Lemma~\ref{Lem:DecompositionComparing}
	\begin{align*}
		\pi(\kli_1\oplus_1\kli_2)-\qq^2
		&=\pi(H^{(1)})-\pi(H^{(2)})\\
		&=\sum_{J\subseteq \Lambda} \left(\pi(H^{(1)},    \covers_J(H^{(1)}))-\pi(H^{(2)},    \covers_J(H^{(2)}))\right)\\
		&=\pi(H^{(1)},    \covers_\Lambda(H^{(1)}))-\pi(H^{(2)},    \covers_\Lambda(H^{(2)})).
	\end{align*}
	Note that $\covers_\Lambda(H^{(l)})$,    $l=1,   2$,    consists of  all clique covers of $H^{(l)}$,    $l=1,   2$,  that are  indecomposable with relation to partition $\{\kli_i\}_{i\in\Lambda}$,    $\Lambda=\{1,   2\}$. This will allow us to employ Lemmas \ref{Lem:InDecomposable2},  \ref{Lem:InDecomposable2bis},  and \ref{Lem:InDecomposable}. Furthermore,  in view of \eqref{Eq:Var:Szacowanie:Przeciecie1},  it now sufficies to show 
	\begin{align}\label{eq:aux7}
		\pi(H^{(1)},    \covers_\Lambda(H^{(1)}))-\pi(H^{(2)},    \covers_\Lambda(H^{(2)}))\asymp
		\begin{cases}
			\frac{\qq^2}{mp}, &\text{ for }mp^2\le 1;\\[5pt]
			mp^3e^{-2mp^2}, &\text{ for }mp^2\ge 1\text{ and }mp^3\le c_r. \hspace{20pt}
		\end{cases}
	\end{align}
	
	First,  we consider the case $mp\le 1$. Since $\{V(\kli_1\oplus_1\kli_2)\}\in  \covers_\Lambda(H^{(1)})$,  as it is an indecomposable clique cover of $\kli_1\oplus_1\kli_2$,    Corollary~\ref{Cor:Glueing} gives us
	$$
	\pi(H^{(1)},    \covers_\Lambda(H^{(1)}))\asymp \pi(H^{(1)},   \{V(\kli_1\oplus_1\kli_2)\}) \asymp mp^{2r-1}=\frac{(mp^r)^2}{mp} \asymp \frac{\qq^2}{mp}.
	$$
	On the other hand,    by Lemma~\ref{Lem:InDecomposable2} with $j=1$,  and using the inequality $p\le m^{-1} (\Leftrightarrow mp\le 1)$ we obtain
	\begin{align}\label{eq:aux8}
		\pi(H^{(2)},    \covers_\Lambda(H^{(2)}))\lesssim \frac{\qq^2}{m} \lesssim m^{-1} \frac{\qq^2}{mp}.
	\end{align}
	Therefore
	$$
	\pi(H^{(1)},    \covers_\Lambda(H^{(1)}))-\pi(H^{(2)},    \covers_\Lambda(H^{(2)})) \asymp (1+O(m^{-1}))  \frac{\qq^2}{mp}\asymp \frac{\qq^2}{mp},   
	$$
	which implies \eqref{eq:aux7} for $mp\le 1$.
	
	In the case $m^{-1}\le p\le m^{-1/2}$,  for $H^{(1)}$ we use Lemma~\ref{Lem:InDecomposable}. We recall that we consider partition $\{\kli_i\}_{i\in \Lambda}$,  $\Lambda=\{1, 2\}$,  and the corresponding graph $F=F(H^{(1)}, \{\kli_i\}_{i\in \Lambda})$ is a path with two edges. Therefore,  by Lemma~\ref{Lem:InDecomposable}
	\begin{equation}
		\pi(H^{(1)},    \covers_\Lambda(H^{(1)}))\lesssim\frac{\qq^2}{mp}.
	\end{equation} 
	To obtain the lower bound,   we consider two indecomposable clique covers. The first one is simply $\{V(\kli_1\oplus_1\kli_2)\}$. To construct the other one,    we set $v\in V(\kli_1)\cap V(\kli_2)$,    $v_i\in V(\kli_i)\setminus\{v\}$. Then let $\cover_\cap$ be obtained from $E(H^{(1)})$ by replacing $\{v,   v_1\}$ and $\{v,   v_2\}$ with $\{v,   v_1,   v_2\}$,  i.e.
	\begin{align}\label{eq:Ccap}
		\cover_\cap=\big(E(H^{(1)})\cup\{v,   v_1,   v_2\}\big)\setminus\big\{\{v,   v_1\},  \{v,   v_2\}\big\}.
	\end{align}
	The clique cover $\cover_\cap$ is clearly indecomposable as well. 
	Therefore,    by \eqref{eq:CC2} and Lemma~\ref{Lem:PiOptimal} we get for $p\le m^{-\frac{r+1}{2r}} $
	$$
	\pi(H^{(1)},    \covers_\Lambda(H^{(1)}))\ge \pi(H^{(1)},    \{V(H^{(1)})\})\asymp n^{2r-1}mp^{2r-1} \asymp \frac{\qq^2}{mp}, 
	$$  
	whereas  for $m^{-\frac{r+1}{2r}}\le p\le m^{-1/2}$
	$$
	\pi(H^{(1)},    \covers_\Lambda(H^{(1)}))\ge \pi(H^{(1)},    \cover_\cap)\asymp mp^3(mp^2)^{2\binom{r}{2}-2} \asymp \frac{\qq^2}{mp}. 
	$$  
	Eventually,  analogously as in \eqref{eq:aux8},   Lemma~\ref{Lem:InDecomposable2} yields $\pi(H^{(2)},    \covers_\Lambda(H^{(2)}))=O(\qq^2/m)=O(m^{-1/2}\qq^2/mp)$,  and hence
	$$
	\pi(H^{(1)},    \covers_\Lambda(H^{(1)}))-\pi(H^{(2)},    \covers_\Lambda(H^{(2)})) \asymp (1+O(m^{-1/2})) \frac{\qq^2}{mp},   
	$$
	as required.

	Now,    let us consider the case $mp^2\ge 1$ and $mp^3\le 1$. Combining  Lemma~\ref{Lem:InDecomposable2bis} with \eqref{Eq:SubgraphProbabilityCCfamily} we obtain
	\begin{align*}
		\pi(H^{(1)},    \covers_\Lambda(H^{(2)}))&\lesssim  mp^3e^{-2mp^2}, \\
		\pi(H^{(2)},    \covers_\Lambda(H^{(2)}))&\lesssim  (mp^3)^2e^{-2mp^2} .
	\end{align*}
	On the other hand,    for $\cover_\cap$ defined in \eqref{eq:Ccap},    by \eqref{eq:CC4} we have
	$$
	\pi(H^{(1)},    \covers_\Lambda(H^{(1)}))\ge \pi(H^{(1)},   \cover_\cap) \gtrsim \,  mp^3e^{-2mp^2}.   
	$$
	This gives us
	\begin{align}
		mp^3e^{-2mp^2} \gtrsim
		\pi(H^{(1)},    \covers_\Lambda(H^{(1)}))-\pi(H^{(2)},    \covers_\Lambda(H^{(2)}))
		\gtrsim  mp^3e^{-2mp^2} (1+O(mp^3)), 
	\end{align}
	which implies \eqref{eq:aux7} for sufficiently small $c_r$. Let us note that $c_r$ depends on $r$ only as the constants in all of the applied estimates  depend on graphs of at most $2r$ vertices.
\end{proof}

\section{Proof of Theorem \ref{Thm:Main} }\label{Sec:Proof}

The main strength of Theorem \ref{Thm:Main} lies in the bound \eqref{eq:main}. In order to fully concentrate on its proof,  we first deal with the remaining parts of the assertion of the theorem.

\begin{lemma}\label{lem:necessary}
	Assume $mp^3\le c_r$ with $c_r$ from Theorem \ref{Tw:Wariancja}. Then 
	$$\widetilde \X\stackrel{\mathcal D}{\longrightarrow}\mathcal N \ \Rightarrow\ (1-\pp)\ble\rightarrow\infty.$$
\end{lemma}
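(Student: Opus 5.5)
We argue by contraposition: assume $(1-\pp)\ble\not\to\infty$, pass to a subsequence along which $(1-\pp)\ble\to\lambda\in[0,\infty)$, and show that $\widetilde\X$ cannot converge to $\mathcal N$ along it. The idea is that a bounded value of $(1-\pp)\ble$ means either very few cliques or very few edges appear (when $\pp$ is small), or very few edges are missing (when $\pp$ is close to $1$). In each case $\X$ is governed by a quantity taking small integer values with non-vanishing probability, so its standardization has an atom or is lattice-like with mesh not tending to zero. Since $1-\pp$ is bounded away from zero for $mp^2\le 1$ and $1-\pp\asymp e^{-mp^2}$ for $mp^2\ge1$, we split into these two regimes, refining the subsequence so that we stay in one of them.

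\emph{Case $mp^2\le 1$.} Here $\ble\to\lambda'<\infty$, so either $n^r\qq=O(1)$ or $n^2\pp=O(1)$. If $n^2\pp=O(1)$, the expected number of edges $\binom n2\pp$ is bounded. By Markov's inequality, with probability bounded away from zero the graph has no edges, hence $\X=0$. By Fact~\ref{Lem:Division2}, $\pi(H)\ge \pp^{|E(H)|}$, so the graph is empty with probability at most $1-c$ for some $c>0$; thus $\P(\X=0)\in[c',1-c']$. If $n^r\qq=O(1)$, the bound $\P(\X\ge1)\le\E\X=O(1)$ gives the same conclusion when $\E\X$ is small. In general we use $\P(\X=0)\ge 1-\E\X$ together with a second-moment lower bound on $\P(\X>0)$, computed via Theorem~\ref{Tw:Wariancja}. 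This shows that $\P(\X=0)$ stays in $(\varepsilon,1-\varepsilon)$. Then $\widetilde\X$ has an atom at $-\E\X/\sqrt{\Var\X}$ of mass at least $\varepsilon$, which is incompatible with convergence to a continuous law. Indeed, since $\Phi$ is continuous, $d_K$ convergence follows from weak convergence and would force the atom mass to tend to $0$.

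\emph{Case $mp^2\ge1$, $mp^3\le c_r$.} Here $\ble\asymp n^2$ and $1-\pp\asymp e^{-mp^2}$, so the expected number of missing edges $Y:=\binom n2-X_n^{(2)}$ is $\asymp n^2(1-\pp)=O(1)$. A missing edge destroys at least $\binom{n-2}{r-2}$ cliques, and at most that many for each missing pair. Consequently, on the event $\{Y\le 1\}$ we have
$$\binom nr-\X\in\Big\{0,\binom{n-2}{r-2}\Big\}.$$
By Theorem~\ref{Tw:Wariancja}, $\Var\X\asymp n^{2r-2}e^{-mp^2}+n^{2r-1}mp^3e^{-2mp^2}$, which is $O(n^{2r-4})$ here. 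Hence the jump $\binom{n-2}{r-2}/\sqrt{\Var\X}$ is bounded below. If $\P(Y=0)$ stays away from $0$ and $1$, we again obtain a non-vanishing atom and are done. If instead $\lambda$ is large but finite, $Y$ is not degenerate but takes boundedly many values with positive limiting probabilities. Then $\widetilde\X$ is concentrated, up to $o(1)$ fluctuations, on a lattice of mesh bounded below, which again contradicts convergence to $\mathcal N$.

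\emph{Main obstacle.} The main difficulty is establishing the non-degeneracy of $Y$ (resp.\ $\X$) rigorously in the borderline subcases. That $\P(Y=0)$ is bounded away from $1$ requires a lower bound on $\P(Y\ge1)$, which we obtain via the second moment method for $Y$. In turn, the second moment method needs $\pi(\text{two pairs missing})\lesssim(1-\pp)^2$, which follows from the complements of the estimates in Lemma~\ref{Lem:DuzeMp2} for vertex-disjoint and vertex-sharing pairs. We also need to control the error coming from several missing edges sharing a vertex; these are handled by the $O(mp^3e^{-2mp^2})$ terms from Lemma~\ref{Lem:DuzeMp2}, which are negligible because $mp^3\le c_r$.
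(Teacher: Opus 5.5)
You reduce the problem correctly to exhibiting an atom of $\widetilde\X$ with mass bounded below. By continuity of $\Phi$, such an atom already rules out convergence to $\mathcal N$, whatever its upper bound and whatever the neighbouring jump sizes. The gap is that the proposal never proves this lower bound in the regimes that matter.

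\begin{itemize}
\item For $mp^2\le 1$, Markov's inequality and $\P(\X=0)\ge 1-\E[\X]$ only help when the relevant expectation ($\binom n2\pp$ or $\binom nr\qq$) is below $1$. When it is bounded but $\ge 1$ they are vacuous.
\item The second-moment method you invoke gives a lower bound on $\P(\X>0)$, i.e.\ an \emph{upper} bound on $\P(\X=0)$, which is the wrong direction. Your claim that Fact~\ref{Lem:Division2} makes the empty graph have probability at most $1-c$ is also false when $n^2\pp\to 0$, though no upper bound is needed anyway.
\item For $mp^2\ge 1$ you only conclude under the proviso ``if $\P(Y=0)$ stays away from $0$''. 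For the remaining situation you fall back on an unproved lattice heuristic.
\end{itemize}

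The ``main obstacle'' you single out, non-degeneracy of $Y$ via second moments and Lemma~\ref{Lem:DuzeMp2}, is not the one that has to be overcome. The missing ingredient is the Harris/FKG inequality for decreasing events, the same tool behind Fact~\ref{Lem:Division2}:
\begin{itemize}
\item $\{\X=0\}=\bigcap_{\kli\in\Subgraphs}\{\kli\not\subseteq\Gnmp\}$ is an intersection of decreasing events, so $\P(\X=0)\ge(1-\qq)^{\binom nr}$, which is bounded below when $n^r\qq=O(1)$.
\item Likewise $\P(\text{no edges})\ge(1-\pp)^{\binom n2}$, which is bounded below when $n^2\pp=O(1)$.
\item $\P(Y=0)=\pi(\klik_n)\ge\pp^{\binom n2}$ by Fact~\ref{Lem:DuzeMp2dwa}, which is bounded below when $n^2(1-\pp)=O(1)$.
\end{itemize}
With these bounds your atom argument closes in every case. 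It then needs no second moments, no lattice considerations, and not even Theorem~\ref{Tw:Wariancja}, hence not the hypothesis $mp^3\le c_r$.

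The paper instead uses only moments. By the variance lower bound of Theorem~\ref{Tw:Wariancja}, $\widetilde\X\ge-\E[\X]/\sqrt{\Var[\X]}\ge -c\sqrt{\ble/(1-\pp)}$. The unbounded support of $\mathcal N$ then forces $\ble/(1-\pp)\to\infty$, which suffices when $\pp\le 1/2$. For $\pp\ge 1/2$ the same bound applied to $\binom nr-\X$ gives $-\widetilde\X\ge -c'\sqrt{(1-\pp)\ble}$.
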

\begin{proof}
	We adapt the argument from \cite[proof of Theorem 3.5]{MNS}. 
	By Theorem \ref{Tw:Wariancja},  for $mp^3\le c_r$ and some constant $c=c(r)$ we have
	$$\widetilde \X=\frac{ \X-\E[\X]}{\sqrt{\Var[\X]}}\ge -\frac{ \E[\X]}{\sqrt{\Var[\X]}}\ge -c\frac{n^r\qq}{\sqrt{(1-\pp)(n^r\qq)^2/\ble}} =-c\sqrt{\frac{\ble}{(1-\pp)}}.$$
	Since  the normal distribution is supported on the whole real line,    the last expression has to tend to minus infinity,  which follows $\ble/(1-\pp)\rightarrow \infty$. This yields the required condition for $\pp\le 1/2$.
	
	Consider now $\pp\ge 1/2$ and let  $V_n^{(r)}=\binom{n}{ r}-\X$ be the number of copies of $K_r$ that do not appear in $\Gnmp$,  i.e., at least one of  the edges of a given copy is absent. Then we have  $\Var[V_n^{(r)}]=\Var[\X]$ and,  due to the symmetry of the standard normal distribution,  $\widetilde {V_n^{(r)}}=-\widetilde \X\stackrel{\mathcal D}{\longrightarrow} \mathcal N$. Next,  we proceed as previously.  Since,   by Fact \ref{Lem:DuzeMp2dwa},  $\qq\asymp1$ and $(1-\qq)\asymp (1-\pp)$,  we obtain for some constant $c'=c'(r)$ and large $n$
	$$\widetilde {V_n^{(r)}}=\frac{ V_n^{(r)}-\E[V_n^{(r)}]}{\sqrt{\Var[V_n^{(r)}]}}\ge -\frac{ \E[V_n^{(r)}]}{\sqrt{\Var[V_n^{(r)}]}}\ge -c\frac{\binom{n}{ r}(1-\qq)}{\sqrt{(1-\pp)(n^r\qq)^2/\ble}}\ge -{c'}\sqrt{(1-\pp)\ble}, $$
	which follows $(1-\pp)\ble\rightarrow\infty$. This concludes the proof.
\end{proof}
\begin{lemma}\label{lem:1/2->1/4}We have
	$$\bigg[\left(\frac{1}{(1-\pp)\ble}\right)+\frac1n+\frac1{nmp}\bigg]^{1/2}\wedge1\lesssim \left(\frac{1}{(1-\pp)\ble}\right)^{1/4}.$$
\end{lemma}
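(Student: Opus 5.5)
Write $x:=\frac{1}{(1-\pp)\ble}$. The plan is to reduce the statement to the single inequality
$$\frac1n+\frac1{nmp}\lesssim x^{1/2}\qquad\text{whenever } x\le 1.$$
Given this, the lemma follows in two cases. If $x\ge 1$, the left-hand side is at most $1\le x^{1/4}$. If $x\le 1$, then $x\le x^{1/2}$, and the displayed inequality bounds the whole bracket by a constant multiple of $x^{1/2}$; taking square roots gives $\lesssim x^{1/4}$.

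Since $1-\pp\le 1$, we have $x\ge 1/\ble$. It is therefore enough to prove
$$\ble\lesssim n^2\wedge (nmp)^2.$$
We use $\ble\le n^2\pp$ from \eqref{Eq:Def:Ble}. Because $\pp\le 1$, this gives $\ble\le n^2$, hence $1/n\le \ble^{-1/2}\le x^{1/2}$.

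For the term $1/(nmp)$, we need $\ble\lesssim (nmp)^2$, and we split according to the size of $mp^2$.
\begin{itemize}
\item If $mp^2\le 1$, then $\pp\asymp mp^2$ by \eqref{eq:pp}. Hence $\ble\lesssim n^2mp^2\le n^2m^2p^2=(nmp)^2$, using $m\ge1$.
\item If $mp^2\ge 1$, then $\pp\le 1\le mp^2\le (mp)^2$, since $mp\ge mp^2\ge1$ for $p\le1$. This again gives $\ble\le n^2\pp\le (nmp)^2$.
\end{itemize}
In both cases $\frac{1}{nmp}\le \ble^{-1/2}\le x^{1/2}$, which completes the reduction.

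The only point needing care is the regime $mp^2\ge 1$, where the estimate $\pp\asymp mp^2$ from \eqref{eq:pp} may fail. It is handled directly by the crude bound $\pp\le 1$ together with $mp\ge 1$. All constants come from \eqref{eq:pp} and depend only on $p$ being bounded away from $1$, which holds since $mp^3\le c_r$ and $m\ge m_0$.
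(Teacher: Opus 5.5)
Your proof is correct and is essentially the paper's argument. Both rest on $\ble\le n^2\pp\le n^2$ and $\ble\le n^2\pp\lesssim n^2mp^2\le (nmp)^2$; the only difference is that you handle the truncation $\wedge 1$ by splitting into $x\ge 1$ and $x\le 1$, whereas the paper uses subadditivity of the square root together with $y^{1/2}\wedge 1\le y^{1/4}$. Your case split on $mp^2\le 1$ versus $mp^2\ge 1$ is unnecessary, since $\pp=1-(1-p^2)^m\le mp^2$ holds for all parameters by Bernoulli's inequality.
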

\begin{proof}Since $x^{1/2}\wedge1\leq x^{1/4}$,  $x\geq0$,  we get
	\begin{align*}
		\bigg[\left(\frac{1}{(1-\pp)\ble}\right)+\frac1n+\frac1{nmp}\bigg]^{1/2}\wedge1&\le \bigg[\left(\frac{1}{(1-\pp)\ble}\right)^{1/2}\wedge1\bigg]+\left(\frac{1}{n}\right)^{1/2}+\left(\frac1{nmp}\right)^{1/2}\\
		&\le \left(\frac{1}{(1-\pp)\ble}\right)^{1/4}+\left(\frac{1}{n^2\pp}\right)^{1/4}+\left(\frac1{m\big(n^2(mp^2)\big)}\right)^{1/4}\\
		&\lesssim\left(\frac{1}{(1-\pp)\ble}\right)^{1/4}, 
	\end{align*}
	where we also used $mp^2\gtrsim \pp$,  cf. \eqref{eq:pp},  and $\ble\leq n^2\pp$.
\end{proof}

In order to prove Theorem~\ref{Thm:Main},  it now suffices to establish the bound \eqref{eq:main}. Indeed,  since convergence of the distance between $\widetilde \X$ and $\mathcal N$ -- both in the Wasserstein and Kolmogorov metrics -- to zero implies the convergence $\widetilde\X \stackrel{\mathcal D}{\longrightarrow} \mathcal N$,  the estimate \eqref{eq:main},  together with Lemma~\ref{lem:necessary},  yields the asymptotic normality characterization. Furthermore,  since $d_{K/W}(X,  Y)\le 2$ for any standardized random variables $X$ and $Y$,  the bound \eqref{eq:main},  combined with Lemma~\ref{lem:1/2->1/4},  gives the estimate \eqref{eq:main2}.

Thus,  in view of Lemma~\ref{Lemma:Main}, Theorem~\ref{Thm:Main}  follows from Lemma~\ref{Lem:IntegralUpperGraphs} stated below. We devote the remainder of this section to its proof.

\begin{lemma}\label{Lem:IntegralUpperGraphs}
	Let $G_t=G_{t,   i,   j}$ with $t=1$ and $0\le i < j\le r$ or $t=2,    3$ and $1\le i<j\le r$. Then for $mp^3\le c_r$
	\begin{align}\label{eq:Lem:IntegralUpperGraphs}
		n^{|V(G_{t})|}\sum_{\II\subseteq\indeksy}(-1)^{|\II|}\qq^{|\indeksy|-|\II|}\pi(G_{t,   \II})
		\lesssim \left(\frac{1}{(1-\pp)\ble}+\frac{1}{n}+\frac1{nmp}\right)(\Var N)^2.
	\end{align}
\end{lemma}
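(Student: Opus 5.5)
We bound, for each fixed triple $(t,i,j)$, the alternating sum
$$S_t:=\sum_{\II\subseteq\indeksy}(-1)^{|\II|}\qq^{|\indeksy|-|\II|}\pi(G_{t,\II}).$$
Here $(\Var N)^2$ is read as $(\Var[\X])^2$, and we use \eqref{eq:Var2}. The first step is to rewrite $S_t$ as a cumulant-like expression. Since $S_t=\E\big[\prod_{ab\in\indeksy}(\qq-\ind_{\kli_{ab}})\big]$, the sum vanishes whenever one clique $\kli_{ab}$ is vertex-disjoint from the union of the other three; in that case its factor is independent of the rest and has mean zero. We may therefore restrict to configurations with $G_t$ connected, or more precisely with no isolated clique. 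The crude bound for the remaining configurations is
$$|S_t|\le \sum_{\II}\qq^{4-|\II|}\pi(G_{t,\II}).$$
We then split according to the size of the intersections, namely whether $i$, $j-i$ and $r-j$ are $0$, $1$ or $\ge2$, and according to the three parameter regimes $mp\le1$, $1\le mp\le m^{1/2}$, and $mp^2\ge1\ge mp^3/c_r$.

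The case where all gluings are along cliques of size $\ge2$ is handled by peeling. Whenever every intersection has at least two vertices, we take a simple clique cover (Lemma~\ref{Lem:Simple}) and apply Lemma~\ref{Lem:Odrywanie} repeatedly. Each application removes one clique $\kli_{ab}$ at the cost of a factor $n^r\qq/\ble$, so after three steps
$$n^{|V(G_{t,\II})|}\pi(G_{t,\II})\lesssim \frac{(n^r\qq)^{|\II|}}{\ble^{|\II|-1}}.$$
For the terms with $|\II|=4$ this gives $(n^r\qq)^4/\ble^3$. Comparing with \eqref{eq:Var2}, where $\Var\gtrsim (n^r\qq)^2/\ble$, this is at most $\ble^{-1}\Var^2$. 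The factor $(1-\pp)$ is recovered as follows. For $mp^2\le1$ we have $1-\pp\asymp1$. For $mp^2\ge1$ we instead use Lemma~\ref{Lem:DuzeMp2}, which gives $\pi(G_{t,\II})$ as a product over the pieces up to an error $O(mp^3e^{-2mp^2})$. The alternating sum of the product terms then yields a factor $(1-\pp)^2$ from every overlap, in the same way as in the proof of \eqref{eq:KiK3}. This produces $n^{4r-\cdots}(1-\pp)^3\lesssim \Var^2/((1-\pp)\ble)$.

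When some intersections consist of a single vertex, or $i\in\{0,1\}$ (for example $G_{1,0,1}$, $G_{t,1,2}$, $G_{3,1,j}$), we use the partitions listed after Definition~\ref{Def:Partition}. As in the proof of Lemma~\ref{Lem:Var:OneIntersection}, we compare $G_t$ with the graph obtained by separating the single-vertex intersections, padded by $\emp_k$. Lemma~\ref{Lem:DecompositionComparing} shows that all decomposable parts of the two expansions coincide, so only indecomposable clique covers contribute after cancellation in $S_t$. These remaining covers are bounded by Lemma~\ref{Lem:InDecomposable} when $mp\ge1\ge mp^2$; the graph $F$ has at most one cycle for the $G_t$ (see Figure~\ref{Fig:GraphF}), which yields a gain $(mp)^{-2}$, or $(mp)^{-2}p^{-1}$. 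They are bounded by Lemma~\ref{Lem:InDecomposable2bis} when $mp^2\ge1$, and by Corollary~\ref{Cor:Glueing} or Lemma~\ref{Lem:InDecomposable2} when $mp\le1$. Each vertex shared by two cliques costs a factor $n^{-1}$ in the vertex count. Against the second term of the variance, $n^{2r-1}\qq^2/(mp)$, this gain turns into $1/n$ or $1/(nmp)$. Mixed configurations, with one gluing of size $\ge2$ and others of size $1$, are handled by first peeling the large gluings with Lemma~\ref{Lem:Odrywanie} or Lemma~\ref{Lem:Transition}, and then treating the remaining single-vertex structure with Lemma~\ref{Lem:Division}(i),(ii) and the indecomposability lemmas.

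I expect the main obstacle to be the cancellation bookkeeping in this single-vertex case. Showing that the alternating sum over $\II$ kills every decomposable cover requires a careful choice of comparison graphs for each $\II$, so that Lemma~\ref{Lem:DecompositionComparing} applies term by term. It then remains to check that the surviving indecomposable contributions have the correct power of $mp$ versus $n$ in each regime, in particular for $G_{3,1,j}$, where $F$ contains a cycle and the factor $p^{-1}$ must be absorbed by an extra $n^{-1}$.
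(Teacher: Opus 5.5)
Your outline follows the paper's architecture: peeling with Lemma~\ref{Lem:Odrywanie} when all overlaps have at least two vertices and $mp\ge1\ge mp^2$, and, for single-vertex overlaps, cancellation of decomposable covers via Lemma~\ref{Lem:DecompositionComparing} followed by Lemma~\ref{Lem:InDecomposable}. The gap is in the regime $mp^2\ge1$. Your claim that the alternating sum is $O((1-\pp)^3)$, with ``a factor $(1-\pp)^2$ from every overlap as in \eqref{eq:KiK3}'', is false. First, \eqref{eq:KiK3} yields a single factor $1-\pp$. Second, the error term of Lemma~\ref{Lem:DuzeMp2} is $mp^3e^{-2mp^2}\asymp mp^3(1-\pp)^2$, which is not $O((1-\pp)^3)$ in general. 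Third, for $t=1$, $j\ge i+2$, expanding every $\pi(G_{1,\II})$ by Lemma~\ref{Lem:DuzeMp2} gives
\[
\qq^4\Big[\tfrac{1}{\pi(\klik_{j-i})}\Big(\tfrac{1}{\pi(\klik_j)}-1\Big)^2+\Big(\tfrac{1}{\pi(\klik_{j-i})}-1\Big)\Big(3-\tfrac{2}{\pi(\klik_j)}\Big)\Big]+O(mp^3e^{-2mp^2}).
\]
The second summand is of exact order $1-\pp$ once $mp^2\to\infty$; it comes from the $\binom{j-i}{2}$ edges shared by all four cliques.

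What is needed is an explicit evaluation of all sixteen terms, as in Lemma~\ref{lem:(1-p)^2}. This gives $\lesssim 1-\pp$ for $t=1$, $j\ge i+2$, and $\lesssim(1-\pp)^2$ for $t=1$, $j=i+1$ and for $t=2,3$. It must be paired with the vertex counts, using $\Var[\X]\gtrsim n^{2r-2}(1-\pp)$ and $\ble\asymp n^2$.

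\begin{itemize}
\item In the first case $|V(G_1)|=4r-3j+i\le 4r-6$, so you get $n^{4r-6}(1-\pp)\lesssim\Var[\X]^2/((1-\pp)\ble)$.
\item In the remaining non-all-single cases other than $G_{3,1,j}$, $|V(G_t)|\le4r-5$, so you get $n^{4r-5}(1-\pp)^2\lesssim\Var[\X]^2/n$.
\end{itemize}

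The same computation must also cover the mixed configurations when $mp^2\ge1$. Your plan for them (peeling, then Lemma~\ref{Lem:Division}) relies on Lemmas~\ref{Lem:Odrywanie}, \ref{Lem:Transition} and \ref{Lem:Division}, all of which assume $mp^2\le1$. There a single factor $1-\pp$ would not suffice, since $n^{4r-5}(1-\pp)$ is not bounded by the right-hand side.

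The all-single case with $mp^2\ge1$ has a similar quantitative gap. Lemma~\ref{Lem:InDecomposable2bis} only gives $\pi(H_{t,\II},\cover)\lesssim mp^3e^{-4mp^2}$ for indecomposable $\cover$. For $t=1,3$, where $|V(G_t)|=4r-3$, the resulting bound $n^{4r-3}mp^3e^{-4mp^2}$ exceeds the right-hand side by a factor $\asymp(mp^3)^{-1}$ whenever $nmp^3e^{-mp^2}\asymp1$. For example, take $m=n$ and $mp^2=\frac12\log n+\frac32\log\log n$. You need one extra observation. The four forbidden $2$-sets, one in each $\kli_{ab}$, must be covered either by two sets of size at least $3$ or by one set of size at least $5$. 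For the cycle $t=2$ the single set can have size $4$, but there $|V(G_2)|=4r-4$. With this, \eqref{eq:CC4} and $mp^4\le(mp^3)^2$ give
\[
n^{4r-3}\pi(H_{t,\II},\cover)\lesssim(n^{2r-1}mp^3e^{-2mp^2})^2/n\lesssim\Var[\X]^2/n.
\]

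For $G_{3,1,j}$, the graph $F$ of the partition $\{\kli_{11}\oplus_{j-1}\kli_{21},\kli_{12},\kli_{22}\}$ is a path, not a cycle; the only cycle occurs for $G_{2,1,2}$. The real difficulty there is that this partition exists only for $\II\supseteq\{11,21\}$. The other twelve terms must first be cancelled through exact isomorphisms of $H_{3,\II}$ with $\klik_r\oplus_0\klik_r\oplus_0\klik_r\oplus_0\klik_r$ or with $(\klik_r\oplus_1\klik_r)\oplus_0\klik_r\oplus_0\klik_r$. After that, the classes $\LL=\{0,1\}$ and $\LL=\{0,2\}$ cancel only in pairs.

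More generally, the cancellation bookkeeping you flag is not resolved by comparing $G_t$ with one separated graph. The paper works with the $4r$-vertex graphs $H_{t,\II}$, for which $H_{t,\II',\JJ}\cong H_{t,\II'',\JJ}$ whenever $\II'\cap\JJ=\II''\cap\JJ$. This makes the contribution of every class $\covers_\JJ$ with $\JJ\ne\indeksy$ vanish.
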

We remark that from the method of the proof of Lemma~\ref{Lemma:Main} (see \cite{Edges}) it follows that the  sum above is nonnegative.
We split the proof into several cases. Each case is considered  in a separate subsection. In some cases a very delicate estimation will be required. However,  it will be often sufficient to  use the following reasoning.
For and any $\II\subseteq \indeksy$,   Lemma~\ref{Lem:Division2} lets us bound
$$
\qq^{|\indeksy|-|\II|}\pi(G_{t,   \II})\le \pi(G_t), 
$$
which follows
\begin{align}\label{Eq:4th:ReductionToGt}
	\sum_{\II\subseteq\indeksy}(-1)^{|\II|}\qq^{|\indeksy|-|\II|}\pi(G_{t,   \II})
	\le
	\sum_{\II\subseteq\indeksy}\qq^{|\indeksy|-|\II|}\pi(G_{t})
	\le 
	2^{|\Gamma|}\pi(G_t) \lesssim \pi(G_t).
\end{align}
Let us also note that (see Figure~\ref{Fig:GraphRepresentations}))
\begin{align}\label{eq:|V(G)|}
	\begin{array}{rl}
		|V(G_1)|&=4r-3j+i, \\
		|V(G_2)|&=4r-2j, \\
		|V(G_3)|&=4r-j-i.
	\end{array}
\end{align}

The outline of the proof of Lemma \ref{Lem:IntegralUpperGraphs} is as follows. 
First,  in Section~\ref{Sec:Proof:SmallMp},  we fully address the case $mp \le 1$. 
When $mp \ge 1$,  we pay special attention to two specific cases that require a more sophisticated approach:
\begin{enumerate}
	\item any two graphs $\kli_{ab}$,  $ab \in \indeksy$,  intersect on at most one vertex (Section~\ref{Sec:Proof:IntOnOne}), 
	\item the graphs $G_{3, 1, j}$ for $3 \le j \le r$ (Section~\ref{Sec:Proof:G3}).
\end{enumerate}
The remaining cases are considered separately: 
for $mp^2 \le 1$ in Section~\ref{Sec:Proof:SmallMp2},  
and for $mp^2 \ge 1$ in Section~\ref{Sec:Proof:LargeMp2}.

\subsection{Case $mp\le 1$}\label{Sec:Proof:SmallMp}

For $t=1$,    $0\le i<j\le r-1$,    by Corollary~\ref{Cor:Glueing}  and Theorem~\ref{Tw:Wariancja} we have
\begin{align*}
	n^{4r-3j+i}\pi(G_{1})
	\asymp
	n^{4r-3j+i}mp^{4r-3j+i}
	\asymp 
	\frac{(n^rmp^r)^4}{(n^jmp^j)^2n^{j-i}mp^{j-i}}
	\asymp
	\frac{(n^r\qq)^4}{\ble_j^2\ble_{j-i}} \lesssim \frac{(\Var N)^2}{\ble}.
\end{align*} 
Similarly,    for $t=2,    3$ and $1\le i < j \le r$ we have 
\begin{align*}
	n^{4r-2j}\pi(G_{2})\asymp n^{4r-2j}mp^{4r-2j}
	\asymp
	\frac{(n^r\qq)^4}{\ble_j\ble_{j-i}\ble_i} \lesssim \frac{(\Var N)^2}{\ble}, 
\end{align*}
and
\begin{align*}
	n^{4r-j-i}\pi(G_{3})\asymp n^{4r-j-i}mp^{4r-j-i}
	\asymp
	\frac{(n^r\qq)^4}{\ble_{j-i}\ble_i^2} \lesssim \frac{(\Var N)^2}{\ble}.
\end{align*} 
In view of  \eqref{Eq:4th:ReductionToGt},  this proves Lemma \ref{Lem:IntegralUpperGraphs} for $mp\le 1$.

\subsection{No all-single  intersections and $mp\ge 1$,   $mp^2\le 1$}\label{Sec:Proof:SmallMp2}

\subsubsection{ No single intersections}
Here we assume that if two cliques $\kli_{ab}$,   $ab\in \indeksy$,   in $G_{t,   i,   j}$ intersect,  then they share  at least two vertices,    i.e.
\begin{align*}
	\begin{array}{l}t=1,  \\i\ge 0,  \\i+2\le j\le r, 
	\end{array}
	\quad\quad\text{or}
	\quad\quad
	\begin{array}{l}
		t=2,   3, \\ i\ge 2, \\ i+2\le j\le r.
	\end{array}
\end{align*}
We will show that 
\begin{align}\label{Eq:LargeInt:Main}
	n^{|V(G_{t})|}\pi(G_t, \cover) 
	\lesssim \frac{(n^r\qq)^4}{\ble^3}.
\end{align}
In view of \eqref{Eq:SubgraphProb}, \eqref{Eq:4th:ReductionToGt}, and Theorem~\ref{Tw:Wariancja},  this proves Lemma~\ref{Lem:IntegralUpperGraphs} in this case.

We will use Lemma~\ref{Lem:Odrywanie} three times.
First, by Lemma~\ref{Lem:Odrywanie}, there exists a connected $G_{t, \II}$, $|\II|=3$, such that  
\begin{align}
	n^{|V(G_t)|}\pi(G_t, \cover) = n^{|V(G_{t, \indeksy})|}\pi(G_{t, \indeksy}, \cover)
	\lesssim \frac{n^r\qq n^{|V(G_{t, \II})|}\pi(G_{t, \II})}{\ble}.
\end{align}
Given $t, i, j$, all connected  $G_{t, \II}$, $|\II|=3$, are isomorphic. Therefore, after summing over all $\cover \in \covers(G_t)$, by \eqref{Eq:SubgraphProb} we obtain
\begin{align}\label{Eq:LargeInt:4}
	n^{|V(G_t)|}\pi(G_t)
	\lesssim \frac{n^r\qq n^{|V(G_{t, \{11,12,21\}})|}\pi(G_{t, \{11,12,21\}})}{\ble}.
\end{align}
Similarly, by Lemma~\ref{Lem:Odrywanie}, for any $\cover\in \covers(G_{t, \{11,12,21\}})$ there exists $\II\subseteq \{11,12,21\}$, $|\II|=2$, such that $G_{t,\II}$ is connected and 
\begin{align}\label{Eq:LargeInt:3}
	n^{|V(G_{t, \{11,12,21\}})|}\pi(G_{t, \{11,12,21\}}, \cover) 
	\lesssim \frac{n^r\qq n^{|V(G_{t, \II})|}\pi(G_{t, \II})}{\ble}.
\end{align}
Moreover, for any $\II\subseteq \indeksy$ such that $|\II|=2$ and $G_{t, \II}$ is connected,  by Lemma~\ref{Lem:Odrywanie} and \eqref{Eq:SubgraphProb}, we have
\begin{equation}
	n^{|V(G_{t, \II})|}\pi(G_{t, \II})=\sum_{\cover\in\covers(G_{t, \II})}n^{|V(G_{t, \II})|}\pi(G_{t, \II}, \cover)
	\lesssim \frac{(n^r\qq)^2}{\ble}.
\end{equation}
Combining this with  \eqref{Eq:LargeInt:3},  we obtain for all $\cover\in \covers(G_{t, \{11,12,21\}})$
\begin{align}
	n^{|V(G_{t, \{11,12,21\}})|}\pi(G_{t, \{11,12,21\}}, \cover) 
	\lesssim \frac{(n^r\qq)^3}{\ble^2},
\end{align}
and consequently, by \eqref{Eq:SubgraphProb}, we have
\begin{align}\label{Eq:LargeInt:3bis}
	n^{|V(G_{t, \{11,12,21\}})|}\pi(G_{t, \{11,12,21\}}) 
	\lesssim \frac{(n^r\qq)^3}{\ble^2}.
\end{align} 
Finally, \eqref{Eq:LargeInt:Main} follows by  \eqref{Eq:LargeInt:4} and \eqref{Eq:LargeInt:3bis}.

\subsubsection{{Both: single and non single intersections}}
\label{sec:mixed}
Here we consider the cases when

\begin{align*}
	\begin{array}{l}t=1,  \\i\ge 1,  \\ j=i+1\le r-1, 
	\end{array}
	\quad\quad\text{or}
	\quad\quad
	\begin{array}{l}
		t=2,   3, \\ i\ge 2, \\ j=i+1\le r.
	\end{array}
	\quad\quad\text{or}
	\quad\quad
	\begin{array}{l}
		t=2, \\ i=1, \\ i+2\le j\le r.
	\end{array}
\end{align*}
Since $G_{2,   j-1,   j}\cong G_{2,   1,   j}$, we may assume that $j=i+1$ in all considered cases. 

As previously,    we use \eqref{Eq:4th:ReductionToGt},  so it suffices to estimate the probabilities $\pi(G_t)$. For $mp^2\le 1$ and $mp\ge 1$,    by the identity \eqref{Eq:SubgraphProb},  first two inequalities of Lemma~\ref{Lem:Division},       the variance representation \eqref{Eq:Var:FormulaGeneral2} and Lemma \ref{lem:Var2--r},  we get
\begin{align*}
	n^{4r-2(i+1)-1}\pi(G_{1,   i,   i+1})
	&\lesssim 
	\frac{
		n^{2r-(i+1)}\pi(\kli_{11}\oplus_{i+1}\kli_{12})
		n^{2r-(i+1)}\pi(\kli_{21}\oplus_{i+1}\kli_{22})}
	{n}&&\\ 	
	&\lesssim
	\frac{(\Var N)^2}{n},    &i&\ge 1, \\
	n^{4r-2-2i}\pi(G_{2,   i,   i+1})
	&\lesssim 
	\frac{
		n^{2r-i}\pi(\kli_{11}\oplus_{i}\kli_{12})
		n^{2r-i}\pi(\kli_{21}\oplus_{i}\kli_{22})}
	{n^2mp^2}\lesssim
	\frac{(\Var N)^2}{\ble},    &i&\ge 2, \\ 	
	n^{4r-1-2i}\pi(G_{3,   i,   i+1})
	&\lesssim 
	\frac{
		n^{2r-i}\pi(\kli_{11}\oplus_{i}\kli_{12})
		n^{2r-i}\pi(\kli_{21}\oplus_{i}\kli_{22})}
	{n}\lesssim
	\frac{(\Var N)^2}{n},    &i&\ge 2, 
\end{align*}
which gives us the required bound in this case.

\subsection{No all-single intersections and $mp^2\ge1$}\label{Sec:Proof:LargeMp2}
In this subsection we deal with $mp^2\ge 1$,   $mp^3\le c_r$ and all the parameters $t,  i,  j $ excluding those analysed in Sections~\ref{Sec:Proof:IntOnOne} and \ref{Sec:Proof:G3}.
We start with a general result.
\begin{lemma}\label{lem:(1-p)^2}
	For $t=1,  j= i+1$ or $t=2,  3$ we have
	$$\sum_{\II\subseteq\indeksy}(-1)^{|\II|}\qq^{|\indeksy|-|\II|}\pi(G_{t,   \II})\lesssim (1-\pp)^2, $$
	while if $t=1$ and $j\ge i+2$ it holds that
	$$\sum_{\II\subseteq\indeksy}(-1)^{|\II|}\qq^{|\indeksy|-|\II|}\pi(G_{1,   \II})\lesssim 1-\pp.$$
\end{lemma}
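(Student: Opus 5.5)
We work in the regime $mp^2\ge 1$, $mp^3\le c_r$ of this subsection. Here every subgraph probability is $\asymp 1$ (Fact~\ref{Lem:DuzeMp2dwa}), and $1-\pp=(1-p^2)^m\asymp e^{-mp^2}$. The plan is to write each $\pi(G_{t,\II})$ as a product of probabilities of its pieces, up to errors $O(mp^3e^{-2mp^2})\lesssim(1-\pp)^2$. The alternating sum will then collapse to a product of factors of the form $(1-\pi(\cdot))$, each of order $1-\pp$.

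The tool is Lemma~\ref{Lem:DuzeMp2}: $\pi(H_1\cup H_2)=\pi(H_1)\pi(H_2)/\pi(H_1\cap H_2)+O(mp^3e^{-2mp^2})$. Since all probabilities are bounded below, the error survives finitely many iterations and multiplications unchanged in order. For $t=2,3$ we split $G_{t,\II}$ into the two "rows". For $ab\in\II$ we peel off $\kli_{a1}\cup\kli_{a2}$ for $a=1$ and then for $a=2$. The intersection of the two rows is a clique (on $j-i$ vertices for $t=2$, on $j-i$ or fewer for $t=3$), or it is empty when a row is missing. We write $Q_\II$ for the main term. Let $\II_a=\II\cap\{a1,a2\}$. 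Then
\[
\pi(G_{t,\II})=\frac{\pi(G_{t,\II_1})\,\pi(G_{t,\II_2})}{\pi(\text{intersection})}+O\big((1-\pp)^2\big).
\]

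The key identity is the following. Each $\pi(G_{t,\II_a})$ is either $1$, $\qq$, or $\pi(\kli\oplus_i\kli)$. Again by Lemma~\ref{Lem:DuzeMp2}, $\pi(\kli\oplus_i\kli)=\qq^2/\pi(\klik_i)+O((1-\pp)^2)$. We therefore set $\rho_x:=1/\pi(\klik_x)-1\asymp 1-\pp$ for $x\ge2$, and $\rho_1=\rho_0=0$.

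Substituting, the alternating sum becomes $\qq^4$ times a polynomial expression in the quantities $\rho$. Since $\qq^{4-|\II|}\pi(G_{t,\II})$ is, up to the errors, $\qq^4$ times a product of factors $(1+\rho)$ indexed by the pairs of cliques in $\II$ that intersect in a clique, the inclusion–exclusion over $\II\subseteq\indeksy$ has a standard structure. It leaves only those terms in which every index $ab$ lies in some intersecting pair that is actually used; that is, the selected pairs must cover all four cliques. Covering four cliques requires at least two pairs, so each surviving term carries at least two factors $\rho$. This gives the bound $\lesssim(1-\pp)^2$.

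To make this rigorous without Möbius bookkeeping, I would do the following:
\begin{itemize}
\item[(a)] Write $\pi(G_{t,\II})=\qq^{|\II|}\prod_{\{ab,cd\}\subseteq\II}(1+\rho_{ab,cd})+O((1-\pp)^2)$, where $\rho_{ab,cd}$ is $\rho$ of the size of $\kli_{ab}\cap\kli_{cd}$. This requires the iterated decomposition to be exactly multiplicative. That holds for $t=2,3$ because the pairwise intersections are cliques and the quotient structure telescopes.
\item[(b)] Expand the product and apply $\sum_{\II\supseteq S}(-1)^{|\II|}=0$ unless $S=\indeksy$.
\end{itemize}

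For $t=1$, $j=i+1$, the diagonal intersections are single vertices, so $\rho_{11,21}=\rho_{12,22}=0$. The only coverings of the four cliques therefore use the two pairs $\{11,12\}$ and $\{21,22\}$, which again gives two factors. For $t=1$, $j\ge i+2$, all six pairs intersect in at least two vertices. Here, however, $G_{1,\II}$ with three cliques is not a product over pairs: the triple intersection is nontrivial, and a three-clique term can cover three indices with a single factor. We therefore only claim the weaker bound. By Fact~\ref{Lem:DuzeMp2dwa}, every term of the sum is within $O(1-\pp)$ of the corresponding term with $\pi(\cdot)$ replaced by $1$ and $\qq$ replaced by $1$. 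The latter sum is $\sum_\II(-1)^{|\II|}=0$, so the whole sum is $O(1-\pp)$.

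The main obstacle is step (a): checking that the repeated use of Lemma~\ref{Lem:DuzeMp2} yields exactly pairwise factors for $t=3$. There, $\kli_{11}$ and $\kli_{21}$ share the $w_l$-vertices together with the $v$-structure, and the intersection of a three-clique union with the fourth clique must be computed as a clique. If it is not a clique, I would instead apply Lemma~\ref{Lem:DuzeMp2} along an ordering in which each added clique meets the previous union in a clique, and verify the resulting factors case by case.
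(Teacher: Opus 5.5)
Your proposal is correct. Its analytic core is the same as the paper's: both iterate Lemma~\ref{Lem:DuzeMp2} (using that all probabilities are $\asymp 1$) to express each $\pi(G_{t,\II})$ through clique probabilities, up to $O(mp^3e^{-2mp^2})\lesssim(1-\pp)^2$.

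The difference is in the bookkeeping. The paper substitutes these expressions and rearranges the sixteen terms by hand for each $t$. This yields closed forms such as $\qq^4\pi(\klik_{j-i})^{-1}\bigl(\pi(\klik_i)^{-1}-1\bigr)^2$ for $t=3$. You instead observe that, for $t=2,3$ and for $t=1$, $j=i+1$, the main term is $\qq^4\prod_{P\subseteq\II}(1+\rho_P)$, and then apply M\"obius inversion. The alternating sum becomes $\qq^4\sum_S\prod_{P\in S}\rho_P$ over families $S$ of pairs covering $\indeksy$. This explains structurally why at least two factors of order $1-\pp$ survive, and it reproduces the paper's formulas exactly. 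For the $4$-cycle $t=2$ it gives $\rho_i^2+\rho_{j-i}^2+2\rho_i^2\rho_{j-i}+2\rho_i\rho_{j-i}^2+\rho_i^2\rho_{j-i}^2$, which is the paper's bracket.

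For $t=1$, $j\ge i+2$, your first-order argument via Fact~\ref{Lem:DuzeMp2dwa} is more elementary and does not need Lemma~\ref{Lem:DuzeMp2} at all. The paper's full expansion instead exhibits the surviving first-order term $\qq^4\bigl(\pi(\klik_{j-i})^{-1}-1\bigr)\bigl(3-2\pi(\klik_j)^{-1}\bigr)$. That shows the weaker bound is sharp there, consistent with your remark about the nontrivial triple intersection.

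One inaccuracy to fix concerns $t=2$. The intersection of the rows $\kli_{11}\cup\kli_{12}$ and $\kli_{21}\cup\kli_{22}$ is $\klik_{j-i}\oplus_0\klik_{j-i}$: the vertices $w_{l,1}$ and $w_{l,2}$, with no edges between the two groups. It is not a clique. Since $G_{2,\indeksy}$ is a cycle of cliques, no ordering makes every intersection a clique, so your proposed fallback would not help. It is also not needed: the probability of a vertex-disjoint union factorizes, giving $\pi(\klik_{j-i})^{-2}=(1+\rho_{j-i})^2$, exactly as your step (a) requires.

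Your worry about $t=3$ is unfounded. The intersection $\kli_{11}\cap\kli_{21}$ consists only of the $w_l$, and $G_{3,\indeksy}$ is a path of cliques $\kli_{12},\kli_{11},\kli_{21},\kli_{22}$. Every intersection arising in the row decomposition is therefore a clique or empty.
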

\begin{proof}
	We will determine the values of $\pi(G_{t, \II})$ for all $t$ and $\II$, up to a small error, by means of clique probabilities. For $|\II|\le 1$ it is straight forward. 
	For $|\II|\ge 2$ we will use Lemma~\ref{Lem:DuzeMp2}. For $\II=\indeksy$ we obtain (cf. Figure \ref{Fig:GraphRepresentations})
	\begin{align*}
		\pi(G_{t, \indeksy})
		&=\frac{\pi(G_{t, \{11, 12\}})\cup \pi(G_{t, \{21, 22\}})}{\pi(G_{t, \{11, 12\}})\cap \pi(G_{t, \{21, 22\}})}+O(mp^3e^{-2mp^2})\\
		&=\frac{\qq^4}{\pi(G_{t, \{11, 12\}})\cap \pi(G_{t, \{21, 22\}})\pi(\kli_{11}\cap\kli_{12})\pi(\kli_{21}\cap\kli_{22})}+O(mp^3e^{-2mp^2})\\
		&=O(mp^3e^{-2mp^2})+
		\begin{cases}
			\frac{\qq^4}{\pi^2(K_j)\pi(K_{j-i})}&\text{ for }t=1;\\
			\frac{\qq^4}{\pi^2(K_i)\pi^2(K_{j-i})}&\text{ for }t=2;\\
			\frac{\qq^4}{\pi^2(K_i)\pi(K_{j-i})}&\text{ for }t=3.\\
		\end{cases}
	\end{align*}
	We remark that in the case $t=1$,  $j=r$,  since by Fact~\ref{Fact:G1r} we have $\kli_{a1}\cap\kli_{a2}\cong\klik_r$,   $a=1, 2$,  some of the equalities are trivially true. Moreover,  if in addition $i=0$,  the term  $O(mp^3e^{-2mp^2})$ may be omitted. Analogous  observations apply to the case $t=1$ and $j=r$ in the following estimates.

	Similarly,  we obtain for $|\II|=3$
	\begin{align*}
		\pi(G_{t, \II})
		&=O(mp^3e^{-2mp^2})+
		\begin{cases}
			\frac{\qq^3}{\pi(K_j)\pi(K_{j-i})}&\text{ for }t=1;\\
			\frac{\qq^3}{\pi(K_i)\pi(K_{j-i})}&\text{ for }t=2;\\
			\frac{\qq^3}{\pi(K_i)\pi(K_{j-i})}&\text{ for }t=3\text{ and }\{11, 21\}\subseteq \II;\\
			\frac{\qq^3}{\pi(K_i)}&\text{ for }t=3\text{ and }\{12, 22\}\subseteq \II,
		\end{cases}
	\end{align*}
	and for $|\II|=2$ and $x,  y \in\indeksy$
	\begin{align*}
		\pi(G_{t, \{x, y\}})
		&=\frac{\qq^2}{\pi(\kli_x\cap\kli_y)}+O(mp^3e^{-2mp^2}).
	\end{align*}
	In the sequel, we will consistently use the following relation,  which follows from Fact \ref{Lem:DuzeMp2dwa}. For any non-empty $H$ and   $mp^2\ge 1$ we have
	$${\frac{1}{\pi(H)}-1}\asymp 1-\pp.$$

	Using all the above estimates, together with $1-\pp\asymp e^{-mp^2}$,  we obtain for $t=1$,  $mp^2\ge1$,  and $mp^3\le1$
	\begin{align*}
		&\sum_{\II\subseteq\indeksy}(-1)^{|\II|}\qq^{|\indeksy|-|\II|}\pi(G_{1,   \II})=\\
		&=\qq^4\left(\frac{1}{\pi^2(\klik_j)\pi(\klik_{j-i})}- \frac{4}{\pi(\klik_j)\pi(\klik_{j-i})}+\left(\frac{2}{\pi(\klik_j)}+\frac{4}{\pi(\klik_{j-i})}\right)-4+1\right)
		+ O(mp^3e^{-2mp^2})\\
		&=\qq^4\left[\frac{1}{\pi(\klik_{j-i})}\left(\frac{1}{\pi(\klik_j)}-1\right)^2
		+\left(\frac{1}{\pi(\klik_{j-i})}-1\right)\left(3-\frac{2}{\pi(\klik_{j})}\right)
		\right]+ O(mp^3e^{-2mp^2})\\
		&\lesssim (1-\pp)^2+(1-\pp)+(1-\pp)^2\lesssim 1-\pp.
	\end{align*}
	In particular,  this implies the latter bound from the assertion. Furthermore,  for $j=i+1$ we have $\pi(K_{j-i})=\pi(N_1)=1$,  and then
	\begin{align*}
		\sum_{\II\subseteq\indeksy}(-1)^{|\II|}\qq^{|\indeksy|-|\II|}\pi(G_{1,   \II})=\frac{\qq^4}{\pi(\klik_{j-i})}\left(\frac{1}{\pi(\klik_j)}-1\right)^2
		+ O(mp^3e^{-2mp^2})\lesssim (1-\pp)^2, 
	\end{align*}
	which gives us the first bound from the assertion in the case $t=1$. For $t=2$ we proceed similarly, which leads to
	\begin{align*}
		&\sum_{\II\subseteq\indeksy}(-1)^{|\II|}\qq^{|\indeksy|-|\II|}\pi(G_{2,   \II})
		\\
		&=\qq^4\left[\frac{1}{\pi^2(\klik_i)\pi^2(\klik_{j-i})}-\frac{4}{\pi(\klik_i)\pi(\klik_{j-i})}+\left(2+\frac{2}{\pi(\klik_{i})}+\frac{2}{\pi(\klik_{j-i})}\right)-4+1\right]\\
		&\quad+ O(mp^3e^{-2mp^2})\\
		&=\qq^4\left[\left(\frac{1}{\pi(\klik_i)\pi(\klik_{j-i})}-1\right)^2
		-2\left(\frac{1}{\pi(\klik_{j-i})}-1\right)\left(\frac{1}{\pi(\klik_{i})}-1\right)
		\right]
		+ O(mp^3e^{-2mp^2})\\
		&\lesssim (1-\pp)^2.
	\end{align*}
	Eventually,  for $t=3$ we get
	\begin{align*}
		&\sum_{\II\subseteq\indeksy}(-1)^{|\II|}\qq^{|\indeksy|-|\II|}\pi(G_{3,   \II})\\
		&=\qq^4
		\left[
		\frac{1}{\pi^2(\klik_i)\pi(\klik_{j-i})}
		-
		\left(\frac{2}{\pi(\klik_i)\pi(\klik_{j-i})}+\frac{2}{\pi(\klik_i)}\right)+\left(\frac{2}{\pi(\klik_i)}+3+\frac{1}{\pi(\klik_{j-i})}\right)-4+1\right]\\&\quad+ O(mp^3e^{-2mp^2})\\
		&=\qq^4\frac{1}{\pi(\klik_{j-i})}\left(\frac{1}{\pi(\klik_i)}-1\right)^2 + O(mp^3e^{-2mp^2})\lesssim (1-\pp)^2, 
	\end{align*}
	as required.
\end{proof}
The parameters considered in this section may be summarised in the following manner:
\begin{align*}&(\alpha)\quad
	\begin{array}{l}t=1,  \\i\ge 1,  \\ j=i+1\le r-1, 
	\end{array}
	\quad\quad\text{or}
	\quad\quad
	\begin{array}{l}
		t=2,   \\ i\ge 1, \\ 3\vee(i+1)\le j\le r, 
	\end{array}
	\quad\quad\text{or}
	\quad\quad
	\begin{array}{l}
		t=3, \\ i\ge2, \\ i+1\le j\le r, 
	\end{array}\\
	&(\beta)\quad
	\begin{array}{l}t=1,  \\i\ge 0,  \\ i+2\le j\le r-1.
	\end{array}
\end{align*}

Consider the case $(\alpha)$. By \eqref{eq:|V(G)|} we obtain
\begin{align*}
	|V(G_1)|&=4r-2j-(j-i)\le 4r-2\cdot2-1\le 4r-5, \\
	|V(G_2)|&=4r-2j\le 4r-2\cdot3\le 4r-6, \\
	|V(G_3)|&=4r-j-i\le 4r-3-2\le 4r-5.
\end{align*}
Thus,  Lemma \ref{lem:(1-p)^2} and \eqref{eq:Var2} give us
\begin{align*}
	n^{|V(G_{t})|}\sum_{\II\subseteq\indeksy}(-1)^{|\II|}\qq^{|\indeksy|-|\II|}\pi(G_{t,   \II})
	\lesssim n^{4r-5}(1-\pp)^2=\frac{(n^{2r-2}(1-\pp))^2}{n}\lesssim \frac{(\Var [\X])^2}{n}.
\end{align*}
In the case $(\beta)$,  we have
$$	|V(G_1)|=4r-3j+i\leq 4r-6, $$
and hence,  by the latter part of Lemma \ref{lem:(1-p)^2},  
\begin{align*}
	n^{|V(G_1)|}\sum_{\II\subseteq\indeksy}(-1)^{|\II|}\qq^{|\indeksy|-|\II|}\pi(G_{1,   \II})
	\lesssim n^{4r-6}(1-\pp)=\frac{(n^{2r-2}(1-\pp))^2}{n^2(1-\pp)}\lesssim \frac{(\Var [\X])^2}{(1-\pp)\ble}.
\end{align*}

\subsection{All-single intersections}\label{Sec:Proof:IntOnOne}

Here we consider the cases $t=1$,    $i=0$,    $j=1$ and $t=2,    3$,    $i=1$,    $j=2$,  i.e., we assume that any two intersecting cliques $\kli_{ab}$,  $ab\in\indeksy$,  share exactly one vertex.  We start with a general discussion,    applicable to all ranges of $p$.

For each $\II\subseteq \indeksy$ and $t=1,    2,    3$,    let $H_{t,   \II}$ be a graph on $4r$ vertices consisting of 
$G_{t,   \II}$,    $|\indeksy|-|\II|$ vertex disjoint cliques of size $r$,    and possibly some isolated vertices. One can think of $G_{t,   \II}$ as of $G_t$ whose cliques $\kli_{ab}$,    $ab\in \indeksy\setminus\II$ have been separated. For convenience,  we will denote the additional cliques by $\kli_{ab}$,    $ab\in \indeksy\setminus\II$,  i.e., in the same manner as before separating them from $G_{t,   \II}$. For example  $H_{1,   \emptyset}=\kli_{11}\oplus_0\kli_{12}\oplus_0\kli_{21}\oplus_0\kli_{22}$ and $H_{1,   \{11,   12\}}=G_{1,   \{11,   12\}}\oplus_0\kli_{21}\oplus_0\kli_{22}\oplus_0\emp_j$.  Since vertex disjoint subgraphs appear independently in $\Gnmp$ and isolated vertices do not affect the subgraph probability,  we have
\begin{equation*}
	\pi(H_{t,   \II})=\qq^{|\indeksy|-|\II|}\pi(G_{t,   \II}). 
\end{equation*}
Thus,    the left-hand side of \eqref{eq:Lem:IntegralUpperGraphs} may be rewritten as  follows
\begin{equation}\label{Eq:SumHtGt}
	\sum_{\II\subseteq\indeksy}(-1)^{|\II|}\qq^{|\indeksy|-|\II|}\pi(G_{t,   \II})
	=
	\sum_{\II\subseteq\indeksy}(-1)^{|\II|}\pi(H_{t,   \II}).
\end{equation}

For each  $t=1,    2,    3$,  and $\II\subseteq \indeksy$,    we consider the natural partition $\{\kli_{ab}\}_{ab\in\Gamma}$ of $H_{t,   \II}$ (see Definition~\ref{Def:Partition}).  Let us recall (see Definition \ref{def:decomp}) that   $\covers_{\JJ}(H_{t,   \II})$,  $\JJ\subseteq\indeksy$,   is the family of clique covers of $H_{t,   \II}$  that are decomposable (see Definition \ref{Def:Partition}) to 
$$H_{t,   \II,   \JJ}{:=\Bigg(V(H_{t,   \II}),    \bigcup_{ab\in \JJ}E(\kli_{ab})\Bigg)}, $$
see \eqref{Eq:DecompDefHJ}. 
Namely,  $\cover\in \covers_{\JJ}(H_{t,   \II})$ if $\cover =\cover_{\JJ}\dot\cup \dot\bigcup_{ab\in \indeksy\setminus \JJ}\cover_{ab}$,  where $\cover_{ab}$ is a clique cover of $\kli_{ab}$ for any $ab\in\indeksy\setminus\JJ$, and $\cover_J$  is indecomposable with relation to the partition $\{\kli_{ab}\}_{ab\in J}$ of $H_{i,  \II,  \JJ}$. In particular,  $\covers_{\indeksy}(H_{t,   \II})$ is the family of all indecomposable clique covers of $H_{t,   \II}$ in relation to the partition $\{\kli_{ab}\}_{ab\in\indeksy}$. 

According to the definition,  the families $\covers_\JJ(H_{t,   \II})$,    $\JJ\subseteq \indeksy$,    are pairwise disjoint and $\covers(H_{t,   \II})=\dot\bigcup_{\JJ\subseteq \indeksy} \covers_\JJ(H_{t,   \II})$. 
Hence,  by \eqref{Eq:SumHtGt},  we obtain
\begin{equation}\label{Eq42:CalkaDoSumaHIJ}
	\begin{split}
		\sum_{\II\subseteq\indeksy}(-1)^{|\II|}\qq^{|\indeksy|-|\II|}\pi(G_{t,   \II})
		&=\sum_{\II\subseteq \indeksy}(-1)^{|\II|}\sum_{\JJ\subseteq\indeksy}\pi(H_{i,   \II},     \covers_{\JJ}(H_{i,   \II}))\\
		&=\sum_{\emptyset\subseteq\JJ\subseteq\indeksy}\sum_{\II\subseteq \indeksy}(-1)^{|\II|}\pi(H_{i,   \II},     \covers_{\JJ}(H_{i,   \II})).
	\end{split}
\end{equation}
The assertion of Lemma \ref{Lem:IntegralUpperGraphs} under current assumption is now a consequence of the following lemma. Namely,  we show that all the inner sums above vanish if $\JJ\neq\indeksy$,  and provide a bound for $\JJ=\indeksy$.

\begin{lemma}\label{Lem:HIJprawdopodobienstwa}For $ \emptyset\subseteq\JJ\subsetneq\indeksy$  and $t=1,    2,    3$ we have
	$$\sum_{\II\subseteq \indeksy}(-1)^{|\II|}\pi(H_{t,   \II},     \covers_{\JJ}(H_{t,   \II}))=0.$$ 
	Moreover,    for $mp\ge1$ and $mp^3 \le c_r$  it holds that 
	\begin{align*}
		n^{|V(G_t)|}\left|\sum_{\II\subseteq \indeksy}(-1)^{|\II|}\pi(H_{t,   \II},     \covers_{\indeksy}(H_{t,   \II}))\right|
		&\lesssim
		\frac{(\Var N)^2}{n}. 	
	\end{align*}	
\end{lemma}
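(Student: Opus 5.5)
For the first claim, fix $\JJ\subsetneq\indeksy$ and pick $ab\in\indeksy\setminus\JJ$. We pair each $\II$ with $ab\notin\II$ with $\II\cup\{ab\}$; the terms of each pair have opposite signs. It therefore suffices to show
\[
\pi(H_{t,\II},\covers_\JJ(H_{t,\II}))=\pi(H_{t,\II\cup\{ab\}},\covers_\JJ(H_{t,\II\cup\{ab\}})),
\]
and for this we apply Lemma~\ref{Lem:DecompositionComparing} to the two graphs with their natural partitions $\{\kli_{cd}\}_{cd\in\indeksy}$.

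The hypothesis \eqref{Eq:IsoDecompComp} asks for two things. First, each $\kli_{cd}$ with $cd\in\indeksy\setminus\JJ$ must be a copy of $\klik_r$ in both graphs, which is obvious. Second, we need $H_{t,\II,\JJ}\cong H_{t,\II\cup\{ab\},\JJ}$. In the all-single case any two cliques of $G_t$ share at most one vertex. Hence the graph $H_{t,\II,\JJ}$, which keeps only the edges of the $\kli_{cd}$ with $cd\in\JJ$ on the common vertex set of $4r$ vertices, depends only on how the cliques indexed by $\JJ$ are glued together. Adding or removing $ab\notin\JJ$ from $\II$ only reattaches or detaches $\kli_{ab}$, whose edges are deleted anyway, while the total number of vertices stays $4r$ (isolated vertices are padded). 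One has to check that attaching $\kli_{ab}$ does not identify two vertices of distinct $\kli_{cd},\kli_{c'd'}$ with $cd,c'd'\in\JJ$. This holds because in $G_t$ the intersections are determined pairwise, and the $\JJ$-cliques meet each other in the same way whether or not $ab$ is present. Note that $|V(H_{t,\II})|=4r$ for all $\II$ by construction, so the vertex-count requirement of Lemma~\ref{Lem:CCequality} used inside Lemma~\ref{Lem:DecompositionComparing} is met. This proves the vanishing.

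For the second claim we bound each term $\pi(H_{t,\II},\covers_\indeksy(H_{t,\II}))$ separately; the alternating signs will not be used. Since $|V(G_t)|=4r-3$ for $t=1,2,3$ in the all-single cases, and by \eqref{eq:Var2} $(\Var)^2\gtrsim n^{4r-2}\cdot(\text{appropriate factor})$, the target reduces to the following bound for every $\II$:
\[
\pi(H_{t,\II},\covers_\indeksy)\lesssim \frac{\qq^4}{(mp)^2}\quad\text{for } mp\ge1,\ mp^2\le1,
\qquad
\pi(H_{t,\II},\covers_\indeksy)\lesssim \bigl(mp^3e^{-2mp^2}\bigr)^2\quad\text{for } mp^2\ge1,\ mp^3\le c_r .
\]
Indeed, in the first regime \eqref{eq:Var2} gives $(\Var)^2\gtrsim n^{4r-2}\qq^4/(mp)^2$ through the $nmp$ term. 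In the second regime $(\Var)^2\gtrsim n^{4r-2}(mp^3)^2e^{-4mp^2}$. Multiplying by $n^{4r-3}$ then yields $(\Var)^2/n$ in both cases.

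For $mp\ge1$, $mp^2\le1$, we apply Lemma~\ref{Lem:InDecomposable} with $|\Lambda|=4$. The bipartite graph $F(H_{t,\II},\{\kli_{cd}\})$ has at most one cycle: for $t=2$ with $\II=\indeksy$ it is an 8-cycle, and otherwise it is a forest. This gives $\qq^4/((mp)^2p^\delta)$. The cycle case loses a factor $p^{-1}$, and this is the main obstacle. Where $\delta=1$ I would refine the argument by reusing the count \eqref{Eq:Indecomp3}. In that case the single $C_j$ containing the cycle has $|\Lambda_j|=4$, so it contributes $(mp)^{-3}p^{-1}=m^{-1}(mp)^{-2}\cdot m/(mp\cdot p)$, i.e.\ $(mp)^{-3}p^{-1}\le (mp)^{-2}$ since $mp^2\le1$. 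So the bound $\qq^4/(mp)^2$ should survive.

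For $mp^2\ge1$ we use Lemma~\ref{Lem:InDecomposable2bis}, which gives $mp^3e^{-4mp^2}$. This is stronger than needed, because $e^{-4mp^2}\le (mp^3)^2e^{-4mp^2}\cdot(mp^3)^{-1}$. If that comparison fails for small $mp^3$, I would instead count forbidden 2-sets more carefully, showing there are at least four covered by sets of size at least $3$, to obtain $(mp^3)^2e^{-4mp^2}$.
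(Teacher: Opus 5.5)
Your proof that the sums with $\JJ\subsetneq\indeksy$ vanish is correct and is the paper's argument with different bookkeeping. You pair $\II$ with $\II\cup\{ab\}$ for a fixed $ab\notin\JJ$, while the paper groups the sets $\II$ by $\II\cap\JJ$; both rest on Fact~\ref{Fact:Isomorphism} and Lemma~\ref{Lem:DecompositionComparing}. The quantitative part, however, contains two genuine errors.

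\textbf{The case $mp\ge1$, $mp^2\le1$, $t=2$.} You have $|V(G_{2,1,2})|=4r-4$, not $4r-3$: the four cliques form a cycle, each sharing one vertex with each of its two neighbours. Because of this miscount you try to prove $\pi(H_{2,\indeksy},\covers_{\indeksy}(H_{2,\indeksy}))\lesssim \qq^4/(mp)^2$. Your refinement of the cycle case rests on a reversed inequality: $(mp)^{-3}p^{-1}=(mp)^{-2}(mp^2)^{-1}\ge (mp)^{-2}$ when $mp^2\le 1$. The intermediate bound is in fact false. The one-set cover $\{V(G_2)\}$ is indecomposable and, by \eqref{eq:CC2}, has probability $\asymp mp^{4r-4}$. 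For $p\le m^{-\frac{r+1}{2r}}$ this equals $\frac{\qq^4}{(mp)^2}\cdot\frac{1}{mp^2}$ up to constants, which is much larger than $\qq^4/(mp)^2$ when $mp^2\to0$. The paper accepts the loss $p^{-1}$ from Lemma~\ref{Lem:InDecomposable} and pays for it with the missing vertex. Since $n^4(mp)^2p=n\cdot(n^2mp^2)\cdot(nmp)$ and $n^2mp^2\asymp\ble_2\gtrsim\ble$, \eqref{eq:Var2} gives $\frac{(n^r\qq)^4}{n^4(mp)^2p}\lesssim\frac1n\cdot\frac{(n^r\qq)^2}{\ble}\cdot\frac{(n^r\qq)^2}{nmp}\lesssim\frac{(\Var N)^2}{n}$.

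\textbf{The case $mp^2\ge1$.} Your comparison is backwards: $mp^3e^{-4mp^2}\lesssim (mp^3)^2e^{-4mp^2}$ would need $mp^3\gtrsim 1$, whereas $mp^3\le c_r$ and may tend to $0$. The bound of Lemma~\ref{Lem:InDecomposable2bis} is genuinely too weak for $t=1,3$. For $mp^2=1$ and $m=n$ one has $(\Var N)^2/n\asymp n^{4r-4}$, while $n^{4r-3}mp^3e^{-4mp^2}\asymp n^{4r-7/2}$. Your fallback goes in the right direction but, as stated, misses the decisive case. Knowing that each of the four forbidden edges (one per clique) lies in a set of size at least $3$ does not give two factors $mp^3$, because a single set may cover all four. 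The paper handles exactly this split: either at least two sets of size at least $3$ are involved, or one set meets every clique in an edge. Such a set has at least $4$ vertices for $t=2$ (the four shared vertices) and at least $5$ for $t=1,3$. By \eqref{eq:CC4} this gives $\pi(H_{t,\II},\cover)\lesssim (mp^4+(mp^3)^2)e^{-4mp^2}$, and $mp^4\le (mp^3)^2$ holds precisely because $mp^2\ge1$. Multiplying by $n^{4r-3}$ and using $\Var N\gtrsim n^{2r-1}mp^3e^{-2mp^2}$ then finishes the argument.
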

In the proof of the lemma,  we will take advantage of Lemma~\ref{Lem:DecompositionComparing}. For that,  we first derive an auxiliary result that explains when the graphs under consideration are isomorphic.
\begin{fact}\label{Fact:Isomorphism}
	Let $\II',   \II'',   \JJ\subseteq \indeksy$ be such that $\II'\cap\JJ=\II''\cap\JJ$.    Then 
	$$
	H_{t,   \II',   \JJ}\cong H_{t,   \II'',   \JJ}.
	$$	
\end{fact}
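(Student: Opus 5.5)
The plan is to construct the isomorphism explicitly, using the fact that in the all-single-intersection cases each clique $\kli_{ab}$ of $H_{t,\II}$ is glued to the others at single vertices only. We must show that the graph $H_{t,\II,\JJ}$ depends on $\II$ only through $\II\cap\JJ$, up to isomorphism. Recall that $H_{t,\II,\JJ}$ keeps all $4r$ vertices of $H_{t,\II}$ and only the edges of the cliques $\kli_{ab}$ with $ab\in\JJ$. So it suffices to describe this graph as a disjoint union of simpler pieces whose isomorphism types are determined by $\II\cap\JJ$.

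First, we record the structure of $H_{t,\II}$. The cliques $\kli_{ab}$ with $ab\in\II$ are glued as in $G_{t,\II}$, where each intersecting pair shares exactly one vertex. The cliques $\kli_{ab}$ with $ab\notin\II$ are vertex disjoint from everything else. The remaining vertices, added to reach $4r$ in total, are isolated. Next, we delete the edges of the cliques $\kli_{ab}$ with $ab\notin\JJ$. The edge set that remains is $\bigcup_{ab\in\JJ}E(\kli_{ab})$.

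For $ab\in\JJ\setminus\II$, the clique $\kli_{ab}$ is a separate component $\klik_r$. For $ab\in\JJ\cap\II$, the cliques $\kli_{ab}$ form the subgraph $G_{t,\II\cap\JJ}$, and how they are glued depends only on $t$ and the index set $\II\cap\JJ$. Here we use that the intersection pattern of $\kli_{ab}$ and $\kli_{a'b'}$ in $G_t$, namely whether they share a vertex and which one, does not depend on the other cliques in $\II$. This is because any vertex shared by two of the cliques appears in both, regardless of which further cliques are present. One point needs checking: if $\kli_{ab}$ and $\kli_{a'b'}$ with $ab,a'b'\in\II\cap\JJ$ both pass through a common vertex $v$ of some $\kli_{cd}$ with $cd\in\II\setminus\JJ$, then they still share $v$ in $H_{t,\II\cap\JJ}$. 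This holds because $v$ lies in both cliques by definition, not in $\kli_{cd}$ alone. All other vertices end up isolated.

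Hence $H_{t,\II,\JJ}\cong G_{t,\II\cap\JJ}\oplus_0 \bigoplus_{0,\,ab\in \JJ\setminus\II}\klik_r\oplus_0\emp_s$. We have $|\JJ\setminus\II|=|\JJ|-|\II\cap\JJ|$, and $s$ is determined by the total vertex count $4r$. Therefore every quantity in this description is a function of $\II\cap\JJ$ alone, which gives $H_{t,\II',\JJ}\cong H_{t,\II'',\JJ}$.

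The only delicate step is verifying that the vertex set of $G_{t,\II\cap\JJ}$, computed inside $H_{t,\II}$ after deleting edges, coincides with the union of the vertex sets of $\kli_{ab}$ with $ab\in\II\cap\JJ$, glued in the same way. This is immediate from the definition $G_{t,\II}=\bigcup_{ab\in\II}\kli_{ab}$. In any case, the isolated vertices absorb any discrepancy in the vertex count.
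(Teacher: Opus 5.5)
Your proof is correct and takes the same approach as the paper: both write $H_{t,\II,\JJ}$ as $G_{t,\II\cap\JJ}$ together with $|\JJ\setminus\II|=|\JJ|-|\JJ\cap\II|$ vertex-disjoint copies of $\klik_r$, padded with isolated vertices up to $4r$ vertices, and every one of these pieces depends only on $\II\cap\JJ$. Your extra check, that cliques indexed by $\II\cap\JJ$ keep their shared vertices after the edges of cliques indexed by $\II\setminus\JJ$ are removed, is a detail the paper leaves implicit.
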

\begin{proof}
	Recall that $H_{t,   \II}$ consists of $G_{t,   \II}$ (the subgraph of $G_t$ induced on cliques with indices from $\II$),     $|\indeksy|-|\II|$ isolated cliques,    and possibly some isolated vertices. Therefore $H_{t,   \II,   \JJ}$ consists of $G_{t,   \II\cap \JJ}$,    $|\JJ\setminus \II|$ isolated cliques,    and possibly some isolated vertices. If $\II'\cap\JJ=\II''\cap\JJ$,    then by definition $G_{t,   \II'\cap \JJ}\cong G_{t,   \II''\cap \JJ}$,    $|\JJ\setminus \II'|=|\JJ|-|\JJ\cap \II'|=|\JJ|-|\JJ\cap \II''|=|\JJ\setminus \II''|$,    and $|V(H_{t,   \II',   \JJ})|=|V(H_{t,   \II'',   \JJ})|=4r$. This implies the required isomorphism. 	
\end{proof}
\begin{proof}[Proof of Lemma~\ref{Lem:HIJprawdopodobienstwa}]
	Let us rewrite the sum from the assertion by taking into consideration all  the possible  intersections of $\II$ and $\JJ$. Given $\JJ\subsetneq \indeksy$ we have	
	\begin{align}
		\sum_{\II\subseteq \indeksy}(-1)^{|\II|}\pi(H_{t,   \II},     \covers_{\JJ}(H_{t,   \II}))&
		=\sum_{\JJ^*\subseteq \JJ}\sum_{\substack{\II\subseteq\indeksy\\  \II\cap \JJ=\JJ^*}}(-1)^{|\II|}\pi(H_{t,   \II},     \covers_{\JJ}(H_{t,   \II}))\\
		\label{Eq42:SumaHIJdwa}
		&=\sum_{\JJ^*\subseteq \JJ}(-1)^{|\JJ^*|}\sum_{\substack{\II\subseteq\indeksy\\  \II\cap \JJ=\JJ^*}}(-1)^{|\II\setminus \JJ|}\pi(H_{t,   \II},     \covers_{\JJ}(H_{t,   \II})).
	\end{align}
	Set  $\JJ^*\subseteq \JJ$,    and let us take 
	$\II',  \II''\subset \Gamma$ such that
	$\II'\cap \JJ=\II''\cap \JJ=\JJ^*$. Consider a natural partition $\{\kli_{ab}\}_{ab\in \indeksy}$ of both $H_{t,   \II'}$ and $H_{t,   \II''}$.
	By Fact~\ref{Fact:Isomorphism} and Lemma~\ref{Lem:DecompositionComparing} we get that 
	$$
	\forall_{\II',   \II'',    \II'\cap\II''}\pi(H_{t,   \II'},     \covers_{\JJ}(H_{t,   \II'}))=\pi(H_{t,   \II''},     \covers_{\JJ}(H_{t,   \II''})).
	$$
	Consequently,  
	\begin{align*}
		\sum_{\substack{\II\subseteq\indeksy\\  \II\cap \JJ=\JJ^*}}(-1)^{|\II\setminus \JJ|}\pi(H_{t,   \II},     \covers_{\JJ}(H_{t,   \II}))
		&=\pi(H_{t,   \JJ^*},     \covers_{\JJ}(H_{t,   \JJ^*}))
		\sum_{\substack{\II\subseteq\indeksy\\  \II\cap \JJ=\JJ^*}}(-1)^{|\II\setminus \JJ|}
		\\
		&=\pi(H_{t,   \JJ^*},     \covers_{\JJ}(H_{t,   \JJ^*}))\sum_{l=0}^{|\indeksy\setminus \JJ|}\binom{|\indeksy\setminus \JJ|}{l}(-1)^{l}=0.
	\end{align*}
	Consider now $\JJ=\indeksy$ and let us recall that $\covers_{\indeksy}(H_{t,   \II})$ consists of all indecomposable clique covers of $H_{t,   \II}$ in relation to the partition $\{\kli_{ab}\}_{ab\in\indeksy}$. 
	
	In the case $mp^2\le 1$,    we will use Lemma~\ref{Lem:InDecomposable}. The graph $F(G_t,   (\kli_{ab})_{ab\in\indeksy})$ (see Definition \ref{def:F}) is a star with $4$ leaves for $t=1$,    a cycle on 8 vertices for $t=2$,   and  a  path on $7$ vertices for $t=3$. Furthermore,  for any $\II\subsetneq\indeksy$,    $F(H_{t,   \II},   (\kli_{ab})_{ab\in\indeksy})$ is a proper subgraph of $F(G_t,   (\kli_{ab})_{ab\in\indeksy})$,  and  therefore  does not contain any cycle,    except the case $t=2$ and $\II=\indeksy$. Thus,  by Lemma~\ref{Lem:InDecomposable} and Theorem~\ref{Tw:Wariancja},  and observing that $|V(G_1)|=|V(G_3)|=4r-3$,  $|V(G_2)|=4r-4$,  we get
	\begin{align*}
		&n^{|V(G_1)|}\sum_{\II\subseteq \indeksy}\pi(H_{1,   \II},     \covers_{\indeksy}(H_{1,   \II}))+	n^{|V(G_3)|}\sum_{\II\subseteq \indeksy}\pi(H_{3,   \II},     \covers_{\indeksy}(H_{3,   \II}))\\
		&\lesssim
		\frac{(n^r\qq)^4}{n^3(mp)^2}
		=\frac{(n^r\qq)^4}{nm(n^2mp^2)}
		\asymp
		\frac1{nm}\left(\frac{(n^r\qq)^2}{\ble_2}\right)^2
		\lesssim \frac{(\Var N)^2}{n}, 
	\end{align*}
	and
	\begin{align*}
		&n^{|V(G_2)|}\sum_{\II\subseteq \indeksy}\pi(H_{2,   \II},     \covers_{\indeksy}(H_{2,   \II}))\\
		&\lesssim
		\frac{(n^r\qq)^4}{n^4(mp)^2p}
		=\frac{(n^r\qq)^4}{n(n^2mp^2)(nmp)}	
		\asymp\frac1n
		\frac{(n^r\qq)^2}{\ble_2}\frac{(n^r\qq)^2}{nmp}
		\lesssim \frac{(\Var N)^2}{n}.
	\end{align*}
	
	Consider $mp^2\ge 1$. Since $E(\kli_{ab})$ is a clique cover of $\kli_{ab}$,  $ab\in\indeksy$,  every indecomposable clique cover has at least $4$ forbidden 2--sets (each for every $\kli_{ab}$,    $ab\in \indeksy$). Moreover,  edges that are not covered by $2$--sets (the forbidden ones) have to be covered by some larger sets. Therefore,    in an indecomposable clique cover of any $H_{t,   \II}$,    in order to cover four edges,    there must be at least two sets of size at least $3$ or at least one large set of size at least $5$    for $t=1,    3$, and of size at least  $4$ for $t=2$,  as in the last case a cycle containing an edge from each of $\kli_{ab}$,  $ab\in\indeksy$,  is possible. Therefore,    by \eqref{eq:CC4} and \eqref{eq:Var2},    for every indecomposable clique cover $\cover$ of  $H_{t,   \II}$ we have 
	\begin{align*}
		n^{|V(G_{t,   \indeksy})|}\pi(H_{t,   \II},   \cover)\lesssim n^{4r-3} (mp^4+(mp^3)^2)e^{-4mp^2}\lesssim \frac{(n^{2r-1}mp^3e^{-2mp^2})^2}{n} \lesssim \frac{(\Var N)^2}{n}.
	\end{align*}    
	Here, we used the inequality $mp^4\le  (mp^3)^2$,  which follows from the assumption $mp^2\geq1$. In view of \eqref{Eq:SubgraphProbabilityCCfamily},  this concludes the proof.
\end{proof}

\subsection{Case of $G_{3,   1,   j}$}\label{Sec:Proof:G3}
First,  let us explain why this case is not included in Section~\ref{sec:mixed}. Namely,  the method employed there fails in this case,  since $\pi\left(\kli_{21}\oplus_1\kli_{22}\right)$ is in general not comparable with $\pi\left(\kli_{21}\oplus_1\kli_{22}\right)-\qq^2$. On the other hand,  the methods from this subsection are similar to those from the previous one,  however,  different partitions are used,  which changes many details in the approach.

Let $G_3=G_{3,   1,   j}$ for $j\ge 3$. One more time we consider the sum from \eqref{Eq:SumHtGt}. The number of its terms is $2^{|\Gamma|}=16$,  however,  we will show that only $4$ of them matter. Namely,  we split the sum  into a double one as follows
\begin{align}\label{6.5:eq:1}
	\sum_{\II\subseteq\indeksy}(-1)^{|\II|}\pi(H_{3,   \II})
	&=\sum_{\II_1\subseteq\{11,   21\}}(-1)^{|\II_1|}\sum_{\II_2\subseteq \{21,   22\}}(-1)^{|\II_2|}\pi(H_{3,   \II_1\cup\II_2}).
\end{align}
We observe that   for $F_0=K_r\oplus_0K_r\oplus_0K_r\oplus_0K_r$ and $F_1=(K_r\oplus_1 K_r)\oplus_0K_r\oplus_0K_r$
\begin{align*}
	\II_1&=\{11\},  	&H_{3,  \{11\}\cup\{12,  22\}}&\cong H_{3,  \{11\}\cup\{12\}}\cong {F_1}\text{ and }
	H_{3,  \{11\}\cup\{22\}}\cong H_{3,  \{11\}\cup\emptyset}\cong {F_0};\\
	\II_1&=\{21\},  	&H_{3,  \{21\}\cup\{12,  22\}}&\cong H_{3,  \{21\}\cup\{22\}}\cong {F_1}\text{ and }
	H_{3,  \{21\}\cup\{12\}}\cong H_{3,  \{21\}\cup\emptyset}\cong {F_0};\\
	\II_1&=\emptyset,  	&H_{3,  \emptyset\cup\{12,  22\}}&\cong H_{3,  \emptyset\cup\{12\}}\cong H_{3,  \emptyset\cup\{22\}}\cong H_{3,  \emptyset\cup\emptyset}\cong {F_0}.\\
\end{align*}
Therefore, in \eqref{6.5:eq:1}   only the term with $\II_1=\{11,  21\}$ does not cancel out and
\begin{align}\label{Eq:G3:Glowne}
	\sum_{\II\subseteq\indeksy}(-1)^{|\II|}\pi(H_{3,   \II})
	&=\sum_{\II_2\subseteq \{12,   22\}}(-1)^{|\II_2|}\pi(H_{3,   \{11,   21\}\cup\II_2}).
\end{align}
Our aim now is  to find the upper bound for \eqref{Eq:G3:Glowne}. We will achieve this using the same reasoning as in Section~\ref{Sec:Proof:IntOnOne}.
For each  $H_{3,   \II}$,    $\{11,   21\}\subseteq\II\subseteq \indeksy$,    consider its partition $\{H_{3, \II, i}\}_{i\in \Lambda}$,    where $\Lambda=\{0,   1,   2\}$,    $H_{3, \II, 0}=G_{\{11,   21\}}=\kli_{11}\oplus_{j-1}\kli_{21}$,    $H_{3, \II, 1}=\kli_{12}$,    and  $H_{3, \II, 2}=\kli_{22}$.  As defined in \eqref{Eq:DecompDefHJ},  for $\LL\subseteq\Lambda$ we have
$$H_{3, \II, \LL}=H_{3,   \II}\setminus \bigcup_{i\in \Lambda\setminus \LL}E(H_{3, \II, i})=\Bigg(V(H_{3,   \II}),  \bigcup_{i\in  \LL}E(H_{3, \II, i})\Bigg).$$

\begin{figure}
	\begin{center}
		\includegraphics[width=13cm]{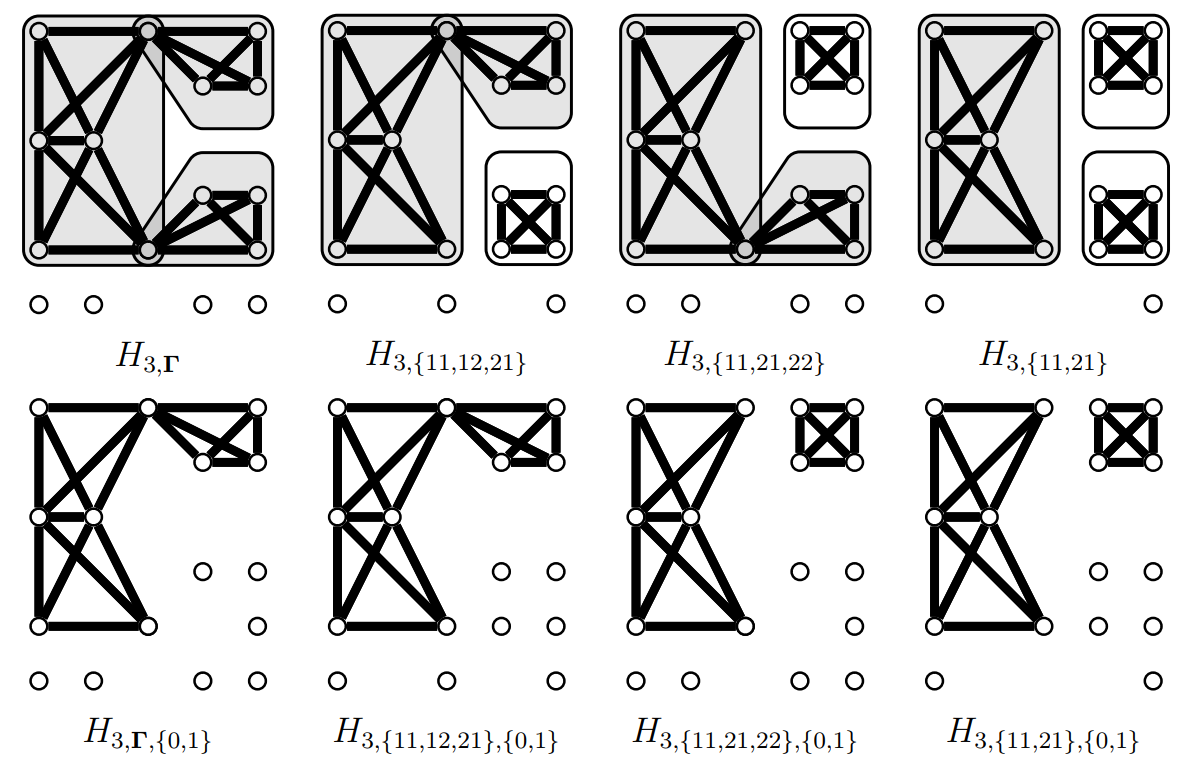}
	\end{center}
	\caption{\label{Fig:G3j1} In the first line are presented all graphs $H_{3, \II}$ for $\II$ such that $\{11,  21\}\subseteq \II$ and their partitions $\{H_{3, \II, i}\}_{i\in\Lambda}$.  Graphs constituting $G_{\II}$ are indicated in grey. Below,  there are depicted respective graphs $H_{3, \II, \LL}$ for $\LL=\{0,  1\}$.}
\end{figure}

For $\LL\subseteq\Lambda$,    let $\covers_{\LL}(H_{3,   \II})$ denote all clique covers decomposable to $H_{3,   \II,   \LL}$ in relation to the partition. Since  $\covers_{\LL}(H_{3,   \II})$,    for $\LL\subseteq\Lambda$,    are by definition pairwise disjoint and their union gives $\covers(H_{3,   \II})$,    for every $\II$ we have by \eqref{Eq:SubgraphProb}
\begin{equation}\label{Eq:G3:SumLL}
	\pi(H_{3, \II})=\sum_{\LL\subseteq \Lambda} \pi(H_{3,  \II},   \covers_{\LL}(H_{3,  \II})), 
\end{equation}
which,  applied to \eqref{Eq:G3:Glowne},  leads to 
\begin{align}\label{Eq:G3:SumLL2}
	\sum_{\{11,   21\}\subseteq\II\subseteq \indeksy}(-1)^{|\II|}\pi(H_{3,   \II})
	&=\sum_{\LL\subseteq \{0,   1,   2\}}\sum_{\{11,   21\}\subseteq\II\subseteq \indeksy}(-1)^{|\II|}\pi(H_{3,   \II},   \covers_{\LL}(H_{3,   \II})).
\end{align}
Next,  we are going to use Lemma~\ref{Lem:DecompositionComparing}. For any $J=\LL$ and $H_J^{(1)}=H_{3,  \II,  \LL}$,  $H_J^{(2)}=H_{3,  \II',  \LL}$,  $\II,  \II'\subset\indeksy$,  the right-hand side condition in \eqref{Eq:IsoDecompComp} is naturally satisfied. Hence,  $H_{3,  \II,  \LL}\cong H_J^{(2)}=H_{3,  \II',  \LL}$ suffices for $\pi(H_{3,   \II},   \covers_{\LL}(H_{3,   \II}))=\pi(H_{3,   \II'},   \covers_{\LL}(H_{3,   \II'}))$.
Let us therefore  study isomorphisms between  graphs  $H_{3,   \II,  \LL}$,  $\{11,   21\}\subseteq\II\subseteq \indeksy$ for any fixed $\LL\neq\Lambda$.
In the same manner as in Fact~\ref{Fact:Isomorphism},    or simply by checking one by one,    we may show the following isomorphisms (for  an example see Figure~\ref{Fig:G3j1}) 
\begin{align}
	H_{3,   \{11,   21\},    \LL}\cong H_{3,   \{11,   21,   12\},    \LL}
	&\cong H_{3,   \{11,   21,   22\},   \LL}\cong H_{3,   \indeksy,   \LL}&&\text{ for }\LL=\emptyset,   \{0\},  \{1,  2\},  \{1\},  \{2\} ;\\
	H_{3,   \{11,   21\},    \LL}\cong H_{3,   \{11,   21,   22\},    \LL}&\text{ and }
	H_{3,   \{11,   21,   12\},    \LL}\cong H_{3,   \indeksy,    \LL}&&\text{ for }\LL=  \{0,   1\};\\ 
	H_{3,   \{11,   21\},    \LL}\cong H_{3,   \{11,   21,   12\},    \LL}&\text{ and }
	H_{3,   \{11,   21,   21\},    \LL}\cong H_{3,   \indeksy,    \LL}&&\text{ for }\LL=  \{0,   2\}. \label{Eq:G3:Isomorphisms}
\end{align}
Consequently,  for $\LL=\emptyset,   \{0\},  \{1,  2\},  \{1\},  \{2\} $ the inner sum in \eqref{Eq:G3:SumLL2} consists of terms that differ only by their sign,  and it therefore vanishes. On the other hand,  for $\LL=\{0,  1\},  \{0,  2\}$ the terms of this sum might be paired up such that the paired terms also differ only by their signs,  so the sum vanishes again. Summing up,   for all $\LL\neq \Lambda$ the  the inner sum in \eqref{Eq:G3:SumLL2}  equals zero. Thus,  we obtain 
\begin{align}\label{Eq:G3:SumIndecomposible}
	\sum_{\{11,   21\}\subseteq\II\subseteq \indeksy}(-1)^{|\II|}\pi(H_{3,   \II})
	&=\sum_{\{11,   21\}\subseteq\II\subseteq \indeksy}(-1)^{|\II|}\pi(H_{3,   \II},   \covers_{\Lambda}(H_{3,   \II})).
\end{align}

Let $mp\ge 1$ and $mp^2\le 1$ and recall that
$\covers_{\Lambda}(H_{3,   \II})$ consists of a finite number of indecomposable clique covers of $H_{3,   \II}$. 
Therefore,   using  Lemma~\ref{Lem:InDecomposable} (with partition $\{H_{3, \II, i}\}_{i\in \Lambda}$,  for which $F(H_{3,   \II},  \{H_{3, \II, i}\}_{i\in \Lambda})$,  introduced in Definition \ref{def:F},  is a forest),    Lemma~\ref{lem:Var2--r} (for $\pi(H_{3, \II, 0})$,   as $H_{3, \II, 0}\cong \klik_r\oplus_{j-1} \klik_r$) for $j\ge 3$,   and the estimate \eqref{eq:Var2},  we obtain for each $ \{11,   21\}\subseteq\II\subseteq \indeksy$
\begin{align*}
	n^{4r-j-1}\pi(H_{3,   \II},   \covers_{\Lambda}(H_{3,   \II}))&\lesssim \frac{n^{4r-j-1}\pi(H_{3, \II, 0})\qq^2}{(mp)^2}=  \frac{n^{2r-{(j-1)}}\pi(\klik_r\oplus_{j-1} \klik_r)(n^r\qq)^2}{({n}mp)^2}\\&\lesssim \frac1{nmp}\frac{(n^r\qq)^2}{\ble}\frac{(n^r\qq)^2}{nmp}\lesssim \frac{(\Var N)^2}{nmp}\le \frac{(\Var N)^2}{n}.
\end{align*}
This gives us the desired upper bound of \eqref{Eq:G3:Glowne} and concludes the proof in the case $mp^2\le 1$.

Now let $mp^2\ge 1$. 
Since  $\covers_{\Lambda}(H_{3,   \II})$ consists of a finite number of indecomposable clique covers of $H_{3,   \II}$,     Lemma~\ref{Lem:InDecomposable2bis} (with partition $\{\bar H_i\}_{i\in \Lambda}$) and \eqref{eq:Var2} give us  for each $ \{11,   21\}\subseteq\II\subseteq \indeksy$
\begin{align*}
	n^{4r-4}\pi(H_{3,   \II},   \covers_{\Lambda}(H_{3,   \II}))&\lesssim
	n^{4r-4}mp^3e^{-3mp^2}\\
	&=\frac{(n^{2r-2}e^{-mp^2})(n^{2r-1}mp^3e^{-2mp^2})}{n} \lesssim \frac{(\Var N)^2}{n}.
\end{align*}
In view of \eqref{Eq:G3:SumIndecomposible},  this concludes the proof of Lemma \ref{Lem:IntegralUpperGraphs} for $i=1$ and $j\ge 3$.

\section*{Acknowledgements}	

Grzegorz Serafin's work was supported by the National Science Centre (NCN), Poland,  under  research grant no. 2021/43/D/ST1/03244. The funder had no role in study design, data collection and analysis, decision to publish, or preparation of the manuscript.


\begin{thebibliography}{10}

\bibitem{BKR}
A.~D. Barbour, M.~Karo\'{n}ski, and A.~Ruci\'{n}ski.
\newblock A central limit theorem for decomposable random variables with
applications to random graphs.
\newblock {\em J. Combin. Theory Ser. B}, 47(2):125--145, 1989.

\bibitem{BCJ}
B.~B. Bhattacharya, A.~Chatterjee, and S.~Janson.
\newblock Fluctuations of subgraph counts in graphon based random graphs.
\newblock {\em Combin. Probab. Comput.}, 32(3):428--464, 2023.

\bibitem{Bloznelis2018}
M.~Bloznelis and J.~Jaworski.
\newblock The asymptotic normality of the global clustering coefficient in
sparse random intersection graphs.
\newblock In A.~Bonato, P.~Pra{\l}at, and A.~Raigorodskii, editors, {\em
	Algorithms and Models for the Web Graph. WAW 2018}, volume 10836 of {\em
	Lecture Notes in Computer Science}, pages 16--29. Springer International
Publishing, 2018.

\bibitem{Bloznelis2023}
M.~Bloznelis, J.~Karjalainen, and L.~Leskelä.
\newblock Normal and stable approximation to subgraph counts in superpositions
of bernoulli random graphs.
\newblock {\em Journal of Applied Probability}, 61(2):401--419, Aug. 2023.

\bibitem{BrennanBresterNagaraj2020Equivalence}
M.~Brennan, G.~Bresler, and D.~Nagaraj.
\newblock Phase transitions for detecting latent geometry in random graphs.
\newblock {\em Probability Theory and Related Fields}, 178(3–4):1215--1289,
Sept. 2020.

\bibitem{Dong2023}
L.~Dong and Z.~Hu.
\newblock The number of triangles in random intersection graphs.
\newblock {\em Communications in Mathematics and Statistics}, 11(4):695--725,
Jan. 2023.

\bibitem{FLSZ}
X.~Fang, S.-H. Liu, Q.-M. Shao, and Y.-K. Zhao.
\newblock Normal approximation for exponential random graphs.
\newblock {\em Probab. Theory Related Fields}, 2025.

\bibitem{FillScheinSinger2000Equivalence}
J.~A. Fill, E.~R. Scheinerman, and K.~B. Singer-Cohen.
\newblock Random intersection graphs when $m=\omega(n)$: An equivalence theorem
relating the evolution of the {G}(n, m, p) and {G}(n, p) models.
\newblock {\em Random Structures \& Algorithms}, 16:156--176, 2000.

\bibitem{KsiazkaJLR}
S.~Janson, T.~\L~uczak, and A.~Rucinski.
\newblock {\em Random graphs}.
\newblock Wiley-Interscience Series in Discrete Mathematics and Optimization.
Wiley-Interscience, New York, 2000.

\bibitem{1999KarSchSin}
M.~Karo\'{n}ski, E.~R. Scheinerman, and K.~B. Singer-Cohen.
\newblock On random intersection graphs: The subgraph problem.
\newblock {\em Combinatorics, Probability and Computing}, 8(1–2):131–159,
1999.

\bibitem{KR}
G.~Kaur and A.~R\"{o}llin.
\newblock Higher-order fluctuations in dense random graph models.
\newblock {\em Electron. J. Probab.}, 26:Paper No. 139, 36, 2021.

\bibitem{KimLeeNa2018Equivalence}
J.~H. Kim, S.~J. Lee, and J.~Na.
\newblock On the total variation distance between the binomial random graph and
the random intersection graph.
\newblock {\em Random Structures \& Algorithms}, 52(4):662--679, Jan. 2018.

\bibitem{LP1}
Q.~Liu and N.~Privault.
\newblock Normal to {P}oisson phase transition for subgraph counting in the
random-connection model.
\newblock {\em Electron. J. Probab.}, 30, 2025.

\bibitem{LP2}
Q.~Liu and N.~Privault.
\newblock Gaussian fluctuations of generalized {$U$}-statistics and subgraph
counting in the binomial random-connection model.
\newblock {\em Stochastic Process. Appl.}, 192:Paper No. 104825, 2026.

\bibitem{Maugis}
P.~A. Maugis.
\newblock Central limit theorems for local network statistics.
\newblock {\em Biometrika}, 111(3):743--754, 2024.

\bibitem{MNS}
W.~Michalczuk, M.~Nieradko, and G.~Serafin.
\newblock Normal approximation for subgraph count in random hypergraphs.
\newblock {\em arxiv:}, 2408.06112, 2024.

\bibitem{PSbej}
N.~Privault and G.~Serafin.
\newblock Normal approximation for sums of weighted
{$U$}-statistics---application to {K}olmogorov bounds in random subgraph
counting.
\newblock {\em Bernoulli}, 26(1):587--615, 2020.

\bibitem{PSstoch}
N.~Privault and G.~Serafin.
\newblock Normal approximation for generalized {$U$}-statistics and weighted
random graphs.
\newblock {\em Stochastics}, 94(3):432--458, 2022.

\bibitem{R}
A.~R\"{o}llin.
\newblock Kolmogorov bounds for the normal approximation of the number of
triangles in the {E}rd\H{o}s-{R}\'{e}nyi random graph.
\newblock {\em Probab. Engrg. Inform. Sci.}, 36(3):747--773, 2022.

\bibitem{Ruc}
A.~Ruci\'{n}ski.
\newblock When are small subgraphs of a random graph normally distributed?
\newblock {\em Probab. Theory Related Fields}, 78(1):1--10, 1988.

\bibitem{Rybarczyk2011Equivalence}
K.~Rybarczyk.
\newblock Equivalence of the random intersection graph and {G}(n,p).
\newblock {\em Random Structures \& Algorithms}, 38:205--234, 2011.

\bibitem{Edges}
K.~Rybarczyk and G.~Serafin.
\newblock Normal approximation for subgraph counts\\ in random intersection
graphs: the case of edges.
\newblock {\em arxiv:}, 2503.23110, 2025.

\bibitem{SubgraphProb}
K.~Rybarczyk and G.~Serafin.
\newblock Subgraph probabilities in random intersection graphs.
\newblock {\em arxiv:}, 2026.

\bibitem{2010RybSta}
K.~Rybarczyk and D.~Stark.
\newblock Poisson approximation of the number of cliques in a random
intersection graph.
\newblock {\em Journal of Applied Probability}, 47(3):826--840, 2010.

\bibitem{2017RybSta}
K.~Rybarczyk and D.~Stark.
\newblock Poisson approximation of counts of induced subgraphs in random
intersection graphs.
\newblock {\em Discrete Mathematics}, 340(9):2183--2193, 2017.

\end{thebibliography}
\end{document}